\documentclass[twoside]{article}
\usepackage[letterpaper, left=3cm, right=3cm, top=4cm, bottom=2.5cm]{geometry}
\usepackage{graphicx}
\usepackage{amsmath,amssymb,amsthm}
\usepackage{authblk}
\usepackage[utf8]{inputenc}
\usepackage{microtype}
\usepackage{mathrsfs} 
\usepackage{enumitem}
\usepackage{calc}
\usepackage{esint}
\usepackage{fancyhdr}
\usepackage{xcolor}
\usepackage[labelfont = bf]{caption}
\usepackage{doi}
\usepackage{subfigure}
\usepackage[numbers]{natbib}
\usepackage{float}
\usepackage{stmaryrd}
\usepackage{mathtools}
\usepackage{booktabs}
\usepackage{colortbl}
\usepackage{tabulary}
\usepackage{diagbox}
\usepackage{caption}
\usepackage{cleveref}
\usepackage{commath}
\usepackage{multirow}
\usepackage{relsize}

\usepackage{physics}
\usepackage{amsmath}
\usepackage{tikz}
\usepackage{pgfplots}
\pgfplotsset{compat=1.18}
\usepackage{mathdots}
\usepackage{yhmath}
\usepackage{cancel}
\usepackage{color}
\usepackage{siunitx}
\usepackage{array}
\usepackage{multirow}
\usepackage{amssymb}
\usepackage{gensymb}
\usepackage{tabularx}
\usepackage{extarrows}
\usepackage{booktabs}
\usetikzlibrary{fadings}
\usetikzlibrary{patterns}
\usetikzlibrary{shadows.blur}
\usetikzlibrary{shapes}

\usepackage{orcidlink}

\newtheorem{theorem}{Theorem}[section]
\numberwithin{equation}{section}
\newtheorem{proposition}[theorem]{Proposition}
\newtheorem{definition}[theorem]{Definition}
\newtheorem{corollary}[theorem]{Corollary}
\newtheorem{remark}[theorem]{Remark}
\newtheorem{lemma}[theorem]{Lemma}

\newtheorem{assumption}[theorem]{Assumption}

\usepackage{titlesec}
\titleformat{\section}{\normalfont\scshape\centering}{\thesection.}{0.5em}{}
\titleformat*{\subsection}{\itshape}
\titleformat*{\subsubsection}{\itshape}

 \providecommand{\keywords}[1]
 {
 	{\small\emph{Keywords:} #1}
 }
 \providecommand{\MSC}[1]
 {
 	{\small\emph{AMS MSC (2020):~~} #1}
 }

\definecolor{denim}{rgb}{0.08, 0.38, 0.74}
\definecolor{byzantium}{rgb}{0.44, 0.16, 0.39} 
\definecolor{shamrockgreen}{rgb}{0.0, 0.62, 0.38} 

\usepackage{hyperref}
\hypersetup{
	colorlinks=true,
	linkcolor=denim,
	citecolor = shamrockgreen,
	filecolor=magenta, 
	urlcolor=byzantium,
}

\usepackage{soul}

\begin{document}
	\setlength{\abovedisplayskip}{5.5pt}
	\setlength{\belowdisplayskip}{5.5pt}
	\setlength{\abovedisplayshortskip}{5.5pt}
	\setlength{\belowdisplayshortskip}{5.5pt}

	\title{\vspace{-15mm}Duality-Based \emph{A Posteriori} Error Identities for Subgradient Flows Based on the Br\'ezis--Ekeland--Nayroles Principle\thanks{This work is partially supported by the Office of Naval Research under Award No. N00014-24-1-2147, the National Science Foundation under Grant DMS-2408877, the Air Force Office of Scientific Research under Award No. FA9550-22-1-0248, and SURE-AI Centre grant 357482, Research Council of Norway. The work of H.A. and A.K. is additionally supported by the MATH+ project AA-Tech-4 “DOC-TWIN”. The contribution of A.K. is additionally funded by the Deutsche Forschungsgemeinschaft (DFG, German Research Foundation) – Project number 581203342.}\vspace{-0.5mm}
    }
	\author[1]{Harbir Antil\,\orcidlink{0000-0002-6641-1449}%
	\thanks{Email: \url{hantil@gmu.edu}}}
    \author[2]{Alex Kaltenbach\,\orcidlink{0000-0001-6478-7963}%
	\thanks{Email: \url{kaltenbach@math.tu-berlin.de}}}
    \author[3]{Keegan L.\,A. Kirk\,\orcidlink{0000-0003-1190-6708}%
	\thanks{Email: \url{kkirk8@lsu.edu}\vspace{-0.5mm}}}
	\date{\today\vspace{-0.5mm}} 
	\affil[1,2,3]{\small{Department of Mathematical Sciences and the Center for Mathematics and Artificial Intelligence (CMAI), George Mason University, Fairfax, VA 22030, USA}}
	\affil[2]{\small{Institute of Mathematics, Technical University of Berlin, Stra\ss e des 17.\ Juni 136, 10623 Berlin}}
    \affil[3]{\small{Department of Mathematics, Louisiana State University, Baton Rouge, LA 70803, USA}\vspace{-0.5mm}}
	\maketitle

	\pagestyle{fancy}
	\fancyhf{}
	\fancyheadoffset{0cm}
	\addtolength{\headheight}{-0.25cm}
	\renewcommand{\headrulewidth}{0pt} 
	\renewcommand{\footrulewidth}{0pt}
	\fancyhead[CO]{\textsc{Duality-Based \emph{A Posteriori} Error Identities for Subgradient Flows}}
	\fancyhead[CE]{\textsc{H. Antil, A. Kaltenbach, and K. Kirk}}
	\fancyhead[R]{\thepage}
	\fancyfoot[R]{}
	
	\begin{abstract}
	We derive duality-based \textit{a posteriori} error identities for a broad class of~subgradient~flows~\mbox{induced} by time-dependent convex integral functionals. Starting from the Br\'ezis--Ekeland--Nayroles~\mbox{principle}, we identify an unsteady primal energy functional and derive its Fenchel dual formulation, including strong duality and the corresponding optimality system under general normal-integrand~\mbox{assumptions}.
    This Fenchel duality framework is  used to derive
\textit{a posteriori} error~identities~for~\mbox{subgradient}~flows. In doing so, we depart from the usual duality-based \textit{a posteriori} error control framework in the unsteady setting, since the Br\'ezis--Ekeland--Nayroles formulation reveals the following unsteady feature: the minimal primal value and the maximal dual value are both prescribed by the initial datum. This allows us to pass from a combined primal-dual gap identity to separate~primal~and~dual~gap~\mbox{identities}. These identities quantify the primal and dual errors independently and admit representations in terms of generalized Bregman divergences and, under a spatial convex conjugation formula, as non-negative time-space integral quantities suitable for localization.
 The abstract framework is applied to a number of variational problems of physical interest, including
 the unsteady heat equation, the unsteady Stokes equations, the unsteady~\mbox{Navier--Lam\'e}~equations, 
 the unsteady Bingham flow through a pipe, the unsteady obstacle problem,  and the unsteady elasto-plastic torsion problem. 
	\end{abstract}
	
	\keywords{Subgradient flows, Fenchel duality, Br\'ezis--Ekeland--Nayroles principle, 
convex integral functionals, primal gap identities, dual gap identities, generalized Bregman divergences,  \textit{a posteriori} error control}
	
	\MSC{Primary: 35K90, 47J35, 49N15; Secondary: 49M29, 65M15}

    \medskip

	\section{Introduction}\label{sec:introduction}\thispagestyle{empty}\enlargethispage{15mm}\vspace{-0.5mm}

    \hspace{5mm}Duality-based \emph{a posteriori} error control is based on the observation that, for convex variational problems, errors can often be quantified by the violation of respective primal and dual optimality~relations. Its classical prototype is the Prager--Synge identity (\textit{cf}.\ \cite{PragerSynge1947}), which provides an exact and constant-free  \textit{a posteriori} error identity for linear elliptic problems in Hilbert space settings. In its original form, this identity relates a conforming primal approximation, satisfying the essential boundary conditions, to an equilibrated dual flux approximation, satisfying the equilibrium equation and natural boundary conditions. This idea has been developed in several directions, including equilibrated residual methods, hypercircle methods, and functional-type majorants (see, \textit{e.g.}, \cite{AinsworthOden2000,Verfuerth1996,Repin2000,Repin2008,
BraessPillweinSchoeberl2009,Vohralik2008,ErnVohralik2015}~and~the~references~therein).\linebreak
    Related \textit{a posteriori} approaches include
residual-based, hierarchical, goal-oriented, and reconstruction-based techniques
(see, \textit{e.g.},
\cite{ErikssonJohnson1991,ErikssonJohnson1995,Picasso1998,Verfuerth2003,BeckerRannacher2001,MakridakisNochetto2003,NochettoSavareVerdi2000}
and the references therein). A common feature of the duality-based approaches is that the error is represented by a computable quantity which
measures, in one form or another, the failure of admissible primal and dual objects to satisfy the
optimality~system.\newpage

    To begin with, we recall this structure for steady convex variational problems, since it provides the 
    template from which the unsteady theory will depart.\vspace{-1.25mm}

    \subsection{Duality-based \emph{a posteriori} error control for steady convex variational problems}\vspace{-0.75mm}\enlargethispage{5mm}

    \hspace{5mm}In the present paper, 
    we are interested in deriving such duality-based \textit{a posteriori} error identities for a broad class of subgradient flows induced by a (time-dependent) family of convex energy functionals $E(t,\cdot)\colon V\coloneqq W^{1,p}_D(\Omega;\mathbb{R}^{\ell})\to \mathbb{R}\cup\{+\infty\}$, $t\in I\coloneqq (0,t_{\mathtt{fin}})$, where $\Omega\subseteq \mathbb{R}^d$, $d\in \mathbb{N}$, is a bounded Lipschitz domain, $p\in (1,+\infty)$ with $p\ge \frac{2d}{d+2}$, $\ell\in \mathbb{N}$, and $t_{\mathtt{fin}}\in (0,+\infty)$, for every $v\in V$ defined by\vspace{-0.5mm}
\begin{align}\label{intro:primal_steady}
E(t,v)
\coloneqq 
G(t,\nabla v)+F(t,v)\,.\\[-6mm]\notag
\end{align}
Here, \hspace{-0.15mm}the \hspace{-0.15mm}integral \hspace{-0.15mm}functionals \hspace{-0.15mm}$G(t,\cdot)\colon\hspace{-0.175em} Y\hspace{-0.175em}\coloneqq\hspace{-0.175em} L^p(\Omega;\mathbb{R}^{\ell\times d})\hspace{-0.175em}\to\hspace{-0.175em} \mathbb{R}\cup\{+\infty\}$, $t\hspace{-0.175em}\in\hspace{-0.175em} I$, \hspace{-0.15mm}and~\hspace{-0.15mm}${F(t,\cdot)\colon \hspace{-0.175em}V\hspace{-0.175em}\to\hspace{-0.175em} \mathbb{R}\hspace{-0.175em}\cup\hspace{-0.175em}\{+\infty\}}$,~${t\hspace{-0.175em}\in\hspace{-0.175em} I}$, for every $y\in Y$ and $v\in V$, respectively, are defined by\vspace{-0.5mm}
\begin{align}\label{intro:GF}
G(t,y)
\coloneqq
\int_\Omega \phi(t,\cdot,y)\,\mathrm{d}x\,,
\qquad
F(t,v)
\coloneqq
\int_\Omega \psi(t,\cdot,v)\,\mathrm{d}x \,,\\[-6mm]\notag
\end{align}
where the (time-dependent) energy densities $\phi\colon Q\times \mathbb{R}^{\ell\times d}\to \mathbb{R}\cup\{+\infty\}$ and $\psi\colon Q\times \mathbb{R}^{\ell}\to \mathbb{R}\cup\{+\infty\}$, where $Q\coloneqq I\times \Omega$, are convex normal integrands satisfying a non-triviality condition (\textit{cf}.\ Assumption~\ref{ass:energy_densities}).\linebreak 
We refer to the problem of minimizing
the steady primal energy functional \eqref{intro:primal_steady} 
as~the~steady primal problem, \hspace{-0.1mm}and \hspace{-0.1mm}a \hspace{-0.1mm}minimizer \hspace{-0.1mm}$u_t\hspace{-0.15em}\in \hspace{-0.15em} V$, \hspace{-0.1mm}the \hspace{-0.1mm}existence \hspace{-0.1mm}of \hspace{-0.1mm}which \hspace{-0.1mm}we \hspace{-0.1mm}assume, \hspace{-0.1mm}to \hspace{-0.1mm}as \hspace{-0.1mm}the \hspace{-0.1mm}steady~\hspace{-0.1mm}\mbox{primal}~\hspace{-0.1mm}\mbox{solution}.\linebreak
For comparison with the unsteady theory developed below, in this subsection, we~freeze~a~time~\mbox{slice}~${t\in I}$, in which case, the convex variational problem \eqref{intro:primal_steady} (with \eqref{intro:GF}) is amenable to  the familiar duality-based \emph{a posteriori} error control for steady convex variational problems developed~in~the~works~\cite{BartelsMilicevic2020,BartelsKaltenbach2023,BartelsGudiKaltenbach2025,AntilBartelsKaltenbachKhandelwal2025,AntilKaltenbachKirk2026}.\linebreak Under additional assumptions on the energy density $\psi(t,\cdot,\cdot)\colon \Omega\times \mathbb{R}^{\ell}\to \mathbb{R}\cup\{+\infty\}$ (\textit{cf}.\ Assumption~\ref{ass:convex_conjugation}),\linebreak a (Fenchel) dual problem (in the sense of \cite[Rem.~4.2, p.~60/61]{EkelandTemam1999}) is not only defined~on~${Y^*\hspace{-0.175em}=\hspace{-0.175em}L^{p'}(\Omega;\mathbb{R}^{\ell\times d})}$,\linebreak but on $Y^*(\operatorname{div})\coloneqq \smash{W^{\operatorname{div},p'}_N}(\Omega;\mathbb{R}^{\ell})$
(see, \textit{e.g.}, \cite[Sec.\ 3]{BartelsKaltenbach2026}, for more  details).~More~\mbox{precisely},~it~is given via the maximization of the (time-dependent) concave dual energy functional $D(t,\cdot)\colon Y^*(\operatorname{div})\to \mathbb{R}\cup\{-\infty\}$, for every $y\in Y^*(\operatorname{div})$ defined by\vspace{-0.5mm}
    \begin{align}\label{intro:dual_steady}
        D(t,y)\coloneqq -\int_{\Omega}{\phi^*(t,y)\,\mathrm{d}x}-\int_{\Omega}{\psi^*(t,\operatorname{div}y)\,\mathrm{d}x}\,.\\[-6mm]\notag
    \end{align}
    Under a standard Fenchel duality qualification, the dual energy functional
\eqref{intro:dual_steady} admits a maximizer $z_t\in Y^*(\operatorname{div})$, called dual
solution, and a strong duality relation applies, \textit{i.e.}, there holds\vspace{-0.5mm}
\begin{align}\label{intro:strong_duality_steady}
    E(t,u_t)=D(t,z_t)\,.\\[-6mm]\notag
\end{align}
Based on 
\eqref{intro:strong_duality_steady}, for every $v\in V$ and $y\in Y^*(\operatorname{div})$, one obtains
 the \emph{steady primal-dual gap identity}\vspace{-0.5mm}
\begin{align}\label{intro:primal_dual_gap_identity_steady}
    \rho_{E(t,\cdot)}^2(v)+\rho_{-D(t,\cdot)}^2(y)
    =
    \eta_{E(t,\cdot)-D(t,\cdot)}^2(v,y)\,.
\end{align}
Here, $\rho_{E(t,\cdot)}^2\colon \hspace{-0.1em}V\hspace{-0.1em}\to\hspace{-0.1em} [0,+\infty]$ and
$\smash{\rho_{\smash{-D(t,\cdot)}}^2}\colon\hspace{-0.1em} Y^*(\operatorname{div})\hspace{-0.1em}\to\hspace{-0.1em} [0,+\infty]$
denote the optimal strong convexity~\mbox{measures} (\textit{cf}.\ Remark \ref{rem:optimal_strong_convexity_measure}) of the steady primal energy
functional \eqref{intro:primal_steady} and of the negative steady dual energy functional \eqref{intro:dual_steady}, respectively.
Moreover, $\eta_{E(t,\cdot)-D(t,\cdot)}^2\colon
V\times Y^*(\operatorname{div})\to [0,+\infty]$ denotes the steady~primal-dual gap estimator, which, for every $v\in V$ and
$y\in Y^*(\operatorname{div})$, is given via\vspace{-0.5mm}
\begin{align}\label{intro:primal_dual_gap_estimator_steady}
    \begin{aligned}
    \eta_{E(t,\cdot)-D(t,\cdot)}^2(v,y)
    &\coloneqq E(t,v)-D(t,y)
    \\
    &=
    \int_{\Omega}
    \bigl(
        \phi^*(t,\cdot,y)
        -y: \nabla v
        +\phi(t,\cdot,\nabla v)
    \bigr)\,\mathrm{d}x
   \\& \quad+
    \int_{\Omega}
    \bigl(
        \psi^*(t,\cdot,\operatorname{div}y)
        -\operatorname{div}y\cdot v
        +\psi(t,\cdot,v)
    \bigr)\,\mathrm{d}x\,.
    \end{aligned}\\[-6mm]\notag
\end{align}
By \hspace{-0.1mm}the \hspace{-0.1mm}Fenchel--Young \hspace{-0.1mm}inequality, 
\hspace{-0.1mm}the \hspace{-0.1mm}two \hspace{-0.1mm}integrands \hspace{-0.1mm}in \hspace{-0.1mm}\eqref{intro:primal_dual_gap_estimator_steady} \hspace{-0.1mm}are \hspace{-0.1mm}point-wise~\hspace{-0.1mm}non-negative~\hspace{-0.1mm}(a.e.).~\hspace{-0.1mm}Hence,~\hspace{-0.1mm}the steady primal-dual gap estimator \eqref{intro:primal_dual_gap_estimator_steady} admits a local integral decomposition, which 
naturally~gives~rise to adaptive mesh-refinement indicators measuring the violation
of the corresponding~optimality~inclusions\vspace{-0.5mm}
\begin{subequations}\label{intro:primal_optimality_inclusions_steady}
\begin{alignat}{2}
    z_t&\in \partial_a\phi(t,\cdot,\nabla u_t)
    &&\quad \text{a.e.\ in }\Omega\,,
    \\
    \operatorname{div}z_t&\in \partial_b\psi(t,\cdot,u_t)
    &&\quad \text{a.e.\ in }\Omega\,.\\[-6mm]\notag
\end{alignat}
\end{subequations}\newpage

\subsection{Duality-based \textit{a posteriori} error control for subgradient flows induced by convex variational problems}\vspace{-0.5mm}\enlargethispage{5mm}

\hspace{5mm}Having recalled duality-based \emph{a posteriori} error control for steady convex variational~problems for frozen time slices,
we now consider the subgradient flow induced by the (time-dependent)
family~of~primal energy functionals \eqref{intro:primal_steady}. More precisely,
we seek 
$u\in \mathcal{W}(I)\coloneqq {\{v\in L^p(I;V)\mid \partial_tv\in L^{p'}(I;V^*)\}}$~such~that\vspace{-4.75mm}
\begin{subequations}\label{intro:subgradient_flow}
    \begin{alignat}{2}\label{intro:subgradient_flow.1}
        \partial_tu(t)+\partial_v E(t,u(t))&\ni 0_{V^*}
        &&\quad \text{in }V^*\quad\text{for a.e.\ }t\in I\,,
        \\
        u(0)&=u_0
        &&\quad\text{in }H\,,
        \label{intro:subgradient_flow.2}
    \end{alignat}\\[-5mm]\notag
\end{subequations}
where the initial condition \eqref{intro:subgradient_flow.2} is well-defined
since $\mathcal{W}(I)\hookrightarrow C^0(\overline{I};H)$, where 
$H\coloneqq L^2(\Omega;\mathbb{R}^{\ell})$.

A natural global variational formulation of the subgradient flow formulation \eqref{intro:subgradient_flow}
is provided~by~the Br\'ezis--Ekeland--Nayroles \hspace{-0.1mm}principle
\hspace{-0.15mm}(\textit{cf}.\ \hspace{-0.15mm}\cite{BrezisEkeland1976I,BrezisEkeland1976II}
\hspace{-0.15mm}and \hspace{-0.15mm}\cite{Nayroles1976I,Nayroles1976II}, \hspace{-0.15mm}respectively; \hspace{-0.15mm}see \hspace{-0.15mm}also \hspace{-0.15mm}\cite[\hspace{-0.25mm}Sec.~\hspace{-0.25mm}3.9]{AubinCellina1984},~\hspace{-0.15mm}\mbox{\cite[\hspace{-0.25mm}Sec.~\hspace{-0.75mm}8.10]{Roubicek2013}}, \hspace{-0.1mm}\cite{Stefanelli2008}, \hspace{-0.1mm}and \hspace{-0.1mm}\cite{CariniJensenNuernberg2023}). \hspace{-0.1mm}It \hspace{-0.1mm}characterizes \hspace{-0.1mm}solutions
\hspace{-0.1mm}$u\hspace{-0.15em}\in\hspace{-0.15em} \mathcal{W}(I)$ \hspace{-0.1mm}of \hspace{-0.1mm}the \hspace{-0.1mm}subgradient \hspace{-0.1mm}flow \hspace{-0.1mm}formulation \hspace{-0.1mm}\eqref{intro:subgradient_flow} \hspace{-0.1mm}as \hspace{-0.1mm}minimizers of
the Br\'ezis--Ekeland--Nayroles energy functional
$\mathcal{E}\colon \mathcal{W}(I)\to \mathbb{R}\cup\{+\infty\}$,~for~\mbox{every}~${v\in \mathcal{W}(I)}$~\mbox{defined}~by\vspace{-0.75mm}
\begin{align}\label{intro:primal_unsteady}
     \mathcal{E}(v)
     \coloneqq
     \int_{I}{E(t,v(t))\,\mathrm{d}t}
     +\int_{I}{E^*(t,-\partial_tv(t))\,\mathrm{d}t}
     +\tfrac{1}{2} \|v(t_{\mathtt{fin}})\|^2_H
     + \chi_{\{u_0\}} (v(0))\,,\\[-6mm]\notag
\end{align}
which takes the minimal value\vspace{-1mm}\enlargethispage{2.5mm} 
\begin{align}\label{intro:primal_unsteady_minimal}
    \mathcal{E}(u)=\tfrac{1}{2}\|u_0\|_H^2\,.\\[-6mm]\notag
\end{align}
Under suitable additional assumptions on the energy densities $\phi$ and $\psi$ (\textit{cf}.\ Assumptions \ref{ass:energy_densities} and \ref{ass:sufficient_for_conjugation}), we will establish that a Fenchel dual problem to the minimization of the primal energy functional is given via the maximization of the unsteady dual energy functional $\mathcal{D}\colon L^{p'}(I;Y^*)\times\mathcal{W}(I)\to \mathbb{R}\cup\{-\infty\}$, for every $(y,\lambda)\in  L^{p'}(I;Y^*)\times \mathcal{W}(I)$ defined by\vspace{-0.75mm}
\begin{align}\label{intro:dual_unsteady}
    \begin{aligned}  \mathcal{D}(y,\lambda)&\coloneqq-\int_{I}{G^*(t,y(t))\,\mathrm{d}t}-\int_{I}{F^*(t,-L^* y(t)-\partial_t\lambda(t))\,\mathrm{d}t}-\int_{I}{E(t,\lambda(t))\,\mathrm{d}t}
          \\&\quad -\tfrac{1}{2}\|\lambda(t_{\mathtt{fin}})\|_H^2+(\lambda(0),u_0)_H\,,
          \end{aligned}\\[-6mm]\notag
\end{align}
where $L^*\colon Y^*\to V^*$ denotes the adjoint operator of the gradient operator $L\coloneqq \nabla \colon V\to Y$.
Under the additional continuity condition in
Theorem~\ref{thm:duality}(\hyperlink{thm:duality.ii}{ii}), there exists
a dual solution $(z,\mu)\in L^{p'}(I;Y^*)\times\mathcal{W}(I)$ and a
strong duality relation applies, \textit{i.e.}, there holds\vspace{-0.75mm}
\begin{align}\label{intro:strong_duality_unsteady}
\mathcal{E}(u)=\mathcal{D}(z,\mu)\,.\\[-6mm]\notag
\end{align}
This Fenchel duality framework is the starting point for our
\textit{a posteriori} error identities~for~\mbox{subgradient} flows. In contrast to
the steady setting, however, we do not merely reproduce a 
primal-dual gap identity. The Br\'ezis--Ekeland--Nayroles principle reveals the following unsteady feature:~due~to~\eqref{intro:primal_unsteady_minimal}~and~\eqref{intro:strong_duality_unsteady}, the minimal primal value and the
maximal dual value are both prescribed by the initial datum. This allows us
to pass from a combined primal-dual gap identity to separate primal and dual
gap identities. These identities quantify the primal and dual errors
independently and admit representations~by~generalized Bregman
divergences (\textit{cf}.\ \cite{Bregman1967}) and, under a spatial convex conjugation formula, as non-negative
time-space integrals suitable~for~localization. These two classes of identities have different~practical~features.\linebreak The primal
gap identities are closer to weak residual-type identities: admissible primal
approximations are comparatively easy to generate, while the evaluation of
the right-hand side may be more involved. They are related
to the estimates in \cite{Stefanelli2008,Stefanelli2009}, which are based on the
Br\'ezis--Ekeland--Nayroles principle and yield error control in a
$C^0(\overline{I};H)$-type measure. The dual gap identities, by contrast, can
be viewed as direct unsteady counterparts of the steady primal-dual gap
identities. The right-hand side is comparatively easy to evaluate, while  the generation of admissible dual
approximations~may~be~more~involved,\linebreak
since the corresponding 
unsteady equilibrium relations have to be satisfied. 
They are related to
the~func\-tional-type error identities for evolutionary problems in \cite{Repin2008,Repin2024,ApushkinskayaRepin2022,LangerRepinWolfmayr2015,MatculevichNeittaanmakiRepin2015}; in this paper, 
such identities arise
systematically from the Fenchel dual formulation of the~\mbox{Br\'ezis--Ekeland--Nayroles}~\mbox{energy}~\mbox{functional}.

\emph{The paper is organized as follows.} In Section~\ref{sec:preliminaries}, we collect the necessary preliminaries on convex integral functionals, normal integrands, and the function spaces used throughout the paper.  In Section~\ref{sec:fenchel_duality_framework}, we develop the Fenchel duality framework for subgradient flows based on the Br\'ezis--Ekeland--Nayroles principle. Section~\ref{sec:duality_based_a_posteriori_error_control}
is devoted to the resulting duality-based \textit{a posteriori} error identities, including the separate primal and dual gap identities and their localizable time-space integral representations. Finally, in Section~\ref{sec:applications}, we 
apply the abstract framework to the model problems considered in this paper.\newpage

    \section{Preliminaries}\vspace{-0.5mm}\label{sec:preliminaries}

    \subsection{Integral functionals}\vspace{-0.5mm}

    \hspace{5mm}In this subsection, we recall important definitions and results on normal integrands, the lower~compact\-ness \hspace{-0.1mm}property, \hspace{-0.1mm}integral \hspace{-0.1mm}functionals, \hspace{-0.1mm}and \hspace{-0.1mm}spatial \hspace{-0.1mm}integral \hspace{-0.1mm}reductions. \hspace{-0.1mm}For \hspace{-0.1mm}more \hspace{-0.1mm}details,~\hspace{-0.1mm}we~\hspace{-0.1mm}refer~\hspace{-0.1mm}to~\hspace{-0.1mm}\mbox{\cite{Rockafellar1968,Rockafellar1971,RockafellarWets1998}}.

Throughout the entire subsection, if not otherwise specified, let $T\subseteq \mathbb{R}^N$, $N\in \mathbb{N}$, be a Lebesgue measurable set and let $(X,\|\cdot\|_X)$ be a separable Banach space with (topological) dual space $(X^*,\|\cdot\|_{X^*})$ and duality pairing $\langle \cdot,\cdot\rangle_X\colon X^*\times X\to \mathbb{R}$,  for every $x^*\in X^*$ and $x\in X$ defined by $\langle x^*,x\rangle_X\coloneqq x^*(x)$. Moreover, we denote the Lebesgue $\sigma$-algebra on $T$ by $\mathcal{L}^{N}(T)$ and the Borel $\sigma$-algebra on $X$~by~$\mathcal{B}(X)$. Then, the space of $\mathcal{L}^{N}(T)$-$\mathcal{B}(X)$-measurable\footnote{Due to the separability of $(X,\|\cdot\|_X)$, the notions of $\mathcal{L}^{N}(T)$-$\mathcal{B}(X)$-measurability and Bochner measurability coincide (\textit{cf}.\ \cite[Thm.\ 2, p.\ 99]{Dinculeanu1966}).} functions\footnote{Throughout the entire paper, we do not distinguish between equivalence classes with respect to equality $\mathcal{L}^N$-a.e.\ and representing functions of these classes.} is denoted by 
\begin{align*}
    L^0(T;X)\coloneqq \smash{\bigl\{x\colon T\to X\mid x^{-1}(B)\in \mathcal{L}^{N}(T)\text{ for all }B\in \mathcal{B}(X) \bigr\}}\,.
\end{align*}
For $p\in [1,+\infty]$, the Bochner space of $p$-integrable functions is denoted by 
\begin{align*}
    L^p(T;X)\coloneqq \smash{\bigl\{x\in L^0(T;X)\mid \|x\|_{L^p(T;X)}<+\infty \bigr\}}\,,
\end{align*}
 and forms a Banach space when equipped with the norm $\|\cdot\|_{L^p(T;X)}\coloneqq (\int_T{\|(\cdot)(t)\|_X^p\,\mathrm{d}t})^{\smash{\frac{1}{p}}}$ if $p\in [1,+\infty)$ and $\|\cdot\|_{L^\infty(T;X)}\coloneqq \operatorname{\textup{ess\,sup}}_{t\in T}{\{\|(\cdot)(t)\|_X\}}$ else (\textit{cf}.\ \cite[Kap. IV, Satz 1.11 \& Satz 1.12]{GajewskiGroegerZacharias1974}). If $p\in [1,+\infty)$ and  \hspace{-0.1mm}$(X^*,\|\cdot\|_{X^*})$  \hspace{-0.1mm}has~\hspace{-0.1mm}the~\hspace{-0.1mm}Radon--Nikod\'ym \hspace{-0.1mm}property \hspace{-0.1mm}(\textit{e.g.}, \hspace{-0.1mm}if  \hspace{-0.1mm}$(X,\|\cdot\|_X)$~\hspace{-0.1mm}is~\hspace{-0.1mm}\mbox{reflexive}~\hspace{-0.1mm}or~\hspace{-0.1mm}${(X^*,\|\!\cdot\!\|_{X^*})}$~\mbox{separable}),\linebreak its (topological) dual space can be characterized via the isometric isomorphism $(L^p(T;X))^*\cong L^{p'}(T;X^*)$\linebreak (\textit{cf}.~\mbox{\cite[Thms.~3.2,~3.3]{BochnerTaylor1938}}), in which case we do not distinguish between functionals in $(L^p(T;X))^*$~and~functions \hspace{-0.15mm}in \hspace{-0.15mm}$L^{p'}\hspace{-0.1em}(T;X^*)$.
 \hspace{-0.25mm}Here, \hspace{-0.15mm}the \hspace{-0.15mm}Hölder \hspace{-0.15mm}conjugate \hspace{-0.15mm}exponent \hspace{-0.15mm}$p'\hspace{-0.2em}\in \hspace{-0.15em}[1,+\infty]$ \hspace{-0.15mm}is \hspace{-0.15mm}defined~\hspace{-0.15mm}by~\hspace{-0.15mm}${\frac{1}{p}\hspace{-0.15em}+\hspace{-0.15em}\frac{1}{p'}\hspace{-0.15em}=\hspace{-0.15em}1}$,~\hspace{-0.15mm}where~\hspace{-0.15mm}${\frac{1}{\infty}\hspace{-0.15em}\coloneqq\hspace{-0.15em} 0}$.
    
   Moreover, we denote by $\Gamma_0(X)$ the space of proper, convex, and lower semi-continuous~functionals~on $X$ and, for a functional $f\colon X\to \mathbb{R}\cup\{\pm\infty\}$, its effective domain by $\operatorname{dom}(f)\coloneqq \{x\in X\mid f(x)<+\infty\}$.

    To begin with, we introduce the central notion of a \emph{(convex) normal integrand} for mappings of the form
${f\colon T\times X\to \mathbb{R}\cup\{\pm\infty\}}$,  called \emph{integrands} if no additional properties are required.~This~notion provides a general sufficient condition ensuring that the composition
of a $\mathcal{L}^{N}(T)$-$\mathcal{B}(X)$-measurable function with an integrand
is $\mathcal{L}^{N}(T)$-measurable and forms the basis for the definition~of~\mbox{integral}~\mbox{functionals}~\mbox{below}.\vspace{-0.5mm}

    \begin{definition}[Normal and Carath\'eodory integrands]\label{def:normal_integrand}
        An integrand $f\colon T\times X\to \mathbb{R}\cup\{\pm\infty\}$ is called
        \begin{itemize}[noitemsep,topsep=2pt,leftmargin=!,labelwidth=\widthof{(iii)}]
            \item[(i)] \hypertarget{def:normal_integrand.i}{} \emph{normal integrand} if it is $ \mathcal{L}^N(T)\otimes \mathcal{B}(X)$-measurable (\textit{i.e.}, $f^{-1}(B)\in \mathcal{L}^N(T)\otimes \mathcal{B}(X)$ for all $B\in \mathcal{B}(\mathbb{R}\cup\{\pm\infty\})$) and $f(t,\cdot)\colon X\to \mathbb{R}\cup\{\pm\infty\}$ is proper~and~lower semi-continuous~for~\mbox{$\mathcal{L}^N$-a.e.}~$t\in T$;
            \item[(ii)] \hypertarget{def:normal_integrand.ii}{} \emph{convex normal integrand} if it is a normal integrand and 
            $f(t,\cdot)\colon X\to \mathbb{R}\cup\{+\infty\}$ is convex for $\mathcal{L}^N$-a.e.\ $t\in T$ (in particular, $f(t,\cdot)\in \Gamma_0(X)$ for $\mathcal{L}^N$-a.e.\ $t\in T$); 
            \item[(iii)] \hypertarget{def:normal_integrand.iii}{} \emph{Carath\'eodory integrand} if $ f(\cdot,x)\colon T\to \mathbb{R}$  (finite-valued) is $\mathcal{L}^N(T)$-measurable for all $x\in X$ and $f(t,\cdot)\colon X\to \mathbb{R}$ is continuous for $\mathcal{L}^N$-a.e.\ $t\in T$;
            \item[(iv)] \hypertarget{def:normal_integrand.iv}{} \emph{convex Carath\'eodory integrand} if it is a Carath\'eodory integrand and $f(t,\cdot)\colon X\to \mathbb{R}$ is convex for $\mathcal{L}^N$-a.e.\ $t\in T$ (in particular, $f(t,\cdot)\in \Gamma_0(X)$ for $\mathcal{L}^N$-a.e.\ $t\in T$).
        \end{itemize}
    \end{definition}

     The first important results on (convex) normal integrands are summarized in the following lemma.\enlargethispage{11mm}\vspace{-0.5mm}

    \begin{lemma}\label{lem:convex_normal_integrands}
    Let $f\colon T\times X\to \mathbb{R}\cup\{\pm\infty\}$ be an integrand. Then,
    the following statements apply:
    \begin{itemize}[noitemsep,topsep=2pt,leftmargin=!,labelwidth=\widthof{(iii)}]
        \item[(i)] \hypertarget{lem:convex_normal_integrands.i}{} If $f\colon T\times X\to \mathbb{R}\cup\{+\infty\}$ is a normal integrand and $x\in L^0(T;X)$, then the composition $f(\cdot,x(\cdot))\colon T\to \mathbb{R}\cup\{+\infty\}$ is $ \mathcal{L}^N(T)$-measurable;
        \item[(ii)] \hypertarget{lem:convex_normal_integrands.ii}{} If $f\colon T\times X\to \mathbb{R}\cup\{+\infty\}$ is a  normal integrand, then the (Fenchel) conjugate (with respect to the second argument) $f^*\colon T\times X^*\to \mathbb{R}\cup\{+\infty\}$, for $ \mathcal{L}^N$-a.e.\ $t\in T$ and $x^*\in X^*$ defined by\vspace{-0.5mm}
        \begin{align*}
            f^*(t,x^*)\coloneqq\sup_{x\in X}{\smash{\bigl\{\langle x^*,x\rangle_X-f(t,x)\bigr\}}}\,,\\[-6mm]\notag
        \end{align*}
        is a convex normal integrand if $\operatorname{dom}(f^*(t,\cdot))\neq\emptyset$ for $\smash{\mathcal{L}^N}$-a.e.\ $t\in T$ (in particular, if $f\colon T\times X\to \mathbb{R}\cup\{+\infty\}$ is a convex~normal~integrand);
        \item[(iii)] \hypertarget{lem:convex_normal_integrands.iii}{} If $f(\cdot,x)\colon T\to \mathbb{R}$ is (finite-valued) $\mathcal{L}^N(T)$-measurable for all $x\in X$ and 
        $f(t,\cdot)\colon  X\to \mathbb{R}$ is lower semi-continuous for $ \mathcal{L}^N$-a.e.\ $t\in T$, then $f\colon T\times X\to \mathbb{R}$ is a normal integrand.
    \end{itemize}
        
    \end{lemma}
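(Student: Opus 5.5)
The three assertions are measurability statements about normal integrands on a complete $\sigma$-finite measure space. I would prove them in the order (iii), (i), (ii), since (ii) and (iii) both rest on approximating the integrand by simpler objects indexed over a countable dense set. Throughout, I use that $X$ is separable and complete metric, so $X$ is Polish. I also use that $(T,\mathcal{L}^N(T))$ is complete with respect to $\mathcal{L}^N$. The standard references are \cite{Rockafellar1968,Rockafellar1971,RockafellarWets1998}, and for the infinite-dimensional separable setting the arguments go through verbatim.

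\textbf{Step (iii).} Fix a countable dense set $\{x_k\}_{k\in\mathbb{N}}\subseteq X$. For $n\in\mathbb{N}$ I will use the Lipschitz (Pasch--Hausdorff) regularizations
\begin{align*}
f_n(t,x)\coloneqq\inf_{k\in\mathbb{N}}\bigl\{f(t,x_k)+n\|x-x_k\|_X\bigr\}.
\end{align*}
Each term $(t,x)\mapsto f(t,x_k)+n\|x-x_k\|_X$ is $\mathcal{L}^N(T)\otimes\mathcal{B}(X)$-measurable, so $f_n$ is measurable as a countable infimum.

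By continuity of $\|\cdot-x_k\|_X$, the infimum over the dense set equals the infimum over all of $X$. This identification needs a lower bound on $f(t,\cdot)$; if none is available, I first compose with $\arctan$ to reduce to bounded functions. For lower semicontinuous $f(t,\cdot)$, the regularizations then satisfy $f_n(t,\cdot)\uparrow f(t,\cdot)$ pointwise for a.e.\ $t$. Modifying $f$ on a null set of $t$, which is harmless by completeness, joint measurability follows.

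\textbf{Step (i).} For this part I would use the epigraphical characterization. The sets $\{t : f(t,x(t))\le\alpha\}$ must be shown measurable. When $x$ is a simple function, $f(\cdot,x(\cdot))$ is a finite sum of measurable pieces, since $t\mapsto f(t,\xi)$ is measurable for fixed $\xi$ as a section of a jointly measurable map.

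For general $x\in L^0(T;X)$, I would instead argue through the graph map $t\mapsto(t,x(t))$. This map is $\mathcal{L}^N(T)$--$\mathcal{L}^N(T)\otimes\mathcal{B}(X)$-measurable, because preimages of measurable rectangles $A\times B$ are $A\cap x^{-1}(B)$. Composing it with the jointly measurable $f$ gives measurability directly. This observation makes Step (i) essentially immediate.

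\textbf{Step (ii).} By lower semicontinuity, the supremum defining $f^*(t,x^*)$ can be restricted to $x\in\operatorname{dom}f(t,\cdot)$. The key point is to make this supremum countable. If $f$ were Carath\'eodory, I could take the sup over $\{x_k\}$ at once; for a general normal integrand that fails.

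My plan is to use a Castaing representation of the epigraph multifunction $t\mapsto\operatorname{epi}f(t,\cdot)$. This multifunction is closed-valued and has measurable graph, so by the Aumann/von Neumann selection theorem, valid thanks to completeness, it admits a Castaing representation $\{(\xi_k(t),\alpha_k(t))\}_k$ of measurable selections. Then
\begin{align*}
f^*(t,x^*)=\sup_k\bigl\{\langle x^*,\xi_k(t)\rangle_X-\alpha_k(t)\bigr\},
\end{align*}
and each term is Carath\'eodory in $(t,x^*)$, hence jointly measurable. Therefore $f^*$ is measurable. For a.e.\ $t$, $f^*(t,\cdot)$ is convex and weak$^*$ lower semicontinuous, and it is proper precisely under the hypothesis $\operatorname{dom}f^*(t,\cdot)\neq\emptyset$, since $f(t,\cdot)$ is proper. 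Norm lower semicontinuity on $X^*$ follows.

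In the convex case, $f(t,\cdot)\in\Gamma_0(X)$ admits a continuous affine minorant, so $\operatorname{dom}f^*(t,\cdot)\neq\emptyset$. A remaining subtlety is that $X^*$ need not be separable, so $\mathcal{B}(X^*)$-measurability must be interpreted with care; I would note that the representation above gives measurability for the weak$^*$ Borel structure.

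The main obstacle is the measurable-selection step in (ii): obtaining the Castaing representation and handling possible non-separability of $X^*$.
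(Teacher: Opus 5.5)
Your Step (i) is the paper's own graph-map argument. Your Step (ii) is a sound, self-contained substitute for the paper's bare citation of Rockafellar's Prop.~2. The non-separability of $X^*$ that worries you is harmless: approximate $\xi_k$ by simple functions. This shows directly that $(t,x^*)\mapsto\langle x^*,\xi_k(t)\rangle_X-\alpha_k(t)$ is $\mathcal{L}^N(T)\otimes\mathcal{B}(X^*)$-measurable. You should argue this way, because ``Carath\'eodory $\Rightarrow$ jointly measurable'' alone would need a countable dense subset of $X^*$.

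The gap is in Step (iii). Consider the identity $\inf_k\{f(t,x_k)+n\|x-x_k\|_X\}=\inf_{y\in X}\{f(t,y)+n\|x-y\|_X\}$. The inequality ``$\geq$'' is trivial. The inequality ``$\leq$'' requires that each $f(t,y)$ be approximated \emph{from above} by values $f(t,x_k)$ with $x_k\to y$, i.e., upper semicontinuity of $f(t,\cdot)$; continuity of the norm does not help. For instance, let $f(t,\cdot)$ equal $1$ everywhere except for the value $0$ at one point not in $\{x_k\}_{k\in\mathbb{N}}$. Then $f_n(t,\cdot)\equiv 1$ for every $n$, so $f_n(t,\cdot)\not\to f(t,\cdot)$.

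This cannot be repaired, because (iii) is false when $f(t,\cdot)$ is merely lower semicontinuous. Take $T=[0,1]$, $X=\mathbb{R}$, and a non-measurable set $N\subseteq[0,1]$. Let $g(t)=t$ for $t\in N$ and $g(t)=t+2$ otherwise; $g$ is injective. Set $f(t,x)=0$ if $x=g(t)$ and $f(t,x)=1$ otherwise.
\begin{itemize}
\item Each $f(\cdot,x)$ differs from $1$ in at most one point, so it is measurable.
\item Each $f(t,\cdot)$ is finite and lower semicontinuous.
\item However, $t\mapsto f(t,t)=1-\chi_N(t)$ is not measurable.
\end{itemize}
By the graph-map argument of (i), $f$ therefore cannot be $\mathcal{L}^1(T)\otimes\mathcal{B}(\mathbb{R})$-measurable.

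Your argument does prove (iii) under the stronger hypothesis that $f(t,\cdot)$ is continuous for a.e.\ $t$. So does the simpler approximation $f(t,x_{k_n(x)})\to f(t,x)$, where $k_n(x)$ is the least $k$ with $\|x-x_k\|_X<\tfrac{1}{n}$. This covers both cases in the remark following the lemma, since a finite convex function on a finite-dimensional space is continuous. That is the only way the paper uses (iii); the paper itself just cites Rockafellar's Lem.~2 at this point. You should therefore state (iii) with continuity of $f(t,\cdot)$ (or finite convexity in finite dimensions) in place of lower semicontinuity.
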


    \begin{remark}[on Lemma \ref{lem:convex_normal_integrands}(\hyperlink{lem:convex_normal_integrands.iii}{iii})]\label{rem:convex_normal_integrands.iii}
       The mapping $f(t,\cdot)\colon  X\to \mathbb{R}$ is (finite-valued) lower semi-continuous for $\mathcal{L}^N$-a.e.\ $t\in T$, if either of the following conditions is satisfied:
       \begin{itemize}[noitemsep,topsep=2pt,leftmargin=!,labelwidth=\widthof{$\bullet$}]
           \item[$\bullet$] $f(t,\cdot)\colon  X\to \mathbb{R}$ is (finite-valued) convex for $\mathcal{L}^N$-a.e.\ $t\in T$ and $\operatorname{dim}X<+\infty$;
            \item[$\bullet$] $f(t,\cdot)\colon  X\to \mathbb{R}$ is (finite-valued) continuous for $\mathcal{L}^N$-a.e.\ $t\in T$.
       \end{itemize}
       In particular, if $f\colon T\times X\to \mathbb{R}$ is a Carath\'eodory integrand, then it is a normal integrand as well.
    \end{remark}

    \begin{proof}[Proof (of Lemma \ref{lem:convex_normal_integrands}).]
        \emph{ad (\hyperlink{lem:convex_normal_integrands.i}{i}).} If $x\in L^0(T;X)$, then $x\colon T\to X$ is $ \mathcal{L}^N(T)$-$\mathcal{B}(X)$-measurable. Therefore, the graph map $(\cdot,x(\cdot))\colon T\to T\times X$ is $ \mathcal{L}^N(T)$-$(\mathcal{L}^N(T)\otimes\mathcal{B}(X))$-measurable as a tuple of $ \mathcal{L}^N(T)$-$ \mathcal{L}^N(T)$- and $ \mathcal{L}^N(T)$-$ \mathcal{B}(X)$-measurable functions. Eventually,  $f(\cdot,x(\cdot))\colon T\to \mathbb{R}\cup\{+\infty\}$ is $ \mathcal{L}^N(T)$-measu\-rable as a composition of an $ \mathcal{L}^N(T)$-$(\mathcal{L}^N(T)\otimes \mathcal{B}(X))$-measurable and~an~\mbox{$ \mathcal{L}^N(T)\otimes \mathcal{B}(X)$-}measurable~function.

        \emph{ad (\hyperlink{lem:convex_normal_integrands.ii}{ii}).} See \cite[Prop.\ 2]{Rockafellar1971}.

        \emph{ad (\hyperlink{lem:convex_normal_integrands.iii}{iii}).} See \cite[Lem.\ 2]{Rockafellar1968}. 
    \end{proof}

    Based on Lemma \ref{lem:convex_normal_integrands}(\hyperlink{lem:convex_normal_integrands.i}{i}), we are now in the position to introduce the notion of an \emph{integral~functional} (associated with a normal integrand).

    \begin{definition}[Integral functional]\label{def:integral_functional}
        Let  $f\colon T\times X\to \mathbb{R}\cup\{+\infty\}$ be a normal integrand. Then, the \emph{integral functional} $I_f\colon L^0(T;X)\to \mathbb{R}\cup\{\pm \infty\}$ \emph{(associated with $f$)}, for every $x\in L^0(T;X)$,~is~defined~by\footnote{Here, $(\cdot)_+\coloneqq \max\{0,\cdot\},(\cdot)_-\coloneqq- \min\{0,\cdot\}\colon\mathbb{R}\to \mathbb{R}_{\ge 0}$.}
        \begin{align*}
            I_f(x)\coloneqq \begin{cases}
                \displaystyle\int_{T}{f(t,x(t))\,\mathrm{d}t}&\text{ if }(f(\cdot,x(\cdot)))_+\in L^1(T;\mathbb{R}^1)
                \,,\\
                +\infty&\text{ else}\,.
            \end{cases}
        \end{align*}
    \end{definition}

    \begin{remark}[on Definition \ref{def:integral_functional}]
        Note that for an integral functional $I_f\colon L^0(T;X)\to \mathbb{R}\cup\{\pm \infty\}$ (associated with a normal integrand $f\colon T\times X\to \mathbb{R}\cup\{+\infty\}$), we have that $x\in \operatorname{dom}(I_f)$~if~and~only~if $(f(\cdot,x(\cdot)))_+\hspace{-0.175em}\in\hspace{-0.175em} L^1(T;\mathbb{R}^1)$, \hspace{-0.15mm}in \hspace{-0.15mm}which \hspace{-0.15mm}case \hspace{-0.15mm}$I_f(x)\hspace{-0.15em}=\hspace{-0.15em}-\infty$ \hspace{-0.15mm}(equivalent~\hspace{-0.15mm}to~\hspace{-0.15mm}$(f(\cdot,x(\cdot)))_-\hspace{-0.175em}\notin \hspace{-0.175em}L^1(T;\mathbb{R}^1)$)~\hspace{-0.15mm}is~\hspace{-0.15mm}not~\hspace{-0.15mm}\mbox{excluded}.

    \end{remark}

    While normal integrands guarantee suitable measurability properties (\textit{cf}.\ Lemma \ref{lem:convex_normal_integrands}(\hyperlink{lem:convex_normal_integrands.i}{i})), they do~not, in general, ensure lower semi-continuity of the associated integral
functional. The following property, tracing back to Ioffe
(\textit{cf}.\ \cite{Ioffe1977I}; see also \cite[Sec.\ 7]{Giner2015}, for a  description in a separable~Banach~space~setting),
provides a necessary and sufficient criterion for lower semi-continuity of the
associated integral functional.\enlargethispage{5mm}

    \begin{definition}[$p$-lower \hspace{-0.1mm}compactness \hspace{-0.1mm}property]\label{def:lower_compactness_property}
        \hspace{-0.1mm}A \hspace{-0.1mm}normal \hspace{-0.1mm}integrand \hspace{-0.1mm}$f\colon \hspace{-0.15em}T\hspace{-0.1em}\times\hspace{-0.1em} X\hspace{-0.175em}\to \hspace{-0.175em}\mathbb{R}\cup\{+\infty\}$~\hspace{-0.1mm}is~\hspace{-0.1mm}said~\hspace{-0.1mm}to~\hspace{-0.1mm}have the \emph{$p$-lower compactness property} for some $p\in [1,+\infty)$ if for each sequence $\{x_n\}_{n\in \mathbb{N}}\subseteq L^p(T;X)$~with
        \begin{subequations} \label{def:lower_compactness_property.0}
        \begin{alignat}{2}\label{def:lower_compactness_property.0.1}
           x_n\to x\quad \text{ in }L^p(T;X)\quad (n\to \infty)\,,\\\label{def:lower_compactness_property.0.2}
           \sup_{n\in \mathbb{N}}{\{I_{f}(x_n)\}}<+\infty\,,
        \end{alignat}
        \end{subequations}
        it follows that $\{(f(\cdot,x_n(\cdot))_-\}_{n\in \mathbb{N}}\subseteq  L^1(T;\mathbb{R}^1)$ is uniformly integrable.
    \end{definition}

    \begin{lemma}\label{lem:lower_compactness_propery}
        Let  $f\colon T\times X\to \mathbb{R}\cup\{+\infty\}$ be a  normal integrand. Then, the following statements apply:
    \begin{itemize}[noitemsep,topsep=2pt,leftmargin=!,labelwidth=\widthof{(ii)}]
        \item[(i)] \hypertarget{lem:lower_compactness_propery.i}{} If $f\colon T\times X\to \mathbb{R}\cup\{+\infty\}$ has the $p$-lower compactness property for some $p\in [1,+\infty)$,  then $I_f\colon L^p(T;X)\to \mathbb{R}\cup\{\pm\infty\}$ is lower semi-continuous. Conversely, if $I_f\colon L^p(T;X)\to \mathbb{R}\cup\{\pm\infty\}$ is lower semi-continuous  for some $p\in [1,+\infty)$ and $I_f(x)>-\infty$ for all $x\in L^p(T;X)$, then $f\colon T\times X\to \mathbb{R}\cup\{+\infty\}$ has the $p$-lower compactness property.
 
        \item[(ii)] \hypertarget{lem:lower_compactness_propery.ii}{} If $\operatorname{dom}(I_f)\cap L^p(T;X)\neq\emptyset $ for some $p\in (1,+\infty)$ (\textit{i.e.}, there exists $x_0\in L^p(T;X)$ such that $(f(\cdot,x_0(\cdot)))_+\hspace{-0.15em}\in \hspace{-0.15em}L^1(T;\mathbb{R}^1)$),~then~the~(Fenchel) conjugate (with respect to the second~\mbox{argument})~$f^*\colon \hspace{-0.15em} T\times X^*\to \mathbb{R}\cup\{+\infty\}$ has the $p'$-lower compactness property and $I_{f^*}(x^*)>-\infty$~for~all~${x^*\in L^{p'}(T;X^*)}$.
    \end{itemize}
    \end{lemma}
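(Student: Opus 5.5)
For part (i), the plan is to follow Ioffe's argument (cf.\ \cite{Ioffe1977I} and \cite[Sec.\ 7]{Giner2015}). To prove lower semi-continuity, take $x_n\to x$ in $L^p(T;X)$. We may assume $\liminf_{n\to\infty}I_f(x_n)<+\infty$ and pass to a subsequence realizing the $\liminf$. Along this subsequence the integral functionals are uniformly bounded from above, and by a further subsequence $x_n(t)\to x(t)$ for a.e.\ $t\in T$.

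We then split $f(\cdot,x_n)=(f(\cdot,x_n))_+-(f(\cdot,x_n))_-$ and treat the two parts separately:
\begin{itemize}[noitemsep,topsep=2pt]
\item For the positive parts, Fatou's lemma together with the pointwise lower semi-continuity of $f(t,\cdot)$ gives $\int_T (f(t,x(t)))_+\,\mathrm{d}t\le\liminf_{n\to\infty}\int_T (f(t,x_n(t)))_+\,\mathrm{d}t$.
\item For the negative parts, the $p$-lower compactness property gives uniform integrability of $\{(f(\cdot,x_n))_-\}_{n\in\mathbb{N}}$. The pointwise bound $\limsup_{n\to\infty}(f(t,x_n(t)))_-\le (f(t,x(t)))_-$, which again follows from lower semi-continuity, combined with Vitali's (reverse Fatou) lemma yields $\limsup_{n\to\infty}\int_T (f(\cdot,x_n))_-\,\mathrm{d}t\le\int_T (f(\cdot,x))_-\,\mathrm{d}t$.
\end{itemize}
Both sides are finite or handled by the definition of $I_f$, so combining the two estimates gives $I_f(x)\le\liminf_{n\to\infty}I_f(x_n)$.

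For the converse we argue by contradiction. Suppose that for some sequence $x_n\to x$ with bounded $I_f(x_n)$ the negative parts are not uniformly integrable. Then there are $\varepsilon>0$ and sets $A_n$ with $|A_n|\to0$ and $\int_{A_n}(f(\cdot,x_n))_-\ge\varepsilon$. Define $\tilde x_n\coloneqq x_n\chi_{A_n}+x\chi_{T\setminus A_n}$, so that $\tilde x_n\to x$ in $L^p$ by absolute continuity of the integral. The gluing preserves the positive part, so $I_f(\tilde x_n)=\int_{T\setminus A_n}f(\cdot,x)+\int_{A_n}f(\cdot,x_n)$. Lower semi-continuity forces $\liminf_{n\to\infty}I_f(\tilde x_n)\ge I_f(x)$, while $\int_{T\setminus A_n}f(\cdot,x)\to I_f(x)$ (using $I_f(x)>-\infty$ and $|A_n|\to0$). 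The remaining term must then carry the negative mass $\varepsilon$. I expect this step to be the main obstacle: the contradiction requires $\int_{A_n}(f(\cdot,x_n))_+$ to stay small, so $A_n$ has to be chosen inside the sets where $f(\cdot,x_n)$ is very negative, and possibly a diagonal argument with $p$-equiintegrability of $\|x_n\|_X^p$ is needed. If this gets delicate, I would cite \cite{Ioffe1977I} for this direction.

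For part (ii), the plan is to use the Fenchel--Young inequality $f^*(t,x^*)\ge\langle x^*,x_0(t)\rangle_X-f(t,x_0(t))$. This gives $(f^*(t,x^*(t)))_-\le|\langle x^*(t),x_0(t)\rangle_X|+(f(t,x_0(t)))_+$. By Hölder's inequality, $|\langle x^*(\cdot),x_0(\cdot)\rangle_X|\le\|x^*(\cdot)\|_{X^*}\|x_0(\cdot)\|_X\in L^1(T)$ for $x^*\in L^{p'}(T;X^*)$, and $(f(\cdot,x_0(\cdot)))_+\in L^1(T)$ by assumption, so $I_{f^*}(x^*)>-\infty$. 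Measurability of $f^*$ comes from Lemma~\ref{lem:convex_normal_integrands}(ii), since $\operatorname{dom}(f^*(t,\cdot))$ is nonempty wherever $f(t,x_0(t))<+\infty$.

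For the $p'$-lower compactness property, take $x_n^*\to x^*$ in $L^{p'}(T;X^*)$. The same bound gives $(f^*(\cdot,x_n^*))_-\le\|x_n^*\|_{X^*}\|x_0\|_X+(f(\cdot,x_0))_+$. The family $\{\|x_n^*\|_{X^*}^{p'}\}_{n\in\mathbb{N}}$ is uniformly integrable by strong convergence, hence $\{\|x_n^*\|_{X^*}\|x_0\|_X\}_{n\in\mathbb{N}}$ is uniformly integrable by Hölder's inequality on small sets. A family dominated by a uniformly integrable family is uniformly integrable, which proves the claim. The hypothesis $p\in(1,+\infty)$ guarantees $p'<\infty$, so the strong $L^{p'}$-convergence used here is meaningful.
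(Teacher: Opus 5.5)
\textbf{Verdict.} Your proposal is correct. Part (ii) follows the paper's route, and for part (i) you give a self-contained proof where the paper only cites a reference.

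\textbf{Part (ii).} This is the paper's argument: the Fenchel--Young bound $(f^*(t,x^*))_-\le|\langle x^*,x_0(t)\rangle_X|+(f(t,x_0(t)))_+$, followed by domination of $\{(f^*(\cdot,x_n^*(\cdot)))_-\}_{n\in\mathbb{N}}$ by a uniformly integrable family. In fact, boundedness of $\{x_n^*\}_{n\in\mathbb{N}}$ in $L^{p'}(T;X^*)$ already suffices, since $\int_A\|x_n^*\|_{X^*}\|x_0\|_X\,\mathrm{d}t\le\sup_m\|x_m^*\|_{L^{p'}(T;X^*)}\|\chi_A x_0\|_{L^p(T;X)}$.

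\textbf{Part (i): comparison with the paper.} The paper gives no argument here and simply cites \cite[Cor.~7.6]{Giner2015}. You instead use Fatou for the positive parts, Vitali's reverse Fatou lemma for the negative parts, and a gluing construction for the converse. This is a legitimate alternative. Its cost is that you must fix what ``uniformly integrable'' means when $|T|=+\infty$. The reverse Fatou step needs tightness: for $h_n=\frac{1}{n}\chi_{[0,n]}$ on $T=\mathbb{R}$, the $\varepsilon$--$\delta$ condition holds and $h_n\to0$ pointwise, yet $\int_T h_n\,\mathrm{d}t=1$. So the notion must be read as relative weak compactness in $L^1$. For bounded $T$, as in all applications in the paper, this is immaterial.

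\textbf{The step you flag in the converse is not an obstacle.}
\begin{itemize}
\item Finiteness. Since $I_f(x_n)$ is bounded above by assumption and $>-\infty$ by hypothesis, each $(f(\cdot,x_n(\cdot)))_-$ is integrable. Also $I_f(x)\le\liminf_n I_f(x_n)<+\infty$ by lower semi-continuity, so $f(\cdot,x(\cdot))\in L^1(T)$.
\item Choice of the sets. Take $\varepsilon>0$, a subsequence $(n_k)$ and sets $A_k$ with $|A_k|\to0$ and $\int_{A_k}(f(\cdot,x_{n_k}))_-\,\mathrm{d}t\ge\varepsilon$. Replace $A_k$ by $A_k\cap\{f(\cdot,x_{n_k}(\cdot))<0\}$. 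This leaves $\int_{A_k}(f(\cdot,x_{n_k}))_-\,\mathrm{d}t$ unchanged, because the negative part vanishes off that set. It also gives $\int_{A_k}f(\cdot,x_{n_k})\,\mathrm{d}t=-\int_{A_k}(f(\cdot,x_{n_k}))_-\,\mathrm{d}t\le-\varepsilon$ exactly.
\item Convergence of the glued sequence. No equi-integrability of $\|x_n\|_X^p$ is needed, since $\|\tilde x_k-x\|_{L^p(T;X)}=\|\chi_{A_k}(x_{n_k}-x)\|_{L^p(T;X)}\le\|x_{n_k}-x\|_{L^p(T;X)}$.
\item Conclusion. Hence $\liminf_k I_f(\tilde x_k)\le\lim_k\int_{T\setminus A_k}f(\cdot,x)\,\mathrm{d}t-\varepsilon=I_f(x)-\varepsilon$, contradicting lower semi-continuity.
\item Existence of the sets $A_k$. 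On a set of finite measure, failure of uniform integrability produces such $A_k$. This includes the case $\sup_n\int_T(f(\cdot,x_n))_-\,\mathrm{d}t=+\infty$: partition $T$ into finitely many pieces of small measure. If $|T|=+\infty$, a failure of tightness is handled by the same gluing with $A_k$ outside large balls, where $\int_{T\setminus A_k}f(\cdot,x)\,\mathrm{d}t\to I_f(x)$ by dominated convergence.
\end{itemize}

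\textbf{A side remark in (ii) is inaccurate.} The condition $f(t,x_0(t))<+\infty$ only yields $f^*(t,\cdot)>-\infty$, not $\operatorname{dom}(f^*(t,\cdot))\neq\emptyset$. For example, $f(t,\xi)=-\|\xi\|_X^2$ gives $f^*(t,\cdot)\equiv+\infty$. This does not affect your estimate, because the negative part vanishes wherever $f^*(t,\cdot)\equiv+\infty$. However, appealing to Lemma~\ref{lem:convex_normal_integrands}(ii) for normality of $f^*$ requires an affine minorant of $f(t,\cdot)$. Convexity guarantees this, and convexity holds in every use of the lemma in the paper.
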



    \begin{proof}

        \emph{ad (\hyperlink{lem:lower_compactness_propery.i}{i}).} See \cite[Cor.\ 7.6]{Giner2015}.
        
        \emph{ad (\hyperlink{lem:lower_compactness_propery.ii}{ii}).} For \hspace{-0.1mm}$\mathcal{L}^N$-a.e.\ \hspace{-0.1mm}$t\hspace{-0.1em}\in\hspace{-0.1em} T$ \hspace{-0.1mm}and \hspace{-0.1mm}every \hspace{-0.1mm}$x^*\hspace{-0.1em}\in\hspace{-0.1em} X^*$, \hspace{-0.1mm}by \hspace{-0.1mm}the \hspace{-0.1mm}Fenchel--Young \hspace{-0.1mm}inequality \hspace{-0.1mm}(\textit{cf}.\ \hspace{-0.1mm}\cite[\hspace{-0.1mm}Prop.~\hspace{-0.1mm}5.1,~\hspace{-0.1mm}p.~\hspace{-0.1mm}21]{EkelandTemam1999}), we \hspace{-0.1mm}have \hspace{-0.1mm}that \hspace{-0.1mm}$f^*(t,x^*)\hspace{-0.1em}\ge\hspace{-0.1em} \langle x^*,x_0(t)\rangle_X-f(t,x_0(t))$,
        \hspace{-0.1mm}which, \hspace{-0.1mm}for \hspace{-0.1mm}$\mathcal{L}^N$-a.e.\ \hspace{-0.1mm}$t\hspace{-0.1em}\in\hspace{-0.1em} T$ \hspace{-0.1mm}and \hspace{-0.1mm}every \hspace{-0.1mm}$x^*\hspace{-0.1em}\in\hspace{-0.1em} X^*$,~\hspace{-0.1mm}\mbox{implies}~\hspace{-0.1mm}that
        \begin{align}\label{lem:lower_compactness_propery.1}
            (f^*(t,x^*))_- \leq \vert \langle x^*, x_0(t)\rangle_X\vert +(f(t,x_0(t)))_+\,.
        \end{align}
        As a consequence, if $\{x_n^*\}_{n\in \mathbb{N}}\subseteq \smash{L^{p'}(T;X^*)}$ is a sequence such that
        \begin{align} 
           x_n^*\to x^*\quad \text{ in }\smash{L^{p'}(T;X^*)}\quad (n\to \infty)\,,\label{lem:lower_compactness_propery.2.1}
        \end{align}
        then, for every $n\in \mathbb{N}$ and $A\in \mathcal{L}^N(T)$, from \eqref{lem:lower_compactness_propery.1}, it follows that
        \begin{align*}
            \int_A{(f^*(t,x^*_n(t)))_-\,\mathrm{d}t}\leq  \int_A{\vert \langle x^*_n(t),x_0(t)\rangle_X\vert\,\mathrm{d}t}+ \int_A{(f(t,x_0(t)))_+\,\mathrm{d}t}\,,
        \end{align*}
        which, \hspace*{-0.15mm}due \hspace*{-0.15mm}to \hspace*{-0.15mm}\eqref{lem:lower_compactness_propery.2.1} \hspace*{-0.15mm}and  \hspace*{-0.15mm}$(f(\cdot,x_0(\cdot)))_+\hspace{-0.15em}\in\hspace{-0.15em} L^1(T;\mathbb{R}^1)$, \hspace*{-0.15mm}implies \hspace*{-0.15mm}that \hspace*{-0.15mm}$\{(f^*(\cdot,x^*_n(\cdot)))_-\}_{n\in \mathbb{N}}\hspace{-0.15em} \subseteq\hspace{-0.15em} L^1(T;\mathbb{R}^1)$~\hspace*{-0.15mm}is~\hspace*{-0.15mm}uniform\-ly integrable. In other words, $f^*\colon T\times X^*\to \mathbb{R}\cup\{+\infty\}$ has the $p'$-lower compactness property.~Moreover,\linebreak from \eqref{lem:lower_compactness_propery.1}, for every $x^*\hspace{-0.1em}\in\hspace{-0.1em} L^{p'}(T;X^*)$, it follows that ${(f^*(\cdot,x^*(\cdot)))_-\hspace{-0.1em}\in\hspace{-0.1em} L^1(T;\mathbb{R}^1)}$,~\textit{i.e.},~${I_{f^*}(x^*)\hspace{-0.1em}>\hspace{-0.1em}-\infty}$.
    \end{proof}

    \begin{corollary}\label{cor:integral_functionals}
        Let $f\colon\hspace{-0.1em} T\times X\hspace{-0.1em}\to\hspace{-0.1em} \mathbb{R}\cup\{+\infty\}$ be a convex normal integrand such that 
        $\operatorname{dom}(I_f)\cap L^p(T;X)\hspace{-0.1em}\neq\hspace{-0.1em}\emptyset$ and  $\operatorname{dom}(I_{f^*})\cap \smash{L^{p'}(T;X^*)}\neq\emptyset$ for some $p\in (1,+\infty)$.
        Then, the  integral functional 
        $I_f\colon L^p(T;X)\to \mathbb{R}\cup\{+\infty\}$ is well-defined, proper, convex, and lower semi-continuous.
    \end{corollary}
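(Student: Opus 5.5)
The plan is to deduce everything from Lemma~\ref{lem:lower_compactness_propery}, applied with the roles of $f$ and $f^*$ interchanged. The key observation is the biconjugate identity: since $f$ is a convex normal integrand, $f(t,\cdot)\in\Gamma_0(X)$ for $\mathcal{L}^N$-a.e.\ $t\in T$. By the Fenchel--Moreau theorem, the conjugate of $f^*(t,\cdot)$, restricted to $X\subseteq X^{**}$, coincides with $f(t,\cdot)$. Concretely, for every $x\in X$ we have
\begin{align*}
f(t,x)=\sup_{x^*\in X^*}\bigl\{\langle x^*,x\rangle_X-f^*(t,x^*)\bigr\}\,.
\end{align*}
By Lemma~\ref{lem:convex_normal_integrands}(\hyperlink{lem:convex_normal_integrands.ii}{ii}), $f^*$ is again a convex normal integrand.

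The first step is to show that $I_f(x)>-\infty$ for all $x\in L^p(T;X)$, which gives well-definedness as a map into $\mathbb{R}\cup\{+\infty\}$. I repeat the argument from the proof of Lemma~\ref{lem:lower_compactness_propery}(\hyperlink{lem:lower_compactness_propery.ii}{ii}) with a point $x_0^*\in\operatorname{dom}(I_{f^*})\cap L^{p'}(T;X^*)$. The Fenchel--Young inequality gives $f(t,x)\ge\langle x_0^*(t),x\rangle_X-f^*(t,x_0^*(t))$, and hence
\begin{align*}
(f(t,x(t)))_-\le|\langle x_0^*(t),x(t)\rangle_X|+(f^*(t,x_0^*(t)))_+\,.
\end{align*}
The right-hand side is integrable by Hölder's inequality, since $x\in L^p$ and $x_0^*\in L^{p'}$, and because $(f^*(\cdot,x_0^*(\cdot)))_+\in L^1$. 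The same pointwise bound holds uniformly along any sequence $x_n\to x$ in $L^p(T;X)$. The terms $|\langle x_0^*,x_n\rangle_X|$ converge in $L^1$ and are therefore uniformly integrable. This shows that $f$ has the $p$-lower compactness property. Alternatively, one may simply invoke Lemma~\ref{lem:lower_compactness_propery}(\hyperlink{lem:lower_compactness_propery.ii}{ii}) for $f^*$ on the space $X^*$ with exponent $p'$. That route needs $(f^*)^*=f$ on $X$ and care if $X^*$ is not separable, so I prefer the direct argument.

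Lower semicontinuity then follows from Lemma~\ref{lem:lower_compactness_propery}(\hyperlink{lem:lower_compactness_propery.i}{i}). Properness follows from two facts: $I_f>-\infty$ everywhere, and $I_f(x_0)<+\infty$ for the given $x_0\in\operatorname{dom}(I_f)\cap L^p(T;X)$. Convexity is obtained pointwise. For $x,y\in L^p(T;X)$ and $\lambda\in[0,1]$, convexity of $f(t,\cdot)$ gives $f(t,\lambda x+(1-\lambda)y)\le\lambda f(t,x)+(1-\lambda)f(t,y)$ a.e. If the right-hand side of the convexity inequality for $I_f$ is finite, then the positive parts on the left are integrable. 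Integrating, with the negative parts integrable by the first step, yields the inequality. If the right-hand side is infinite, the inequality is trivial.

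The main obstacle is the uniform-integrability step, and with it the handling of the $\pm\infty$ arithmetic so that the integrals are well-defined. The Hölder-plus-$L^1$-convergence argument above should settle it.
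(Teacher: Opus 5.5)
Your proof is correct and follows essentially the paper's route: the paper gets the $p$-lower compactness property of $f$ and $I_f>-\infty$ by applying Lemma~\ref{lem:lower_compactness_propery}(ii) with $f^*$ in the role of $f$, tacitly using $f^{**}=f$ on $X$, and then concludes via Lemma~\ref{lem:lower_compactness_propery}(i); you instead redo that lemma's proof with the roles of $f$ and $f^*$ swapped. Your direct version is slightly cleaner. It only needs the Fenchel--Young inequality $f(t,x)\ge\langle x^*,x\rangle_X-f^*(t,x^*)$, so the Fenchel--Moreau observation is not actually needed, and it avoids any separability issue for $X^*$; your explicit convexity argument also fills in a step the paper leaves implicit.
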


    \begin{proof}
        To begin with, we note that, by Lemma \ref{lem:convex_normal_integrands}(\hyperlink{lem:convex_normal_integrands.ii}{ii}), the Fenchel conjugate $f^*\colon T\times X^*\to \mathbb{R}\cup\{+\infty\}$ is a convex normal integrand as well and, thus, by Lemma \ref{lem:convex_normal_integrands}(\hyperlink{lem:convex_normal_integrands.i}{i}),~${I_{f^*}\colon \hspace{-0.1em}L^0(T;X^*)\hspace{-0.1em}\to\hspace{-0.1em} \mathbb{R}\hspace{-0.1em}\cup\hspace{-0.1em}\{\pm \infty\}}$~is~\mbox{well-defined}.

        Then, according to Lemma \ref{lem:lower_compactness_propery}(\hyperlink{lem:lower_compactness_propery.ii}{ii}), $f\colon T\times X\to \mathbb{R}\cup\{+\infty\}$ has the $p$-lower compactness property  and $f^*\colon T\times X^*\to \mathbb{R}\cup\{+\infty\}$ the $p'$-lower compactness property as well as $I_f(x)>-\infty$ and $I_{f^*}(x^*)>-\infty$ for all $x\in L^p(T;X)$ and $x^*\in L^{p'}(T;X^*)$, respectively. Therefore, resorting to Lemma \ref{lem:lower_compactness_propery}(\hyperlink{lem:lower_compactness_propery.i}{i}),  we conclude that $I_f\colon L^p(T;X)\to \mathbb{R}\cup\{+\infty\}$ and $I_{f^*}\colon L^{p'}(T;X^*)\to \mathbb{R}\cup\{+\infty\}$~are~\mbox{well-defined},~proper,~convex, and lower semi-continuous.
    \end{proof}

    The following lemma identifies the Fenchel conjugate of an integral functional with the integral functional associated with the Fenchel conjugate of the underlying normal integrand.
    
    \begin{lemma}[Fenchel conjugates of integral functionals]\label{lem:conjugate_of_integral_functional}
         Let $f\colon T\times X\to \mathbb{R}\cup\{+\infty\}$ be a  normal integrand such that  $\operatorname{dom}(I_f)\cap L^p(T;X)\neq \emptyset$ for some $p\in [1,+\infty)$. Then, if,  in addition,  $X$ is reflexive,
         for the Fenchel conjugate $(I_f)^*\colon \hspace{-0.1em} L^{p'}(T;X^*)\hspace{-0.1em}\to\hspace{-0.1em} \mathbb{R}\cup\{+\infty\}$ of the integral functional $I_f\colon \hspace{-0.1em}L^p(T;X)\hspace{-0.1em}\to\hspace{-0.1em} \mathbb{R}\cup\{\pm\infty\}$,\linebreak there holds $(I_f)^*(x^*)=I_{f^*}(x^*)$~for~all~${x^*\in L^{p'}(T;X^*)}$.
    \end{lemma}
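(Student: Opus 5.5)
We prove the two inequalities $(I_f)^*\le I_{f^*}$ and $(I_f)^*\ge I_{f^*}$ on $L^{p'}(T;X^*)$. This is Rockafellar's interchange of supremum and integral, and the paper would likely just cite \cite{Rockafellar1971}.

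\emph{The inequality $(I_f)^*\le I_{f^*}$.} Fix $x^*\in L^{p'}(T;X^*)$ and $x\in L^p(T;X)$ with $I_f(x)<+\infty$. By the Fenchel--Young inequality, for $\mathcal{L}^N$-a.e.\ $t\in T$ we have
\begin{align*}
\langle x^*(t),x(t)\rangle_X-f(t,x(t))\le f^*(t,x^*(t))\,.
\end{align*}
The left-hand side is measurable by Lemma~\ref{lem:convex_normal_integrands}(\hyperlink{lem:convex_normal_integrands.i}{i}). Its negative part is integrable because $\langle x^*,x\rangle_X\in L^1(T)$ by H\"older's inequality and $(f(\cdot,x(\cdot)))_+\in L^1(T)$. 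Hence we may integrate, and the integral equals $\langle x^*,x\rangle-I_f(x)$. Here the case $I_f(x)=-\infty$ gives $+\infty$ on the left, which forces $(f^*(\cdot,x^*(\cdot)))_+\notin L^1(T)$, so $I_{f^*}(x^*)=+\infty$ and the inequality is trivial. Taking the supremum over $x$ gives $(I_f)^*(x^*)\le I_{f^*}(x^*)$.

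\emph{The inequality $(I_f)^*\ge I_{f^*}$.} This is the main obstacle: we must build near-optimal $x\in L^p(T;X)$ measurably. Fix $x^*$ and a real $\alpha<I_{f^*}(x^*)$.

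First, I would use the measurable selection theorem. Since $f$ is a normal integrand and $X$ is separable, there is a Castaing representation (countable dense family $\{x_k\}\subseteq L^0(T;X)$) of the epigraphical multifunction $t\mapsto\operatorname{epi}f(t,\cdot)$. This gives
\begin{align*}
f^*(t,x^*(t))=\sup_k\bigl\{\langle x^*(t),x_k(t)\rangle_X-f(t,x_k(t))\bigr\}\,.
\end{align*}
Choosing for each $t$ the first index that nearly attains this supremum (truncated at level $n$ where the value is $+\infty$) yields a measurable $\tilde x\in L^0(T;X)$. For $\tilde x$, the quantity $g\coloneqq\langle x^*,\tilde x\rangle_X-f(\cdot,\tilde x)$ nearly attains $\min\{f^*(\cdot,x^*),n\}$.

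Second, I would localize to restore integrability. Let $x_0\in\operatorname{dom}(I_f)\cap L^p(T;X)$. Set $x\coloneqq \tilde x$ on $T_m\coloneqq\{\|\tilde x\|_X\le m,\ |f(\cdot,\tilde x)|\le m\}\cap B_m$ and $x\coloneqq x_0$ elsewhere, where $B_m$ is a bounded set exhausting $T$. Then $x\in L^p(T;X)$ and $(f(\cdot,x))_+\in L^1(T)$, so $x\in\operatorname{dom}(I_f)$. Moreover,
\begin{align*}
\langle x^*,x\rangle-I_f(x)=\int_{T_m}g\,\mathrm{d}t+\int_{T\setminus T_m}\bigl(\langle x^*,x_0\rangle_X-f(\cdot,x_0)\bigr)\,\mathrm{d}t\,.
\end{align*}
As $m\to\infty$, the second term tends to $0$ or stays bounded below; this uses $x_0\in\operatorname{dom}(I_f)$ and dominated convergence on the integrable negative part. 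The first term tends to $\int_T g\,\mathrm{d}t$ by monotone convergence, after splitting $g$ into positive and negative parts. Letting $n\to\infty$ afterwards, this exceeds $\alpha$. Hence $(I_f)^*(x^*)\ge\alpha$, and the claim follows.

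Reflexivity of $X$ is used only to identify $(L^p(T;X))^*\cong L^{p'}(T;X^*)$ via the Radon--Nikod\'ym property. The technically delicate part is the measurable near-maximizer construction, for which I would invoke Rockafellar's result \cite[Thm.~2]{Rockafellar1971}.
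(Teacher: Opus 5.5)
Your proposal is correct and, like the paper, ultimately rests on \cite[Thm.~2]{Rockafellar1971}; the paper's proof is only that citation, and your sketch is the standard proof of the cited result: Fenchel--Young for $(I_f)^*\le I_{f^*}$, and for the reverse inequality a Castaing representation of $t\mapsto\operatorname{epi}f(t,\cdot)$ followed by localization against $x_0$. One detail needs fixing if you write it out: since $T$ may have infinite measure, the tolerance in the choice of $\tilde x$ must be a strictly positive integrable function (e.g.\ $\varepsilon(t)=\varepsilon\,e^{-|t|^2}$) rather than a constant, since otherwise $g_-$ need not be integrable and the step $\int_{T_m}g\to\int_T g$ can fail.
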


    \begin{proof}
        See \cite[Thm.\ 2]{Rockafellar1971}.
    \end{proof}
    
    In the context of subgradient flows, one naturally encounters time-dependent families of normal~integ\-rands $f\colon I\times  \Omega\times \mathbb{R}^{\ell}\to \mathbb{R}\cup\{+\infty\}$, where $I\hspace{-0.1em} \coloneqq \hspace{-0.1em} (0,t_{\mathtt{fin}})$ is a finite time interval, $\Omega \hspace{-0.1em}\in\hspace{-0.1em}\mathcal{L}^d(\mathbb{R}^d)$, $d\hspace{-0.1em}\in \hspace{-0.1em} \mathbb{N}$,~and~${\ell\hspace{-0.1em}\in \hspace{-0.1em} \mathbb{N}}$.\linebreak Inasmuch as the associated spatial integral reduction ${I_f^{\Omega}\coloneqq((t,v)\mapsto I_{f(t,\cdot,\cdot)}(v))
    \colon I\times X\to \mathbb{R}\hspace{-0.15em}\cup\hspace{-0.15em}\{\pm \infty\}}$, where $X\subseteq  L^0(\Omega;\mathbb{R}^{\ell})$, serves as a building block for~the~\mbox{construction}~of (time-)integral functionals,  it is essential that it itself be a normal integrand.~The~\mbox{following}~lemma, together with the subsequent remark and corollary, shows that this property is already guaranteed under mild assumptions on the underlying space $X$ and the integrand $f\colon I\times \Omega\times \mathbb{R}^{\ell}\to \mathbb{R}\cup\{+\infty\}$.\enlargethispage{7.5mm}

    \begin{lemma}[Integral functionals as normal integrands]\label{lem:integral_functionals_as_normal_integrands}

Let $X \subseteq L^0(\Omega;\mathbb{R}^{\ell})$, where $\Omega\in  \mathcal{L}^d(\mathbb{R}^d)$, $d\in \mathbb{N}$, with finite measure and $\ell\in \mathbb{N}$, be such that convergence in $X$ implies
convergence in $\mathcal{L}^d$-measure on $\Omega$. Moreover, let $f\colon T\times\Omega\times\mathbb{R}^{\ell}
    \to \mathbb{R}\cup\{+\infty\}$~be~a convex normal integrand. 
Then, the \emph{($t$-dependent)~family of spatial integral functionals}~(or~\emph{spatial integral reduction}) $ \smash{I_f^{\Omega}}\colon T\times X\to \mathbb{R}\cup\{\pm\infty\}$, for $\mathcal{L}^N$-a.e.\ $t\in T$ and every~${v\in X}$~defined~by
\begin{align*}
     \smash{I_f^{\Omega}}(t,v)
    \coloneqq I_{f(t,\cdot,\cdot)}(v)
    \coloneqq
    \int_\Omega{f(t,\cdot,v)\,\mathrm{d}x}\,,
\end{align*}
is $\mathcal{L}^N(T)\otimes\mathcal{B}(X)$-measurable. 
If, in addition, the integral functional $\smash{I_f^{\Omega}}(t,\cdot)\colon X\to \mathbb{R}\cup\{+\infty\}$ is proper and lower semi-continuous for $\mathcal{L}^N$-a.e.\ $t\in T$, then  $ \smash{I_f^{\Omega}}\colon T\times X\to \mathbb{R}\cup\{+\infty\}$ is~a~convex~normal~integrand.
\end{lemma}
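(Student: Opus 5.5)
The plan is to establish joint measurability by writing $I_f^{\Omega}$ as a monotone limit of jointly measurable functions, reducing the $X$-dependence to that of finitely many elementary operations. The normality claim then follows directly from the definition.

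\emph{Truncation.} For $k\in\mathbb{N}$, set $f_k\coloneqq\max\{\min\{f,k\},-k\}$. This is still $\mathcal{L}^N(T)\otimes\mathcal{L}^d(\Omega)\otimes\mathcal{B}(\mathbb{R}^\ell)$-measurable and bounded. To obtain a monotone scheme, split $f=f_+-f_-$ and treat $I^\Omega_{f_+}$ and $I^\Omega_{f_-}$ separately via $\min\{f_\pm,k\}\nearrow f_\pm$. By monotone convergence,
\[
\int_\Omega \min\{f_\pm(t,\cdot,v),k\}\,\mathrm{d}x\nearrow \int_\Omega f_\pm(t,\cdot,v)\,\mathrm{d}x ,
\]
so $I_f^\Omega=I^\Omega_{f_+}-I^\Omega_{f_-}$, with the convention of Definition~\ref{def:integral_functional}: the value is $+\infty$ whenever the positive part is not integrable. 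Joint measurability of each $I^\Omega_{f_\pm}$ therefore suffices, because this combination is Borel on $[0,+\infty]^2$.

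\emph{Bounded case.} Let $g\colon T\times\Omega\times\mathbb{R}^\ell\to[0,k]$ be measurable and set $J(t,v)\coloneqq\int_\Omega g(t,x,v(x))\,\mathrm{d}x$. The first step is to show that $J$ is $\mathcal{L}^N(T)\otimes\mathcal{B}(X)$-measurable. A monotone class argument handles this: consider the class $\mathcal{H}$ of bounded measurable $g$ for which $J$ is jointly measurable. The class $\mathcal{H}$ is a vector space and is closed under bounded monotone limits, by monotone or dominated convergence in $x$. It contains the products $g=\mathbf{1}_A(t)\,\mathbf{1}_B(x)\,h(z)$ with $A\in\mathcal{L}^N(T)$, $B\in\mathcal{L}^d(\Omega)$, and $h\in C_b(\mathbb{R}^\ell)$. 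For these, $J(t,v)=\mathbf{1}_A(t)\int_B h(v)\,\mathrm{d}x$. The map $v\mapsto\int_B h(v)\,\mathrm{d}x$ is continuous on $X$: convergence in $X$ implies convergence in measure, $h$ is bounded and continuous, and $|\Omega|<\infty$. Dominated convergence along subsequences then gives continuity, hence Borel measurability, so $J$ is jointly measurable. Since such products generate the product $\sigma$-algebra (and, $h$ being continuous bounded, their indicator approximations do too), the functional monotone class theorem yields $\mathcal{H}\supseteq$ all bounded measurable $g$.

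The main obstacle I expect is here. The integrand $f$ is only measurable with respect to $\mathcal{L}^N(T)\otimes\mathcal{B}(\Omega\times\mathbb{R}^\ell)$ up to completion, or, in the paper's convention, with respect to the product of the Lebesgue $\sigma$-algebra on $Q$ and $\mathcal{B}(\mathbb{R}^\ell)$. This must be aligned with the generating rectangles. If $f$ is measurable only for the completed product $\sigma$-algebra in $(t,x)$, I would first replace $f$ by a measurable modification that agrees with it outside an $\mathcal{L}^{N+d}$-null set $\mathcal{N}\times\mathbb{R}^\ell$. By Fubini, for a.e. $t$ the slice $\mathcal{N}_t$ is $\mathcal{L}^d$-null, so the integrals coincide for a.e. $t$ and every $v$. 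This suffices because measurability is required only for $\mathcal{L}^N$-a.e. $t$, using completeness of $\mathcal{L}^N(T)$.

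\emph{Normality and convexity.} Assume now that $I_f^\Omega(t,\cdot)$ is proper and lower semi-continuous for a.e. $t$. Together with the measurability above, this is exactly Definition~\ref{def:normal_integrand}(\hyperlink{def:normal_integrand.i}{i}). Convexity of $I_f^\Omega(t,\cdot)$ follows pointwise from the convexity of $f(t,x,\cdot)$ and linearity of the integral. Properness excludes the value $-\infty$, so the $\infty-\infty$ issue does not arise and the inequality $I(\lambda v+(1-\lambda)w)\le\lambda I(v)+(1-\lambda)I(w)$ holds. Hence $I_f^\Omega$ is a convex normal integrand in the sense of Definition~\ref{def:normal_integrand}(\hyperlink{def:normal_integrand.ii}{ii}).
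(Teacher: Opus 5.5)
Your proof is correct, but it reaches the measurability claim by a genuinely different route than the paper.

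The paper follows \cite{PennanenPerkkio2018} in four moves:
\begin{itemize}
\item it reduces to $f\ge\alpha$ via $\max\{f,\alpha\}$;
\item it treats bounded integrands that are continuous in $\xi$, for which $I_f^{\Omega}$ is itself a Carath\'eodory integrand and hence normal by Remark~\ref{rem:convex_normal_integrands.iii};
\item it removes the upper bound by $\min\{f,\beta\}\nearrow f$;
\item it removes continuity by Moreau envelopes $e_\lambda(f)\nearrow f$, which are Carath\'eodory integrands.
\end{itemize}
Measurability then passes to the monotone limits.

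You instead split $f=f_+-f_-$, truncate, and apply the functional monotone class theorem to the multiplicative class $\mathbf{1}_A(t)\mathbf{1}_B(x)h(z)$ with $h\in C_b(\mathbb{R}^{\ell})$. The continuity-in-measure argument is then needed only for $v\mapsto\int_B h(v)\,\mathrm{d}x$. This buys generality and economy. Your measurability argument works for every jointly measurable integrand with values in $\mathbb{R}\cup\{+\infty\}$. It uses no convexity or lower semi-continuity of $f(t,x,\cdot)$. It also avoids the Moreau-envelope machinery, which genuinely needs lower semi-continuity and a lower bound to get $e_\lambda(f)\nearrow f$.

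The price is that the monotone class theorem only reaches $\mathcal{L}^N(T)\otimes\mathcal{L}^d(\Omega)\otimes\mathcal{B}(\mathbb{R}^{\ell})$, so you must handle the completion $\mathcal{L}^{N+d}$ explicitly. You do so, and the modification you assert does exist, though you should justify it:
\begin{itemize}
\item Consider the sets in $\mathcal{L}^{N+d}(T\times\Omega)\otimes\mathcal{B}(\mathbb{R}^{\ell})$ that differ from some set in $\mathcal{L}^N(T)\otimes\mathcal{L}^d(\Omega)\otimes\mathcal{B}(\mathbb{R}^{\ell})$ only inside $\mathcal{N}\times\mathbb{R}^{\ell}$, for some null set $\mathcal{N}$. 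These form a $\sigma$-algebra containing all rectangles.
\item Apply this to $\{f>q\}$ for each $q\in\mathbb{Q}$.
\item Enlarge the union of the resulting null sets to a measurable null set, and define $\tilde f\coloneqq f$ off it and $0$ on it.
\end{itemize}
The remaining exceptional $t$-set is harmless under the paper's a.e.-in-$t$ convention: you restrict to its complement, which lies in $\mathcal{L}^N(T)$ by completeness.

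The paper's route, in turn, stays inside the Carath\'eodory/normal-integrand framework used throughout the paper. It relies on the same Fubini and completion bookkeeping, but does not make it explicit.
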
 

\begin{remark}[on Lemma \ref{lem:integral_functionals_as_normal_integrands}]\label{rem:integral_functionals_as_normal_integrands}
    By the standard section property of product $\sigma$-algebras (\textit{cf}.\ \cite[Kap.~V, §1, Lem.\ 1.1]{Elstrodt2018}) and the Fubini--Tonelli theorem (\textit{cf}.\ \cite[Kap.\ V, §2, Satz 2.1]{Elstrodt2018}), for $\mathcal{L}^N$-a.e.~$t\in T$, we observe~that $f(t,\cdot,\cdot)\colon \Omega\times\mathbb{R}^{\ell}
    \to \mathbb{R}\cup\{+\infty\}$ is a convex normal integrand  and, consequently, the associated integral functional  $\smash{I_f^{\Omega}}(t,\cdot)\colon X\to \mathbb{R}\cup\{\pm\infty\}$ is well-defined (in the sense of Definition \ref{def:integral_functional}).

\end{remark}

\begin{proof}[Proof (of Lemma \ref{lem:integral_functionals_as_normal_integrands}).] The proof closely follows  the proof of \cite[Thm.\ 20]{PennanenPerkkio2018}.\enlargethispage{1.5mm}

    To begin with, it suffices to consider the case that  $f\colon T\times\Omega\times\mathbb{R}^{\ell}
    \to \mathbb{R}\cup\{+\infty\}$~is~bounded~from~\mbox{below}. Otherwise, we consider the ($t$-dependent) family of normal integrands $f^\alpha\colon T\times\Omega\times\mathbb{R}^{\ell}
    \to \mathbb{R}\cup\{+\infty\}$, ${\alpha \in (-\infty,0)}$, for $\mathcal{L}^N\otimes\mathcal{L}^d$-a.e.\ $(t,x)\in T\times \Omega$ and every $\xi\in \mathbb{R}^{\ell}$ defined by 
    \begin{align*}
        f^\alpha(t,x,\xi)\coloneqq \sup\{f(t,x,\xi),\alpha\}\,,
    \end{align*}
    and, if we can verify that $\smash{I_{f^\alpha}^{\Omega}}\colon T\times X\to \mathbb{R}\cup\{+\infty\}$ is
    $\mathcal{L}^N(T)\otimes\mathcal{B}(X)$-measurable for all $\alpha \in (-\infty,0)$,~then, for $\mathcal{L}^N$-a.e.\ $t\in T$ and every $v\in X$, since 
    \begin{align*}
        f^\alpha(t,x,v(x))\searrow f(t,x,v(x))\quad ( \alpha \to -\infty) \quad \text{ for $\mathcal{L}^d$-a.e.\ }x\in \Omega\,,
    \end{align*}
    we obtain\vspace{-0.5mm}
    \begin{align}\label{lem:integral_functionals_as_normal_integrands.0}
       \smash{\lim_{\alpha\to-\infty}{\smash{\bigl\{\smash{I_{f^\alpha}^{\Omega}}(t,v)\bigr\}}}}= \smash{I_f^{\Omega}}(t,v)
\,,
    \end{align}
    and, thus, that $\smash{I_f^{\Omega}}\colon T\times X\to \mathbb{R}\cup\{\pm\infty\}$ is
    $\mathcal{L}^N(T)\otimes\mathcal{B}(X)$-measurable. For the convergence \eqref{lem:integral_functionals_as_normal_integrands.0}, it is enough  to distinguish the following two cases:
    \begin{itemize}[noitemsep,topsep=2pt,leftmargin=!,labelwidth=\widthof{$\bullet$}]
        \item[$\bullet$] \emph{Case 1.} If $(f(t,\cdot,v))_+\in L^1(\Omega;\mathbb{R}^1)$, then, due to  $f^\alpha(t,x,v(x))\leq (f(t,x,v(x)))_+$~for~\mbox{$\mathcal{L}^d$-a.e.}~${x\in  \Omega}$, by Beppo Levi's monotone convergence theorem (\textit{cf}.\ \cite[Satz 2.7, p.\ 139]{Elstrodt2018}), we find that
        \begin{align*}
             \smash{\lim_{\alpha\to-\infty}{\smash{\bigl\{\smash{I_{f^\alpha}^{\Omega}}(t,v)\bigr\}}}}=\smash{I_f^{\Omega}}(t,v)\,;
        \end{align*}  
        \item[$\bullet$] \emph{Case 2.} If $(f(t,\cdot,v))_+\notin L^1(\Omega;\mathbb{R}^1)$, then, due to  $(f^\alpha(t,x,v(x)))_+= (f(t,x,v(x)))_+$ for $\mathcal{L}^d$-a.e.\ $x\in \Omega$, by Definition \ref{def:integral_functional}, for every $\alpha\in (-\infty,0)$, we find that
        \begin{align*}
            \smash{I_{f^\alpha}^{\Omega}}(t,v)=+\infty=\smash{I_{f}^{\Omega}}(t,v)\,.
        \end{align*} 
    \end{itemize}

For this reason, it suffices to establish the assertion under the additional assumption that~there~exists a constant $\alpha\in (-\infty,0)$ such that for $\mathcal{L}^N\otimes\mathcal{L}^d$-a.e.\ $(t,x)\in T\times \Omega$ and every $\xi\in \mathbb{R}^{\ell}$,~we~have~that
\begin{align}\label{lem:integral_functionals_as_normal_integrands.1}
    f(t,x,\xi)\ge \alpha\,.
\end{align}

Then, under the assumption \eqref{lem:integral_functionals_as_normal_integrands.1}, the proof follows in three steps:

\emph{Step 1.}\hypertarget{step 1}{} In this step, we assume that there exists a constant $\beta\in (0,\infty)$ such that for $\mathcal{L}^N\otimes\mathcal{L}^d$-a.e.\ $(t,x)\in T\times \Omega$ and every $\xi\in \mathbb{R}^{\ell}$, we have that
\begin{align}\label{lem:integral_functionals_as_normal_integrands.2}
    f(t,x,\xi)\leq \beta\,,
\end{align}
and that $f(t,x,\cdot)\colon \mathbb{R}^{\ell}\to \mathbb{R}$
is continuous for $\mathcal{L}^N\otimes\mathcal{L}^d$-a.e.\ $(t,x)\in T\times \Omega$. Next, let $\{v_j\}_{j\in \mathbb{N}}\subseteq X$ be a sequence such that $v_j\to v$ in $X$ $(j\to \infty)$. By the additional assumption on the space $X$, this implies that $v_j\to v$ in measure on
$\Omega$ $(j\to \infty)$ and for a subsequence $v_{j'}(x)\to v(x)$ $(j'\to \infty)$ for $\mathcal{L}^d$-a.e.~$x\in\Omega$, so that, by the continuity of  $f(t,x,\cdot)\colon \mathbb{R}^{\ell}\to \mathbb{R}$
for $\mathcal{L}^N\otimes\mathcal{L}^d$-a.e.\ $(t,x)\in T\times \Omega$,~we~infer~that  
\begin{align}\label{lem:integral_functionals_as_normal_integrands.3}
    f(t,x,v_{j'}(x))\to f(t,x,v(x))\quad (j'\to \infty)\quad\text{ for $\mathcal{L}^N\otimes\mathcal{L}^d$-a.e.\ }(t,x)\in T\times \Omega\,.
\end{align}
Then, owing to \eqref{lem:integral_functionals_as_normal_integrands.2} and \eqref{lem:integral_functionals_as_normal_integrands.3}, by 
Lebesgue's dominated convergence theorem (\textit{cf}.\ \cite[Satz~5.2,~p.~160]{Elstrodt2018}), 
we obtain
\begin{align}\label{lem:integral_functionals_as_normal_integrands.4}
    \smash{\smash{I_f^{\Omega}}}(t,v_{j'})\to \smash{\smash{I_f^{\Omega}}}(t,v)\quad (j'\to \infty)\quad \text{ for $\mathcal{L}^N$-a.e.\ }t\in T\,.
\end{align}
Moreover, 
by the standard convergence principle (\textit{cf}.\ \cite[Kap.\ I, Lem.\ 5.4]{GajewskiGroegerZacharias1974}), we find that~the~\mbox{convergence} \eqref{lem:integral_functionals_as_normal_integrands.4} applies for the entire sequence and, therefore, that $\smash{I_f^{\Omega}}(t,\cdot)\colon X\to \mathbb{R}$ is continuous~for~\mbox{$\mathcal{L}^N$-a.e.}~$t\in T$. Since, by the Fubini--Tonelli theorem  (\textit{cf}.\ \cite[Kap.\ V, §2, Satz 2.1]{Elstrodt2018})
$\smash{I_f^{\Omega}}(\cdot,v)\colon T\to \mathbb{R}$ is $\mathcal{L}^N(T)$-measurable for all $v\in X$, we conclude that 
$\smash{I_f^{\Omega}}\colon T\times X\to \mathbb{R}$ is a Carath\'eodory integrand and, consequently,~according to Remark~\ref{rem:convex_normal_integrands.iii}, a normal integrand and, in particular, $\mathcal{L}^N(T)\otimes\mathcal{B}(X)$-measurable.\enlargethispage{2.5mm}

\emph{Step 2.}\hypertarget{step 2}{} In this step, we assume that $f(t,x,\cdot)\colon \mathbb{R}^{\ell}\to \mathbb{R}$
is continuous~for~$\mathcal{L}^N\otimes\mathcal{L}^d$-a.e.~$(t,x)\in T\times \Omega$. 
Next, we consider the ($t$-dependent) family of normal integrands $f_\beta\colon T\times\Omega\times\mathbb{R}^{\ell}
    \to \mathbb{R}$, $\beta\in (0,+\infty)$, for $\mathcal{L}^N\otimes\mathcal{L}^d$-a.e.\ $(t,x)\in T\times \Omega$ and every $\xi\in \mathbb{R}^{\ell}$ defined by 
    \begin{align*}
        f_\beta(t,x,\xi)\coloneqq \inf\{f(t,x,\xi),\beta\}\,.
    \end{align*}
Then, by Step \hyperlink{step 1}{1}, $I_{f_\beta}^{\Omega}\colon T\times X\to \mathbb{R}$ is a Carath\'eodory integrand and, consequently, according~to~\mbox{Remark}~\ref{rem:convex_normal_integrands.iii}, a normal integrand and, in particular, $\mathcal{L}^N(T)\otimes\mathcal{B}(X)$-measurable. Therefore, for $\mathcal{L}^N$-a.e.\ $t\in T$ and every $v\in X$, since 
\begin{align*}
     \alpha \leq f_\beta(t,x,v(x))\nearrow f(t,x,v(x))\quad ( \beta \to +\infty)\quad\text{ for $\mathcal{L}^d$-a.e.\ }x\in \Omega\,,
\end{align*}
by Beppo Levi's monotone convergence theorem (\textit{cf}.\ \cite[Satz~2.7,~p.~139]{Elstrodt2018}), 
we obtain\vspace{0.5mm} 
    \begin{align*}
       \lim_{\beta\to+\infty}{\smash{\bigl\{I_{f_\beta}^{\Omega}(t,v)\bigr\}}}= I_f^{\Omega}(t,v)\,,
    \end{align*}
    and, in particular, that $\smash{I_f^{\Omega}}\colon T\times X\to \mathbb{R}\cup\{+\infty\}$~is~\mbox{$\mathcal{L}^N(T)\otimes\mathcal{B}(X)$-measurable}.

    \emph{Step 3.}\hypertarget{step 3}{} In this case, eventually, we merely assume that $ f(t,x,\cdot)\colon \mathbb{R}^{\ell}\to \mathbb{R}\cup\{+\infty\}$ is lower semi-continuous for $\mathcal{L}^N\otimes\mathcal{L}^d$-a.e.\ $(t,x)\in T\times \Omega$. Next, we consider the family of Moreau~envelopes (with respect to the last argument, \textit{cf}.\ \cite[Def.\ 1.22]{RockafellarWets1998}) $e_\lambda(f)\colon T\times \Omega\times \mathbb{R}^{\ell}\to \mathbb{R}$, $\lambda>0$, for every $\mathcal{L}^N\otimes\mathcal{L}^d$-a.e.\ $(t,x)\in T\times \Omega$ and every $\xi\in \mathbb{R}^{\ell}$ defined by 
    \begin{align*}
          e_\lambda(f)(t,x,\xi)
    \coloneqq
    \inf_{\eta\in\mathbb{R}^{\ell}}
    {\bigl\{
        f(t,x,\eta)
        +
        \tfrac{1}{2\lambda}|\xi-\eta|^2
    \bigr\}}\,,
    \end{align*}
    which, owing to \cite[Ex.\ 14.38]{RockafellarWets1998}, are Carath\'eodory integrands and, consequently, according to~\mbox{Remark}~\ref{rem:convex_normal_integrands.iii}, normal integrands, so that, by Step \hyperlink{step 2}{2}, $I_{e_\lambda(f)}^{\Omega}\colon \hspace{-0.1em}T\times X\hspace{-0.1em}\to \hspace{-0.1em}\mathbb{R}\cup\{+\infty\}$ is $\mathcal{L}^N(T)\otimes\mathcal{B}(X)$-measurable~for~all~${\lambda\hspace{-0.1em}>\hspace{-0.1em}0}$. 
    Therefore, for $\mathcal{L}^N$-a.e.\ $t\in T$ and every $v\in X$, 
    since, according to \cite[Thm.\ 1.25]{RockafellarWets1998}, we have that 
    \begin{align*}
       \alpha \leq e_\lambda(f)(t,x,\xi)\nearrow f(t,x,\xi)\quad (\lambda\to 0^+)\quad \text{ for $\mathcal{L}^d$-a.e.\ }x\in \Omega\,,
    \end{align*}
     by Beppo Levi's monotone convergence theorem (\textit{cf}.\ \cite[Satz 2.7, p.\ 139]{Elstrodt2018}), we have that\vspace{0.5mm}   
     \begin{align*}
          \lim_{\lambda\to 0^+}{\smash{\bigl\{I_{e_\lambda(f)}^{\Omega}(t,v)\bigr\}}}=I_f^{\Omega}(t,v)\,,
     \end{align*}
    we conclude that  $\smash{I_f^{\Omega}}\colon T\times X\to \mathbb{R}\cup\{+\infty\}$ is $\mathcal{L}^N(T)\otimes\mathcal{B}(X)$-measurable. 

    If, in addition, $\smash{I_f^{\Omega}}(t,\cdot)\colon X\to \mathbb{R}\cup\{+\infty\}$ is proper and lower semi-continuous for $\mathcal{L}^N$-a.e.\ $t\in T$,~then, by Definition \ref{def:normal_integrand}(\hyperlink{def:normal_integrand.ii}{ii}),  $ I_f^{\Omega}\colon T\times X\to \mathbb{R}\cup\{+\infty\}$ is a convex normal integrand.
\end{proof}

    \begin{corollary}\label{cor:integral_functionals_as_normal_integrands}
        Let the assumptions of Lemma \ref{lem:integral_functionals_as_normal_integrands} be satisfied with $X=L^p(\Omega;\mathbb{R}^{\ell})$, where $\Omega\in \mathcal{L}^d(\mathbb{R}^d)$, $d\in \mathbb{N}$, with finite measure, $\ell\in \mathbb{N}$, and $p\in (1,+\infty)$. Then, the following statements apply:
        \begin{itemize}[noitemsep,topsep=2pt,leftmargin=!,labelwidth=\widthof{(ii)}]
            \item[(i)]\hypertarget{cor:integral_functionals_as_normal_integrands.i}{} If $\operatorname{dom}(I_f^{\Omega}(t,\cdot))\cap X\neq\emptyset$ and  $\operatorname{dom}(I_{f^*}^{\Omega}(t,\cdot))\cap X^*\neq\emptyset$ for $\mathcal{L}^N$-a.e.\ $t\in T$, then $\smash{I_f^{\Omega}}\colon T\times X\to \mathbb{R}\cup\{+\infty\}$ is a convex normal integrand;
            \item[(ii)]\hypertarget{cor:integral_functionals_as_normal_integrands.ii}{} If there exist $x\hspace{-0.1em}\in\hspace{-0.1em} L^{p}(T;X)$ and $x^*\hspace{-0.1em}\in\hspace{-0.1em} L^{p'}(T;X^*)$ such that 
        $(I_f^{\Omega}(\cdot,x(\cdot)))_+,(I_{f^*}^{\Omega}(\cdot,x^*(\cdot)))_+\hspace{-0.1em}\in\hspace{-0.1em} L^1(T;\mathbb{R}^1)$, then $I_{\smash{I_f^{\Omega}}}\colon L^p(T; X)\to \mathbb{R}\cup\{+\infty\}$ is well-defined, proper, convex, and lower semi-continuous.
        \end{itemize} 
    \end{corollary}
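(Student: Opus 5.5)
For (i), the plan is to reduce to Lemma~\ref{lem:integral_functionals_as_normal_integrands}, whose last clause says that it suffices to show: for $\mathcal{L}^N$-a.e.\ $t\in T$, the map $I_f^{\Omega}(t,\cdot)\colon X\to \mathbb{R}\cup\{+\infty\}$ is proper and lower semi-continuous. The hypothesis on $X$ holds, because convergence in $L^p(\Omega;\mathbb{R}^{\ell})$ implies convergence in $\mathcal{L}^d$-measure on the finite-measure set $\Omega$.

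Fix a good time $t$. By Remark~\ref{rem:integral_functionals_as_normal_integrands}, $f(t,\cdot,\cdot)$ is a convex normal integrand on $\Omega\times\mathbb{R}^{\ell}$. The space $X=L^p(\Omega;\mathbb{R}^{\ell})$ has dual $X^*=L^{p'}(\Omega;\mathbb{R}^{\ell})$, so both Bochner-type spaces fit Corollary~\ref{cor:integral_functionals}. We apply that corollary with $T$ replaced by $\Omega$, with $X$ replaced by $\mathbb{R}^{\ell}$, and with the integrand $f(t,\cdot,\cdot)$. Its two domain assumptions are exactly $\operatorname{dom}(I_f^{\Omega}(t,\cdot))\cap X\neq\emptyset$ and $\operatorname{dom}(I_{f^*}^{\Omega}(t,\cdot))\cap X^*\neq\emptyset$. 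Here $f^*$ is taken with respect to the last variable, and this conjugation commutes with freezing $t$. The corollary then gives that $I_f^{\Omega}(t,\cdot)$ is well-defined, proper, convex and lower semi-continuous on $X$. Lemma~\ref{lem:integral_functionals_as_normal_integrands} then yields that $I_f^{\Omega}$ is a convex normal integrand.

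For (ii), we first check that the hypotheses of (i) hold. The assumptions give $(I_f^{\Omega}(t,x(t)))_+<\infty$ and $(I_{f^*}^{\Omega}(t,x^*(t)))_+<\infty$ for a.e.\ $t$, with $x(t)\in X$ and $x^*(t)\in X^*$. Hence the domain conditions of (i) hold a.e., and $g\coloneqq I_f^{\Omega}$ is a convex normal integrand on $T\times X$. Here $X$ is separable and reflexive.

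Next we apply Corollary~\ref{cor:integral_functionals} to $g$ on $T\times X$. This needs $\operatorname{dom}(I_g)\cap L^p(T;X)\neq\emptyset$, which is witnessed by $x$. It also needs $\operatorname{dom}(I_{g^*})\cap L^{p'}(T;X^*)\neq\emptyset$. To verify this we must identify $g^*(t,\cdot)$, the conjugate of $I_f^{\Omega}(t,\cdot)$ on $X$. By Lemma~\ref{lem:conjugate_of_integral_functional}, applied at fixed $t$ on $\Omega$ with $\mathbb{R}^{\ell}$ reflexive and $\operatorname{dom}\neq\emptyset$ from (i), we get $g^*(t,\cdot)=I_{f^*}^{\Omega}(t,\cdot)$ for a.e.\ $t$. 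Therefore $(g^*(\cdot,x^*(\cdot)))_+=(I_{f^*}^{\Omega}(\cdot,x^*(\cdot)))_+\in L^1(T;\mathbb{R}^1)$, so $x^*$ is the required witness. Corollary~\ref{cor:integral_functionals} then gives that $I_{I_f^{\Omega}}$ is well-defined, proper, convex and lower semi-continuous on $L^p(T;X)$.

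The main point needing care is this identification $g^*=I_{f^*}^{\Omega}$ slice by slice. It must hold on a common full-measure set of times, which we obtain by intersecting the null sets coming from the assumptions and from Remark~\ref{rem:integral_functionals_as_normal_integrands}. Everything else is bookkeeping with the earlier results.
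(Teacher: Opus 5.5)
Your proposal is correct and follows the paper's proof. In (i) you apply Corollary~\ref{cor:integral_functionals} slice-wise on $\Omega\times\mathbb{R}^{\ell}$ and then the last clause of Lemma~\ref{lem:integral_functionals_as_normal_integrands}; in (ii) you reduce to (i) and apply Corollary~\ref{cor:integral_functionals} on $T\times X$, explicitly justifying the slice-wise identification $(I_f^{\Omega}(t,\cdot))^*=I_{f^*}^{\Omega}(t,\cdot)$ via Lemma~\ref{lem:conjugate_of_integral_functional}, which the paper uses tacitly when it says the hypothesis of (ii) ``reads'' as the domain condition for $I_{(I_f^{\Omega})^*}$.
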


    \begin{proof}
        \emph{ad (\hyperlink{cor:integral_functionals_as_normal_integrands.i}{i}).} The assertion is a direct consequence of Corollary \ref{cor:integral_functionals}.

        \emph{ad (\hyperlink{cor:integral_functionals_as_normal_integrands.ii}{ii}).} By the additional assumption, we have that 
        $\operatorname{dom}(I_f^{\Omega}(t,\cdot))\cap X\neq\emptyset$ and  $\operatorname{dom}(I_{f^*}^{\Omega}(t,\cdot))\cap X^*\neq\emptyset$ for $\mathcal{L}^N$-a.e.\ $t\in T$, so that, resorting to point (\hyperlink{cor:integral_functionals_as_normal_integrands.i}{i}), we infer that $\smash{I_f^{\Omega}}\colon T\times X\to \mathbb{R}\cup\{+\infty\}$ is a convex normal integrand and, thus, the associated integral functional ${I_{\smash{I_f^{\Omega}}}\colon L^p(T;X)\to \mathbb{R}\cup\{\pm \infty\}}$~is~\mbox{well-defined}. The additional assumption, then, reads $\operatorname{dom}(I_{\smash{I_f^{\Omega}}})\cap L^{p}(T;X)\neq\emptyset$ and  $\operatorname{dom}(I_{\smash{(I_f^{\Omega})^*}})\cap L^{p'}(T;X^*)\neq\emptyset$, so that the assertion is again a direct consequence of Corollary \ref{cor:integral_functionals}.
    \end{proof}

    \subsection{Function spaces}\enlargethispage{5mm}

    \hspace{5mm}For a (Lebesgue) measurable set $\omega\subseteq \mathbb{R}^N$, $N\in \mathbb{N}$, and  (Lebesgue) measurable functions, vector or tensor fields $v,w\colon \omega\to \mathbb{R}^{n}$, $n\in\{1,\ell,\ell\times d\}$, $\ell\in \mathbb{N}$, we employ the inner product $(v,w)_{\omega}\coloneqq \int_{\omega}{v\odot w\,\mathrm{d}x}$,
	whenever the right-hand side is well-defined, where $\odot\colon \mathbb{R}^{\ell}\times \mathbb{R}^{\ell}\to \mathbb{R}$ either denotes scalar multiplication (\textit{i.e.}, $v\odot w=vw$), the Euclidean inner product (\textit{i.e.}, $v\odot w=v\cdot w$), or the Frobenius inner product (\textit{i.e.}, $v\odot w=v:w$). Moreover, for a (Lebesgue) measurable set $\omega\subseteq \mathbb{R}^N$, $N\in \mathbb{N}$, and a (Lebesgue) measurable function, vector, or tensor field $v\colon \omega\to \mathbb{R}^{n}$, $n\in\{1,\ell,\ell\times d\}$, $\ell\in \mathbb{N}$, we set $\|v\|_{p,\omega}\coloneqq(\int_\omega |v(x)|^p\,\mathrm{d}x)^{1/p}$, $\|v\|_{\infty,\omega}\coloneqq\operatorname*{ess\,sup}_{x\in \omega}{\{|v(x)|\}}$, and abbreviate $\|v\|_{\omega}\coloneqq\|v\|_{2,\omega}$.\newpage

    \subsubsection{Function spaces for the steady setting}\vspace{-0.5mm}

    \hspace{5mm}For the rest of the paper,  let ${\Omega\subseteq \mathbb{R}^d}$, ${d\in\mathbb{N}}$, be a bounded Lipschitz domain such that its (topological) boundary $\partial\Omega$ is divided into two disjoint
	(relatively) open sets: a non-empty Dirichlet part $\Gamma_D\subseteq \partial\Omega$ 
    and a Neumann part $\Gamma_N\subseteq \partial\Omega$ such that $\overline{\Gamma}_D\cup\overline{\Gamma}_N=\partial\Omega$ and $\Gamma_D\cap\Gamma_N=\emptyset$. Then, for $\ell\in \mathbb{N}$~and~${p\in [1,+\infty]}$, we define\vspace{-0.5mm}
	\begin{align*} 
        W^{1,p}(\Omega;\mathbb{R}^{\ell})&\coloneqq \bigl\{v\in L^p(\Omega;\mathbb{R}^{\ell})\mid \nabla v\in L^p(\Omega;\mathbb{R}^{\ell\times d})\bigr\}\,,\\
		W^{\operatorname{div},\smash{p'}}(\Omega;\mathbb{R}^{\ell})&\coloneqq \bigl\{y\in L^{\smash{p'}}(\Omega;\mathbb{R}^{\ell\times d})\mid \operatorname{div} y\in L^{\smash{p'}}(\Omega;\mathbb{R}^{\ell})\bigr\}\,,\\[-6mm]
	\end{align*}
    where the divergence is to be understood row-wise  (\textit{i.e.}, if $y=\smash{\{y_{ij}\}_{i\in \{1,\ldots,\ell\},j\in  \{1,\ldots,d\}}}\in \smash{L^{\smash{p'}}(\Omega;\mathbb{R}^{\ell\times d})}$, then $(\operatorname{div} y)_i \coloneqq \smash{\sum_{j=1}^d{\partial_jy_{ij}}}$ for all $i= 1,\ldots,\ell$) and initially (as the gradient) in a distributional sense.

	Denote by $\textup{tr}(\cdot)\colon  W^{1,p}(\Omega;\mathbb{R}^{\ell})\to W^{\smash{1-\frac{1}{p}},p}(\partial\Omega;\mathbb{R}^{\ell})$ and ${\textup{tr}((\cdot)n)\colon W^{\operatorname{div},\smash{p'}}(\Omega;\mathbb{R}^{\ell})\to (W^{\smash{1-\frac{1}{p}},p}(\partial\Omega;\mathbb{R}^{\ell}))^*}$ the trace operator  and the normal trace operator, respectively, where $n\colon \partial\Omega\hspace{-0.1em}\to\hspace{-0.1em} \mathbb{S}^{d-1}\hspace{-0.1em}\coloneqq\hspace{-0.1em}{\{x\hspace{-0.1em}\in\hspace{-0.1em} \mathbb{R}^d\mid \vert x\vert\hspace{-0.1em}=\hspace{-0.1em}1\}}$ denotes the outer unit normal vector~field~to~$\partial\Omega$. 
	Then,  for every $v\hspace{-0.1em}\in\hspace{-0.1em}  W^{1,p}(\Omega;\mathbb{R}^{\ell})$~and~${y\hspace{-0.1em}\in\hspace{-0.1em} W^{\operatorname{div},\smash{p'}}(\Omega;\mathbb{R}^{\ell})}$, there holds the spatial integration-by-parts formula (\textit{cf}.\ \cite[Sec.\ 4.3, (4.12)]{ErnGuermond2020}):\vspace{-0.5mm}
	\begin{align*}
		\smash{(\nabla v,y)_{\Omega}+(v,\operatorname{div}\,y )_{\Omega}=\langle \textup{tr}(y n),\textup{tr}(v)\rangle_{W^{1-\smash{\frac{1}{p}},p}(\partial\Omega;\mathbb{R}^{\ell})}\,.}\\[-6mm]\notag
	\end{align*} 
	Then, we define\vspace{-0.5mm}
	\begin{align*} 
		W^{1,p}_D(\Omega;\mathbb{R}^{\ell})&\coloneqq  \bigl\{v\in 	 W^{1,p}(\Omega;\mathbb{R}^{\ell}) \mid \textup{tr}(v)=0\textup{ a.e.\ on }\Gamma_D\bigr\}\,,\\
		\smash{W_N^{\operatorname{div},\smash{p'}}(\Omega;\mathbb{R}^{\ell})}&\coloneqq  \bigl\{y\in \smash{W^{\operatorname{div},\smash{p'}}(\Omega;\mathbb{R}^{\ell})}\mid \langle\textup{tr}(y n),\textup{tr}(v)\rangle_{\partial\Omega} =0\text{ for all  }v\in W^{1,p}_D(\Omega;\mathbb{R}^{\ell})\bigr\}\,.\\[-6mm]\notag
	\end{align*}
	In what follows, for the sake~of~readability, we omit writing both $\textup{tr}(\cdot)$ and $\textup{tr}((\cdot)n)$ and, for $p\in (1,+\infty)$, we employ the abbreviations\vspace{-0.5mm}
    \begin{align*}
        V&\coloneqq W^{1,p}_D(\Omega;\mathbb{R}^{\ell})\,, \qquad \qquad\;\, H\coloneqq L^2(\Omega;\mathbb{R}^{\ell})\,,\\
        Y&\coloneqq L^p(\Omega;\mathbb{R}^{\ell\times d})\,, \qquad Y^*(\operatorname{div})\coloneqq \smash{W_N^{\operatorname{div},\smash{p'}}(\Omega;\mathbb{R}^{\ell})}\,.
    \end{align*}

    \subsubsection{Function spaces for the unsteady setting}\label{subsubsec:function_spaces_unsteady}\vspace{-0.5mm}

    \hspace{5mm}For the rest of the paper,  let $I\coloneqq (0,t_{\mathtt{fin}})$, where $t_{\mathtt{fin}}\in (0,+\infty)$, be a finite time-interval. 
    For $p\in (1,+\infty)$ with $p\ge \frac{2d}{d+2}$, a function $v\in  L^p(I;V)$ is said to have a \emph{$p'$-integrable generalized time-derivative} if there exists a function $v^*\in L^{p'}(I;V^*)$ such that for every $\varphi\in C_{\mathrm{c}}^1(I)$,~there~holds\vspace{-0.5mm}\begin{align}\label{subsubsec:function_spaces_unsteady.1}
        -\int_{I}{v(t)\varphi'(t)\,\mathrm{d}t}=\int_{I}{v^*(t)\varphi(t)\,\mathrm{d}t}\quad \text{ in }V^*\,,\\[-6mm]\notag
    \end{align}
    in which case, we define $\partial_t v\coloneqq v^*$.
    Here, the $V$-valued Bochner integral on the left-hand~side~of~\eqref{subsubsec:function_spaces_unsteady.1} is interpreted as $V^*$-valued Bochner integral by means of the Evolution triple structure~(\textit{cf}.~\cite[Sec.~23.4]{Zeidler1990IIA})\vspace{-0.5mm}
    \begin{align*}
        V\xhookrightarrow[\smash{\quad\mathrm{dense}\quad}]{\quad \iota_2 \quad} H\smash{\overset{R_H}{\cong}}H^*\xhookrightarrow[\smash{\quad\mathrm{dense}\quad}]{\quad \iota_2^* \quad} \; V^*\,,\\[-6mm]\notag
    \end{align*}
    where $\iota_2^*\colon H^*\to V^*$ is the adjoint operator of the identity mapping $\iota_2\coloneqq\operatorname{id}_{V\to H} \colon V\to H$ and $R_H\colon H\to H^*$ the Riesz isomorphism, whose explicit notation~we~will~omit~in~the~\mbox{following}.

    Then, for $p\in (1,+\infty)$ with $p\ge \frac{2d}{d+2}$, the Bochner--Sobolev space of $p$-integrable functions with $p'$-integrable generalized time-derivative is denoted by\vspace{-0.5mm}
    \begin{align*}
        \mathcal{W}(I)\coloneqq\smash{\bigl\{ v\in L^p(I;V)\mid  \partial_t v\in L^{p'}(I;V^*)\bigr\}}\,,\\[-6mm]\notag
    \end{align*}
    and forms a Banach space when equipped with the norm $\|\cdot\|_{\mathcal{W}(I)}\coloneqq \|\cdot\|_{L^p(I;V)}+\|\partial_t(\cdot)\|_{\smash{L^{p'}(I;V^*)}}$ (\textit{cf}.\ \cite[Prop.\ 23.23(i)]{Zeidler1990IIA}). Moreover, 
    \hspace{-0.15mm}for \hspace{-0.15mm}every \hspace{-0.15mm}$v,w\hspace{-0.15em}\in\hspace{-0.15em} \mathcal{W}(I)$, \hspace{-0.15mm}there \hspace{-0.15mm}exist \hspace{-0.15mm}not \hspace{-0.15mm}relabelled~\hspace{-0.15mm}\mbox{representatives}~\hspace{-0.15mm}${v,w\hspace{-0.15em}\in \hspace{-0.15em}\smash{C^0(\overline{I},H)}}$ and, for every $t,t'\hspace{-0.15em}\in\hspace{-0.15em} \overline{I}$, there holds the temporal integration-by-parts formula  (\textit{cf}.\ \cite[Prop.\ 23.23(ii)\&(iv)]{Zeidler1990IIA})\vspace{-0.5mm}
    \begin{align}\label{subsubsec:function_spaces_unsteady.3}
        \int_{t'}^{t}{\langle \partial_t v(s),w(s)\rangle_V\,\mathrm{d}s}=(v(t),w(t))_H-(v(t'),w(t'))_H-\int_{t'}^{t}{\langle \partial_t w(s),v(s)\rangle_V\,\mathrm{d}s}\,.\\[-6mm]\notag
    \end{align}  
    
    \section{Fenchel duality framework for subgradient flows}\label{sec:fenchel_duality_framework}

    \hspace{5mm}In this section, we introduce a Fenchel duality framework for a broad class of subgradient flows induced by 
    convex integral functionals based on the Br\'ezis--Ekeland--Nayroles~\mbox{principle}~(\textit{cf}.~\mbox{\cite{BrezisEkeland1976I,BrezisEkeland1976II}}~and~\mbox{\cite{Nayroles1976I,Nayroles1976II}}, respectively; see also \cite[Sec.\ 3.9]{AubinCellina1984}, \cite[Sec.\ 8.10]{Roubicek2013}, \cite{Stefanelli2008}, and \cite{CariniJensenNuernberg2023} for a recent numerical application).

    In what follows, unless otherwise specified, 
    let $Q\coloneqq I\times \Omega$ be a finite time-space cylinder, let $u_0\in H$ be an initial value, and let
    $\phi\colon Q\times \mathbb{R}^{\ell\times d}\to \mathbb{R}\cup\{+\infty\}$ and $\psi\colon Q\times \mathbb{R}^{\ell}\to \mathbb{R}\cup\{+\infty\}$ be energy densities such that the following assumptions are satisfied. 

    \begin{assumption}[on $\phi$ and $\psi$]\label{ass:energy_densities}
    	The energy densities 
        $\phi\colon Q\times \mathbb{R}^{\ell\times d}\to \mathbb{R}\cup\{+\infty\}$ and $\psi\colon Q\times \mathbb{R}^{\ell}\to \mathbb{R}\cup\{+\infty\}$ satisfy the following conditions:
    	\begin{itemize}[noitemsep,topsep=2pt,leftmargin=!,labelwidth=\widthof{(ii)}]
    		
            \item[(i)]\hypertarget{ass:energy_densities.i}{} \emph{Convex normal integrands} (\textit{cf}.\ Definition \ref{def:normal_integrand}): 
    		\begin{itemize}[noitemsep,topsep=2pt,leftmargin=!,labelwidth=\widthof{(i.b)}]
    			\item[(i.a)]\hypertarget{ass:energy_densities.i.a}{} $\phi\colon Q\times \mathbb{R}^{\ell\times d}\to \mathbb{R}\cup\{+\infty\}$ is a convex normal integrand; 
    			\item[(i.b)]\hypertarget{ass:energy_densities.i.b}{} $\psi\colon Q\times \mathbb{R}^{\ell}\to \mathbb{R}\cup\{+\infty\}$ is a convex normal integrand.
    		\end{itemize} 
            

              \item[(ii)]\hypertarget{ass:energy_densities.ii}{}
            \emph{Non-triviality:} there exist $u^\star\in L^p(I;V)$, $z^\star\in L^{p'}(I;Y^*)$, and $w^\star\in L^{p'}(Q;\mathbb{R}^{\ell})$~such~that
            \begin{align*}
                (I_{\phi}^{\Omega}(\cdot,\nabla u^\star))_+,(I_{\psi}^{\Omega}(\cdot, u^\star))_+, (I_{\phi^*}^{\Omega}(\cdot,z^\star))_+,(I_{\psi^*}^{\Omega}(\cdot, w^\star))_+\in L^1(I)\,,
            \end{align*}
             where $\phi^*\colon Q\times \mathbb{R}^{\ell\times d}\to \mathbb{R}\cup\{+\infty\}$ and $\psi^*\colon Q\times \mathbb{R}^{\ell}\to \mathbb{R}\cup\{+\infty\}$ denote the Fenchel conjugates (with respect to the last argument) of the energy densities  $\phi$ and $\psi$, respectively. 
    	\end{itemize} 
    \end{assumption} 

    \subsection{Fenchel duality framework for steady convex integral functionals} 

    \hspace*{5mm}In this subsection, we recall a Fenchel duality framework for a broad class of (time-dependent) convex integral functionals. For a detailed presentation, we refer to the textbook \cite{EkelandTemam1999} (see also \cite{Rockafellar1968,Rockafellar1971,RockafellarWets1998,BartelsKaltenbach2026}).

    If
    Assumption \ref{ass:energy_densities} is satisfied, given a  right-hand side $f\in L^{p'}(I;V^*)$, for a.e.~fixed~${t\in I}$,~we~introduce the \emph{(time-dependent) steady primal energy functional} $E(t,\cdot)\colon V\to \mathbb{R}\cup\{+\infty\}$, for every $v\in V$~defined~by 
    \begin{align}\label{eq:primal_steady}
        E(t,v)\coloneqq I_{\phi}^{\Omega}(t,\nabla v)+I_{\psi}^{\Omega}(t,v)
      -\langle f(t),v\rangle_V\,, 
    \end{align}
    and we refer to the problem that seeks to minimize the (time-dependent) steady primal energy functional \eqref{eq:primal_steady} as the \emph{(time-dependent) steady primal problem}.
    
    Then, for a.e.\ fixed time $t\in I$, introducing the steady integral functional $G(t,\cdot) \colon Y\to \mathbb{R}\cup\{+\infty\}$ (associated with $\phi(t,\cdot,\cdot)\colon \Omega\times \mathbb{R}^{\ell\times d}\to \mathbb{R}\cup\{+\infty\}$) and the affinely perturbed steady integral functional  $F(t,\cdot)\colon V\to \mathbb{R}\cup\{+\infty\}$ (associated with $\psi(t,\cdot,\cdot)\colon \Omega\times \mathbb{R}^{\ell}\to \mathbb{R}\cup\{+\infty\}$), for every $y\in Y$ and $v\in V$, respectively, defined by 
    \begin{subequations}\label{eq:G_F_steady} 
    \begin{align}\label{eq:G_steady} 
        G(t,y)&\coloneqq I_{\phi}^{\Omega}(t,y)
        \,,\\
        F(t,v)&\coloneqq I_{\psi}^{\Omega}(t,v)-\langle f(t),v\rangle_{V}
        \,,\label{eq:F_steady} 
    \end{align} 
     \end{subequations}
    as well as the notation $L\coloneqq \nabla \colon V\to Y$, 
    the \emph{steady dual energy functional} $D(t,\cdot)\colon Y^*\to \mathbb{R}\cup\{-\infty\}$, for  every $y\in Y^*$, is defined by 
    \begin{align}\label{eq:dual_steady}
        D(t,y)\coloneqq -G^*(t,y)-F^*(t,-L^* y)\,, 
    \end{align} 
    where $G^*(t,\cdot)\colon \hspace{-0.1em}Y^*\hspace{-0.1em}\to\hspace{-0.1em} \mathbb{R}\cup\{+\infty\}$ and $F^*(t,\cdot)\colon \hspace{-0.1em}V^*\hspace{-0.1em}\to\hspace{-0.1em} \mathbb{R}\cup\{+\infty\}$ denote the Fenchel~conjugates~(with~respect to \hspace{-0.1mm}the \hspace{-0.1mm}second \hspace{-0.1mm}argument) \hspace{-0.1mm}of \hspace{-0.1mm}the \hspace{-0.1mm}integral \hspace{-0.1mm}functionals \hspace{-0.1mm}$G$ \hspace{-0.1mm}and \hspace{-0.1mm}$F$, \hspace{-0.1mm}and \hspace{-0.1mm}we \hspace{-0.1mm}refer \hspace{-0.1mm}to \hspace{-0.1mm}the~\hspace{-0.1mm}\mbox{problem}~\hspace{-0.1mm}that~\hspace{-0.1mm}seeks~\hspace{-0.1mm}to~\hspace{-0.1mm}maxi\-mize \hspace{-0.1mm}the \hspace{-0.1mm}(time-dependent) \hspace{-0.1mm}steady \hspace{-0.1mm}dual \hspace{-0.1mm}energy \hspace{-0.1mm}functional \hspace{-0.1mm}\eqref{eq:dual_steady} \hspace{-0.1mm}as \hspace{-0.1mm}the \hspace{-0.1mm}\emph{(time-dependent)~\hspace{-0.1mm}steady~\hspace{-0.1mm}dual~\hspace{-0.1mm}\mbox{problem}}.\enlargethispage{5mm}
    \begin{remark}[on well-posedness of \eqref{eq:primal_steady}]
        If Assumption \ref{ass:energy_densities}(\hyperlink{ass:energy_densities.i}{i}) is satisfied, 
        according to Remark~\ref{rem:integral_functionals_as_normal_integrands},~the spatial integral reductions (\textit{cf}.~Lemma~\ref{lem:integral_functionals_as_normal_integrands}) $I_{\phi}^{\Omega}\colon I\times Y\to \mathbb{R}\cup\{\pm\infty\}$ and $ I_{\psi}^{\Omega}\colon I\times V\to \mathbb{R}\cup\{\pm\infty\}$  are well-defined. Moreover, if, in addition, Assumption \ref{ass:energy_densities}(\hyperlink{ass:energy_densities.ii}{ii}) is satisfied,  according to Corollary \ref{cor:integral_functionals_as_normal_integrands}(\hyperlink{cor:integral_functionals_as_normal_integrands.i}{i}), $G\colon \hspace{-0.1em}I\hspace{-0.1em}\times\hspace{-0.1em} Y\hspace{-0.1em}\to\hspace{-0.1em} \mathbb{R}\cup\{+\infty\}$ and $F\colon \hspace{-0.1em}I\hspace{-0.1em}\times\hspace{-0.1em} V\hspace{-0.1em}\to\hspace{-0.1em} \mathbb{R}\cup\{+\infty\}$ are convex~normal~integrands.~In~\mbox{particular},~for~a.e.~$t\in I$, the functionals $G(t,\cdot)\colon  Y\to \mathbb{R}\cup\{+\infty\}$ and $F(t,\cdot)\colon  V\to \mathbb{R}\cup\{+\infty\}$ and, consequently, by the linearity and boundedness of $L=\nabla\colon V\to Y$, the steady primal energy functional $E(t,\cdot)\colon  V\to \mathbb{R}\cup\{+\infty\}$ are well-defined, proper, convex, and lower semi-continuous.
    \end{remark}

    From a numerical point of view, it is highly desirable that the steady dual energy functional~\eqref{eq:dual_steady} --similarly to the steady primal energy functional~\eqref{eq:primal_steady}-- admits an integral-functional representation.
    Such  representations can be approximated directly by means of numerical integration~and,~unlike~abstract dual-space representations, are localizable in space (time). This spatial~(temporal) localizability is particularly useful in the contexts of \textit{a posteriori} error analysis and~adaptive~\mbox{mesh-refinement}. To this end,
    we next record sufficient conditions~under~which, for sufficiently regular dual
    tensor fields, the steady dual energy functional \eqref{eq:dual_steady} admits an
    integral-functional representation.

    The \hspace{-0.1mm}integral-functional \hspace{-0.1mm}representation \hspace{-0.1mm}of \hspace{-0.1mm}$G^*\colon \hspace{-0.15em}I\hspace{-0.1em}\times\hspace{-0.1em} Y^*\hspace{-0.15em}\to\hspace{-0.15em} \mathbb{R}\cup\{+\infty\}$ \hspace{-0.1mm}follows \hspace{-0.1mm}directly \hspace{-0.1mm}from \hspace{-0.1mm}the~\hspace{-0.1mm}convex~conju\-gation formula for integral functionals (\textit{cf}.\ Lemma \ref{lem:conjugate_of_integral_functional}) and, therefore, requires~no~\mbox{additional}~\mbox{assumption}. More precisely,  if~\mbox{Assumption}~\ref{ass:energy_densities}
is satisfied, by Lemma \ref{lem:conjugate_of_integral_functional}, for a.e.\ $t\in I$ and every $y\in Y^*$,~we~have~that
\begin{align}\label{eq:G_prime_steady}
    G^*(t,y)=I_{\phi^*}^{\Omega}(t,y) 
    \,.
\end{align}

The integral-functional representation of $F^*\colon I\times V^*\to \mathbb{R}\cup\{+\infty\}$ is more delicate, since it
requires a convex conjugation formula that identifies $-L^*y$ with
$\operatorname{div} y$ for sufficiently regular dual tensor fields. This motivates the following additional assumption on the energy density
$\psi\colon Q\times\mathbb{R}^{\ell}\to\mathbb{R}\cup\{+\infty\}$,
under which the (Fenchel) dual problem may be posed on
$Y^*(\operatorname{div})$ rather than on $Y^*$ and the steady dual energy
functional \eqref{eq:dual_steady} likewise admits an integral-functional representation.

    \begin{assumption}[Convex conjugation formula (in space)]\label{ass:convex_conjugation}
    	The energy density  $\psi\colon Q\times \mathbb{R}^{\ell}\to \mathbb{R}\cup\{+\infty\}$ is  such that 
            for a.e.\ $t\in I$ and every $v^*\in V^*$, there holds the \emph{convex conjugation formula (in space)}
    		\begin{align}\label{eq:convex_conjugation_formula}
    			F^*(t,v
            ^*)=
                \begin{cases}
                   I_{\psi^*}^{\Omega}(t,w^*)&\text{if }
                     \iota_p^*w^*=v^*+f(t)\text{ in }V^*
                     \text{ for some }w^*\in L^{p'}(\Omega;\mathbb{R}^{\ell})\,,   
                    \\
                    +\infty&\text{else}\,,
                \end{cases}
    		\end{align} 
            where \hspace{-0.15mm}$\iota_p^*\colon \hspace{-0.175em}L^{p'}(\Omega;\mathbb{R}^{\ell})\hspace{-0.175em}\to\hspace{-0.175em} V^*$ \hspace{-0.15mm}is \hspace{-0.15mm}the \hspace{-0.15mm}adjoint \hspace{-0.15mm}operator \hspace{-0.15mm}to \hspace{-0.15mm}the \hspace{-0.15mm}identity \hspace{-0.15mm}mapping \hspace{-0.15mm}${\iota_p\hspace{-0.175em}\coloneqq \hspace{-0.175em}\operatorname{id}_{V\hspace{-0.1em}\to\hspace{-0.1em} L^{p}(\Omega;\mathbb{R}^{\ell})}\colon \hspace{-0.15em}V\hspace{-0.175em}\to\hspace{-0.175em} L^{p}(\Omega;\mathbb{R}^{\ell})}$.
    \end{assumption}

    The following lemma provides a general sufficient condition on the energy density $\psi\colon Q\times \mathbb{R}^{\ell}\to \mathbb{R}\cup\{+\infty\}$ that ensures the validity of the convex conjugation formula \eqref{eq:convex_conjugation_formula} in Assumption \ref{ass:convex_conjugation}.\vspace{-0.5mm}
    
    \begin{lemma}[Sufficient condition for Assumption \ref{ass:convex_conjugation}]\label{lem:sufficient_for_conjugation}
    Let the energy density $\psi\colon Q\times\mathbb{R}^{\ell}\to \mathbb{R}\cup\{+\infty\}$ be such that for a.e.\ $t\in I$, the spatial integral reduction $ I_{\psi}^{\Omega}(t,\cdot)\colon L^p(\Omega;\mathbb{R}^{\ell})\to \mathbb{R}$ 
        is well-defined, continuous, and bounded (\textit{i.e.}, maps bounded sets in $L^p(\Omega;\mathbb{R}^{\ell})$ into bounded~sets~in~$\mathbb{R}$).
        Then, for a.e.\ $t\in I$, the convex conjugation formula \eqref{eq:convex_conjugation_formula} in Assumption \ref{ass:convex_conjugation} applies.
    \end{lemma}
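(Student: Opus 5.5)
We fix a time $t\in I$ for which $I_{\psi}^{\Omega}(t,\cdot)\colon L^p(\Omega;\mathbb{R}^{\ell})\to\mathbb{R}$ is well-defined, continuous, and bounded. We write $\Psi\coloneqq I_{\psi}^{\Omega}(t,\cdot)$, so that $F(t,\cdot)=\Psi\circ\iota_p-\langle f(t),\cdot\rangle_V$. The argument is a composition rule for conjugates, $(\Psi\circ\iota_p)^*=\Psi^*\,\square\,\iota_p^*$, i.e., $F^*$ is an infimal postcomposition by $\iota_p^*$ whose infimum is attained, combined with Lemma~\ref{lem:conjugate_of_integral_functional}.

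\emph{Step 1 (shift by $f$).} For every $v^*\in V^*$ we have $F^*(t,v^*)=(\Psi\circ\iota_p)^*(v^*+f(t))$, because subtracting a linear functional in the primal shifts the argument of the conjugate.

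\emph{Step 2 (conjugate of $\Psi$).} By Remark~\ref{rem:integral_functionals_as_normal_integrands}, $\psi(t,\cdot,\cdot)$ is a normal integrand on $\Omega\times\mathbb{R}^{\ell}$, and $\operatorname{dom}(\Psi)=L^p(\Omega;\mathbb{R}^{\ell})\neq\emptyset$. Since $\mathbb{R}^{\ell}$ is reflexive, Lemma~\ref{lem:conjugate_of_integral_functional} (applied with $T=\Omega$, $X=\mathbb{R}^{\ell}$) gives $\Psi^*(w^*)=I_{\psi^*}^{\Omega}(t,w^*)$ for all $w^*\in L^{p'}(\Omega;\mathbb{R}^{\ell})$.

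\emph{Step 3 (composition rule with exact infimum).} We show that for every $v^*\in V^*$,
\begin{align*}
(\Psi\circ\iota_p)^*(v^*)=\min\bigl\{\Psi^*(w^*)\mid w^*\in L^{p'}(\Omega;\mathbb{R}^{\ell}),\ \iota_p^*w^*=v^*\bigr\}\,,
\end{align*}
with the convention $\min\emptyset=+\infty$.

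The inequality ``$\le$'' is elementary. If $\iota_p^*w^*=v^*$, then $\langle v^*,v\rangle_V-\Psi(\iota_pv)=(w^*,\iota_pv)_\Omega-\Psi(\iota_pv)\le\Psi^*(w^*)$ for every $v\in V$.

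For ``$\ge$'' and attainment we use Fenchel--Rockafellar duality (\textit{cf}.\ \cite[Ch.~III, Thm.~4.1]{EkelandTemam1999}). Write $(\Psi\circ\iota_p)^*(v^*)=-\inf_{v\in V}\{\Psi(\iota_pv)-\langle v^*,v\rangle_V\}$. This is a primal problem with the linear functional $-\langle v^*,\cdot\rangle_V$ on $V$, the convex function $\Psi$ on $L^p(\Omega;\mathbb{R}^{\ell})$, and the bounded linear operator $\iota_p$. The qualification condition holds because $\Psi$ is convex, finite, and continuous on all of $L^p(\Omega;\mathbb{R}^{\ell})$, in particular at $\iota_p0$. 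Continuity is where the hypothesis is used; boundedness is a convenient way of ensuring local boundedness, hence continuity, should only lower semi-continuity be available. Under this condition, strong duality holds and the dual problem $\sup\{-\Psi^*(w^*)\mid\iota_p^*w^*=v^*\}$ attains its value whenever the primal value is finite.

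If the primal value is $-\infty$, then $(\Psi\circ\iota_p)^*(v^*)=+\infty$, and the inequality ``$\le$'' already forces $\Psi^*(w^*)=+\infty$ for every feasible $w^*$. Hence both sides equal $+\infty$, which covers both the case of no feasible $w^*$ and the case where all feasible $w^*$ have infinite conjugate.

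\emph{Step 4 (conclusion).} Combining Steps 1--3 gives
\begin{align*}
F^*(t,v^*)=\min\bigl\{I_{\psi^*}^{\Omega}(t,w^*)\mid \iota_p^*w^*=v^*+f(t)\bigr\}\,.
\end{align*}
To reach exactly the form \eqref{eq:convex_conjugation_formula}, which is stated through ``some $w^*$'', we need $\iota_p^*$ to be injective, so that the constraint determines $w^*$ uniquely and the minimum is simply the value at that $w^*$. Injectivity holds because $\iota_p(V)\supseteq C_c^\infty(\Omega;\mathbb{R}^{\ell})$ is dense in $L^p(\Omega;\mathbb{R}^{\ell})$, so $\iota_p^*w^*=0$ forces $w^*=0$.

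The main obstacle is Step 3, namely verifying the hypotheses of the Fenchel--Rockafellar theorem carefully. This includes checking that $\Psi$ is proper, convex, and continuous at a point of $\iota_p(V)$, and handling the degenerate cases where $(\Psi\circ\iota_p)^*(v^*)=+\infty$.
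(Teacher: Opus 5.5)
Your proof is correct, but it does not follow the paper's route, because the paper gives no argument of its own: it only refers to \cite[Lem.~3.5]{BartelsKaltenbach2026}.

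\emph{What your argument does.} It is self-contained and uses only tools already present in the paper:
\begin{itemize}
\item[(i)] Rockafellar's conjugation formula (Lemma~\ref{lem:conjugate_of_integral_functional}), which identifies $\Psi^*$;
\item[(ii)] the Fenchel--Rockafellar theorem of \cite{EkelandTemam1999}, which gives the chain rule with attainment, $(\Psi\circ\iota_p)^*(v^*)=\min\{\Psi^*(w^*)\mid \iota_p^*w^*=v^*\}$;
\item[(iii)] injectivity of $\iota_p^*$, which lets you pass from the minimum to the ``some $w^*$'' form of the formula.
\end{itemize}

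\emph{What your route buys.} It needs only that $\Psi$ is proper, convex, and continuous at some point of $\iota_p(V)$, for instance at $0$. The hypothesis that $\Psi$ maps bounded sets to bounded sets is never used. The reason is that dual attainment already forces $v^*+f(t)\in\operatorname{ran}\iota_p^*$ whenever $F^*(t,v^*)<+\infty$.

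\emph{The more elementary alternative.} This mirrors how the paper uses Assumption~\ref{ass:sufficient_for_conjugation} in the proof of Theorem~\ref{thm:duality}(i), and it splits into two cases.
\begin{itemize}
\item[(a)] If $v^*+f(t)$ is continuous for the $L^p$-norm on $V$, it extends to some $w^*\in L^{p'}(\Omega;\mathbb{R}^{\ell})$, uniquely by density of $V$ in $L^p(\Omega;\mathbb{R}^{\ell})$. Continuity of $\Psi$ plus density then lets you replace $\sup_{v\in V}$ by $\sup_{w\in L^p(\Omega;\mathbb{R}^{\ell})}$, which gives $\Psi^*(w^*)$.
\item[(b)] Otherwise, there are $v_n\in V$, bounded in $L^p$, with $\langle v^*+f(t),v_n\rangle_V\to+\infty$. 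Boundedness of $\Psi$ then yields $F^*(t,v^*)=+\infty$.
\end{itemize}
That route avoids the duality theorem, and it is exactly where boundedness enters (boundedness of $\Psi$ near a single point would suffice).

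\emph{Bookkeeping.} Two small points in your write-up should be made explicit. Convexity of $\Psi$ comes from the standing Assumption~\ref{ass:energy_densities}(i.b), not from the hypotheses of the lemma. The full-measure set of times you fix should also be intersected with the one from Remark~\ref{rem:integral_functionals_as_normal_integrands}.
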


    \begin{proof}
        See \cite[Lem.\ 3.5]{BartelsKaltenbach2026}.
    \end{proof}

\begin{remark}[on Assumption \ref{ass:convex_conjugation}]
        By the Krasnoselskii theorem (\textit{cf}.\ \cite[Prop.\ 1.1]{Krasnoselskii1964}), Assumption \ref{ass:convex_conjugation}  is 
        satisfied if for a.e.\ $t\in I$, the energy density $\psi(t,\cdot,\cdot)\colon \Omega\times\mathbb{R}^{\ell}\to \mathbb{R}$ is a Carath\'eodory integrand and, for every $v\in L^p(\Omega;\mathbb{R}^{\ell})$, there holds $\psi(t,\cdot,v)\in L^1(\Omega;\mathbb{R}^1)$.
    \end{remark}
    
    If, in addition, Assumption \ref{ass:convex_conjugation} is satisfied and $f=\iota_p^*f_0$ for some $f_0\in L^{p'}(Q;\mathbb{R}^{\ell})$, then,~for~a.e.~$t\in I$ and every $y\in Y^*$, we have that
    \begin{align}\label{eq:integral_representation_F_prime}
        F^*(t,-L^* y)= \begin{cases}
            I_{\psi^*}^{\Omega}(t,\operatorname{div}y+f_0(t))
            &\text{ if }y\in Y^*(\operatorname{div})\,,\\
            +\infty &\text{ else}\,,
        \end{cases}
    \end{align}
    so that it is sufficient to consider the restricted steady dual functional $D(t,\cdot)\colon Y^*(\operatorname{div})\to \mathbb{R}\cup\{-\infty\}$,  for every $y\in Y^*(\operatorname{div})$ given via
    \begin{align}\label{eq:integral_representation_D}
        D(t,y)= -I_{\phi^*}^{\Omega}(t,y)-I_{\psi^*}^{\Omega}(t,\operatorname{div}y+f_0(t))
        \,.
    \end{align}

    \subsection{Fenchel duality framework for subgradient flows induced by convex integral functionals}\label{subsec:primal_problem}

    \hspace*{5mm}In this subsection, we derive a Fenchel duality framework for a broad class of subgradient flows induced by convex integral functionals based on the Br\'ezis--Ekeland--Nayroles~\mbox{principle}~(\textit{cf}.~\mbox{\cite{BrezisEkeland1976I,BrezisEkeland1976II}}~and~\mbox{\cite{Nayroles1976I,Nayroles1976II}}). If Assumption \ref{ass:energy_densities} is satisfied, for a given right-hand side $f\in L^{p'}(I;V^*)$ and~a~given~initial~datum~${u_0\in H}$, the Br\'ezis--Ekeland--Nayroles principle  characterizes~a~solution~$u\in \mathcal{W}(I)$ 
    of the subgradient flow induced by the (time-dependent) family of steady primal energy functionals 
    \eqref{eq:primal_steady},~\textit{i.e.},
    \begin{subequations}\label{eq:subgradient_flow}
        \begin{alignat}{2}
        \label{eq:subgradient_flow.1}
            \partial_tu(t)+\partial_v E(t,u(t))&\ni 0_{V^*}&&\quad \text{ in }V^*\quad\text{ for a.e.\ }t\in I\,,\\
            u(0)&=u_0&&\quad\text{ in }H\,,\label{eq:subgradient_flow.2}
        \end{alignat}
    \end{subequations} 
    as a minimizer of the \emph{Br\'ezis--Ekeland--Nayroles energy functional}
       $\mathcal{E}\colon  \mathcal{W}(I)\to \mathbb{R} \cup \{+\infty\}$, for every $v\in \mathcal{W}(I)$ defined by
\begin{align}\label{eq:primal}
    \mathcal{E}(v)&\coloneqq I_{E}(v)+I_{E^*}(-\partial_t v)+ \tfrac{1}{2} \|v(t_{\mathtt{fin}})\|^2_H + \chi_{\{u_0\}} (v(0))\,,
\end{align} 
to which we refer as the \emph{unsteady primal energy functional} in the context of Fenchel duality theory. Here,
the indicator functional $\smash{\chi_{\{u_0\}}}\colon H\to \mathbb{R}\cup\{+\infty\}$, for every $\widehat{v}\in H$, is defined by
\begin{align*}
    \chi_{\{u_0\}}(\widehat{v})\coloneqq \begin{cases}
        0&\text{ if }\widehat{v}=u_0\text{ in }H\,,\\
        +\infty&\text{ else}\,.
    \end{cases}
\end{align*}
Moreover,  for a.e.\ fixed time $t\in I$, the functional $E^*(t,\cdot) \colon V^*\to \mathbb{R}\cup\{+\infty\}$ denotes the Fenchel conjugate (with respect~to~the~second~argument) of the steady primal energy functional $E(t,\cdot) \colon V\to \mathbb{R}\cup\{+\infty\}$.\linebreak
Rather than imposing further restrictive assumptions on the energy densities $\phi$ and $\psi$~merely~to~ensure~sol\-vability \hspace{-0.1mm}of \hspace{-0.1mm}\eqref{eq:subgradient_flow}, \hspace{-0.1mm}we \hspace{-0.1mm}assume \hspace{-0.1mm}throughout \hspace{-0.1mm}that \hspace{-0.1mm}the \hspace{-0.1mm}subgradient-flow \hspace{-0.1mm}problem \hspace{-0.1mm}\eqref{eq:subgradient_flow}~\hspace{-0.1mm}admits~\hspace{-0.1mm}a~\hspace{-0.1mm}\mbox{solution}~\hspace{-0.1mm}${u\hspace{-0.175em}\in\hspace{-0.175em}\mathcal{W}(I)}$.
By Proposition \ref{prop:brezis_ekeland_nayroles} below, $u$ is a minimizer of \eqref{eq:primal} and we refer~to~it~as~the~\emph{primal~solution}.

Whereas it is more apparent for the integrand $E\colon I\times V\to \mathbb{R}\cup\{+\infty\}$~that~the~\mbox{associated} integral functional $I_E\colon L^p(I;V)\to \mathbb{R}\cup\{+\infty\}$ is well-defined, proper, convex, and lower semi-continuous, this is less obvious for the integral functional  $I_{E^*}\colon L^p(I;V^*)\to \mathbb{R}\cup\{+\infty\}$ associated with the Fenchel~conjugate (with respect to second argument) $E^*\colon I\times V^*\to \mathbb{R}\cup\{+\infty\}$. 

\begin{remark}[on the well-posedness of \eqref{eq:primal}]\label{rem:well-posedness}
    If Assumption \ref{ass:energy_densities} is satisfied, by Corollary~\ref{cor:integral_functionals_as_normal_integrands}(\hyperlink{cor:integral_functionals_as_normal_integrands.i}{i}), the integrands $G\colon I\times Y\to \mathbb{R}\cup\{+\infty\}$ and $F\colon I\times V\to \mathbb{R}\cup\{+\infty\}$ are convex normal~integrands, which, by the continuity of $L=\nabla\colon V\to Y$, implies that  $E\colon I\times V\to \mathbb{R}\cup\{+\infty\}$ is a convex normal integrand.\linebreak As a consequence, due to Lemma \ref{lem:convex_normal_integrands}(\hyperlink{lem:convex_normal_integrands.ii}{ii}), the Fenchel conjugate (with respect to the second argument) $E^*\colon I\times V^*\to \mathbb{R}\cup\{+\infty\}$ is a convex~\mbox{normal}~\mbox{integrand}~as~well.~In~\mbox{particular}, Assumption~\ref{ass:energy_densities}(\hyperlink{ass:energy_densities.ii}{ii}), Corollary~\ref{cor:integral_functionals_as_normal_integrands}(\hyperlink{cor:integral_functionals_as_normal_integrands.ii}{ii}), the continuous embedding $L^p(I;V)\hookrightarrow L^p(Q;\mathbb{R}^\ell)$, and the~continuous~affine~\mbox{perturbation} by $f\in L^{p'}(I;V^*)$ imply that $I_G\in \Gamma_0(L^p(I;Y))$ and $I_F\in\Gamma_0(L^p(I;V))$. Since $u^\star\in\operatorname{dom}(I_G\circ\nabla)\cap\operatorname{dom}(I_F)$, it follows that $I_E=I_G\circ\nabla+I_F\in\Gamma_0(L^p(I;V))$, and Lemma~\ref{lem:conjugate_of_integral_functional} yields $I_{E^*}=(I_E)^*\in\Gamma_0(L^{p'}(I;V^*))$. 
    If, in addition, there exists $u^\sharp\in \operatorname{dom}(I_G\circ \nabla)\cap \operatorname{dom}(I_F)$ such that $I_G\circ \nabla$ or $I_F$ is continuous at $u^\sharp$,~then, for every $v^*\in L^{p'}(I;V^*)$, $I_{E^*}$ is given via the infimal convolution (\textit{cf}.~\cite[Thm.~9.4.1]{AttouchButtazzoMichaille2014}~and~Lemma~\ref{lem:conjugate_of_integral_functional})
\begin{align}\label{eq:E_prime_steady}
   I_{E^*}(v^*)= (I_{E})^*(v^*)=((I_G\circ\nabla)^*\square(I_F)^*)(v^*)=\inf_{y \in L^{p'}(I;Y^*)}{
\bigl\{
I_{G^*}(y)+I_{F^*}(v^*-L^*y) 
\bigr\}}\,.
\end{align}
\end{remark}

For the sake of completeness, 
we recall the Br\'ezis--Ekeland--Nayroles~principle (\textit{cf}.~\cite{BrezisEkeland1976I,BrezisEkeland1976II}~and~\mbox{\cite{Nayroles1976I,Nayroles1976II}}). 

\begin{proposition}
\label{prop:brezis_ekeland_nayroles}
    A function $u\in \mathcal{W}(I)$ solves \eqref{eq:subgradient_flow} if and only~if $\smash{\mathcal{E}(u)=\tfrac{1}{2}\|u_0\|_H^2=\operatorname{min}_{\smash{v\in \mathcal{W}(I)}}{\{\mathcal{E}(v)\}}}$. 
\end{proposition}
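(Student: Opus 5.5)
The plan is to combine the pointwise Fenchel--Young inequality for $E(t,\cdot)$ with the temporal integration-by-parts formula \eqref{subsubsec:function_spaces_unsteady.3}.

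\emph{Step 1: lower bound.} First I show that $\mathcal{E}(v)\ge \tfrac12\|u_0\|_H^2$ for every $v\in\mathcal{W}(I)$. If $\mathcal{E}(v)=+\infty$ there is nothing to prove, so assume $\mathcal{E}(v)<+\infty$. Then $v(0)=u_0$, and $I_E(v)$ and $I_{E^*}(-\partial_t v)$ are finite; here $I_E>-\infty$ and $I_{E^*}>-\infty$ by Remark~\ref{rem:well-posedness}. By Fenchel--Young, for a.e.\ $t\in I$,
\begin{align*}
E(t,v(t))+E^*(t,-\partial_t v(t))\ge -\langle \partial_t v(t),v(t)\rangle_V\,.
\end{align*}
The right-hand side lies in $L^1(I)$, since $v\in L^p(I;V)$ and $\partial_t v\in L^{p'}(I;V^*)$. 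Integrating over $I$ and applying \eqref{subsubsec:function_spaces_unsteady.3} with $w=v$, $t'=0$, $t=t_{\mathtt{fin}}$ gives
\begin{align*}
\int_I\langle \partial_t v,v\rangle_V\,\mathrm{d}s=\tfrac12\|v(t_{\mathtt{fin}})\|_H^2-\tfrac12\|u_0\|_H^2\,.
\end{align*}
Hence $I_E(v)+I_{E^*}(-\partial_tv)\ge \tfrac12\|u_0\|_H^2-\tfrac12\|v(t_{\mathtt{fin}})\|_H^2$, which is exactly $\mathcal{E}(v)\ge\tfrac12\|u_0\|_H^2$.

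A measure-theoretic point must be checked here. The Fenchel--Young inequality holds pointwise with values in $\mathbb{R}\cup\{+\infty\}$. The sum of the two integrals equals the integral of the sum because the negative parts are integrable: the pointwise sum is bounded below by an $L^1$ function, and each of $I_E(v)$, $I_{E^*}(-\partial_t v)$ is $>-\infty$.

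\emph{Step 2: solutions attain the bound.} Suppose $u$ solves \eqref{eq:subgradient_flow}. Then $-\partial_t u(t)\in\partial E(t,u(t))$ for a.e.\ $t$, so the Fenchel--Young inequality holds with equality a.e.:
\begin{align*}
E(t,u(t))+E^*(t,-\partial_tu(t))=-\langle\partial_tu(t),u(t)\rangle_V\,.
\end{align*}
The right-hand side is in $L^1(I)$, and since both $E(t,u(t))$ and $E^*(t,-\partial_t u(t))$ are bounded below by integrable functions, each is integrable. The computation of Step 1 then gives $\mathcal{E}(u)=\tfrac12\|u_0\|_H^2$. Combined with Step 1, this shows that $\tfrac12\|u_0\|_H^2$ is the minimum.

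\emph{Step 3: attaining the bound gives a solution.} Suppose $\mathcal{E}(u)=\tfrac12\|u_0\|_H^2$. Then $u(0)=u_0$, and the identity from Step 1 shows that
\begin{align*}
\int_I\Bigl(E(t,u)+E^*(t,-\partial_tu)+\langle\partial_tu,u\rangle_V\Bigr)\,\mathrm{d}t=0\,.
\end{align*}
The integrand is non-negative a.e., so it vanishes a.e. The equality case of Fenchel--Young, valid since $E(t,\cdot)\in\Gamma_0(V)$ for a.e.\ $t$, yields $-\partial_tu(t)\in\partial_vE(t,u(t))$ for a.e.\ $t$, which is \eqref{eq:subgradient_flow}.

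The main obstacle is the integrability bookkeeping in Steps 1 and 2: justifying that the integral functionals add, and that no expression of the form $\infty-\infty$ occurs. I would handle this using the $L^1$ lower bound $-\langle\partial_t v,v\rangle_V$ together with $I_E,I_{E^*}>-\infty$ from Lemma~\ref{lem:lower_compactness_propery}(ii) and Remark~\ref{rem:well-posedness}.
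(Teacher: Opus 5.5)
Your proof is correct and follows the paper's route. Like the paper, you use temporal integration by parts \eqref{subsubsec:function_spaces_unsteady.3} to write $\mathcal{E}(v)-\tfrac12\|u_0\|_H^2$ (for $v(0)=u_0$) as the time integral of the pointwise Fenchel--Young gap $E(t,v)+E^*(t,-\partial_t v)+\langle\partial_t v,v\rangle_V$, and then conclude from its non-negativity and the equality case. You also supply the explicit lower bound $\mathcal{E}\ge\tfrac12\|u_0\|_H^2$ and the integrability bookkeeping (properness of $I_E$, $I_{E^*}$ and the $L^1$ minorants from Lemma~\ref{lem:lower_compactness_propery}(ii)), which the paper leaves implicit.
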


\begin{proof}
    Using the integration-by-parts formula in time \eqref{subsubsec:function_spaces_unsteady.3},  for every $v\in \mathcal{W}(I)$, we find that
    \begin{align}\label{prop:brezis_ekeland_nayroles.1}
        \mathcal{E}(v)=\smash{\bigl(I_{E}(v)+\langle \partial_t v,v\rangle_{L^p(I;V)}+I_{E^*}(-\partial_t v)\bigr)}+\tfrac{1}{2}\|u_0\|_H^2+\chi_{\{u_0\}}(v(0))\,.
    \end{align}
    Therefore, for a function $u\in \mathcal{W}(I)$, we have that $\mathcal{E}(u)=\tfrac{1}{2}\|u_0\|_H^2$ if and only~if 
    \begin{subequations} \label{prop:brezis_ekeland_nayroles.2}
    \begin{alignat}{2}
        E(t,u(t))+\langle \partial_t u(t),u(t)\rangle_V+E^*(t,-\partial_t u(t))&=0&&\quad\text{ for a.e.\ }t\in I\,,\label{prop:brezis_ekeland_nayroles.2.1}\\
        u(0)&=u_0&&\quad\text{ in }H\,,\label{prop:brezis_ekeland_nayroles.2.2}
    \end{alignat} 
    \end{subequations}
    where, by the equality condition in the Fenchel--Young inequality (\textit{cf}.\ \cite[Prop.\ 51.2]{Zeidler1985III}), \eqref{prop:brezis_ekeland_nayroles.2.1}~is equivalent to \eqref{eq:subgradient_flow.1}.
\end{proof}\newpage

Under the following additional assumption, a (Fenchel) dual problem (in the sense of \cite[Rem.~4.2, p.~60/61]{EkelandTemam1999}) to the minimization of \eqref{eq:primal} is also given via the maximization~of~a~time~integral~\mbox{functional}.\vspace{-0.5mm}

\begin{assumption}\label{ass:sufficient_for_conjugation}
    The energy density $\psi\colon Q\times\mathbb{R}^{\ell}\to \mathbb{R}\cup\{+\infty\}$ is  such that the integral functional $I_F\colon \hspace{-0.1em}L^p(I;V)\hspace{-0.1em}\to\hspace{-0.1em} \mathbb{R}$ (associated with the convex normal integrand $F\colon \hspace{-0.1em} I\times V\hspace{-0.1em}\to\hspace{-0.1em}\mathbb{R}$)~is~\mbox{continuous}~and~bounded (\textit{i.e.}, maps bounded sets in $L^p(I;V)$ into bounded~sets~in~$\mathbb{R}$).
\end{assumption}

If  Assumptions \ref{ass:energy_densities} and \ref{ass:sufficient_for_conjugation} are satisfied, the (Fenchel) dual problem to the minimization of \eqref{eq:primal} consists in the maximization of the \emph{unsteady dual energy functional}
 $\mathcal{D}\colon L^{p'}(I;Y^*) \times \mathcal{W}(I) \to\mathbb{R} \cup \{- \infty\}$,
 for every $(y,\lambda)\in L^{p'}(I;Y^*) \times \mathcal{W}(I)$ defined by\vspace{-0.5mm}
\begin{align}\label{eq:dual}
\begin{aligned}
    \mathcal{D}(y,\lambda)\coloneqq -I_{G^*}(y)-I_{F^*}(-L^* y-\partial_t\lambda)-I_{E}(\lambda)
           -\tfrac{1}{2}\|\lambda(t_{\mathtt{fin}})\|_H^2+(\lambda(0),u_0)_H\,,
           \end{aligned}\\[-6mm]\notag
\end{align}
which \hspace{-0.1mm}is \hspace{-0.1mm}established \hspace{-0.1mm}along \hspace{-0.1mm}with \hspace{-0.1mm}the \hspace{-0.1mm}existence \hspace{-0.1mm}of \hspace{-0.1mm}a \hspace{-0.1mm}maximizer \hspace{-0.1mm}$(z,\mu)\hspace{-0.15em}\in\hspace{-0.15em} \smash{L^{p'}}(I;Y^*)\hspace{-0.1em}\times\hspace{-0.1em} \mathcal{W}(I)$, \hspace{-0.1mm}called~\hspace{-0.1mm}\emph{dual~\hspace{-0.1mm}solution}, a strong duality relation, and  optimality inclusions in the following theorem.\vspace{-0.5mm}

\begin{theorem}[Identification of dual problem, strong duality, and  optimality inclusions]\label{thm:duality} Let Assumptions \ref{ass:energy_densities} and \ref{ass:sufficient_for_conjugation} be satisfied. Then, the following statements apply:
		\begin{itemize}[noitemsep,topsep=2pt,leftmargin=!,labelwidth=\widthof{(iii)}]
			\item[(i)]\hypertarget{thm:duality.i}{}  A (Fenchel) 
            dual problem to the minimization of \eqref{eq:primal} is given via the \mbox{maximization}~of~\eqref{eq:dual};  
			\item[(ii)]\hypertarget{thm:duality.ii}{}  If  there exists $u^\dagger\in \mathcal{W}(I)$ with $u^\dagger(0)=u_0$ in $H$ such that            
            $((y,\lambda)\mapsto I_{G}(y)+I_{
            E^*}(\lambda))\colon L^{p}(I;Y)\times L^{p'}(I;V^*)\to \mathbb{R}\cup\{+\infty\}$ is continuous at $(\nabla u^\dagger,-\partial_t u^\dagger)$,~then~a~dual~solution $(z,\mu)\in \operatorname{dom}(-\mathcal{D})$, \textit{i.e.}, a maximizer of \eqref{eq:dual}, exists and a \emph{strong duality relation} applies, \textit{i.e.}, we have that\vspace{-0.5mm}
            \begin{align}\label{thm:duality.1}
                \mathcal{E}(u)=\tfrac{1}{2}\|u_0\|_H^2=\mathcal{D}(z,\mu)\,,\\[-6mm]\notag
            \end{align}
            which is equivalent to the \emph{optimality inclusions}\vspace{-0.5mm}
            \begin{subequations}\label{thm:duality.2} 
            \begin{alignat}{3}\label{thm:duality.2.1} 
            z(t)&\in \partial_y G(t,\nabla u(t))&&\quad\text{ in }Y^*&&\quad\text{ for a.e.\ }t\in I\,,\\\label{thm:duality.2.2} 
            -\partial_t u(t)&\in  \partial_v E(t,\mu(t))&&\quad\text{ in }V^*&&\quad\text{ for a.e.\ }t\in I\,,\\\label{thm:duality.2.3} 
            -\partial_t\mu(t)+\operatorname{div}z(t)&\in \partial_vF(t,u(t))&&\quad\text{ in }V^*&&\quad\text{ for a.e.\ }t\in I\,,\\
            u(t_{\mathtt{fin}})&=\mu(t_{\mathtt{fin}})&&\quad\text{ in }H\,.\label{thm:duality.2.4} \\[-6mm]\notag
            \end{alignat}
            \end{subequations}
		\end{itemize}
\end{theorem}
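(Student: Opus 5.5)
The plan is to put the minimization of \eqref{eq:primal} into the abstract Ekeland--Temam form $\inf_{v}\{\mathcal{G}(\Lambda v)+\Phi(v)\}$ (\textit{cf}.\ \cite[Rem.~4.2, p.~60/61]{EkelandTemam1999}). The ingredients are:
\begin{align*}
\Lambda v&\coloneqq (\nabla v,-\partial_t v)\in L^p(I;Y)\times L^{p'}(I;V^*)\,,\qquad \mathcal{G}(y,\eta)\coloneqq I_G(y)+I_{E^*}(\eta)\,,\\
\Phi(v)&\coloneqq I_F(v)+\tfrac12\|v(t_{\mathtt{fin}})\|_H^2+\chi_{\{u_0\}}(v(0))\,,
\end{align*}
where $\Lambda$ acts on $\mathcal{W}(I)$. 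Then $\mathcal{E}(v)=\mathcal{G}(\Lambda v)+\Phi(v)$, since $I_E(v)=I_G(\nabla v)+I_F(v)$. The abstract dual is to maximize $-\mathcal{G}^*(y,\lambda)-\Phi^*(-\Lambda^*(y,\lambda))$ over $(y,\lambda)\in L^{p'}(I;Y^*)\times L^{p}(I;V)$.

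\emph{Identifying $\mathcal{G}^*$.} This conjugate splits. The first part is $(I_G)^*=I_{G^*}$ by Lemma~\ref{lem:conjugate_of_integral_functional}. The second part is $(I_{E^*})^*=(I_E)^{**}=I_E$, because $I_E\in\Gamma_0(L^p(I;V))$ and $I_{E^*}=(I_E)^*$ (Remark~\ref{rem:well-posedness}).

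\emph{Identifying $\Phi^*(-\Lambda^*(y,\lambda))$.} This is the core computation of (i). By definition it equals
\[
\sup_{v\in\mathcal{W}(I),\,v(0)=u_0}\bigl\{-\langle y,\nabla v\rangle+\langle\partial_t v,\lambda\rangle-I_F(v)-\tfrac12\|v(t_{\mathtt{fin}})\|_H^2\bigr\}\,.
\]
I will treat two cases.
\begin{itemize}[noitemsep,topsep=2pt]
\item[$\bullet$] If $\lambda\in\mathcal{W}(I)$, I integrate by parts with \eqref{subsubsec:function_spaces_unsteady.3}. The expression inside the supremum becomes
\[
\langle -L^*y-\partial_t\lambda,v\rangle-I_F(v)+(v(t_{\mathtt{fin}}),\lambda(t_{\mathtt{fin}}))_H-\tfrac12\|v(t_{\mathtt{fin}})\|_H^2-(u_0,\lambda(0))_H\,.
\]
The key claim is that the supremum decouples into a supremum over $v\in L^p(I;V)$ and a separate supremum over $h=v(t_{\mathtt{fin}})\in H$. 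For fixed $h$, the set $\{v\in\mathcal{W}(I)\mid v(0)=u_0,\ v(t_{\mathtt{fin}})=h\}$ is dense in $L^p(I;V)$; I would show this by modifying $v$ on thin layers near $t=0$ and $t=t_{\mathtt{fin}}$, for $h$ in a dense subset of $H$ and then extending to all $h$ by approximation. By Assumption~\ref{ass:sufficient_for_conjugation}, $I_F$ is continuous on $L^p(I;V)$, so the constrained supremum equals the free one. This yields
\[
\Phi^*(-\Lambda^*(y,\lambda))=I_{F^*}(-L^*y-\partial_t\lambda)+\tfrac12\|\lambda(t_{\mathtt{fin}})\|_H^2-(u_0,\lambda(0))_H\,,
\]
using Lemma~\ref{lem:conjugate_of_integral_functional} for $(I_F)^*=I_{F^*}$. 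Combined with $\mathcal{G}^*$, this is exactly $-\mathcal{D}(y,\lambda)$ as in \eqref{eq:dual}.
\item[$\bullet$] If $\lambda\notin\mathcal{W}(I)$, the map $\partial_t v\mapsto\langle\partial_t v,\lambda\rangle$ is unbounded along suitable test directions. Taking $v=u_0$-compatible plus $s\varphi$ with $\varphi$ compactly supported in time, and using that $I_F$ is bounded on bounded sets, the supremum is $+\infty$. Hence such $\lambda$ contribute the value $-\infty$ and may be excluded, which restricts the dual domain to $\mathcal{W}(I)$.
\end{itemize}
I expect this decoupling and density argument, i.e.\ matching the endpoint constraints with continuity of $I_F$, to be the main obstacle.

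\emph{Proof of (ii).} I apply the Fenchel--Rockafellar theorem \cite[Thm.~III.4.1]{EkelandTemam1999}. Its qualification condition holds: $\Phi(u^\dagger)<\infty$, because $I_F$ is finite everywhere and $u^\dagger(0)=u_0$, and $\mathcal{G}$ is continuous at $\Lambda u^\dagger$ by hypothesis. The theorem gives $\inf\mathcal{E}=\max\mathcal{D}$ with a maximizer $(z,\mu)$. Proposition~\ref{prop:brezis_ekeland_nayroles} identifies the common value as $\tfrac12\|u_0\|_H^2$, which proves \eqref{thm:duality.1}.

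\emph{Optimality inclusions.} I expand $\mathcal{E}(u)-\mathcal{D}(z,\mu)$, integrate $\langle\partial_t u,\mu\rangle$ by parts, and use $u(0)=u_0$. The gap then becomes a sum of four non-negative terms, where $-L^*z=\operatorname{div}z$:
\begin{align*}
&\bigl[I_G(\nabla u)+I_{G^*}(z)-\langle z,\nabla u\rangle\bigr]+\bigl[I_E(\mu)+I_{E^*}(-\partial_tu)+\langle\partial_tu,\mu\rangle\bigr]\\
&\quad+\bigl[I_F(u)+I_{F^*}(\operatorname{div}z-\partial_t\mu)-\langle\operatorname{div}z-\partial_t\mu,u\rangle\bigr]+\tfrac12\|u(t_{\mathtt{fin}})-\mu(t_{\mathtt{fin}})\|_H^2\,.
\end{align*}
Each bracket is a time integral of a pointwise non-negative Fenchel--Young defect. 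Therefore the gap vanishes if and only if each defect vanishes for a.e.\ $t\in I$ and $u(t_{\mathtt{fin}})=\mu(t_{\mathtt{fin}})$ in $H$. By the equality case of the Fenchel--Young inequality, this is exactly \eqref{thm:duality.2.1}--\eqref{thm:duality.2.4}.
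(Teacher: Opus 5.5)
Your proposal is correct and follows essentially the same route as the paper. It uses the same splitting $\mathcal{E}=\mathcal{G}\circ\mathcal{L}+\mathcal{F}$, the same identification $\mathcal{G}^*(y,\lambda)=I_{G^*}(y)+I_E(\lambda)$, and the same case distinction $\lambda\in\mathcal{W}(I)$ versus $\lambda\notin\mathcal{W}(I)$, resolved by time integration by parts plus a density-and-continuity decoupling; your fiberwise density of $\{v\in\mathcal{W}(I)\mid v(0)=u_0,\ v(t_{\mathtt{fin}})=h\}$ plays the role of Lemma~\ref{lem:dense_range}. It also uses the same four-term Fenchel--Young decomposition of the duality gap to obtain the optimality inclusions.

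One precision in the case $\lambda\notin\mathcal{W}(I)$: you should take a sequence $\{\varphi_n\}_{n\in\mathbb{N}}$ bounded in $L^p(I;V)$ with $\langle\partial_t\varphi_n,\lambda\rangle\to+\infty$, which exists precisely because $\lambda\notin\mathcal{W}(I)$. A fixed $\varphi$ scaled by $s\to\infty$ does not work, because boundedness of $I_F$ on bounded sets only controls the remaining terms along bounded perturbations.
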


\begin{remark}[on Theorem \ref{thm:duality}]\label{rem:duality} 

The second component $\mu$ of a dual solution $(z,\mu)$ is not an
additional physical state variable. Instead, it is the Fenchel variable
conjugate to the second component of the parabolic differential operator
$\mathcal{L}\coloneqq
(v\mapsto(\nabla v,-\partial_t v))
\in\mathscr{L}(\mathcal{W}(I);
L^p(I;Y)\times L^{p'}(I;V^*))$
(\textit{cf}.~\eqref{thm:duality.4} below).
Thus, the terminal condition~\eqref{thm:duality.2.4} is the natural
boundary condition arising from integration by parts in time on
$\mathcal{W}(I)$. In this sense, $\mu$ has an adjoint-like character,
although it is not an adjoint state arising from a separate
optimal-control objective. The optimality inclusion~\eqref{thm:duality.2.2},~together~with~\eqref{eq:subgradient_flow.1}, shows \hspace{-0.1mm}that \hspace{-0.1mm}$u(t),\mu(t)\hspace{-0.15em}\in\hspace{-0.15em}\partial_{v^*} E^*(t,-\partial_tu(t))$
\hspace{-0.1mm}for \hspace{-0.1mm}a.e.\ \hspace{-0.1mm}$t\hspace{-0.15em}\in\hspace{-0.15em} I$.
\hspace{-0.1mm}In \hspace{-0.1mm}particular,~\hspace{-0.1mm}$\mu\hspace{-0.15em}=\hspace{-0.15em}u$~\hspace{-0.1mm}\mbox{whenever}~\hspace{-0.1mm}this~\hspace{-0.1mm}set~\hspace{-0.1mm}is~\hspace{-0.1mm}a~\hspace{-0.1mm}\mbox{singleton}.

This interpretation is related to the anti-self-duality of the Br\'ezis--Ekeland--Nayroles~\mbox{Lagrangian} $\mathfrak{L}\colon L^p(I;V)\times L^{p'}(I;V^*)\to \mathbb{R}\cup\{+\infty\}$, defined by $\mathfrak{L}(v,v^*)\coloneqq I_E(v)+I_{E^*}(-v^*)$ for all ${(v,v^*)\in L^p(I;V)}\times L^{p'}(I;V^*)$. \hspace{-0.1mm}More \hspace{-0.1mm}precisely, \hspace{-0.1mm}from \hspace{-0.1mm}Lemma \hspace{-0.1mm}\ref{lem:conjugate_of_integral_functional}, \hspace{-0.1mm}it \hspace{-0.1mm}follows \hspace{-0.1mm}that \hspace{-0.1mm}${\mathfrak{L}^*(w^*,w)\hspace{-0.15em}=\hspace{-0.15em}I_{E^*}(w^*)\hspace{-0.15em}+\hspace{-0.15em}I_{E}(-w)\hspace{-0.15em}=\hspace{-0.15em}\mathfrak{L}(-w,-w^*)}$ for all $(w^*,w)\in L^{p'}(I;V^*)\times L^p(I;V)$, 
\textit{i.e.}, $\mathfrak{L}$ is anti-self-dual on the path space. The dual formulation
of Theorem~\ref{thm:duality} can be viewed as the Fenchel-dual realization of
this structure after the substitution~$v^*=\partial_t v$, the splitting
$I_{E}=I_{G}\circ\nabla +I_{F}$, and the inclusion of the temporal boundary
terms.
\end{remark}

To prove Theorem \ref{thm:duality}, we first establish the following auxiliary density result.\enlargethispage{5mm}\vspace{-0.5mm}

\begin{lemma}\label{lem:dense_range}
    The mapping $\Pi\coloneqq(v\mapsto (v,v(t_{\mathtt{fin}})))\colon \mathcal{W}_0(I)\coloneqq \{v\in \mathcal{W}(I)\mid v(0)=0\}\to L^p(I;V)\times H$ has a dense range.\vspace{-0.5mm}
\end{lemma}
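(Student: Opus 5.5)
We show density of $\Pi(\mathcal{W}_0(I))$ in $L^p(I;V)\times H$ by approximating each component separately and then gluing with a time cut-off. Fix $(w,h)\in L^p(I;V)\times H$ and $\varepsilon>0$.

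\emph{Step 1 (reduction to regular data).} Since $V$ is dense in $H$ (evolution triple structure in Subsection~\ref{subsubsec:function_spaces_unsteady}), there is $h_\varepsilon\in V$ with $\|h-h_\varepsilon\|_H<\varepsilon$. Since $C_{\mathrm{c}}^\infty(I;V)$ (for instance, finite sums $\sum_k\varphi_k(t)v_k$ with $\varphi_k\in C_{\mathrm{c}}^\infty(I)$ and $v_k\in V$) is dense in $L^p(I;V)$ for $p<\infty$, there is $w_\varepsilon\in C_{\mathrm{c}}^\infty(I;V)$ with $\|w-w_\varepsilon\|_{L^p(I;V)}<\varepsilon$. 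The function $w_\varepsilon$ vanishes near $t=0$ and near $t=t_{\mathtt{fin}}$, and it lies in $\mathcal{W}(I)$ because $\partial_tw_\varepsilon\in C^\infty_{\mathrm c}(I;V)\hookrightarrow L^{p'}(I;V^*)$.

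\emph{Step 2 (terminal correction).} Let $\eta_\delta\in C^\infty(\overline{I})$ with $0\le\eta_\delta\le 1$, $\eta_\delta=0$ on $[0,t_{\mathtt{fin}}-\delta]$ and $\eta_\delta(t_{\mathtt{fin}})=1$. Define
\begin{align*}
v_\delta\coloneqq w_\varepsilon+\eta_\delta h_\varepsilon\,.
\end{align*}
Then $v_\delta\in\mathcal{W}(I)$, since $\eta_\delta h_\varepsilon\in C^\infty(\overline{I};V)$ and its derivative $\eta_\delta' h_\varepsilon$ lies in $L^{p'}(I;V)\hookrightarrow L^{p'}(I;V^*)$. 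Moreover, $v_\delta(0)=0$ and $v_\delta(t_{\mathtt{fin}})=h_\varepsilon$, because $w_\varepsilon$ vanishes at both endpoints.

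\emph{Step 3 (smallness of the correction).} We have
\begin{align*}
\|v_\delta-w_\varepsilon\|_{L^p(I;V)}=\|\eta_\delta\|_{L^p(I)}\|h_\varepsilon\|_V\le\delta^{1/p}\|h_\varepsilon\|_V\,,
\end{align*}
which tends to $0$ as $\delta\to0$. Choosing $\delta$ small therefore yields
\begin{align*}
\|\Pi v_\delta-(w,h)\|_{L^p(I;V)\times H}<3\varepsilon\,,
\end{align*}
which proves density.

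The main obstacle is the role of the $H$-component. The terminal value cannot be prescribed directly in $H$ by an element of $\mathcal{W}(I)$ built from $h$ itself. Passing through $h_\varepsilon\in V$, using density of $V$ in $H$, resolves this, because the cut-off term costs only $\delta^{1/p}\|h_\varepsilon\|_V$ in $L^p(I;V)$, even though $\|h_\varepsilon\|_V$ may be large.
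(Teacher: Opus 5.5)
Your proposal is correct and follows essentially the same route as the paper: approximate the $L^p(I;V)$-component by functions compactly supported in time, approximate $h$ in $H$ by elements of $V$, and add a cut-off in time supported near $t_{\mathtt{fin}}$ whose $L^p(I;V)$-norm is made small by shrinking its support relative to the $V$-norm of the approximant. The differences are cosmetic: you use a smooth cut-off $\eta_\delta$ and an $\varepsilon$-argument, while the paper uses a piecewise-linear ramp $\psi_n$ on $[t_{\mathtt{fin}}\frac{m_n-1}{m_n},t_{\mathtt{fin}}]$ and sequences.
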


\begin{proof}
    Let $(v,h)\in L^p(I;V)\times H$ be fixed, but arbitrary. Since $C_{\mathrm{c}}^1(I;V)$ is dense in $L^p(I;V)$ (\textit{cf}.\ \cite[Prop.\ 23.2(c)]{Zeidler1990IIA}), there exists a sequence $\{\varphi_n\}_{n\in\mathbb{N}}\subseteq C_{\mathrm{c}}^1(I;V)\subseteq\mathcal{W}_0(I)$ such that $\varphi_n\to v$ in $L^p(I;V)$ $(n\to\infty)$. Moreover, since $V$ is dense in $H$, there exists a sequence $\{v_n\}_{n\in\mathbb{N}}\subseteq V$ such that $v_n\to h$ in $H$ $(n\to\infty)$.\linebreak
    Then, for every $n\in\mathbb{N}$, choose $m_n\in\mathbb{N}$ sufficiently large such that
    $\frac{t_{\mathtt{fin}}}{m_n}\|v_n\|_V^p\leq\frac{1}{n^p}$,
    and define $\psi_n\in W^{1,\infty}(I)$ by
    $\psi_n(t)\coloneqq
    \chi_{\smash{[t_{\mathtt{fin}}(\frac{m_n-1}{m_n}),t_{\mathtt{fin}}]}}(t)
    (\frac{m_n}{t_{\mathtt{fin}}}t-(m_n-1))$
    for all $t\in I$. Then, $\psi_n(0)=0$, $\psi_n(t_{\mathtt{fin}})=1$, and
    $\|\psi_n v_n\|_{L^p(I;V)}^p
   \hspace{-0.1em} \leq\hspace{-0.1em}
    \frac{t_{\mathtt{fin}}}{m_n}\|v_n\|_V^p
    \hspace{-0.1em}\leq\hspace{-0.1em}\frac{1}{n^p}$ for all $n\hspace{-0.1em}\in\hspace{-0.1em} \mathbb{N}$.
    Then, the sequence
    ${\{\widetilde{v}_n\}_{n\in\mathbb{N}}
  \hspace{-0.1em}  \coloneqq\hspace{-0.1em}
    \{\varphi_n+\psi_n v_n\}_{n\in\mathbb{N}}
   \hspace{-0.1em} \subseteq\hspace{-0.1em}\mathcal{W}_0(I)}$
    satisfies $\widetilde{v}_n\to v$ in $L^p(I;V)$ $(n\to\infty)$ and
    $\widetilde{v}_n(t_{\mathtt{fin}})=v_n\to h$ in $H$ $(n\to\infty)$. 
\end{proof}
\newpage
\begin{proof}[Proof (of Theorem \ref{thm:duality}).]
    \emph{ad \hspace{-0.1mm}(\hyperlink{thm:duality.i}{i}).}
   \hspace{-0.1mm}To \hspace{-0.1mm}begin \hspace{-0.1mm}with, \hspace{-0.1mm}we \hspace{-0.1mm}introduce \hspace{-0.1mm}the \hspace{-0.1mm}functionals \hspace{-0.1mm}${\mathcal{G}\hspace{-0.175em}\in\hspace{-0.175em}  \Gamma_0(L^p(I;Y)\hspace{-0.175em}\times\hspace{-0.175em} L^{p'}(I;V^*))}$ and $\mathcal{F}\in  \Gamma_0(\mathcal{W}(I))$, for every $(y,\lambda)\in L^p(I;Y)\times L^{p'}(I;V^*)$ and $v\in \mathcal{W}(I)$, respectively, defined by\enlargethispage{2mm}
    \begin{subequations}\label{thm:duality.3}
    \begin{align}\label{thm:duality.3.1}
			\mathcal{G}(y,\lambda)&\coloneqq I_{G}(y)+I_{
            E^*}(\lambda)\,,\\
			\mathcal{F}(v)&\coloneqq I_{F}(v)+\tfrac{1}{2}\|v(t_{\mathtt{fin}})\|_H^2+\chi_{\{u_0\}}(v(0))\,,\label{thm:duality.3.2}
	\end{align}
    \end{subequations}
    and the parabolic differential operator $\mathcal{L}\hspace{-0.1em}\in\hspace{-0.1em} \mathscr{L}(\mathcal{W}(I); L^p(I;Y)\times L^{p'}(I;V^*))$, for every $v\hspace{-0.1em}\in \hspace{-0.1em}\mathcal{W}(I)$~\mbox{defined}~by
    \begin{align}\label{thm:duality.4}
        \mathcal{L}v\coloneqq (\nabla v,-\partial_tv)\quad\text{ in }L^p(I;Y)\times L^{p'}(I;V^*)\,,
    \end{align}
    so that, for every  $v\in \mathcal{W}(I)$, we have that
    \begin{align*}
		\mathcal{E}(v)= \mathcal{G}(\mathcal{L}v)+\mathcal{F}(v)\,.
	\end{align*}
		Then, according to \cite[Rem.\ 4.2, p.\ 60/61]{EkelandTemam1999}, the (Fenchel) dual problem to the minimization of \eqref{eq:primal} is given via the maximization of the dual energy functional $\mathcal{D}\colon  L^{p'}(I;Y^*)\times L^{p}(I;V)\to \mathbb{R}\cup \{-\infty\}$,\linebreak for every $(y,\lambda)\in L^{p'}(I;Y^*)\times L^{p}(I;V)$ defined by 
		\begin{align}\label{prop:duality.6}
			\mathcal{D}(y,\lambda)\coloneqq -\mathcal{G}^*(y,\lambda)- \mathcal{F}^*(-\mathcal{L}^*(y,\lambda))\,,
		\end{align}
        where $\mathcal{L}^*\in \mathscr{L}(\smash{L^{p'}}(I;Y^*)\times L^{p}(I;V);(\mathcal{W}(I))^*)$ is the adjoint operator of 
        \eqref{thm:duality.4}.\enlargethispage{2.5mm}
        
        Therefore, it is only left to establish that the claimed representation \eqref{eq:dual} applies:
        \begin{itemize}[noitemsep,topsep=2pt,leftmargin=!,labelwidth=\widthof{$\bullet$}]
            \item[$\bullet$] Since, according to the reasoning in Remark \ref{rem:well-posedness}, the spatial integral reduction $G\colon I\times Y\to \mathbb{R}\cup\{+\infty\}$ and $E\colon I\times V\to \mathbb{R}\cup\{+\infty\}$ are convex normal integrands and, due to  Assumption \ref{ass:energy_densities}(\hyperlink{ass:energy_densities.ii}{ii}),~we~have~that $\operatorname{dom}(I_G)\cap L^p(I;Y)\neq \emptyset$ and $\operatorname{dom}(I_E)\cap L^p(I;V)\neq \emptyset$,  the convex conjugation formula for integral functionals (\textit{cf}.\ Lemma \ref{lem:conjugate_of_integral_functional}) is applicable 
            to  $I_G\colon L^p(I;Y)\to \mathbb{R}\cup\{+\infty\}$ and $I_E\colon L^p(I;V)\to \mathbb{R}\cup\{+\infty\}$, and,  for every $y\in L^{p'}(I;Y^*)$ and $\lambda \in L^{p}(I;V)$, due to $E^{**}=E$ (\textit{cf}.\ \cite[Prop.\ 4.1, p.\ 18]{EkelandTemam1999}), yields
            \begin{align}\label{prop:duality.7}
            \mathcal{G}^*(y,\lambda)=I_{G^*}(y)+I_{E}(\lambda)\,.
            \end{align}

            \item[$\bullet$] Let $y\in \smash{L^{p'}}(I;Y^*)$ and $\lambda \in L^{p}(I;V)$ be fixed, but arbitrary. Then, denoting by $\widehat{u}_0\in \mathcal{W}(I)$ a trace lift of the initial datum $u_0\in H$, \textit{i.e.}, there holds $\widehat{u}_0(0)=u_0$ in $H$,  
            we find that
            \begin{align} \label{prop:duality.8}
                \mathcal{F}^*(-\mathcal{L}^*(y,\lambda))&=\sup_{v\in \mathcal{W}(I)}\bigl\{-\langle y, \nabla v\rangle_{L^p(I;Y)}+\langle \partial_tv,\lambda\rangle_{L^p(I;V)}-I_{F}(v)-\tfrac{1}{2}\|v(t_{\mathtt{fin}})\|_H^2-\chi_{\{u_0\}}(v(0))\bigr\}\notag
                \\[-0.5mm]&=\sup_{v\in \mathcal{W}_0(I)}\bigl\{-\langle y, \nabla (v+\widehat{u}_0)\rangle_{L^p(I;Y)}+\langle \partial_t(v+\widehat{u}_0)(t),\lambda(t)\rangle_{L^p(I;V)}\\[-2.5mm]&\qquad\qquad\quad-I_{F}(v+\widehat{u}_0)-\tfrac{1}{2}\|(v+\widehat{u}_0)(t_{\mathtt{fin}})\|_H^2\bigr\}
           \eqqcolon \texttt{SUP}\,,\notag
            \end{align}
            Apparently, we have that\vspace{-0.5mm} 
            \begin{align}\label{prop:duality.9}
                \begin{aligned}
                    &\sup_{\varphi\in C_{\mathrm{c}}^1(I;V)}\bigl\{-\langle y, \nabla \varphi\rangle_{L^p(I;Y)}+(\lambda,\partial_t\varphi)_{Q}-I_{F}(\varphi+\widehat{u}_0)\bigr\}\\[-0.5mm]&\qquad\leq \texttt{SUP}+\tfrac{1}{2}\|\widehat{u}_0(t_{\mathtt{fin}})\|_H^2+
                    \langle y, \nabla\widehat{u}_0\rangle_{L^p(I;Y)} -\langle\partial_t \widehat{u}_0,\lambda\rangle_{L^p(I;V)}\,.
                     \end{aligned}
            \end{align}
            Next, if $\texttt{SUP}<+\infty$, 
             we distinguish the cases $\lambda\in \mathcal{W}(I)$ and $\lambda\notin  \mathcal{W}(I)$:

        \quad$\bullet$ \emph{Case $\lambda\notin  \mathcal{W}(I)$.} In this case, there exists a  sequence $\{\varphi_n\}_{n\in \mathbb{N}}\subseteq C_{\mathrm{c}}^1(I;V)$ bounded in $L^p(I;V)$ such that\vspace{-0.5mm} 
        \begin{align}\label{prop:duality.10}
            (\lambda,\partial_t\varphi_n)_{Q}\to +\infty\qquad (n\to \infty)\,.
        \end{align}
        Then, since, by Assumption \ref{ass:sufficient_for_conjugation},  the  functional $I_{F}\colon L^p(I;V)\to \mathbb{R}$ is bounded, from the boundedness of the sequence $\{\varphi_n\}_{n\in \mathbb{N}}\subseteq C_{\mathrm{c}}^1(I;V)$ bounded in $L^p(I;V)$ and \eqref{prop:duality.10}, it follows that  
        \begin{align*}
        \begin{aligned} 
            &\sup_{\varphi\in C_{\mathrm{c}}^1(I;V)}\bigl\{-(y, \nabla \varphi)_{Q}+(\lambda,\partial_t\varphi)_{Q}-I_{F}(\varphi+\widehat{u}_0)\bigr\}
            \\&\quad\ge -(y, \nabla \varphi_n)_{Q}+(\lambda,\partial_t\varphi_n)_{Q}-I_{F}(\varphi_n+\widehat{u}_0)
            \\&\quad\ge (\lambda,\partial_t\varphi_n)_{Q}-\sup_{n\in \mathbb{N}}{\bigl\{
            \|y\|_{L^{p'}(I;Y^*)}\|\varphi_n\|_{L^p(I;V)} +I_{F}(\varphi_n+\widehat{u}_0)\bigr\}}
            \to +\infty\quad (n\to \infty)\,,
            \end{aligned}
        \end{align*}
        \textit{i.e.}, due to \eqref{prop:duality.9}, a contradiction to the assumption $\texttt{SUP}<+\infty$.

        \quad$\bullet$ \emph{Case $\lambda\in  \mathcal{W}(I)$.} In this case, using the integration-by-parts formula in time \eqref{subsubsec:function_spaces_unsteady.3}, from \eqref{prop:duality.8}, it follows that
        \begin{align}\label{prop:duality.12}
        \hspace{-2.5mm}\begin{aligned} 
            \mathcal{F}^*(-\mathcal{L}^*(y,\lambda))&=\sup_{v\in \mathcal{W}_0(I)}\bigl\{-\langle y, \nabla (v+\widehat{u}_0)\rangle_{L^p(I;Y)}+\langle \partial_t(v+\widehat{u}_0),\lambda\rangle_{L^p(I;V)}\\[-2mm]&\qquad\qquad\quad-I_{F}(v+\widehat{u}_0)-\tfrac{1}{2}\|(v+\widehat{u}_0)(t_{\mathtt{fin}})\|_H^2\bigr\}
            \\&=\sup_{v\in \mathcal{W}_0(I)}\bigl\{\langle-L^* y-\partial_t\lambda,v+\widehat{u}_0\rangle_{L^p(I;V)}-I_{F}(v+\widehat{u}_0)\\[-2mm]&\qquad\qquad\quad+((v+\widehat{u}_0)(t_{\mathtt{fin}}),\lambda(t_{\mathtt{fin}}))_H-\tfrac{1}{2}\|(v+\widehat{u}_0)(t_{\mathtt{fin}})\|_H^2\bigr\}-(u_0,\lambda(0))_H\,.
            \end{aligned}\hspace{-7.5mm}
        \end{align}
        Since  $R(\Pi)$ is dense in $L^p(I;V)\times H$ (\textit{cf}.\ Lemma \ref{lem:dense_range}) and both $I_{F}\colon L^p(I;V)\to \mathbb{R}$ (\textit{cf}.\ Assumption~\ref{ass:sufficient_for_conjugation}) and $\tfrac{1}{2}\|\cdot\|_H^2\colon H\to \mathbb{R}$ are continuous, from \eqref{prop:duality.12}, we infer that 
        \begin{align}\label{prop:duality.13}
            \begin{aligned}
            \mathcal{F}^*(-\mathcal{L}^*(y,\lambda))&=\sup_{v\in L^p(I;V)}\bigl\{\langle-\partial_t\lambda-L^*y,v+\widehat{u}_0\rangle_{L^p(I;V)}-I_{F}(v+\widehat{u}_0)\bigr\}
            \\&\quad\,+\sup_{h \in H}\bigl\{(h+\widehat{u}_0(t_{\mathtt{fin}}),\lambda(t_{\mathtt{fin}}))_H-\tfrac{1}{2}\|h+\widehat{u}_0(t_{\mathtt{fin}})\|_H^2\bigr\}
            -(\lambda(0),u_0)_H
            \\&=I_{F^*}(-L^* y-\partial_t\lambda)+\tfrac{1}{2}\|\lambda(t_{\mathtt{fin}})\|_H^2-(\lambda(0),u_0)_H\,.
            \end{aligned}\hspace{-2.5mm}
        \end{align}
        \end{itemize}

        Putting everything together, \textit{i.e.}, the cases $\mathtt{SUP}=+\infty$ and $\mathtt{SUP}<+\infty$ (including the subcases $\lambda\notin \mathcal{W}(I)$ and $\lambda\in \mathcal{W}(I)$; using \eqref{prop:duality.7} and \eqref{prop:duality.13} in \eqref{prop:duality.6} in the latter case), we find that
        \begin{align*}
            \mathcal{D}(y,\lambda)&=
            \begin{cases}
            -I_{G^*}(y)-I_{F^*}(-L^* y-\partial_t\lambda)-I_{E}(\lambda)
           -\tfrac{1}{2}\|\lambda(t_{\mathtt{fin}})\|_H^2+(\lambda(0),u_0)_H&\quad \text{ if }\lambda\in \mathcal{W}(I)\,,\\
                -\infty&\quad\text{ else}\,.
            \end{cases}
        \end{align*}
        In particular, since $\operatorname{dom}(-\mathcal{D})\subseteq L^{p'}(I;Y^*)\times \mathcal{W}(I)$, it is sufficient to consider the restricted unsteady dual energy functional $\mathcal{D}\colon L^{p'}(I;Y^*)\times \mathcal{W}(I)\to \mathbb{R}\cup\{-\infty\}$, which, eventually, confirms the claimed representation \eqref{eq:dual}.

        \emph{ad \hspace{-0.1mm}(\hyperlink{thm:duality.ii}{ii}).} \hspace{-0.1mm}By \hspace{-0.1mm}Assumption \hspace{-0.1mm}\ref{ass:sufficient_for_conjugation}, \hspace{-0.1mm}there \hspace{-0.1mm}holds \hspace{-0.1mm}$\mathcal{F}(u^{\dagger})\hspace{-0.175em}<\hspace{-0.175em}+\infty$ \hspace{-0.1mm}and, \hspace{-0.1mm}by \hspace{-0.1mm}the \hspace{-0.1mm}additional \hspace{-0.1mm}assumption~\hspace{-0.1mm}of~\hspace{-0.1mm}this~\hspace{-0.1mm}\mbox{theorem}, $\mathcal{G}\colon \hspace{-0.15em}L^p(I;Y)\times L^{p'}(I;V^*)\hspace{-0.15em}\to\hspace{-0.15em} \mathbb{R}\cup\{+\infty\}$ is continuous at $\mathcal{L}u^{\dagger}=(\nabla u^{\dagger},-\partial_t u^{\dagger})$ (in particular, $\mathcal{G}(\mathcal{L}u^{\dagger})\hspace{-0.15em}<\hspace{-0.15em}+\infty$). Therefore,~the~Fenchel~\mbox{duality}~theorem (\textit{cf}.\ \cite[Rem.~4.1,~eqs.~(4.21), p.\ 61]{EkelandTemam1999}) yields the existence of a dual solution $(z,\mu)\in \textup{dom}(-\mathcal{D})\subseteq L^{p'}(I;Y^*)\times \mathcal{W}(I)$ and that a strong duality relation~\eqref{thm:duality.1}~applies. Moreover, 
        according to \cite[Rem.\ 4.1, eqs.\ (4.22),(4.23), p.\ 61]{EkelandTemam1999}, the strong duality relation~is~\mbox{equivalent}~to\vspace{-4.5mm}
        \begin{subequations}\label{prop:duality.14}
        \begin{align}\label{prop:duality.14.1}
            \mathcal{G}(\nabla u,-\partial_t u)-\langle (z,\mu), \mathcal{L}u\rangle_{L^p(I;Y)\times L^{p'}(I;V^*)}+\mathcal{G}^*(z,\mu)&=0\,,\\
            \mathcal{F}(u)-\langle -\mathcal{L}^*(z,\mu), u\rangle_{\mathcal{W}(I)}+\mathcal{F}^*(-\mathcal{L}^*(z,\mu))&=0\,,\label{prop:duality.14.2}
        \end{align}
        \end{subequations}
        where, by the definitions of the integral functionals in  \eqref{thm:duality.3}, the equations in \eqref{prop:duality.14} each equivalently can be expressed as
        \begin{subequations}\label{prop:duality.15}
        \begin{align}\label{prop:duality.15.1}
         &
         \begin{aligned} 
         0&=\bigl(I_{G^*}(z)-\langle z,\nabla u\rangle_{L^p(I;Y)}+I_{G}(\nabla u)\bigr)+
         \bigl(I_{E^*}(-\partial_t u)-\langle -\partial_t u,\mu\rangle_{L^p(I;V)}+I_{E}(\mu)\bigr)\,, 
            \end{aligned}\\
        &\begin{aligned} 0&=\bigl(I_{F^*}(-\nabla^* z-\partial_t\mu)-\langle-\nabla^* z-\partial_t \mu,u\rangle_{L^p(I;V)}+I_{F}(u)\bigr)+
        \tfrac{1}{2}\|(u-\mu)(t_{\mathtt{fin}})\|_H^2\,.
        \end{aligned}\label{prop:duality.15.2}
        \end{align}  
        \end{subequations}
        Since, by the Fenchel--Young inequality (\textit{cf}.\ \cite[Prop.\ 51.2]{Zeidler1985III}), the four integrands in \eqref{prop:duality.15} are point-wise non-negative (a.e.), we infer that 
        \begin{alignat*}{2}
            G^*(t,z(t))-\langle z(t),\nabla u(t)\rangle_{Y}+G(t,\nabla u(t))&=0&&\quad\text{ for a.e.\ }t\in I\,,\\
            E^*(t,-\partial_tu(t))-\langle -\partial_t u(t),\mu(t)\rangle_{V}+E(t,\mu (t))&=0&&\quad\text{ for a.e.\ }t\in I\,,\\
            F^*(t,\operatorname{div}z(t)-\partial_t \mu (t))-\langle \operatorname{div}z(t)-\partial_t \mu(t),u(t)\rangle_V+F(t,u(t))&=0&&\quad\text{ for a.e.\ }t\in I\,,\\
            u(t_{\mathtt{fin}})&=\mu(t_{\mathtt{fin}})&&\quad\text{ in }H\,,
        \end{alignat*}
        which,   by the equality condition in the Fenchel--Young inequality (\textit{cf}.\ \cite[Prop.\ 51.2]{Zeidler1985III}), in turn, is equivalent to the optimality inclusions \eqref{thm:duality.2}.
\end{proof}

\section{Duality-based \textit{a posteriori} error control}\label{sec:duality_based_a_posteriori_error_control}

\hspace{5mm}In this section, we introduce an \textit{a posteriori} error control
framework that is based on the Br\'ezis--Ekeland--Nayroles principle and the corresponding
Fenchel duality framework (\textit{cf}.\ Subsection~\ref{subsec:primal_problem}).
The approach is inspired by the contributions
\cite{BartelsGudiKaltenbach2025,AntilBartelsKaltenbachKhandelwal2025,
AntilKaltenbachKirk2026}, where so-called
\textit{primal-dual gap \emph{a posteriori} identities} for
classes of steady convex minimization problems were derived based on a Fenchel duality framework.

Following these contributions, the error measure on the left-hand side of such an
\emph{a posteriori} error identity is the sum of the \emph{primal optimal strong convexity measure}
$\rho_{\mathcal{E}}^2\colon\mathcal{W}(I)\to[0,+\infty]$ at a primal
solution $u\in\mathcal{W}(I)$, for every $v\in\mathcal{W}(I)$ defined by 
\begin{align}\label{def:primal_optimal_strong_convexity_measure}
    \smash{\rho_{\mathcal{E}}^2(v)\coloneqq \mathcal{E}(v)-\mathcal{E}(u)\,,}
\end{align}
and the \emph{dual optimal strong convexity measure} $\rho_{-\mathcal{D}}^2\colon \smash{L^{p'}}(I;Y^*)\times \mathcal{W}(I)\to [0,+\infty]$ at a dual solution $(z,\mu)\in \smash{L^{p'}}(I;Y^*)\times \mathcal{W}(I)$, for every $(y,\lambda)\in \smash{L^{p'}}(I;Y^*)\times \mathcal{W}(I)$ defined by
\begin{align}\label{def:dual_optimal_strong_convexity_measure}
    \smash{\rho_{-\mathcal{D}}^2(y,\lambda)\coloneqq -\mathcal{D}(y,\lambda)+\mathcal{D}(z,\mu)\,,}
\end{align}
respectively. The corresponding \textit{a posteriori} error estimator on the right-hand side is the 
\emph{primal-dual~gap estimator}
$\eta_{\mathcal{E}-\mathcal{D}}^2\colon\hspace{-0.1em}
\mathcal{W}(I)\times (L^{p'}(I;Y^*)\times \mathcal{W}(I))\hspace{-0.1em}\to\hspace{-0.1em}
[0,+\infty]$, for every $(v,(y,\lambda))\hspace{-0.1em}\in\hspace{-0.1em}\mathcal{W}(I)\times (\smash{L^{p'}}(I;Y^*)\times \mathcal{W}(I))$ defined by
\begin{align*}
    \smash{\eta_{\mathcal{E}-\mathcal{D}}^2(v,(y,\lambda))
    \coloneqq \mathcal{E}(v)-\mathcal{D}(y,\lambda)\,.}
\end{align*} 
Then, by virtue of the strong duality relation~\eqref{thm:duality.1}, for every $(v,(y,\lambda))\in\mathcal{W}(I)\times (\smash{L^{p'}}(I;Y^*)\times \mathcal{W}(I))$, there holds the
\emph{primal-dual gap identity}\vspace{-0.5mm}
\begin{align*}
    \rho_{\mathcal{E}}^2(v)
    + \rho_{-\mathcal{D}}^2(y,\lambda)
    =
    \eta_{\mathcal{E}-\mathcal{D}}^2(v,(y,\lambda))\,.
\end{align*} 

However, due to the anti-self-duality of the Br\'ezis--Ekeland--Nayroles principle (\textit{cf}.~\mbox{Remark}~\ref{rem:duality})\linebreak and the fact that the
optimal primal and dual values are independent of particular primal and
dual solutions (\textit{cf}.\ \eqref{thm:duality.1}), the above approach can be refined further. In what follows,
we derive separate primal and dual gap identities. The primal gap identities
are closer to weak residual-type identities: admissible approximations are
comparatively easy to generate, whereas the evaluation of the right-hand side may be more
involved. The dual gap identities, by contrast, can be viewed as
direct unsteady counterparts of steady primal-dual gap identities: the evaluation of the
right-hand side is typically~easier, but the construction of
admissible dual approximations may be more involved.

\begin{remark}[on optimal strong convexity measures]
\label{rem:optimal_strong_convexity_measure}
Let $(X,\|\cdot\|_X)$ be a real Banach space and let
$f\colon X\to \mathbb{R}\cup\{+\infty\}$ be proper, convex, and lower
semi-continuous. Following the terminology~in~\cite{BartelsGudiKaltenbach2025,AntilBartelsKaltenbachKhandelwal2025,AntilKaltenbachKirk2026},
if $\bar{x}\hspace{-0.1em}\in\hspace{-0.1em}\operatorname{argmin}_{y\in X}{\{f(y)\}}$, 
we introduce the \emph{optimal strong
convexity measure (of $f$ at $\bar{x}$)} $\rho_f^2\colon \hspace{-0.1em}X\hspace{-0.1em}\to\hspace{-0.1em} [0,+\infty]$, for every $y\in X$ defined by\vspace{-0.5mm}
\begin{align}\label{def:optimal_strong_convexity_measure}
    \smash{\rho_f^2(y) \coloneqq f(y)-f(\bar{x})\ge 0\, .}
\end{align}
The minimizer $\bar{x}\in X$ is not included in the notation $\rho_f^2$, since the value
$f(\bar{x})=\min_{y\in X}{\{f(y)\}}$ is independent of the particular choice of
$\bar{x}\in\operatorname*{argmin}_{y\in X}{\{f(y)\}}$. 

\begin{itemize}[noitemsep,topsep=2pt,leftmargin=!,labelwidth=\widthof{(ii)}]
\item[(i)]
Definition \eqref{def:optimal_strong_convexity_measure} can be interpreted as a special case of the generalized
Bregman~divergence~(see~\mbox{\cite{Bregman1967}}\linebreak and,
for the underlying subdifferential calculus,
\cite{EkelandTemam1999,RockafellarWets1998}). Indeed,~for~each~${x\hspace{-0.15em}\in\hspace{-0.15em}\operatorname{dom}(f)}$~and~${x^*\hspace{-0.15em}\in\hspace{-0.15em}\partial f(x)}$, the generalized
Bregman divergence (of $f$ at $x$ with respect to $x^*$) $\mathcal{D}_f^{x^*}(\cdot,x)\colon X\to [0,+\infty]$, for every $y\in X$, is defined by
\begin{align}\label{def:bregman_divergence}
    \smash{\mathcal{D}_f^{x^*}(y,x)
    \coloneqq
    f(y)-f(x)-\langle x^*,y-x\rangle_X\ge 0\,.}
\end{align} 
If, in particular, $x\in\operatorname{argmin}_{y\in X}{\{f(y)\}}$, then
$0^*\in\partial f(x)$ and, thus, for every $y\in X$, we have that
\begin{align*}
    \smash{\rho_f^2(y)
    =
    \mathcal{D}_f^{0}(y,x)\,.}
\end{align*}
In other words, the optimal strong convexity measure is precisely the generalized
Bregman divergence with the choice $x^*=0$;\newpage 
\item[(ii)]
If $f$ is G\^ateaux differentiable at $x\in X$, then the Bregman divergence
with respect to the G\^ateaux derivative $\mathrm{D}f(x)$ at $x$, for every $y\in X$, reads
\begin{align*}
    \smash{\mathcal{D}_f^{\mathrm{D}f(x)}(y,x)
    =
    f(y)-f(x)-\langle \mathrm{D}f(x),y-x\rangle_X}\, .
\end{align*}
If, in addition, $f$ is twice G\^ateaux differentiable along the segment
$[x,y]\coloneqq \{sy +(1-s)x\mid s\in [0,1]\}$ for some $y\in X$ and Taylor's formula is applicable
along this segment, then
\begin{align*}
  \mathcal{D}_f^{\mathrm{D}f(x)}(y,x)
    =
    \int_0^1{(1-s)
        \langle
            \mathrm{D}^2 f(sy +(1-s)x)(y-x),
            y-x
        \rangle_X
    \,\mathrm{d}s}\,;
\end{align*} 

\end{itemize}
\end{remark}

The primal and dual optimal strong convexity measures defined in
\eqref{def:primal_optimal_strong_convexity_measure} and
\eqref{def:dual_optimal_strong_convexity_measure} are global~quantities:\linebreak
they are expressed in terms of the optimal primal and dual values and do not
yet reflect the~Fenchel~struct\-ure of the Br\'ezis--Ekeland--Nayroles energy
functional \eqref{eq:primal}. For the later derivation~of~\textit{a~posteriori}~identities,
however, it is useful to rewrite these energy differences in terms of
generalized Bregman divergences. The required subgradients are provided by the
optimality inclusions \eqref{thm:duality.2}.

To begin with, we obtain such a representation for the primal optimal strong convexity
measure~\eqref{def:primal_optimal_strong_convexity_measure}. It separates the contribution of the integral functional $I_E\colon L^p(I;V)\to\mathbb{R}\cup\{+\infty\}$,
the contribution of the conjugate integral functional $I_{E^*}\colon L^{p'}(I;V^*)\to\mathbb{R}\cup\{+\infty\}$, and the
terminal contribution induced by the Br\'ezis--Ekeland--Nayroles energy  functional \eqref{eq:primal}.

\begin{lemma}[Bregman-type representation of the primal optimal strong convexity measure]
\label{lem:bregman_representation_primal_error_measure}
Let Assumption \ref{ass:energy_densities} be
satisfied. Then, for every
$v\in\operatorname{dom}(\mathcal{E})$, there holds 
\begin{align}
    \smash{\rho_{\mathcal{E}}^2(v)
    =\mathcal{D}_{I_E}^{-\partial_t u}
        (v,u)+\mathcal{D}_{I_{E^*}}^{u}
        (-\partial_t v,-\partial_t u) 
    +
    \tfrac{1}{2}
    \|(v-u)(t_{\mathtt{fin}})\|_H^2\,,}
    \label{lem:bregman_representation_primal_error_measure.0}
\end{align}
where, in the second Bregman divergence, $u\in L^p(I;V)$ 
is identified with its canonical~image~in~$L^p(I;V^{**})$.
\end{lemma}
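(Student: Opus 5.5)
The plan is to expand both sides of \eqref{lem:bregman_representation_primal_error_measure.0} from the definitions and match the terms. Fix $v\in\operatorname{dom}(\mathcal{E})$; then $v(0)=u_0=u(0)$, and $I_E(v)$, $I_{E^*}(-\partial_t v)$ are finite (both are bounded below by Corollary~\ref{cor:integral_functionals_as_normal_integrands}, so finiteness of $\mathcal{E}(v)$ forces each finite). By Proposition~\ref{prop:brezis_ekeland_nayroles}, $\mathcal{E}(u)=\tfrac12\|u_0\|_H^2$ is finite as well. So $\rho_{\mathcal{E}}^2(v)=\mathcal{E}(v)-\mathcal{E}(u)$ is a difference of finite quantities and can be rearranged freely.

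First I will check that the subgradients are admissible, so that the Bregman divergences in \eqref{def:bregman_divergence} are defined. The solution $u$ satisfies \eqref{eq:subgradient_flow.1}, i.e.\ $-\partial_t u(t)\in\partial_v E(t,u(t))$ for a.e.\ $t\in I$. For integral functionals, the pointwise a.e.\ subgradient inclusion together with $-\partial_t u\in L^{p'}(I;V^*)$ gives $-\partial_t u\in\partial I_E(u)$. Concretely, integrate the pointwise subgradient inequality $E(t,w(t))\ge E(t,u(t))+\langle-\partial_t u(t),w(t)-u(t)\rangle_V$ over $I$ for arbitrary $w\in L^p(I;V)$. Equivalently, since $E(t,u(t))+E^*(t,-\partial_t u(t))=\langle -\partial_t u(t),u(t)\rangle_V$ a.e., we have $u(t)\in\partial_{v^*}E^*(t,-\partial_t u(t))$, and integrating again gives $u\in\partial I_{E^*}(-\partial_t u)$, with $u$ viewed in $L^p(I;V^{**})$. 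Hence
\begin{align*}
\mathcal{D}_{I_E}^{-\partial_t u}(v,u)&=I_E(v)-I_E(u)-\langle-\partial_t u,v-u\rangle_{L^p(I;V)}\,,\\
\mathcal{D}_{I_{E^*}}^{u}(-\partial_t v,-\partial_t u)&=I_{E^*}(-\partial_t v)-I_{E^*}(-\partial_t u)-\langle-\partial_t(v-u),u\rangle_{L^p(I;V)}\,.
\end{align*}

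Next I add these two expressions to $\tfrac12\|(v-u)(t_{\mathtt{fin}})\|_H^2$ and compare the sum with $\mathcal{E}(v)-\mathcal{E}(u)$. The energy terms $I_E$, $I_{E^*}$ match directly. It then remains to show
\begin{align*}
\langle\partial_t u,v-u\rangle_{L^p(I;V)}+\langle\partial_t(v-u),u\rangle_{L^p(I;V)}+\tfrac12\|(v-u)(t_{\mathtt{fin}})\|_H^2=\tfrac12\|v(t_{\mathtt{fin}})\|_H^2-\tfrac12\|u(t_{\mathtt{fin}})\|_H^2\,.
\end{align*}
Apply the integration-by-parts formula \eqref{subsubsec:function_spaces_unsteady.3} with $w=v-u$, whose initial value is $(v-u)(0)=0$. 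This turns the first two terms into $(u(t_{\mathtt{fin}}),(v-u)(t_{\mathtt{fin}}))_H$. Adding $\tfrac12\|(v-u)(t_{\mathtt{fin}})\|_H^2$ and expanding the square yields exactly $\tfrac12\|v(t_{\mathtt{fin}})\|_H^2-\tfrac12\|u(t_{\mathtt{fin}})\|_H^2$. The indicator terms vanish for both $u$ and $v$.

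The main obstacle I expect is the bookkeeping of finiteness and subdifferentials. I must make sure every term is finite, so that no expression of the form $\infty-\infty$ appears. I must also justify the passage from the pointwise inclusions to $\partial I_E$ and $\partial I_{E^*}$ on the Bochner spaces. For the latter I will rely on Lemma~\ref{lem:conjugate_of_integral_functional}, giving $I_{E^*}=(I_E)^*$ as in Remark~\ref{rem:well-posedness}, and on the Fenchel--Young equality. Integrating the a.e.\ identity $E(t,u)+E^*(t,-\partial_t u)=\langle-\partial_t u,u\rangle_V$ gives $I_E(u)+(I_E)^*(-\partial_t u)=\langle-\partial_t u,u\rangle$, which is the equality case characterizing both subgradient relations at once. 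This also shows $I_E(u)$ and $I_{E^*}(-\partial_t u)$ are finite.
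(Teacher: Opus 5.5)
Your proposal is correct and follows essentially the same route as the paper. Both arguments expand $\rho_{\mathcal{E}}^2(v)=\mathcal{E}(v)-\mathcal{E}(u)$, rewrite the terminal terms via the binomial formula and the temporal integration-by-parts formula \eqref{subsubsec:function_spaces_unsteady.3} with $(v-u)(0)=0$, and identify the two Bregman divergences from $-\partial_t u\in\partial I_E(u)$ and $u\in\partial I_{E^*}(-\partial_t u)$, which follow from \eqref{eq:subgradient_flow.1} and the Fenchel--Young equality. Your additional bookkeeping (finiteness of every term, and lifting the pointwise inclusions to the Bochner-space subdifferentials) makes explicit what the paper leaves implicit; note that the finiteness step only needs properness of $I_E$ and $I_{E^*}$, i.e.\ that they never take the value $-\infty$, rather than boundedness from below.
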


\begin{proof}
Let $v\in\operatorname{dom}(\mathcal{E})$ be fixed, but arbitrary. Then, there holds 
\begin{align*}
\begin{aligned} 
    \rho_{\mathcal{E}}^2(v)&=I_E(v)-I_E(u)
\\&\quad+
I_{E^*}(-\partial_t v)-I_{E^*}(-\partial_t u)
\\&\quad+
\tfrac{1}{2}\|v(t_{\mathtt{fin}})\|_H^2
-
\tfrac{1}{2}\|u(t_{\mathtt{fin}})\|_H^2\,, 
\end{aligned}
\end{align*}
where, by a binomial formula, $v(0)=u(0)=u_0$ in $H$, and the integration-by-parts formula~in~time~\eqref{subsubsec:function_spaces_unsteady.3}, we have that
\begin{align*}
    \begin{aligned}
        \tfrac{1}{2}\|v(t_{\mathtt{fin}})\|_H^2
\hspace{-0.1em}-\hspace{-0.1em}
\tfrac{1}{2}\|u(t_{\mathtt{fin}})\|_H^2&=(u(t_{\mathtt{fin}}),(v\hspace{-0.1em}-\hspace{-0.1em}u)(t_{\mathtt{fin}}))_H \hspace{-0.1em}+\hspace{-0.1em}\tfrac{1}{2}\|(v\hspace{-0.1em}-\hspace{-0.1em}u)(t_{\mathtt{fin}})\|_H^2 
\\&=(u(t_{\mathtt{fin}}),(v\hspace{-0.1em}-\hspace{-0.1em}u)(t_{\mathtt{fin}}))_H  \hspace{-0.1em} -\hspace{-0.1em}(u(0),(v\hspace{-0.1em}-\hspace{-0.1em}u)(0))_H
\hspace{-0.1em}+\hspace{-0.1em}\tfrac{1}{2}\|(v\hspace{-0.1em}-\hspace{-0.1em}u)(t_{\mathtt{fin}})\|_H^2 \\
&=
    \langle \partial_t u,v\hspace{-0.1em}-\hspace{-0.1em}u\rangle_{L^p(I;V)}
\hspace{-0.1em}+\hspace{-0.1em} 
    \langle u,\partial_t(v\hspace{-0.1em}-\hspace{-0.1em}u)\rangle_{L^{p'}(I;V^*)}\hspace{-0.1em}+\hspace{-0.1em}\tfrac{1}{2}\|(v\hspace{-0.1em}-\hspace{-0.1em}u)(t_{\mathtt{fin}})\|_H^2 \,.
    \end{aligned}
\end{align*}
Due to \eqref{eq:subgradient_flow.1} and $(\partial_v E)^{-1}(t,\cdot)=\partial_{v^*} E^*(t,\cdot)$ on $V^*$ for a.e.\ $t\in I$ (\textit{cf}.\ \mbox{\cite[Cor.\ 5.2,~p.~22]{EkelandTemam1999}}), we have that 
\begin{alignat*}{3}
    -\partial_t u(t)&\in \partial_v E(t,u(t))
    &&\quad\text{ in }V^*
    &&\quad\text{ for a.e.\ }t\in I \,,\\
    u(t)&\in \partial_{v^*} E^*(t,-\partial_t u(t))
    &&\quad\text{ in }V^{**}
    &&\quad\text{ for a.e.\ }t\in I \,.
\end{alignat*}
Therefore, by the definition of the Bregman divergence \eqref{def:bregman_divergence}, we arrive at
\begin{align*}
    \begin{aligned}
        \rho_{\mathcal{E}}^2(v)&=I_E(v)-I_E(u)+
    \langle \partial_t u,v-u\rangle_{L^p(I;V)}
\\&\quad+
I_{E^*}(-\partial_t v)-I_{E^*}(-\partial_t u)+\langle u,\partial_t(v-u)\rangle_{L^{p'}(I;V^*)}
\\&\quad+\tfrac{1}{2}\|(v-u)(t_{\mathtt{fin}})\|_H^2 
\\&=\mathcal{D}_{I_E}^{-\partial_t u}
        (v,u)+\mathcal{D}_{I_{E^*}}^{u}
        (-\partial_t v,-\partial_t u) 
    +
    \tfrac{1}{2}
    \|(v-u)(t_{\mathtt{fin}})\|_H^2\,,
    \end{aligned}
\end{align*}
which is the claimed Bregman-type representation \eqref{lem:bregman_representation_primal_error_measure.0} of the primal optimal strong convexity measure \eqref{def:primal_optimal_strong_convexity_measure}.
\end{proof}

The dual counterpart of Lemma \ref{lem:bregman_representation_primal_error_measure} is obtained in the same way, but its structure is more
involved. This is because the dual energy functional \eqref{eq:dual} is written in terms of the three
energy functionals $I_{G^*}$, $I_{F^*}$, and $I_E$ and depends on the
additional dual variable $\lambda$. Therefore, the dual optimal strong
convexity measure \eqref{def:dual_optimal_strong_convexity_measure} decomposes into three Bregman divergences, corresponding~to~these~three~contributions, together with the terminal contribution for the
additional dual variable $\lambda$.

\begin{lemma}[Bregman-type representation of the dual optimal strong convexity measure]
\label{lem:bregman_representation_dual_error_measure}
Let Assumptions \ref{ass:energy_densities} and \ref{ass:sufficient_for_conjugation} be satisfied. Moreover, suppose that there exists a dual solution $(z,\mu)\in \operatorname{dom}(-\mathcal{D})$ and that the strong duality relation \eqref{thm:duality.1} and the optimality inclusions \eqref{thm:duality.2} apply.
Then, for every
$(y,\lambda)\in\operatorname{dom}(-\mathcal{D})$, there holds\vspace{-0.5mm}
\begin{align}\label{lem:bregman_representation_dual_error_measure.0}
    \rho_{-\mathcal{D}}^2(y,\lambda)
    =\mathcal{D}_{I_{G^*}}^{\nabla u}(y,z)
        +\mathcal{D}_{I_{F^*}}^{u}(-\partial_t\lambda-L^*y,-L^*z-\partial_t\mu)
        +\mathcal{D}_{I_{E}}^{-\partial_t u}(\lambda,\mu)
    +
    \tfrac{1}{2}
    \|(\lambda-\mu)(t_{\mathtt{fin}})\|_H^2 \,,\\[-6mm]\notag
\end{align}
where, in the first and second Bregman divergence, $\nabla u\in L^p(I;Y)$ and $u\in L^p(I;V)$ 
are identified with their canonical images in $L^p(I;Y^{**})$ and $L^p(I;V^{**})$, respectively. 
\end{lemma}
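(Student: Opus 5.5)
The plan mirrors the proof of Lemma~\ref{lem:bregman_representation_primal_error_measure}. First expand the dual optimal strong convexity measure using definition \eqref{eq:dual}. For fixed $(y,\lambda)\in\operatorname{dom}(-\mathcal{D})$ this gives
\begin{align*}
\rho_{-\mathcal{D}}^2(y,\lambda)&=\bigl(I_{G^*}(y)-I_{G^*}(z)\bigr)+\bigl(I_{F^*}(-L^*y-\partial_t\lambda)-I_{F^*}(-L^*z-\partial_t\mu)\bigr)+\bigl(I_E(\lambda)-I_E(\mu)\bigr)\\
&\quad+\tfrac12\|\lambda(t_{\mathtt{fin}})\|_H^2-\tfrac12\|\mu(t_{\mathtt{fin}})\|_H^2-((\lambda-\mu)(0),u_0)_H\,.
\end{align*}
All terms are finite because both pairs lie in $\operatorname{dom}(-\mathcal{D})$. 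Note that $I_{G^*}$ and $I_{F^*}$ take values greater than $-\infty$ by Lemma~\ref{lem:lower_compactness_propery}(ii).

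Next, rewrite the temporal boundary terms. The binomial formula gives
\begin{align*}
\tfrac12\|\lambda(t_{\mathtt{fin}})\|_H^2-\tfrac12\|\mu(t_{\mathtt{fin}})\|_H^2=(\mu(t_{\mathtt{fin}}),(\lambda-\mu)(t_{\mathtt{fin}}))_H+\tfrac12\|(\lambda-\mu)(t_{\mathtt{fin}})\|_H^2\,.
\end{align*}
By \eqref{thm:duality.2.4} we may replace $\mu(t_{\mathtt{fin}})$ by $u(t_{\mathtt{fin}})$, and $u_0=u(0)$. Applying the integration-by-parts formula \eqref{subsubsec:function_spaces_unsteady.3} to $u$ and $\lambda-\mu$ then yields
\begin{align*}
(u(t_{\mathtt{fin}}),(\lambda-\mu)(t_{\mathtt{fin}}))_H-(u(0),(\lambda-\mu)(0))_H=\langle\partial_t(\lambda-\mu),u\rangle_{L^p(I;V)}+\langle\partial_t u,\lambda-\mu\rangle_{L^p(I;V)}\,.
\end{align*}

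Then identify the subgradients from the optimality inclusions \eqref{thm:duality.2}, lifted to the integral functionals:
\begin{itemize}
\item \eqref{thm:duality.2.1} gives $z(t)\in\partial_yG(t,\nabla u(t))$. By Fenchel--Young equality, as in the proof of Theorem~\ref{thm:duality}(ii), this is equivalent to $\nabla u\in\partial I_{G^*}(z)$, using $I_{G^*}=(I_G)^*$ from Lemma~\ref{lem:conjugate_of_integral_functional}.
\item \eqref{thm:duality.2.3} gives $u\in\partial I_{F^*}(-L^*z-\partial_t\mu)$ in the same way.
\item \eqref{thm:duality.2.2} gives $-\partial_t u\in\partial I_E(\mu)$.
\end{itemize}
Each pointwise-in-time Fenchel--Young equality integrates to the global equality, which is exactly the subgradient relation for the integral functional.

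It remains to insert these linear terms. The Bregman terms require
\begin{align*}
-\langle y-z,\nabla u\rangle&\quad\text{for }I_{G^*},\\
-\langle -L^*(y-z)-\partial_t(\lambda-\mu),u\rangle&\quad\text{for }I_{F^*},\\
+\langle\partial_t u,\lambda-\mu\rangle&\quad\text{for }I_E\,.
\end{align*}
Here $\langle\partial_t u,\lambda-\mu\rangle$ is $-\langle-\partial_t u,\lambda-\mu\rangle$. The term $\langle L^*(y-z),u\rangle=\langle y-z,\nabla u\rangle$ cancels the first linear term. What remains is $\langle\partial_t(\lambda-\mu),u\rangle+\langle\partial_t u,\lambda-\mu\rangle$, which is precisely the boundary contribution obtained above.

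Collecting everything yields the three Bregman divergences plus $\tfrac12\|(\lambda-\mu)(t_{\mathtt{fin}})\|_H^2$, i.e. \eqref{lem:bregman_representation_dual_error_measure.0}.

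I expect the main obstacle to be bookkeeping rather than analysis. The signs in the $I_{F^*}$ term and the exact cancellation of the $L^*$ contribution must be tracked carefully. One must also justify splitting the differences into finite pieces, which holds since all values are finite on the domains. Finally, the subgradient identifications have to be rigorous: I would state them as global Fenchel--Young equalities obtained by integrating the pointwise ones, using the canonical embedding into the biduals as in the statement.
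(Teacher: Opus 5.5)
Your proposal is correct and follows essentially the same route as the paper: you expand $\rho_{-\mathcal{D}}^2$, rewrite the temporal boundary terms via the binomial formula, $\mu(t_{\mathtt{fin}})=u(t_{\mathtt{fin}})$, $u(0)=u_0$ and integration by parts in time, identify the subgradients from the optimality inclusions, and cancel the $L^*$ contributions via $\langle L^*(y-z),u\rangle=\langle y-z,\nabla u\rangle$. Your remarks on the finiteness of all terms and on lifting the pointwise Fenchel--Young equalities to the integral functionals make explicit what the paper leaves implicit.
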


\begin{proof}
Let $(y,\lambda)\in\operatorname{dom}(-\mathcal{D})$ be fixed, but arbitrary. Then, there holds\vspace{-0.5mm}
\begin{align*}
    \begin{aligned}
    \rho_{-\mathcal{D}}^2(y,\lambda)&=I_{G^*}(y)-I_{G^*}(z)\\&\quad+I_{F^*}(-L^* y-\partial_t\lambda)-I_{F^*}(-L^* z-\partial_t\mu)\\&\quad+I_{E}(\lambda)-I_{E}(\mu)
          \\&\quad+ \tfrac{1}{2}\|\lambda(t_{\mathtt{fin}})\|_H^2-\tfrac{1}{2}\|\mu(t_{\mathtt{fin}})\|_H^2-((\lambda-\mu)(0),u_0)_H\,,
    \end{aligned}\\[-6mm]\notag
\end{align*}
where, by a binomial formula, $\mu(t_{\mathtt{fin}})=u(t_{\mathtt{fin}})$ in $H$ (\textit{cf}.\ \eqref{thm:duality.2.4}), $u(0)=u_0$ in $H$ (\textit{cf}.\ \eqref{eq:subgradient_flow.2}), and the integration-by-parts formula in time \eqref{subsubsec:function_spaces_unsteady.3}, we have that\vspace{-0.5mm}
\begin{align*}
    \begin{aligned}
        \tfrac{1}{2}\|\lambda(t_{\mathtt{fin}})\|_H^2-\tfrac{1}{2}\|\mu(t_{\mathtt{fin}})\|_H^2&-((\lambda-\mu)(0),u_0)_H\\&=(\mu(t_{\mathtt{fin}}),(\lambda-\mu)(t_{\mathtt{fin}}))_H   -(u_0,(\lambda-\mu)(0))_H+\tfrac{1}{2}\|(\lambda-\mu)(t_{\mathtt{fin}})\|_H^2
\\&=(u(t_{\mathtt{fin}}),(\lambda-\mu)(t_{\mathtt{fin}}))_H   -(u(0),(\lambda-\mu)(0))_H
+\tfrac{1}{2}\|(\lambda-\mu)(t_{\mathtt{fin}})\|_H^2 \\
&=
    \langle \partial_t u,\lambda-\mu\rangle_{L^p(I;V)}
+ 
    \langle u,\partial_t (\lambda-\mu)\rangle_{L^{p'}(I;V^*)}+\tfrac{1}{2}\|(\lambda-\mu)(t_{\mathtt{fin}})\|_H^2 \,.
    \end{aligned}\\[-6mm]\notag
\end{align*}
Due to the optimality inclusions \eqref{thm:duality.2.1}--\eqref{thm:duality.2.3} as well as $(\partial_y G)^{-1}(t,\cdot)=\partial_{y^*} G^*(t,\cdot)$~on~$Y^*$ for a.e.\ $t\in I$ and  $(\partial_v F)^{-1}(t,\cdot)=\partial_{v^*} F^*(t,\cdot)$ on $V^*$ for a.e.\ $t\in I$ (\textit{cf}.\ \cite[Cor.\ 5.2(5.8), p.\ 22]{EkelandTemam1999}), we~have~that\vspace{-0.5mm}
\begin{alignat*}{3}
            \nabla u(t)&\in \partial_{y^*} G^*(t,z(t))&&\quad\text{ in }Y^{**}&&\quad\text{ for a.e.\ }t\in I\,,\\
            -\partial_t u(t)&\in  \partial_v E(t,\mu(t))&&\quad\text{ in }V^*&&\quad\text{ for a.e.\ }t\in I\,,\\
            u(t) &\in \partial_{v^*} F^*(t,-\partial_t\mu(t)+\operatorname{div}z(t))&&\quad\text{ in }V^{**}&&\quad\text{ for a.e.\ }t\in I\,.
\end{alignat*}\\[-4.5mm]\notag
Therefore, by  $-\langle \nabla u,y-z\rangle_{L^{p'}(I;Y^*)}+\langle u,L^*(y-z)\rangle_{L^{p'}(I;V^*)}=0$ and the definition of the Bregman divergence~\eqref{def:bregman_divergence}, we arrive at\vspace{-0.5mm}
\begin{align*}
   \begin{aligned}
    \rho_{-\mathcal{D}}^2(y,\lambda)&=I_{G^*}(y)-I_{G^*}(z)-\langle \nabla u,y-z\rangle_{L^{p'}(I;Y^*)}
    \\&\quad+I_{F^*}(-L^* y-\partial_t\lambda)-I_{F^*}(-L^* z-\partial_t\mu)-\langle u,(-L^* y-\partial_t\lambda) -(-L^*z-\partial_t \mu)\rangle_{L^{p'}(I;V^*)}\\&\quad+I_{E}(\lambda)-I_{E}(\mu)+  \langle \partial_t u,\lambda-\mu\rangle_{L^p(I;V)}
          \\&\quad+\tfrac{1}{2}\|(\lambda-\mu)(t_{\mathtt{fin}})\|_H^2
          \\&=\mathcal{D}_{I_{G^*}}^{\nabla u}(y,z)
        +\mathcal{D}_{I_{F^*}}^{u}(-\partial_t\lambda-L^*y,-L^*z-\partial_t\mu)
        +\mathcal{D}_{I_{E}}^{-\partial_t u}(\lambda,\mu) 
    +
    \tfrac{1}{2}
    \|(\lambda-\mu)(t_{\mathtt{fin}})\|_H^2\,,
    \end{aligned}\\[-6mm]\notag
\end{align*}
which is the claimed Bregman-type representation \eqref{lem:bregman_representation_dual_error_measure.0} of the dual optimal strong convexity measure~\eqref{def:dual_optimal_strong_convexity_measure}.
\end{proof}

\begin{remark}[Alternative representation of Bregman divergences]\label{rem:alternative_representation_bregman_divergence}
Note that, by the  equality condition in the Fenchel--Young inequality (\textit{cf}.\ \cite[Prop.\ 51.2]{Zeidler1985III}), if $(X,\|\cdot\|_X)$ is a real Banach space, for proper and convex functional $f\colon X\to \mathbb{R}\cup\{+\infty\}$, for every $y\in X$, $x\in \operatorname{dom}(f)$, and $x^*\in \partial f(x)$,~we~have~that\vspace{-0.5mm}
\begin{align}\label{eq:special_bregman_representation}
    \smash{\mathcal{D}_f^{x^*}(y,x)=f^*(x^*)-\langle x^*,y\rangle_X+f(y)\,.}
\end{align}\newpage

\noindent This allows us to derive the following representations of Bregman divergences in Lemmas \ref{lem:bregman_representation_primal_error_measure} and \ref{lem:bregman_representation_dual_error_measure}:\enlargethispage{1.5mm}\vspace{-1mm}

\begin{itemize}[noitemsep,topsep=2pt,leftmargin=!,labelwidth=\widthof{$\bullet$}]
    \item[$\bullet$] If there exists $u^\sharp\in \operatorname{dom}(I_G\circ \nabla)\cap \operatorname{dom}(I_F)$ such that $I_G\circ \nabla$ or $I_F$ is continuous at $u^\sharp$, then, for every $v\in \mathcal{W}(I)$, using \eqref{eq:special_bregman_representation}, which is applicable since, due to \eqref{eq:subgradient_flow.1}, we have that 
\begin{align}\label{rem:alternative_representation_bregman_divergence.1}
    -\partial_t u\in \partial I_{E}(u)\,, 
\end{align} 
the definition of the steady primal energy functional \eqref{eq:primal_steady}, \eqref{eq:E_prime_steady}, 
and $\langle y,\nabla v\rangle_{L^p(I;Y)}+\langle -L^*y,v\rangle_{L^p(I;V)}$ $=0$, there holds\vspace{-0.5mm}
\begin{align}\label{rem:alternative_representation_bregman_divergence.2}
    \begin{aligned} 
    \mathcal{D}_{I_{E}}^{-\partial_t u}
    (v,u)
    &=
   I_{E^*}(-\partial_t u)-
    \langle -\partial_tu,v\rangle_{L^p(I;V)}+ I_{E}(v)
    \\&=\inf_{y\in L^{p'}(I;Y^*)}
    \bigl\{
        I_{G^*}(y)-\langle y,\nabla v\rangle_{L^p(I;Y)}+I_{G}(\nabla v)
        \\[-2.5mm]&\qquad\qquad\qquad+I_{F^*}(-\partial_t u-L^*y)-\langle -\partial_t u-L^*y,v\rangle_{L^p(I;V)}+I_{F}(v)
    \bigr\}  \,;
    \end{aligned}\\[-6mm]\notag
\end{align}
 
 \item[$\bullet$] If $\mu = u$ in $\mathcal{W}(I)$, then, using the definition of the Bregman divergence \eqref{def:bregman_divergence}, the definition of the steady primal energy  functional \eqref{eq:primal_steady}, that $\langle z,\nabla (v-u)\rangle_{L^p(I;Y)}+\langle -L^*z,v-u\rangle_{L^p(I;V)}=0$, and \eqref{eq:special_bregman_representation},
    which is applicable since, due to \eqref{thm:duality.2.1} and \eqref{thm:duality.2.3}, we have that\vspace{-0.5mm}
    \begin{subequations}\label{rem:alternative_representation_bregman_divergence.3}
    \begin{align}\label{rem:alternative_representation_bregman_divergence.3.1}
        z&\in \partial I_{G}(\nabla u)\,,\\
        -\partial_t u-L^* z&\in \partial I_{F}(u)\,,\label{rem:alternative_representation_bregman_divergence.3.2}\\[-6mm]\notag
    \end{align}
    \end{subequations}
    there holds\vspace{-0.5mm}
\begin{align}\label{rem:alternative_representation_bregman_divergence.4}
    \begin{aligned} 
    \mathcal{D}_{I_{E}}^{-\partial_t u}
    (v,u)
    &=
    I_{E}(v)
    -
    I_{E}(u)
    -
    \langle -\partial_tu,v-u\rangle_{L^p(I;V)}
    \\
     &=
    I_{G}(\nabla v)-
    I_{G}(\nabla u)-\langle z,\nabla (v-u)\rangle_{L^p(I;Y)}
    \\&\quad+I_{F}(v)-I_{F}(u)
    -
    \langle -\partial_tu-L^*z,v-u\rangle_{L^p(I;V)}
      \\
     &=
     \mathcal{D}_G^z(\nabla v,\nabla u)
    +\mathcal{D}_F^{-\partial_tu-L^*z}(v,u) 
    \\
     &=
    I_{G^*}(z)-\langle z,\nabla v\rangle_{L^p(I;Y)}+I_{G}(\nabla v)\\&\quad+I_{F^*}(-\partial_t u-L^* z)-\langle-\partial_t u-L^* z,v\rangle_{L^p(I;V)}+I_{F}(v) 
    \,;
    \end{aligned}\\[-6mm]\notag
\end{align}

\item[$\bullet$] If there exists $u^\sharp\in \operatorname{dom}(I_G\circ \nabla)\cap \operatorname{dom}(I_F)$ such that $I_G\circ \nabla$ or $I_F$ is continuous at $u^\sharp$, then, for every $v\in \mathcal{W}(I)$,
  using \eqref{eq:special_bregman_representation}, which is applicable since, due to \eqref{rem:alternative_representation_bregman_divergence.1} and ${(\partial I_{E})^{-1}\hspace{-0.1em}=\hspace{-0.1em}\partial (I_{E})^*\hspace{-0.1em}=\hspace{-0.1em}\partial I_{E^*}}$ (\textit{cf}.\ \cite[Cor.\ 5.2(5.8), p.\ 22]{EkelandTemam1999} and Lemma \ref{lem:conjugate_of_integral_functional}), we have that\vspace{-0.5mm}\begin{align}\label{rem:alternative_representation_bregman_divergence.5}
    u\in \partial I_{E^*}(-\partial_t u)\,,\\[-6mm]\notag
\end{align}
the definition of the steady primal energy functional \eqref{eq:primal_steady}, \eqref{eq:E_prime_steady}, 
and $\langle y,\nabla v\rangle_{L^p(I;Y)}+\langle -L^*y,v\rangle_{L^p(I;V)}$ $=0$, there holds\vspace{-0.5mm}
\begin{align}\label{rem:alternative_representation_bregman_divergence.6}
    \begin{aligned} 
    \mathcal{D}_{I_{E^*}}^{u}
    (-\partial_t v,-\partial_t u)
    &= 
    I_E(u)-\langle u,-\partial_t v\rangle_{L^p(I;V)}+I_{E^*}(-\partial_t v)
   \\
    &=
    \inf_{y\in L^{p'}(I;Y^*)}
    \bigl\{
        I_{G^*}(y)-\langle y,\nabla u\rangle_{L^p(I;Y)}+I_{G}(\nabla u)
        \\[-2.5mm]&\qquad\qquad\qquad+I_{F^*}(-\partial_t v-L^*y)-\langle -\partial_t v-L^*y,u \rangle_{L^p(I;V)}+I_F(u) \bigr\}\,;
    \end{aligned}\\[-6mm]\notag
\end{align}

    \item[$\bullet$] For every $y\in L^{p'}(I;Y^*)$, using \eqref{eq:special_bregman_representation}, which is applicable since, due to \eqref{rem:alternative_representation_bregman_divergence.3.1} and $(\partial I_{G})^{-1}=\partial (I_{G})^*=\partial I_{G^*}$ (\textit{cf}.\ \cite[Cor.\ 5.2(5.8), p.\ 22]{EkelandTemam1999} and Lemma \ref{lem:conjugate_of_integral_functional}), we have that\vspace{-0.5mm}
    \begin{align}\label{rem:alternative_representation_bregman_divergence.7}
        \nabla u\in \partial I_{G^*}(z)\,,\\[-6mm]\notag
    \end{align}
    there holds\vspace{-0.5mm}
\begin{align}\label{rem:alternative_representation_bregman_divergence.8}
    \mathcal{D}_{I_{G^*}}^{\nabla u}
    (y,z)
    =
    I_{G^*}(y)-\langle y,\nabla u\rangle_{L^p(I;Y)} +I_{G}(\nabla u)
    \,;\\[-6mm]\notag
\end{align}

\item[$\bullet$] For every $(y,\lambda)\in L^{p'}(I;Y^*)\times\mathcal{W}(I)$, using \eqref{eq:special_bregman_representation}, which is applicable since, due to \eqref{thm:duality.2.3} and $(\partial I_{F})^{-1}=\partial (I_{F})^*=\partial I_{F^*}$ (\textit{cf}.\ \cite[Cor.\ 5.2(5.8), p.\ 22]{EkelandTemam1999} and Lemma \ref{lem:conjugate_of_integral_functional}), we have that\vspace{-0.5mm}
\begin{align}\label{rem:alternative_representation_bregman_divergence.9}
    u\in \partial I_{F^*}(-\partial_t\mu - L^*z)\,,\\[-6mm]\notag
\end{align}
there holds\vspace{-0.5mm}
\begin{align}\label{rem:alternative_representation_bregman_divergence.10}
    \mathcal{D}_{I_{F^*}}^{u}
    (-\partial_t \lambda -L^*y,-\partial_t\mu - L^*z)=
    I_{F^*}(-\partial_t \lambda -L^*y)-\langle -\partial_t \lambda -L^*y,u\rangle_{L^p(I;V)} +I_{F}(u)
    \,.\\[-6mm]\notag
\end{align}
\end{itemize}
\end{remark}\pagebreak

\subsection{Primal gap identities}\label{subsec:primal_gap_identities}

\hspace{5mm}We first derive \textit{a posteriori} identities that are based directly on
the Br\'ezis--Ekeland--Nayroles principle
(\textit{cf}.\ Subsection~\ref{subsec:primal_problem}). In contrast to the
primal-dual gap identity, these identities only involve~admissible primal
approximations. 
The error measure on the left-hand side of these \textit{a posteriori} error identities
is, again, the primal optimal strong convexity measure~\eqref{def:primal_optimal_strong_convexity_measure}.
The corresponding \textit{a posteriori} error estimator on the right-hand side is the
\emph{primal gap estimator}
$\eta_{\mathcal{E}}^2\colon \mathcal{W}(I)\to
[0,+\infty]$, for every $v\in \mathcal{W}(I)$ defined by
\begin{align}\label{def:primal_gap_estimator}
    \eta_{\mathcal{E}}^2(v)
    \coloneqq
    \mathcal{E}(v)-\tfrac{1}{2}\|u_0\|_H^2\,.
\end{align}

\begin{theorem}[Primal gap identity]\label{thm:primal_gap_identity}
Let Assumption~\ref{ass:energy_densities} be satisfied. Then,  
    for every $v\in \mathcal{W}(I)$, there holds
    \begin{align}\label{thm:primal_gap_identity.0}
        \rho_{\mathcal{E}}^2(v)=\eta_{\mathcal{E}}^2(v)\,.
    \end{align}
\end{theorem}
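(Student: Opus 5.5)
The identity is essentially a one-line consequence of Proposition~\ref{prop:brezis_ekeland_nayroles}, so the plan is to unwind the definitions. By the definition of the primal optimal strong convexity measure \eqref{def:primal_optimal_strong_convexity_measure}, we have $\rho_{\mathcal{E}}^2(v)=\mathcal{E}(v)-\mathcal{E}(u)$ for every $v\in\mathcal{W}(I)$, where $u\in\mathcal{W}(I)$ is the primal solution, i.e., a solution of the subgradient flow \eqref{eq:subgradient_flow}, which we assume to exist throughout. By the definition of the primal gap estimator \eqref{def:primal_gap_estimator}, $\eta_{\mathcal{E}}^2(v)=\mathcal{E}(v)-\tfrac{1}{2}\|u_0\|_H^2$. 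Hence it suffices to show $\mathcal{E}(u)=\tfrac{1}{2}\|u_0\|_H^2$.

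This is exactly the ``only if'' direction of Proposition~\ref{prop:brezis_ekeland_nayroles}. Since $u$ solves \eqref{eq:subgradient_flow}, we have $-\partial_t u(t)\in\partial_v E(t,u(t))$ for a.e.\ $t\in I$. The equality case of the Fenchel--Young inequality then gives \eqref{prop:brezis_ekeland_nayroles.2.1} pointwise a.e. in time. Integrating and using the rewriting \eqref{prop:brezis_ekeland_nayroles.1}, which comes from the integration-by-parts formula \eqref{subsubsec:function_spaces_unsteady.3}, yields $\mathcal{E}(u)=\tfrac{1}{2}\|u_0\|_H^2$, and $u$ is also a minimizer. Only Assumption~\ref{ass:energy_densities} is needed, to guarantee via Remark~\ref{rem:well-posedness} that $I_E$ and $I_{E^*}$ are well-defined; no duality assumption enters.

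The only point requiring care is the extended-real arithmetic. If $v\notin\operatorname{dom}(\mathcal{E})$, both sides equal $+\infty$, since $\mathcal{E}(u)$ is finite. Otherwise both sides are finite and equal. Non-negativity of $\eta_{\mathcal{E}}^2$, as claimed in its codomain, follows from the minimality of $u$. I expect no real obstacle beyond confirming that the integral functionals in \eqref{prop:brezis_ekeland_nayroles.1} may be combined additively. This holds because the Fenchel--Young integrand $E(t,v(t))+\langle\partial_t v(t),v(t)\rangle_V+E^*(t,-\partial_t v(t))$ is non-negative and the pairing $\langle\partial_t v,v\rangle$ is integrable.
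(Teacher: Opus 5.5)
Your proposal is correct and matches the paper's proof: both reduce the identity to $\mathcal{E}(u)=\tfrac{1}{2}\|u_0\|_H^2$ via Proposition~\ref{prop:brezis_ekeland_nayroles} and then unwind the definitions \eqref{def:primal_optimal_strong_convexity_measure} and \eqref{def:primal_gap_estimator}. Your extra remarks on extended-real arithmetic (both sides equal $+\infty$ outside $\operatorname{dom}(\mathcal{E})$) are accurate; they only spell out what the paper leaves implicit.
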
 

\begin{proof}
 \hspace{-1mm}Let \hspace{-0.15mm}$v\hspace{-0.175em}\in\hspace{-0.175em}\mathcal{W}(I)$ \hspace{-0.15mm}be \hspace{-0.15mm}fixed, \hspace{-0.15mm}but \hspace{-0.15mm}arbitrary. \hspace{-0.15mm}By \hspace{-0.1mm}the
\hspace{-0.15mm}Br\'ezis--Ekeland--Nayroles \hspace{-0.15mm}principle \hspace{-0.15mm}(\textit{cf}.\ \hspace{-0.15mm}\mbox{Proposition}~\hspace{-0.15mm}\ref{prop:brezis_ekeland_nayroles}), 
the primal solution $u\in \mathcal{W}(I)$ satisfies $\mathcal{E}(u)=\tfrac{1}{2}\|u_0\|_H^2$. 
Therefore, using the definitions
\eqref{def:primal_optimal_strong_convexity_measure} and
\eqref{def:primal_gap_estimator}, we arrive at
\begin{align*}
     \rho_{\mathcal{E}}^2(v)
    =
    \mathcal{E}(v)-\mathcal{E}(u)
    =
    \mathcal{E}(v)-\tfrac{1}{2}\|u_0\|_H^2
    =
    \eta_{\mathcal{E}}^2(v)\,,
\end{align*}
which is the claimed primal gap identity \eqref{thm:primal_gap_identity.0}.
\end{proof}

The primal gap identity \eqref{thm:primal_gap_identity.0} is exact but still abstract,
since (initially) it is expressed entirely in terms of the Br\'ezis--Ekeland--Nayroles energy functional \eqref{eq:primal}. While the primal optimal
strong convexity measure \eqref{def:primal_optimal_strong_convexity_measure} admits a meaningful Bregman-type representation (\textit{cf}.\ Lemma \ref{lem:bregman_representation_primal_error_measure}),~in~the~following~lemma, we 
derive a corresponding representation for the primal
gap estimator \eqref{def:primal_gap_estimator} that exposes its hidden Fenchel duality structure.

\begin{lemma}[Representation of the primal gap estimator]\label{lem:representation_of_primal_gap_estimator}
Let Assumption~\ref{ass:energy_densities} be satisfied and assume there \hspace{-0.1mm}exists \hspace{-0.1mm}$u^\sharp\hspace{-0.15em}\in\hspace{-0.15em} \operatorname{dom}(I_G\circ \nabla)\cap \operatorname{dom}(I_F)$ \hspace{-0.1mm}such \hspace{-0.1mm}that \hspace{-0.1mm}$I_G\circ \nabla$ \hspace{-0.1mm}or \hspace{-0.1mm}$I_F$ \hspace{-0.1mm}is \hspace{-0.1mm}continuous~\hspace{-0.1mm}at~\hspace{-0.1mm}$u^\sharp$.~\hspace{-0.1mm}Then,~\hspace{-0.1mm}for~\hspace{-0.1mm}\mbox{every}~\hspace{-0.1mm}${v\hspace{-0.15em}\in\hspace{-0.15em}\mathcal{W}(I)}$ with $v(0)=u_0$ in $H$, we have that
    \begin{align}\label{lem:representation_of_primal_gap_estimator.0}
        \begin{aligned} 
        \eta_{\mathcal{E}}^2(v)&=\inf_{y\in L^{p'}(I;Y^*)}\bigl\{\bigl(I_{G^*}(y)-\langle y,\nabla v\rangle_{L^p(I;Y)}+I_{G}(\nabla v)\bigr)\\[-2mm]&\qquad\qquad\qquad+\bigl(I_{F^*}(-L^* y-\partial_t v)-\langle -L^* y-\partial_t v, v\rangle_{L^p(I;V)}+I_{F}(v)\bigr)
        \bigr\}\,.
    \end{aligned}
    \end{align}
\end{lemma}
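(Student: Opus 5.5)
The argument rewrites the primal gap estimator via integration by parts in time and then expands the conjugate $I_{E^*}$ through the infimal convolution formula \eqref{eq:E_prime_steady}. Throughout, fix $v\in\mathcal{W}(I)$ with $v(0)=u_0$ in $H$, so that $\chi_{\{u_0\}}(v(0))=0$.

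\emph{Step 1 (rewrite the estimator).} Exactly as in the proof of Proposition~\ref{prop:brezis_ekeland_nayroles}, apply the temporal integration-by-parts formula \eqref{subsubsec:function_spaces_unsteady.3} with $w=v$. This gives $\tfrac12\|v(t_{\mathtt{fin}})\|_H^2-\tfrac12\|u_0\|_H^2=\langle\partial_t v,v\rangle_{L^p(I;V)}$, and hence, cf.\ \eqref{prop:brezis_ekeland_nayroles.1},
\begin{align*}
\eta_{\mathcal{E}}^2(v)=I_E(v)+I_{E^*}(-\partial_t v)-\langle -\partial_t v,v\rangle_{L^p(I;V)}\,.
\end{align*}
This holds in $\mathbb{R}\cup\{+\infty\}$, since $I_E$ and $I_{E^*}$ never take the value $-\infty$ (Remark~\ref{rem:well-posedness}).

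\emph{Step 2 (expand the conjugate).} Under the constraint qualification on $u^\sharp$, Remark~\ref{rem:well-posedness} provides the infimal convolution representation \eqref{eq:E_prime_steady} with $v^*=-\partial_t v\in L^{p'}(I;V^*)$:
\begin{align*}
I_{E^*}(-\partial_t v)=\inf_{y\in L^{p'}(I;Y^*)}\bigl\{I_{G^*}(y)+I_{F^*}(-\partial_t v-L^*y)\bigr\}\,.
\end{align*}
Next, split $I_E(v)=I_G(\nabla v)+I_F(v)$, and split the pairing as
\begin{align*}
-\langle -\partial_t v,v\rangle_{L^p(I;V)}=-\langle y,\nabla v\rangle_{L^p(I;Y)}-\langle -L^*y-\partial_t v,v\rangle_{L^p(I;V)}\,,
\end{align*}
which holds for every $y$ because $\langle L^*y,v\rangle=\langle y,\nabla v\rangle$. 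Since the terms $I_G(\nabla v)$, $I_F(v)$ and $\langle -\partial_t v,v\rangle$ do not depend on $y$, they can be moved inside the infimum. This yields \eqref{lem:representation_of_primal_gap_estimator.0}.

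\emph{Main obstacle.} The delicate point is the bookkeeping of the values $+\infty$ when moving terms inside the infimum. If $I_G(\nabla v)=+\infty$ or $I_F(v)=+\infty$, then both sides equal $+\infty$, because each bracket is bounded below by zero via the Fenchel--Young inequality. Otherwise these terms are finite, and adding finite constants commutes with the infimum. One must also check that the pairing $\langle y,\nabla v\rangle$ is finite and that no expression of the form $\infty-\infty$ arises; both follow since $y\in L^{p'}(I;Y^*)$ and $\nabla v\in L^p(I;Y)$. Finally, each bracket is non-negative by Fenchel--Young, which confirms that the right-hand side lies in $[0,+\infty]$.
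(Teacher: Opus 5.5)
Your proof is correct and follows essentially the same route as the paper: you rewrite $\eta_{\mathcal{E}}^2(v)$ via the temporal integration-by-parts identity \eqref{prop:brezis_ekeland_nayroles.1}, insert the infimal convolution formula \eqref{eq:E_prime_steady}, and redistribute the pairing using $\langle y,\nabla v\rangle_{L^p(I;Y)}=\langle L^*y,v\rangle_{L^p(I;V)}$. Your extended-real bookkeeping is a detail the paper leaves implicit; note that the absence of $\infty-\infty$ rests on the properness of $I_G$, $I_F$, $I_{G^*}$, $I_{F^*}$ (none takes the value $-\infty$), not on the integrability of $y$ and $\nabla v$, which only guarantees that the pairings are finite.
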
  

\begin{proof}
    Let $v\in \mathcal{W}(I)$  with $v(0)=u_0$ in $H$ be fixed, but arbitrary.
   According to  \eqref{prop:brezis_ekeland_nayroles.1} and Lemma \ref{lem:conjugate_of_integral_functional}, we have that
    \begin{align*}
        \begin{aligned} 
        \eta_{\mathcal{E}}^2(v)&=I_{E}(v)+\langle \partial_t v,v\rangle_{L^p(I;V)}+I_{E^*}(-\partial_t v)
        \\&=I_{G}(\nabla v)+I_{F}(v)+\langle \partial_t v,v\rangle_{L^p(I;V)}
        +(I_{E})^*(-\partial_t v)\,.
        \end{aligned}
    \end{align*}
    Then, using \eqref{eq:E_prime_steady} and $\langle y,\nabla v\rangle_{L^p(I;Y)}+\langle -L^*y, v\rangle_{L^p(I;V)}=0$,
    we arrive at
    \begin{align*}
        \begin{aligned} 
        \eta_{\mathcal{E}}^2(v)
         &=\inf_{y\in L^{p'}(I;Y^*)}\bigl\{I_{G}(\nabla v)+I_{F}(v)-\langle y,\nabla v\rangle_{L^p(I;Y)}-\langle-L^*y- \partial_t v,v\rangle_{L^p(I;V)}
        \\[-2mm]&\qquad\qquad\qquad+ I_{G^*}(y)+I_{F^*}(-L^*y-\partial_t v)\bigr\}
        \\&=\inf_{y\in L^{p'}(I;Y^*)}\bigl\{\bigl(I_{G^*}(y)-\langle y,\nabla v\rangle_{L^p(I;Y)}+I_{G}(\nabla v)\bigr)\\[-2mm]&\qquad\qquad\qquad+\bigl(I_{F^*}(-L^* y-\partial_t v)-\langle -L^* y-\partial_t v, v\rangle_{L^p(I;V)}+I_{F}(v)\bigr)
        \bigr\}\,,
        \end{aligned}
    \end{align*}
    which is the claimed representation \eqref{lem:representation_of_primal_gap_estimator.0} of the primal gap estimator \eqref{def:primal_gap_estimator}. 
\end{proof}
 
The representation \eqref{lem:representation_of_primal_gap_estimator.0} of
the primal gap estimator \eqref{def:primal_gap_estimator} is still
of limited practical~use, as it contains (time) integral functionals of Fenchel
conjugates of (space) integral functionals defined~on~$V$, the evaluation of which, in general, is non-trivial.

In practice, however, admissible primal approximations
$v\in\operatorname{dom}(\mathcal{E})$ typically do not merely possess a
distributional time derivative; rather, the relevant time derivatives, and
after reconstruction likewise the corresponding divergences, are often
represented by integrable functions (\textit{i.e.}, weak derivatives). Resorting to this additional
regularity, it is possible to derive a quasi time-space integral
representation of the primal gap estimator
\eqref{def:primal_gap_estimator}, where the prefix \emph{quasi} refers to the
fact that an additional infimum over admissible flux reconstructions remains
to be evaluated. 

At the basis of the quasi time-space integral
representation of the  primal gap estimator
\eqref{def:primal_gap_estimator} is the additional Assumption \ref{ass:convex_conjugation} on the validity of a convex conjugation formula (in space), which allows us to restrict the infimum in \eqref{lem:representation_of_primal_gap_estimator.0} formed with respect to $Y^*$ to its subspace $Y^*(\operatorname{div})$.

\begin{lemma}\label{lem:integral_type_representation_of_E_prime}
    Let Assumptions \ref{ass:energy_densities} and  \ref{ass:convex_conjugation} be satisfied, and suppose that $f=\iota_p^* f_0\in L^{p'}(I;V^*)$ for some $f_0\in L^{p'}(Q;\mathbb{R}^{\ell})$ and there exists $u^\sharp\in \operatorname{dom}(I_G\circ \nabla)\cap \operatorname{dom}(I_F)$ such that $I_G\circ \nabla$ or $I_F$ is continuous~at~$u^\sharp$. Then,  for every $v^*\in L^{p'}(I;V^*)$ with $v^*=\iota_p^* v_0^*$ in $L^{p'}(I;V^*)$ for some $v_0^*\in L^{p'}(Q;\mathbb{R}^{\ell})$, we have that
\begin{align}\label{lem:integral_type_representation_of_E_prime.0}
    I_{E^*}(v^*)=\inf_{\substack{y \in L^{p'}(I;Y^*)\\ y(t)\in Y^*(\operatorname{div})\text{ for a.e.\ }t\in I}}{
\bigl\{
I_{\phi^*}(y)
+
I_{\psi^*}(v_0^* + \operatorname{div}y+f_0) 
\bigr\}}\,,
\end{align}
where
$I_{\phi^*}\colon L^{p'}(Q;\mathbb{R}^{\ell\times d})
\to\mathbb{R}\cup\{+\infty\}$ and
$I_{\psi^*}\colon L^0(Q;\mathbb{R}^{\ell})
\to\mathbb{R}\cup\{\pm\infty\}$
denote the integral functionals associated with $\phi^*$ and
$\psi^*$, respectively.
Note that, for every admissible $y \in L^{p'}(I;Y^*)$ with $y(t)\in Y^*(\operatorname{div})$ for a.e.\ $t\in I$, we have that
$\operatorname{div} y\in L^0(Q;\mathbb{R}^{\ell})$ by the distributional definition
of $\operatorname{div}$, a density argument, and Pettis' measurability theorem.
\end{lemma}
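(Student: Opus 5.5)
The plan is to start from the infimal-convolution formula \eqref{eq:E_prime_steady} of Remark~\ref{rem:well-posedness}. It is available because of the assumed continuity of $I_G\circ\nabla$ or $I_F$ at $u^\sharp$, and for every $v^*\in L^{p'}(I;V^*)$ it gives
\begin{align*}
I_{E^*}(v^*)=\inf_{y\in L^{p'}(I;Y^*)}\bigl\{I_{G^*}(y)+I_{F^*}(v^*-L^*y)\bigr\}\,.
\end{align*}
First, $I_{G^*}(y)=\int_I G^*(t,y(t))\,\mathrm{d}t$ with $G^*(t,\cdot)=I^\Omega_{\phi^*}(t,\cdot)$ by \eqref{eq:G_prime_steady}. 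The measurability and sign conventions of Definition~\ref{def:integral_functional} are compatible under Fubini--Tonelli, since the positive parts are non-negative and the negative parts are integrable by the non-triviality Assumption~\ref{ass:energy_densities}(ii) and Lemma~\ref{lem:lower_compactness_propery}(ii). Hence $I_{G^*}(y)=I_{\phi^*}(y)$ on $L^{p'}(Q;\mathbb{R}^{\ell\times d})\cong L^{p'}(I;Y^*)$.

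Next, I would evaluate $I_{F^*}(v^*-L^*y)$ pointwise in time using the spatial convex conjugation formula \eqref{eq:convex_conjugation_formula} of Assumption~\ref{ass:convex_conjugation}. For a.e.\ $t\in I$ one has $v^*(t)-L^*y(t)+f(t)=\iota_p^*\bigl(v_0^*(t)+f_0(t)\bigr)-L^*y(t)$. This functional is of the form $\iota_p^*w^*$ for some $w^*\in L^{p'}(\Omega;\mathbb{R}^\ell)$ if and only if $-L^*y(t)=\iota_p^*g$ with $g\in L^{p'}(\Omega;\mathbb{R}^\ell)$. That condition means precisely that $y(t)\in Y^*(\operatorname{div})$ and $g=\operatorname{div}y(t)$: the distributional divergence lies in $L^{p'}$, the vanishing normal trace on $\Gamma_N$ comes from testing with all $v\in V$, and injectivity of $\iota_p^*$ follows from the density of $V$ in $L^p$. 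In that case \eqref{eq:integral_representation_F_prime} gives $F^*(t,v^*(t)-L^*y(t))=I^\Omega_{\psi^*}\bigl(t,v_0^*(t)+\operatorname{div}y(t)+f_0(t)\bigr)$; otherwise the value is $+\infty$.

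Consequently, if the set of $t$ with $y(t)\notin Y^*(\operatorname{div})$ has positive measure, then $F^*(\cdot,v^*-L^*y)=+\infty$ on a set of positive measure, so $I_{F^*}(v^*-L^*y)=+\infty$ by Definition~\ref{def:integral_functional}. Such $y$ do not contribute to the infimum, and the infimum may therefore be restricted to $y$ with $y(t)\in Y^*(\operatorname{div})$ for a.e.\ $t$. For admissible $y$, measurability of $\operatorname{div}y$ on $Q$ is exactly the content of the note at the end of the statement (Pettis' theorem). Fubini--Tonelli, applied to positive and negative parts separately, then identifies $\int_I I^\Omega_{\psi^*}(t,\cdot)\,\mathrm{d}t$ with $I_{\psi^*}(v_0^*+\operatorname{div}y+f_0)$. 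The negative part of $\psi^*$ is controlled by Fenchel--Young with $u^\star$ as in Lemma~\ref{lem:lower_compactness_propery}(ii). The one point needing care is $\operatorname{div}y\notin L^{p'}(Q)$: I would use the fact that this negative-part bound still holds pointwise, so both sides coincide as extended reals in the convention of Definition~\ref{def:integral_functional}.

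I expect the main obstacle to be this last bookkeeping step: making the interchange of time and space integration rigorous in the extended-real sense, and justifying that the a.e.-in-time pointwise identification of $-L^*y(t)$ with $\operatorname{div}y(t)$ is jointly measurable, so that the restricted admissible set is well defined and the two integrals agree.
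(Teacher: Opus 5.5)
Your proposal follows the paper's proof: you use the infimal-convolution formula \eqref{eq:E_prime_steady} with $I_{G^*}=I_{\phi^*}$ from \eqref{eq:G_prime_steady}, then apply \eqref{eq:convex_conjugation_formula} for a.e.\ $t$, which forces $y(t)\in Y^*(\operatorname{div})$ and yields $F^*(t,v^*(t)-L^*y(t))=I_{\psi^*}^{\Omega}(t,v_0^*(t)+\operatorname{div}y(t)+f_0(t))$. The paper takes the identification of iterated with joint time-space integrals for granted, and your extra justification of it does not hold as stated: Assumption~\ref{ass:energy_densities}(ii) only gives $(I_{\phi}^{\Omega}(\cdot,\nabla u^\star))_+\in L^1(I)$, not the $(\phi(\cdot,\cdot,\nabla u^\star))_+\in L^1(Q)$ that Lemma~\ref{lem:lower_compactness_propery}(ii) on $T=Q$ would require, and the pointwise Fenchel--Young bound gives no $L^1(Q)$-control of the negative part when $\operatorname{div}y\notin L^{p'}(Q)$; this step needs an additional joint integrability hypothesis, but the gap is shared with the original rather than a defect of your route.
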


\begin{proof} 
    To begin with,  
    using $(I_G)^*=I_{G^*}=I_{\phi^*}$ (\textit{cf}.\ Lemma \ref{lem:conjugate_of_integral_functional}) together with \eqref{eq:G_prime_steady} in \eqref{eq:E_prime_steady}~as~well~as $(I_F)^*=I_{F^*}$ (\textit{cf}.\ Lemma \ref{lem:conjugate_of_integral_functional}), for every $v^*\in  L^{p'}(I;V^*)$, we find that
    \begin{align}\label{lem:integral_type_representation_of_E_prime.3}
    I_{E^*}(v^*)=\inf_{y \in L^{p'}(I;Y^*)}{
    \bigl\{
    I_{\phi^*}(y)
    +
    I_{F^*}(v^*-L^*y) 
    \bigr\}}\,.
    \end{align}
    Next, by Assumption~\ref{ass:convex_conjugation} and the assumption that $f=\iota_p^*f_0\in L^{p'}(I;V^*)$ for some $f_0\in L^{p'}(Q;\mathbb{R}^{\ell})$, for a.e. $t\in I$, every $v^*\in V^*$ with $v^*=\iota_p^* v_0^*$ in $V^*$ for some $v_0^*\in L^{p'}(\Omega;\mathbb{R}^{\ell})$, and every $y\in Y^*$,
applying~\eqref{eq:convex_conjugation_formula} to $v^*-L^*y\in V^*$, analogously~to~\eqref{eq:integral_representation_F_prime},
we have that
    \begin{align}\label{lem:integral_type_representation_of_E_prime.4}
      F^*(t,v^*-L^*y) = 
      \begin{cases}
         I_{\psi^*}^{\Omega}(t, v^*_0 + \operatorname{div}y+f_0(t))&\text{ if }y\in Y^*(\operatorname{div})\,,\\
        +\infty &\text{ else}\,.
      \end{cases}
    \end{align}
    Eventually, using \eqref{lem:integral_type_representation_of_E_prime.4} in \eqref{lem:integral_type_representation_of_E_prime.3}, we conclude that the claimed representation \eqref{lem:integral_type_representation_of_E_prime.0} applies. 
\end{proof}

The preceding representation \eqref{lem:integral_type_representation_of_E_prime.0} can now be inserted into the primal gap
estimator \eqref{def:primal_gap_estimator} with $v^*=\iota_p^*(-\partial_t v)\in L^{p'}(I;V^*)$ if $\partial_t v\in L^{p'}(Q;\mathbb{R}^{\ell})$, yielding the announced quasi
time-space integral representation.

\begin{corollary}[Quasi
time-space integral representation of the primal gap estimator]\label{thm:integral_representation_of_primal_gap_estimator}
Let the assumptions of Lemma \ref{lem:integral_type_representation_of_E_prime} be satisfied. 
Then,  
    for every $v\in \mathcal{W}(I)$  with $v(0)=u_0$ in $H$ and  $\partial_t v\in L^{p'}(Q;\mathbb{R}^{\ell})$, we have that
    \begin{align}\label{thm:integral_representation_of_primal_gap_estimator.0} 
        \eta_{\mathcal{E}}^2(v)&=\inf_{\substack{y \in L^{p'}(I;Y^*)\\ y(t)\in Y^*(\operatorname{div})\text{ for a.e.\ }t\in I}}\biggl\{\int_{Q}{\bigl(\phi^*(\cdot,\cdot,y)-y:\nabla v+\phi(\cdot,\cdot,\nabla v)\bigr)\,\mathrm{d}t\mathrm{d}x}\\[-2.5mm]&\qquad\qquad\qquad\qquad\quad\;+\int_{Q}{\bigl(\psi^*(\cdot,\cdot,\operatorname{div} y+f_0-\partial_t v)-(\operatorname{div} y+f_0-\partial_t v)\cdot \, v+\psi(\cdot,\cdot,v)\bigr)\,\mathrm{d}t\mathrm{d}x}
        \biggr\}\,.\notag
    \end{align}
\end{corollary}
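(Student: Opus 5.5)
The plan is to start from the Fenchel--Young-type expression obtained in the proof of Lemma~\ref{lem:representation_of_primal_gap_estimator}. Since $v(0)=u_0$, \eqref{prop:brezis_ekeland_nayroles.1} gives
\begin{align*}
\eta_{\mathcal{E}}^2(v)=I_{G}(\nabla v)+I_{F}(v)+\langle \partial_t v,v\rangle_{L^p(I;V)}+I_{E^*}(-\partial_t v)\,.
\end{align*}
The only abstract term is $I_{E^*}(-\partial_t v)$. Because $\partial_t v\in L^{p'}(Q;\mathbb{R}^{\ell})$, the functional $-\partial_t v\in L^{p'}(I;V^*)$ is of the form $\iota_p^* v_0^*$ with $v_0^*\coloneqq-\partial_t v$. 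Hence Lemma~\ref{lem:integral_type_representation_of_E_prime} applies and yields
\begin{align*}
I_{E^*}(-\partial_t v)=\inf_{y}\bigl\{I_{\phi^*}(y)+I_{\psi^*}(\operatorname{div}y+f_0-\partial_t v)\bigr\}\,,
\end{align*}
with the infimum taken over $y\in L^{p'}(I;Y^*)$ with $y(t)\in Y^*(\operatorname{div})$ for a.e.\ $t\in I$.

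Next, I rewrite the remaining terms as time-space integrals. By Lemma~\ref{lem:integral_functionals_as_normal_integrands}, Fubini--Tonelli, and the definition of the spatial reductions, $I_G(\nabla v)=\int_Q\phi(\cdot,\cdot,\nabla v)$. Likewise, $I_F(v)=\int_Q\psi(\cdot,\cdot,v)-\int_Q f_0\cdot v$, using $f=\iota_p^*f_0$. The pairing $\langle\partial_t v,v\rangle_{L^p(I;V)}$ becomes $\int_Q\partial_t v\cdot v$, again via $\iota_p^*$.

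For each admissible $y$, I then insert zero in the form
\begin{align*}
0=-\int_Q y:\nabla v-\int_Q\operatorname{div}y\cdot v\,.
\end{align*}
This holds for a.e.\ $t$ by the spatial integration-by-parts formula: $v(t)\in V$ and $y(t)\in Y^*(\operatorname{div})$ have vanishing boundary pairing, and integrating in time is justified since $y:\nabla v$ and $\operatorname{div} y\cdot v$ are integrable when the infimand is finite. Regrouping, the linear terms $-f_0\cdot v+\partial_t v\cdot v-\operatorname{div}y\cdot v$ combine into $-(\operatorname{div}y+f_0-\partial_t v)\cdot v$. Moving the finite, $y$-independent quantities inside the infimum then gives exactly \eqref{thm:integral_representation_of_primal_gap_estimator.0}.

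The main obstacle is bookkeeping with possibly infinite or non-integrable values when splitting and recombining integrals. The terms $\int_Q\phi(\cdot,\cdot,\nabla v)$, $\int_Q\psi(\cdot,\cdot,v)$, and $I_{\psi^*}$ may be $+\infty$, and $\operatorname{div}y\cdot v$ need not be integrable for every $y$. I would handle this in two cases.
\begin{itemize}
\item If $v\notin\operatorname{dom}(I_E)$, both sides are $+\infty$. The right-hand side is $+\infty$ because, by Fenchel--Young, the two integrands are pointwise non-negative, and one of $\phi(\cdot,\cdot,\nabla v)$, $\psi(\cdot,\cdot,v)$ has non-integrable positive part.
\item Otherwise, the primal terms are finite. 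For each fixed $y$ with finite infimand, the pointwise Fenchel--Young non-negativity makes each grouped integrand an $L^1$ function, so the regrouping is legitimate. For $y$ with infinite infimand, both formulations of that infimand equal $+\infty$.
\end{itemize}
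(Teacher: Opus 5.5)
Your proposal is correct and follows the paper's proof, which reruns the argument of Lemma~\ref{lem:representation_of_primal_gap_estimator} with \eqref{lem:integral_type_representation_of_E_prime.0} (for $v_0^*=-\partial_t v$) in place of \eqref{eq:E_prime_steady}, and regroups the linear terms via spatial integration by parts. Your case analysis of infinite values goes beyond the paper, but non-negativity of a grouped integrand does not by itself make it $L^1(Q)$ (for finite infimand), nor force its integral to be $+\infty$ (when $\psi(\cdot,\cdot,v)$ has non-integrable positive part), because $\operatorname{div}y\cdot v$ is a priori integrable only on a.e.\ time slice; this step is cleanest done slice-wise: for a.e.\ $t$, the spatial integral of each grouped integrand equals the steady Fenchel gap of $G(t,\cdot)$ resp.\ $F(t,\cdot)$, and you then integrate these non-negative functions in time using $I_{G^*},I_{F^*}>-\infty$.
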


\begin{proof}
    The assertion follows along the lines of the proof of Lemma \ref{lem:representation_of_primal_gap_estimator} using \eqref{lem:integral_type_representation_of_E_prime.0} instead of \eqref{eq:E_prime_steady}.
\end{proof}\pagebreak

\subsection{Dual gap identity}\label{subsec:dual_gap_identities}\enlargethispage{5mm}\vspace{-0.5mm} 

\hspace{5mm}We next derive the dual counterparts of the primal gap identities (\textit{cf}.\ Subsection \ref{subsec:primal_gap_identities}).~More~precisely, these \textit{a posteriori} error identities are based on the derived Fenchel duality framework associated with the Br\'ezis--Ekeland--Nayroles principle (\textit{cf}.\ Subsection~\ref{subsec:primal_problem}). 
The error measure on the left-hand side of these \textit{a posteriori} error identities
is, again, the dual optimal strong convexity measure~\eqref{def:dual_optimal_strong_convexity_measure}.
The corresponding \textit{a posteriori} error estimator on the right-hand side is the
\emph{dual gap estimator} $\eta_{-\mathcal{D}}^2\colon L^{p'}(I;Y^*)\times \mathcal{W}(I)\to [0,+\infty]$, for every~${(y,\lambda)\in L^{p'}(I;Y^*)\times \mathcal{W}(I)}$~defined~by\vspace{-0.5mm} 
\begin{align}\label{def:dual_gap_estimator} 
\smash{\eta_{-\mathcal{D}}^2(y,\lambda) \coloneqq -\mathcal{D}(y,\lambda) +\tfrac{1}{2}\|u_0\|_H^2\,.}\\[-6mm]\notag
\end{align}

\begin{theorem}[Dual gap identity]\label{thm:dual_gap_identity}
Let Assumptions~\ref{ass:energy_densities} and \ref{ass:sufficient_for_conjugation} be satisfied.  Moreover, suppose that there exists a dual solution $(z,\mu)\in \operatorname{dom}(-\mathcal{D})$ and that the strong duality relation \eqref{thm:duality.1} and the optimality inclusions \eqref{thm:duality.2} apply. Then, for every~$(y,\lambda)\in L^{p'}(I;Y^*)\times \mathcal{W}(I)$, we have that\vspace{-1mm} 
    \begin{align}\label{thm:dual_gap_identity.0}
        \smash{\rho_{-\mathcal{D}}^2(y,\lambda)=\eta_{-\mathcal{D}}^2(y,\lambda)}\,.\\[-6mm]\notag
    \end{align}
\end{theorem}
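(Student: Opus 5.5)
The identity follows exactly as in the proof of Theorem~\ref{thm:primal_gap_identity}, with the strong duality relation \eqref{thm:duality.1} taking the place of the Br\'ezis--Ekeland--Nayroles principle (\textit{cf}.\ Proposition~\ref{prop:brezis_ekeland_nayroles}). Fix an arbitrary $(y,\lambda)\in L^{p'}(I;Y^*)\times \mathcal{W}(I)$. By assumption, a dual solution $(z,\mu)\in\operatorname{dom}(-\mathcal{D})$ exists and the strong duality relation \eqref{thm:duality.1} holds, so that
\begin{align*}
\mathcal{D}(z,\mu)=\mathcal{E}(u)=\tfrac{1}{2}\|u_0\|_H^2\,.
\end{align*}
Here the second equality is the value of the primal minimum supplied by Proposition~\ref{prop:brezis_ekeland_nayroles}, which applies because $u$ solves \eqref{eq:subgradient_flow}.

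Inserting this into the definitions \eqref{def:dual_optimal_strong_convexity_measure} and \eqref{def:dual_gap_estimator} gives
\begin{align*}
\rho_{-\mathcal{D}}^2(y,\lambda)=-\mathcal{D}(y,\lambda)+\mathcal{D}(z,\mu)=-\mathcal{D}(y,\lambda)+\tfrac{1}{2}\|u_0\|_H^2=\eta_{-\mathcal{D}}^2(y,\lambda)\,,
\end{align*}
which is the claimed identity \eqref{thm:dual_gap_identity.0}.

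The argument has no substantial obstacle, but two consistency points need checking. First, both sides must make sense in $[0,+\infty]$. Since $\mathcal{D}(z,\mu)=\tfrac{1}{2}\|u_0\|_H^2$ is finite and $(z,\mu)$ is a maximizer, $\rho_{-\mathcal{D}}^2(y,\lambda)\ge 0$. If $(y,\lambda)\notin\operatorname{dom}(-\mathcal{D})$, then $-\mathcal{D}(y,\lambda)=+\infty$, and both sides equal $+\infty$ with no ill-defined expressions of the form $\infty-\infty$. Second, the optimal dual value $\mathcal{D}(z,\mu)$ does not depend on the particular maximizer chosen, as noted in Remark~\ref{rem:optimal_strong_convexity_measure}. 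Hence $\rho_{-\mathcal{D}}^2$ is well-defined, and the estimator $\eta_{-\mathcal{D}}^2$ involves only the initial datum $u_0$.
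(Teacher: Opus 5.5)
Your proposal is correct and matches the paper's proof: the strong duality relation \eqref{thm:duality.1} gives $\mathcal{D}(z,\mu)=\tfrac{1}{2}\|u_0\|_H^2$, and inserting this into the definitions \eqref{def:dual_optimal_strong_convexity_measure} and \eqref{def:dual_gap_estimator} yields the identity. Your additional remarks on the value $+\infty$ outside $\operatorname{dom}(-\mathcal{D})$ and on independence of the chosen maximizer are accurate, though the paper does not spell them out.
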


 \begin{proof}
Let $(y,\lambda)\in L^{p'}(I;Y^*)\times \mathcal{W}(I)$ be fixed, but arbitrary. By the strong duality relation
\eqref{thm:duality.1}, the dual solution satisfies~${\mathcal{D}(z,\mu)=\tfrac{1}{2}\|u_0\|_H^2}$. 
Hence, using the definitions
\eqref{def:dual_optimal_strong_convexity_measure} and
\eqref{def:dual_gap_estimator}, we arrive at\vspace{-0.5mm} 
\begin{align*}
    \smash{ \rho_{-\mathcal{D}}^2(y,\lambda)
    =
    -\mathcal{D}(y,\lambda)+\mathcal{D}(z,\mu)
    =
    -\mathcal{D}(y,\lambda)+\tfrac{1}{2}\|u_0\|_H^2
    =
    \eta_{-\mathcal{D}}^2(y,\lambda)\,,}\\[-6mm]\notag
\end{align*}
which is the claimed dual gap identity \eqref{thm:dual_gap_identity.0}.
\end{proof} 
 
As in the primal case, the dual gap identity \eqref{thm:dual_gap_identity.0} is exact
but still abstract, since (initially)~it~is~expressed entirely in terms of the dual energy functional \eqref{eq:dual}. While the dual optimal strong convexity measure \eqref{def:dual_optimal_strong_convexity_measure} admits a meaningful Bregman-type representation (\textit{cf}.\ Lemma \ref{lem:bregman_representation_dual_error_measure}),~in~the~\mbox{following}~\mbox{lemma}, we derive a corresponding representation for the dual gap estimator \eqref{def:dual_gap_estimator} that  likewise exposes its Fenchel duality structure.\vspace{-0.5mm}

\begin{lemma}[Representation of the dual gap estimator]\label{lem:representation_of_dual_gap_estimator}
Let Assumptions~\ref{ass:energy_densities} and \ref{ass:sufficient_for_conjugation} be satisfied. Then, for every $(y,\lambda)\in L^{p'}(I;Y^*)\times \mathcal{W}(I)$, we have that\vspace{-0.5mm} 
    \begin{align}\label{lem:representation_of_dual_gap_estimator.0}
    \begin{aligned} 
        \eta_{-\mathcal{D}}^2(y,\lambda)&=\bigl(I_{G^*}(y)-\langle y,\nabla\lambda\rangle_{L^p(I;Y)}+I_{G}(\nabla \lambda)\bigr)
        \\&\quad+\bigl(I_{F^*}(-L^* y-\partial_t\lambda)-\langle-L^* y-\partial_t\lambda,\lambda\rangle_{L^p(I;V)}+I_{F}(\lambda)\bigr) 
         +\tfrac{1}{2}\|\lambda(0)-u_0\|_H^2\,.
    \end{aligned}\\[-6mm]\notag
    \end{align}
\end{lemma}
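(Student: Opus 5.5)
The identity is an algebraic rearrangement of the definition \eqref{def:dual_gap_estimator}. It uses the time integration-by-parts formula \eqref{subsubsec:function_spaces_unsteady.3} and the splitting $I_E(\lambda)=I_G(\nabla\lambda)+I_F(\lambda)$. No duality theorem is needed; the only requirement is to handle the cases where some terms are infinite.

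Fix $(y,\lambda)\in L^{p'}(I;Y^*)\times\mathcal{W}(I)$. By \eqref{eq:dual} and \eqref{def:dual_gap_estimator},
\begin{align*}
\eta_{-\mathcal{D}}^2(y,\lambda)=I_{G^*}(y)+I_{F^*}(-L^*y-\partial_t\lambda)+I_G(\nabla\lambda)+I_F(\lambda)+\tfrac12\|\lambda(t_{\mathtt{fin}})\|_H^2-(\lambda(0),u_0)_H+\tfrac12\|u_0\|_H^2 .
\end{align*}
Two things then remain to be checked.

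First, the pairings inserted on the right-hand side of \eqref{lem:representation_of_dual_gap_estimator.0} combine to a pure boundary term:
\begin{align*}
-\langle y,\nabla\lambda\rangle_{L^p(I;Y)}-\langle -L^*y-\partial_t\lambda,\lambda\rangle_{L^p(I;V)}=\langle\partial_t\lambda,\lambda\rangle_{L^p(I;V)} .
\end{align*}
This holds because $\langle y,\nabla\lambda\rangle_{L^p(I;Y)}=\langle L^*y,\lambda\rangle_{L^p(I;V)}$ by the definition of the adjoint. Applying \eqref{subsubsec:function_spaces_unsteady.3} with $v=w=\lambda$, $t'=0$ and $t=t_{\mathtt{fin}}$ gives
\begin{align*}
\langle\partial_t\lambda,\lambda\rangle_{L^p(I;V)}=\tfrac12\|\lambda(t_{\mathtt{fin}})\|_H^2-\tfrac12\|\lambda(0)\|_H^2 .
\end{align*}
Second, the binomial formula gives
\begin{align*}
-(\lambda(0),u_0)_H+\tfrac12\|u_0\|_H^2=\tfrac12\|\lambda(0)-u_0\|_H^2-\tfrac12\|\lambda(0)\|_H^2 .
\end{align*}
Inserting the first identity into the right-hand side of \eqref{lem:representation_of_dual_gap_estimator.0} turns its boundary contributions into $\tfrac12\|\lambda(t_{\mathtt{fin}})\|_H^2-\tfrac12\|\lambda(0)\|_H^2+\tfrac12\|\lambda(0)-u_0\|_H^2$. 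By the second identity this equals $\tfrac12\|\lambda(t_{\mathtt{fin}})\|_H^2-(\lambda(0),u_0)_H+\tfrac12\|u_0\|_H^2$, so the right-hand side coincides with the expression for $\eta_{-\mathcal{D}}^2(y,\lambda)$ above.

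The only delicate point is the bookkeeping with the extended value $+\infty$. The added pairing terms are finite, since $y\in L^{p'}(I;Y^*)$, $\lambda\in L^p(I;V)$ and $\partial_t\lambda\in L^{p'}(I;V^*)$. The functionals $I_{G^*}$, $I_{F^*}$, $I_G\circ\nabla$ and $I_F$ never take the value $-\infty$ on these spaces; this follows from the Fenchel--Young lower bounds of Lemma \ref{lem:lower_compactness_propery}(\hyperlink{lem:lower_compactness_propery.ii}{ii}) together with Assumption \ref{ass:energy_densities}(\hyperlink{ass:energy_densities.ii}{ii}), cf.\ Remark \ref{rem:well-posedness}. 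Hence both sides are sums of terms in $\mathbb{R}\cup\{+\infty\}$ that differ only by finite quantities, and the identity holds in $\mathbb{R}\cup\{+\infty\}$, including the case where both sides equal $+\infty$. By the Fenchel--Young inequality each of the two brackets is non-negative, which is consistent with $\eta_{-\mathcal{D}}^2\ge 0$.

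Assumption \ref{ass:sufficient_for_conjugation} enters only through the representation \eqref{eq:dual} of $\mathcal{D}$ established in Theorem \ref{thm:duality}(\hyperlink{thm:duality.i}{i}). I expect no genuine obstacle beyond checking this finiteness.
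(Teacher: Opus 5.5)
Your proposal is correct and follows essentially the same route as the paper's proof. You expand $-\mathcal{D}$ via \eqref{eq:dual} with $I_E(\lambda)=I_G(\nabla\lambda)+I_F(\lambda)$ and insert the vanishing combination $\langle y,\nabla\lambda\rangle_{L^p(I;Y)}+\langle -L^*y,\lambda\rangle_{L^p(I;V)}=0$. You then absorb $-\langle\partial_t\lambda,\lambda\rangle_{L^p(I;V)}+\tfrac12\|\lambda(t_{\mathtt{fin}})\|_H^2-(\lambda(0),u_0)_H+\tfrac12\|u_0\|_H^2$ into $\tfrac12\|\lambda(0)-u_0\|_H^2$ via the temporal integration-by-parts formula and the binomial formula. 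Your additional check that no term takes the value $-\infty$, so that the rearrangement is legitimate in $\mathbb{R}\cup\{+\infty\}$, is a point the paper passes over silently; your justification via the Fenchel--Young lower bounds from Assumption~\ref{ass:energy_densities}(\hyperlink{ass:energy_densities.ii}{ii}) is sound.
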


\begin{proof}
    Let $(y,\lambda)\in L^{p'}(I;Y^*)\times \mathcal{W}(I)$ be fixed, but arbitrary. Then, using \eqref{eq:dual}, \eqref{eq:primal_steady}, that $\langle y,\nabla \lambda\rangle_{L^p(I;Y)}+\langle -L^*y,\lambda\rangle_{L^p(I;V)}=0$, and the integration-by-parts formula in time \eqref{subsubsec:function_spaces_unsteady.3}, we find that\vspace{-0.5mm}
    \begin{align*}
    \begin{aligned} 
        \eta_{-\mathcal{D}}^2(y,\lambda)
           &=
        I_{G^*}(y)-\langle y,\nabla \lambda\rangle_{L^p(I;Y)}+I_{G}(\nabla\lambda)
        \\&\quad+I_{F^*}(-L^* y-\partial_t\lambda)-\langle -L^* y-\partial_t\lambda, \lambda\rangle_{L^p(I;V)} +I_{F}(\lambda)\\&\quad
         -\langle \partial_t\lambda, \lambda\rangle_{L^p(I;V)}  +\tfrac{1}{2}\|\lambda(t_{\mathtt{fin}})\|_H^2-(\lambda(0),u_0)_H+\tfrac{1}{2}\|u_0\|_H^2
         \\&=
        I_{G^*}(y)-\langle y,\nabla \lambda\rangle_{L^p(I;Y)}+I_{G}(\nabla\lambda)
        \\&\quad+I_{F^*}(-L^* y-\partial_t\lambda)-\langle -L^* y-\partial_t\lambda, \lambda\rangle_{L^p(I;V)} +I_{F}(\lambda)
         +\tfrac{1}{2}\|\lambda(0)-u_0\|_H^2\,,
    \end{aligned}\\[-6mm]\notag
    \end{align*}
    which is the claimed representation \eqref{lem:representation_of_dual_gap_estimator.0} of the dual gap estimator \eqref{def:dual_gap_estimator}.
\end{proof}

 By \hspace{-0.1mm}analogy \hspace{-0.1mm}with \hspace{-0.1mm}Corollary \hspace{-0.1mm}\ref{thm:integral_representation_of_primal_gap_estimator}, \hspace{-0.1mm}under \hspace{-0.1mm}the \hspace{-0.1mm}additional \hspace{-0.1mm}Assumption \hspace{-0.1mm}\ref{ass:convex_conjugation} \hspace{-0.1mm}on \hspace{-0.1mm}the \hspace{-0.1mm}validity \hspace{-0.1mm}of \hspace{-0.1mm}a \hspace{-0.1mm}convex~\hspace{-0.1mm}conjuga\-tion \hspace{-0.1mm}formula \hspace{-0.1mm}(in \hspace{-0.1mm}space), \hspace{-0.1mm}we \hspace{-0.1mm}can \hspace{-0.1mm}derive \hspace{-0.1mm}a \hspace{-0.1mm}time-space \hspace{-0.1mm}integral
\hspace{-0.1mm}representation \hspace{-0.1mm}for~\hspace{-0.1mm}the~\hspace{-0.1mm}dual~\hspace{-0.1mm}gap~\hspace{-0.1mm}\mbox{estimator}~\hspace{-0.1mm}\eqref{def:dual_gap_estimator}.\vspace{-0.5mm}

\begin{corollary}[Time-space integral functional representation of the dual gap estimator]\label{thm:time_integral_representation_of_gap_estimator}
Let Assumption \ref{ass:sufficient_for_conjugation} and
 the assumptions of Lemma \ref{lem:integral_type_representation_of_E_prime}
be satisfied. Then, for every $(y,\lambda)\in \operatorname{dom}(-\mathcal{D})$ with $\operatorname{div}y,\partial_t \lambda\in  L^{p'}(Q;\mathbb{R}^{\ell})$, we have that\vspace{-0.5mm} 
    \begin{align*}
    \begin{aligned} 
        \eta_{-\mathcal{D}}^2(y,\lambda)&=\int_Q{\bigl(\phi^*(\cdot,\cdot,y)-y:\nabla \lambda +\phi(\cdot,\cdot,\nabla \lambda)\bigr)\,\mathrm{d}t\mathrm{d}x}  
        \\&\quad+\int_Q{\bigl(\psi^*(\cdot,\cdot,\operatorname{div}y+f_0-\partial_t\lambda)-(\operatorname{div}y+f_0-\partial_t\lambda)\cdot\lambda+\psi(\cdot,\cdot,\lambda)\bigr)\,\mathrm{d}t\mathrm{d}x}   +\tfrac{1}{2}\|\lambda(0)-u_0\|_H^2\,.
    \end{aligned}\\[-6mm]\notag
    \end{align*}
\end{corollary}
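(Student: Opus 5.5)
The plan is to start from the abstract representation of Lemma~\ref{lem:representation_of_dual_gap_estimator} and rewrite each of its two Fenchel--Young bracket terms as a time-space integral. The boundary term $\tfrac{1}{2}\|\lambda(0)-u_0\|_H^2$ is carried over unchanged. Since $(y,\lambda)\in\operatorname{dom}(-\mathcal{D})$, every term in \eqref{eq:dual} is finite. In particular, $I_{G^*}(y)$, $I_{F^*}(-L^*y-\partial_t\lambda)$ and $I_E(\lambda)$ are finite. This lets us split the brackets into individually finite summands without encountering $\infty-\infty$ issues.

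\emph{First bracket.} By Lemma~\ref{lem:conjugate_of_integral_functional} together with \eqref{eq:G_prime_steady}, we have $I_{G^*}(y)=\int_I I_{\phi^*}^{\Omega}(t,y(t))\,\mathrm{d}t=\int_Q\phi^*(\cdot,\cdot,y)\,\mathrm{d}t\mathrm{d}x$, using the Fubini--Tonelli theorem (cf.\ Remark~\ref{rem:integral_functionals_as_normal_integrands}). Likewise, $I_G(\nabla\lambda)=\int_Q\phi(\cdot,\cdot,\nabla\lambda)\,\mathrm{d}t\mathrm{d}x$. The pairing $\langle y,\nabla\lambda\rangle_{L^p(I;Y)}$ equals $\int_Q y:\nabla\lambda\,\mathrm{d}t\mathrm{d}x$. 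We then combine the three terms into a single integral. This is legitimate because the pointwise integrand is non-negative by the Fenchel--Young inequality and each part is integrable: the negative parts are controlled by Lemma~\ref{lem:lower_compactness_propery}(ii).

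\emph{Second bracket.} Since $\operatorname{div}y\in L^{p'}(Q;\mathbb{R}^\ell)$, for a.e.\ $t$ we have $y(t)\in Y^*(\operatorname{div})$, and $-L^*y(t)=\iota_p^*\operatorname{div}y(t)$. Note that $I_{G^*}(y)<\infty$ forces $y\in L^{p'}$; the divergence identity follows by spatial integration by parts with the $W_N^{\operatorname{div},p'}$ boundary condition. Similarly, $\partial_t\lambda=\iota_p^*\partial_t\lambda$ with $\partial_t\lambda\in L^{p'}(Q)$. Hence $-L^*y(t)-\partial_t\lambda(t)=\iota_p^*(\operatorname{div}y-\partial_t\lambda)(t)$. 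Applying Assumption~\ref{ass:convex_conjugation} (formula \eqref{eq:convex_conjugation_formula}, with $w^*=\operatorname{div}y-\partial_t\lambda+f_0$, as in \eqref{lem:integral_type_representation_of_E_prime.4}) and Lemma~\ref{lem:conjugate_of_integral_functional} gives
\[
I_{F^*}(-L^*y-\partial_t\lambda)=\int_Q\psi^*(\cdot,\cdot,\operatorname{div}y+f_0-\partial_t\lambda)\,\mathrm{d}t\mathrm{d}x .
\]
On the primal side, $I_F(\lambda)=\int_Q\psi(\cdot,\cdot,\lambda)\,\mathrm{d}t\mathrm{d}x-\int_Q f_0\cdot\lambda\,\mathrm{d}t\mathrm{d}x$ by \eqref{eq:F_steady} with $f=\iota_p^*f_0$. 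The pairing $\langle -L^*y-\partial_t\lambda,\lambda\rangle$ equals $\int_Q(\operatorname{div}y-\partial_t\lambda)\cdot\lambda\,\mathrm{d}t\mathrm{d}x$. The two $f_0\cdot\lambda$ contributions then combine exactly into $-(\operatorname{div}y+f_0-\partial_t\lambda)\cdot\lambda$. As before, all parts are finite, so we merge them into one integral with a pointwise non-negative integrand.

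\emph{Main obstacle.} The delicate step is the identification $-L^*y=\iota_p^*\operatorname{div}y$ in $L^{p'}(I;V^*)$ and the fact that the spatial conjugation formula may be applied pointwise in time and then integrated. Concretely, one needs measurability of $t\mapsto\operatorname{div}y(t)$, which is already noted after Lemma~\ref{lem:integral_type_representation_of_E_prime}. One also needs that $(I_F)^*=I_{F^*}$ holds with $F^*(t,\cdot)$ given by \eqref{eq:convex_conjugation_formula} for a.e.\ $t$. The latter comes from Lemma~\ref{lem:conjugate_of_integral_functional} applied to the convex normal integrand $F$ (Remark~\ref{rem:well-posedness}), combined with \eqref{eq:convex_conjugation_formula} for a.e.\ $t$ and Fubini.
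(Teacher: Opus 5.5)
Your proposal is correct and follows the paper's proof, which likewise starts from Lemma~\ref{lem:representation_of_dual_gap_estimator} and inserts \eqref{eq:G_prime_steady} and the spatial conjugation formula \eqref{lem:integral_type_representation_of_E_prime.4} (with $v_0^*=-\partial_t\lambda$) into the two Fenchel--Young brackets. One step should be tightened: if $\Gamma_N\neq\emptyset$, the normal-trace part of $y(t)\in Y^*(\operatorname{div})$ for a.e.\ $t$ does not follow from $\operatorname{div}y\in L^{p'}(Q;\mathbb{R}^{\ell})$ alone; derive it instead from the finiteness of $I_{F^*}(-L^*y-\partial_t\lambda)$ together with the ``$+\infty$ else'' branch of \eqref{lem:integral_type_representation_of_E_prime.4}.
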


\begin{proof}
    The assertion follows along the lines of the proof of Lemma \ref{lem:representation_of_dual_gap_estimator} using \eqref{lem:integral_type_representation_of_E_prime.4}.
\end{proof}

 \newpage 
 \section{Applications}\label{sec:applications}

\hspace{5mm}In this section, we apply the Fenchel duality framework from Section
\ref{sec:fenchel_duality_framework} and derive corresponding primal and dual gap identities from
Section \ref{sec:duality_based_a_posteriori_error_control}. For each model problem, we
proceed in the same way: we specify the energy densities, identify the corresponding steady
and unsteady primal~and~dual~functionals, and specialize the abstract subgradient flow and
the primal and dual gap identities~to~the~concrete~setting.\linebreak As model problems serve the unsteady heat equation, the unsteady Stokes equations, the unsteady Navier--Lam\'e equations,  the unsteady Bingham flow through a pipe, the unsteady obstacle problem, and the unsteady elasto-plastic~torsion~problem.
 
 \subsection{The unsteady heat equation}\label{subsec:heat_equation}

\hspace{5mm}In this subsection, we consider the \emph{unsteady heat equation}, which was first formulated by J.-B.-J.\ Fourier in 1807 and later developed in his \emph{Théorie analytique de la chaleur} (1822) (\textit{cf}.\ \cite{Fourier1822,Narasimhan2009}) and models the temporal evolution of the temperature in a heat-conducting body due to thermal diffusion and distributed heat sources.

Let $\ell=1$ and $p=2$, \textit{i.e.},
$V=W^{1,2}_D(\Omega;\mathbb{R}^1)$,
$Y=L^2(\Omega;\mathbb{R}^d)$, and
$H=L^2(\Omega;\mathbb{R}^1)$, where $V$ is equipped with the gradient
norm $\|\cdot\|_V\coloneqq\|\nabla(\cdot)\|_\Omega$ in $V$.
Moreover, let the energy densities
$\phi\colon\mathbb{R}^d\to\mathbb{R}$~and
$\psi\colon Q\times\mathbb{R}\to\mathbb{R}$, for a.e.\
$(t,x)\in Q$, every $a\in\mathbb{R}^d$, and $b\in\mathbb{R}$,
respectively, be defined by
\begin{subequations}\label{subsec:heat_equation.1}
\begin{align}\label{subsec:heat_equation.1.1}
    \phi(a)
    &\coloneqq
    \tfrac{1}{2}\vert a\vert^2\,,
    \\\label{subsec:heat_equation.1.2}
    \psi(t,x,b)
    &\coloneqq 0\,,
\end{align}
\end{subequations}
so that $F\colon \hspace{-0.1em}I\times V\hspace{-0.1em}\to\hspace{-0.1em} \mathbb{R}$, for some $f\hspace{-0.1em}\in\hspace{-0.1em} L^2(I;V^*)$, is given via $F(t,v)\hspace{-0.1em}\coloneqq \hspace{-0.1em}-\langle f(t),v\rangle_V$ for a.e.\ $t\hspace{-0.1em}\in\hspace{-0.1em} I$~and~all~$v\hspace{-0.1em}\in\hspace{-0.1em} V$.  For the choice
\eqref{subsec:heat_equation.1},
Assumptions~\ref{ass:energy_densities},
\ref{ass:convex_conjugation}, and
\ref{ass:sufficient_for_conjugation}~are~\mbox{readily}~\mbox{verified}.

For a.e.\ $t\hspace{-0.1em}\in\hspace{-0.1em} I$, the steady primal energy functional
$E(t,\cdot)\colon \hspace{-0.1em}V\hspace{-0.1em}\to\hspace{-0.1em}\mathbb{R}\cup\{+\infty\}$, for every
$v\hspace{-0.1em}\in\hspace{-0.1em} V$,~is~given~via
\begin{align}\label{subsec:heat_equation.2}
    E(t,v)
    =
    \tfrac{1}{2}\|\nabla v\|_\Omega^2
    -
    \langle f(t),v\rangle_V\,,
\end{align}
and, due to \eqref{eq:E_prime_steady}, the Fenchel conjugate functional
$E^*(t,\cdot)\colon \hspace{-0.1em}V^*\hspace{-0.1em}\to\hspace{-0.1em}\mathbb{R}\cup\{+\infty\}$ of
\eqref{subsec:heat_equation.2},~for~every~$v^*\hspace{-0.1em}\in\hspace{-0.1em} V^*$, is given via
\begin{align}\label{subsec:heat_equation.3}
\begin{aligned}
    E^*(t,v^*)
    =
    \inf_{\substack{
        y\in Y^*\\
        L^*y=f(t)+v^*\text{ in }V^*
    }}
    \bigl\{
        \tfrac{1}{2}\|y\|_\Omega^2
    \bigr\}
    =
    \tfrac{1}{2}\|f(t)+v^*\|_{V^*}^2\,.
\end{aligned}
\end{align}
Here, we used the standard
representation of the $V^*$-norm induced by the gradient inner product: $\|v^*\|_{V^*}
   \hspace{-0.1em} =\hspace{-0.1em}
    \inf_{y\in Y^*: L^*y=v^*}
    \|y\|_{\Omega}$ for all $v^*\in V^*$, which, in turn, is based on~that~$\|\cdot\|_{V}=\|\nabla (\cdot)\|_{\Omega}$~on~$V$.

If the Laplace operator $-\Delta\coloneqq L^*\circ \nabla \colon V\to V^*$, for every  $v,w\in V$, is defined by 
\begin{align*}
    \langle(-\Delta)v,w\rangle_V
    \coloneqq
    (\nabla v,\nabla w)_\Omega\,,
\end{align*}
then, given an arbitrary initial datum $u_0\in H$, the abstract
subgradient-flow problem \eqref{eq:subgradient_flow} is given via the 
unsteady heat equation: find $u\in\mathcal{W}(I)$ such that
\begin{subequations}\label{subsec:heat_equation.4}
\begin{alignat}{2}\label{subsec:heat_equation.4.1}
    \partial_tu(t)+(-\Delta)u(t)
    &=
    f(t)
    &&\quad\text{in }V^*\quad\text{for a.e.\ }t\in I\,,
    \\\label{subsec:heat_equation.4.2}
    u(0)
    &=
    u_0
    &&\quad\text{in }H\,.
\end{alignat}
\end{subequations}
By the standard existence theory for linear parabolic problems in an
evolution triple (\textit{cf}.\
\cite[Thm.~23.A]{Zeidler1990IIA}), the  unsteady heat equation
\eqref{subsec:heat_equation.4} admits a unique solution
$u\in\mathcal{W}(I)$. According to the
Br\'ezis--Ekeland--Nayroles principle
(\textit{cf}.\ Proposition~\ref{prop:brezis_ekeland_nayroles}), this
solution is equivalently characterized~as~the~\mbox{primal}~solution,
\textit{i.e.}, as a minimizer of the unsteady primal energy functional
$\mathcal{E}\colon\mathcal{W}(I)\to\mathbb{R}\cup\{+\infty\}$, for
every $v\in\mathcal{W}(I)$ defined by
\begin{align}\label{subsec:heat_equation.5}
    \mathcal{E}(v)
    \coloneqq
    \tfrac{1}{2}\|\nabla v\|_Q^2
    -
    \langle f,v\rangle_{L^2(I;V)}
    +
    \tfrac{1}{2}\|f-\partial_tv\|_{L^2(I;V^*)}^2
    +
    \tfrac{1}{2}\|v(t_{\mathtt{fin}})\|_H^2
    +
    \chi_{\{u_0\}}(v(0))\,.
\end{align}\pagebreak

\noindent According to Theorem~\ref{thm:duality}(\hyperlink{thm:duality.i}{i}),
the unsteady dual energy functional
$\mathcal{D}\colon L^2(I;Y^*)\times\mathcal{W}(I)
\to\mathbb{R}\cup\{-\infty\}$, for every
$(y,\lambda)\in L^2(I;Y^*)\times\mathcal{W}(I)$, is given via
\begin{align}\label{subsec:heat_equation.6}
\begin{aligned}
    \mathcal{D}(y,\lambda)
    \coloneqq
    &-
    \tfrac{1}{2}\|y\|_Q^2
    -
    \chi_{\{-f\}}(-\partial_t\lambda-L^*y)
    -
    \tfrac{1}{2}\|\nabla\lambda\|_Q^2
    \\
    &+
    \langle f,\lambda\rangle_{L^2(I;V)}
    -
    \tfrac{1}{2}\|\lambda(t_{\mathtt{fin}})\|_H^2
    +
    (\lambda(0),u_0)_H\,,
\end{aligned}
\end{align}
where the indicator functional $\chi_{\{-f\}}\colon L^2(I;V^*)\to \mathbb{R}\cup\{+\infty\}$, for every $v^*\in L^2(I;V^*)$, is defined by
\begin{align*}
    \chi_{\{-f\}}(v^*)\coloneqq \begin{cases}
        0&\text{ if }v^*=-f\text{ in }L^2(I;V^*)\,,\\
        +\infty&\text{ else}\,.
    \end{cases}
\end{align*}
Since the continuity condition in
Theorem~\ref{thm:duality}(\hyperlink{thm:duality.ii}{ii}) is satisfied
in the present quadratic setting, there exists a dual solution
$(z,\mu)\in L^2(I;Y^*)\times\mathcal{W}(I)$ and the corresponding
strong duality relation~\eqref{thm:duality.1}~applies. 
In particular, the 
optimality inclusions \eqref{thm:duality.2} together with the strict
convexity of the steady primal energy functional \eqref{subsec:heat_equation.2} imply that the dual
solution is unique and given via
\begin{subequations}\label{subsec:heat_equation.opt}
\begin{alignat}{2}\label{subsec:heat_equation.opt.1}
    z
    &=
    \nabla u
    &&\quad\text{ in }L^2(I;Y^*)\,,
    \\\label{subsec:heat_equation.opt.2}
    \partial_tu+L^*z
    &=
    f
    &&\quad\text{in }L^2(I;V^*)\,,
    \\\label{subsec:heat_equation.opt.3}
    \mu
    &=
    u
    &&\quad\text{in }\mathcal{W}(I)\,.
\end{alignat}
\end{subequations}

\hspace{-1mm}Next, \hspace{-0.1mm}we \hspace{-0.1mm}record \hspace{-0.1mm}the \hspace{-0.1mm}corresponding \hspace{-0.1mm}primal \hspace{-0.1mm}and \hspace{-0.1mm}dual \hspace{-0.1mm}gap \hspace{-0.1mm}identities
\hspace{-0.1mm}(\textit{cf}.\ \hspace{-0.1mm}Theorems~\ref{thm:primal_gap_identity} \hspace{-0.1mm}and
\hspace{-0.1mm}\ref{thm:dual_gap_identity}).~\hspace{-0.1mm}To~\hspace{-0.1mm}this~\hspace{-0.1mm}end, we note that, for every
$v,w\in \mathcal{W}(I)$ and  $y\in  L^2(I;Y^*)$ with $ L^*y=f-\partial_tv$ in $L^2(I;V^*)$, due to $I_{G^*}=I_G=\frac{1}{2}\|\cdot\|_{Q}^2$, $I_{F}=-\langle f,\cdot\rangle_{L^2(I;V)}$, $I_{F^*}=\chi_{\{-f\}}$, and a binomial formula, there holds
\begin{align}\label{subsec:heat_equation.7}
\begin{aligned}
    &I_{G^*}(y)
    -
    \langle y,\nabla w\rangle_{L^2(I;Y)}
    +
    I_G(\nabla w)
    \\
    &\quad+
    \chi_{\{-f\}}(-\partial_tv-L^*y)
    -
    \langle-\partial_tv-L^*y,w\rangle_{L^2(I;V)}
    -
    \langle f,w\rangle_{L^2(I;V)}
    \\
    &=
    \tfrac{1}{2}\|y-\nabla w\|_Q^2\,.
\end{aligned}
\end{align}

First, using repeatedly the elementary identity \eqref{subsec:heat_equation.7}, we derive the corresponding primal~gap~identity.

\begin{lemma}[Primal gap identity for the unsteady heat equation]
\label{lem:primal_gap_identity_heat_equation}
For every $v\in\mathcal{W}(I)$ with $v(0)=u_0$ in $H$, there holds
\begin{align}\label{lem:primal_gap_identity_heat_equation.0}
\begin{aligned}
    &\tfrac{1}{2}\|\nabla(v-u)\|_Q^2
    +
    \tfrac{1}{2}\|\partial_t(v-u)\|_{L^2(I;V^*)}^2
    +
    \tfrac{1}{2}\|(v-u)(t_{\mathtt{fin}})\|_H^2
    \\&=
    \inf_{\substack{
        y\in L^2(I;Y^*)\\
        L^*y=f-\partial_tv\text{ in }L^2(I;V^*)
    }}
    \bigl\{
        \tfrac{1}{2}\|y-\nabla v\|_Q^2
    \bigr\}
    \\&=
    \tfrac{1}{2}
    \|\partial_tv+(-\Delta)v-f\|_{L^2(I;V^*)}^2\,.
\end{aligned}
\end{align}
\end{lemma}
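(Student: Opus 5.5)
The plan is to combine the abstract primal gap identity (Theorem~\ref{thm:primal_gap_identity}) with the two representations already available. These are the Bregman-type representation of $\rho_{\mathcal{E}}^2$ (Lemma~\ref{lem:bregman_representation_primal_error_measure}) on the left and the representation of $\eta_{\mathcal{E}}^2$ (Lemma~\ref{lem:representation_of_primal_gap_estimator}) on the right. Both are specialized via the elementary identity \eqref{subsec:heat_equation.7}. The continuity hypothesis of Lemma~\ref{lem:representation_of_primal_gap_estimator} holds because $I_G\circ\nabla=\frac12\|\nabla\cdot\|_Q^2$ is continuous everywhere on $L^2(I;V)$. If $v\notin\operatorname{dom}(\mathcal{E})$, both sides are $+\infty$. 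Otherwise, $\partial_t v\in L^2(I;V^*)$ implies $\mathcal{E}(v)<+\infty$, so we may assume $v\in\operatorname{dom}(\mathcal{E})$.

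\emph{Left-hand side.} Here $I_E(v)=\frac12\|\nabla v\|_Q^2-\langle f,v\rangle_{L^2(I;V)}$, which is quadratic with derivative $L^*\nabla u-f=-\partial_t u$ at $u$ (by \eqref{subsec:heat_equation.4.1}). Hence $\mathcal{D}_{I_E}^{-\partial_t u}(v,u)=\frac12\|\nabla(v-u)\|_Q^2$. Similarly, by \eqref{subsec:heat_equation.3}, $I_{E^*}(w^*)=\frac12\|f+w^*\|_{L^2(I;V^*)}^2$ is quadratic. Its derivative at $-\partial_t u$ is the Riesz representative of $f-\partial_t u=(-\Delta)u$, which is $u$ itself, since the Riesz map for the gradient norm is $-\Delta$. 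Consequently $\mathcal{D}_{I_{E^*}}^{u}(-\partial_tv,-\partial_tu)=\frac12\|\partial_t(v-u)\|_{L^2(I;V^*)}^2$. Plugging both into \eqref{lem:bregman_representation_primal_error_measure.0} gives the first line of \eqref{lem:primal_gap_identity_heat_equation.0}.

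\emph{Right-hand side.} In \eqref{lem:representation_of_primal_gap_estimator.0} the $F$-conjugate term is $\chi_{\{-f\}}(-L^*y-\partial_tv)$, so the infimum is effectively taken over $y$ with $L^*y=f-\partial_t v$. For such $y$, \eqref{subsec:heat_equation.7} with $w=v$ reduces the bracket to $\frac12\|y-\nabla v\|_Q^2$, which yields the middle expression.

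\emph{Last equality.} This is the only nontrivial step: a Hilbert-space projection argument. Write $y=\nabla v+r$. The constraint becomes $L^*r=f-\partial_tv-(-\Delta)v\eqqcolon g\in L^2(I;V^*)$. Pointwise in time, the standard representation $\|g(t)\|_{V^*}=\inf_{L^*r=g(t)}\|r\|_\Omega$ holds, with minimizer $r=\nabla R^{-1}g(t)$, where $R=-\Delta$ is the Riesz map. This minimizer is measurable in $t$ and lies in $L^2(I;Y^*)$. Therefore the infimum over $L^2(I;Y^*)$ equals $\frac12\|g\|_{L^2(I;V^*)}^2$: the bound "$\ge$" holds pointwise in $t$, and "$\le$" is attained by the explicit minimizer. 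The main care needed is this measurable selection, which here is explicit and linear, so no obstacle arises.
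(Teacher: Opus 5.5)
Your proof is correct and follows the same route as the paper. It combines Theorem~\ref{thm:primal_gap_identity} with Lemmas~\ref{lem:bregman_representation_primal_error_measure} and \ref{lem:representation_of_primal_gap_estimator}, reduces the estimator bracket via \eqref{subsec:heat_equation.7} with $w=v$, and uses the minimal-norm representation of the $V^*$-norm (cf.\ \eqref{subsec:heat_equation.3}) for the last equality.

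The only local difference is in the two Bregman divergences. You evaluate them directly from the quadratic structure of $I_E$ and $I_{E^*}$, using that the Riesz map $-\Delta$ sends $u$ to $f-\partial_t u$. The paper instead passes through \eqref{rem:alternative_representation_bregman_divergence.4}, \eqref{rem:alternative_representation_bregman_divergence.6} and \eqref{subsec:heat_equation.7} with the dual solution $z=\nabla u$. Your shortcut is equivalent here and has the small advantage of not needing the dual solution at all.
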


\begin{proof}
Let $v\in\mathcal{W}(I)$ with $v(0)=u_0$ in $H$ be fixed, but
arbitrary.

\emph{$\bullet$ Optimal strong convexity measure.}
Due to the alternative Bregman divergence representation
\eqref{rem:alternative_representation_bregman_divergence.4} (which is
applicable since $\mu=u$),
\eqref{subsec:heat_equation.7} (applied with
$(v,w,y)=(u,v,z)$), and \eqref{subsec:heat_equation.opt},~we~have~that
\begin{align*}
\begin{aligned}
    \smash{\mathcal{D}_{I_E}^{-\partial_tu}(v,u)}
    &=
    \tfrac{1}{2}\|z-\nabla v\|_Q^2
    \\&=
    \tfrac{1}{2}\|\nabla(v-u)\|_Q^2\,.
\end{aligned}
\end{align*} 
Moreover, due to the alternative Bregman divergence representation
\eqref{rem:alternative_representation_bregman_divergence.6} (which is applicable since both $I_G\circ\nabla$ and $I_F$ are continuous),
\eqref{subsec:heat_equation.7} (applied with
$(v,w,y)=(v,u,y)$), and \eqref{subsec:heat_equation.3},~we~have~that
\begin{align*}
\begin{aligned}
    \mathcal{D}_{I_{E^*}}^u
    (-\partial_tv,-\partial_tu)
    &=
    \inf_{\substack{
        y\in L^2(I;Y^*)\\
        L^*y=f-\partial_tv\text{ in }L^2(I;V^*)
    }}
    \bigl\{
        \tfrac{1}{2}\|y-\nabla u\|_Q^2
    \bigr\}
    \\&=
    \tfrac{1}{2}
    \|\partial_t(v-u)\|_{L^2(I;V^*)}^2\,.
\end{aligned}
\end{align*}
Therefore, the Bregman-type representation of the primal error measure
\eqref{lem:bregman_representation_primal_error_measure.0} yields
\begin{align}\label{lem:primal_gap_identity_heat_equation.3}
\begin{aligned}
    \rho_{\mathcal{E}}^2(v)
    =
    \tfrac{1}{2}\|\nabla(v-u)\|_Q^2
    +
    \tfrac{1}{2}\|\partial_t(v-u)\|_{L^2(I;V^*)}^2 +
    \tfrac{1}{2}\|(v-u)(t_{\mathtt{fin}})\|_H^2\,.
\end{aligned}
\end{align}

\emph{$\bullet$ Primal gap estimator.}
Using the representation
\eqref{lem:representation_of_primal_gap_estimator.0}  of the primal gap
estimator \eqref{def:primal_gap_estimator} (which is applicable since both $I_G\circ\nabla$ and $I_F$ are continuous),
\eqref{subsec:heat_equation.7} (applied with
$(v,w,y)=(v,v,y)$),~and~\eqref{subsec:heat_equation.3}, we find that
\begin{align}\label{lem:primal_gap_identity_heat_equation.4}
\begin{aligned}
    \eta_{\mathcal{E}}^2(v)
    &=
    \inf_{\substack{
        y\in L^2(I;Y^*)\\
        L^*y=f-\partial_tv\text{ in }L^2(I;V^*)
    }}
    \bigl\{
        \tfrac{1}{2}\|y-\nabla v\|_Q^2
    \bigr\}
    \\&=
    \tfrac{1}{2}
    \|\partial_tv+(-\Delta)v-f\|_{L^2(I;V^*)}^2\,.
\end{aligned}
\end{align}

Finally, using the representations
\eqref{lem:primal_gap_identity_heat_equation.3} and
\eqref{lem:primal_gap_identity_heat_equation.4} and the general primal
gap identity \eqref{thm:primal_gap_identity.0}, we arrive at the claimed
representation \eqref{lem:primal_gap_identity_heat_equation.0} of the
primal gap identity for the unsteady~heat~equation~\eqref{subsec:heat_equation.4}.
\end{proof}

Next, we derive the corresponding dual~gap~identity.

\begin{lemma}[Dual gap identity for the unsteady heat equation]
\label{lem:dual_gap_identity_heat_equation}
For every
$(y,\lambda)\in L^2(I;Y^*)\times\mathcal{W}(I)$ with
\begin{align}\label{lem:dual_gap_identity_heat_equation.-1}
    \operatorname{div}y
    =
    \partial_t\lambda-f
    \quad\text{in }L^2(I;V^*)\,,
\end{align}
there holds
\begin{align}\label{lem:dual_gap_identity_heat_equation.0}
\begin{aligned}
    \tfrac{1}{2}\|y-z\|_Q^2
    +
    \tfrac{1}{2}\|\nabla(\lambda-u)\|_Q^2
    +
    \tfrac{1}{2}\|(\lambda-u)(t_{\mathtt{fin}})\|_H^2
    =
    \tfrac{1}{2}\|y-\nabla\lambda\|_Q^2
    +
    \tfrac{1}{2}\|\lambda(0)-u_0\|_H^2\,.
\end{aligned}
\end{align}
\end{lemma}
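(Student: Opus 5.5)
The proof mirrors that of Lemma~\ref{lem:primal_gap_identity_heat_equation}. We compute the left-hand side of the dual gap identity \eqref{thm:dual_gap_identity.0} via the Bregman-type representation of Lemma~\ref{lem:bregman_representation_dual_error_measure}. We compute the right-hand side via Lemma~\ref{lem:representation_of_dual_gap_estimator}, using the elementary identity \eqref{subsec:heat_equation.7} in each step. Throughout, fix $(y,\lambda)\in L^2(I;Y^*)\times\mathcal{W}(I)$ satisfying \eqref{lem:dual_gap_identity_heat_equation.-1}. In the weak sense this reads $L^*y=f-\partial_t\lambda$ in $L^2(I;V^*)$, since $\operatorname{div}=-L^*$. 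Hence $\chi_{\{-f\}}(-\partial_t\lambda-L^*y)=0$ and $(y,\lambda)\in\operatorname{dom}(-\mathcal{D})$. Existence of the dual solution $(z,\mu)$, strong duality, and the optimality relations \eqref{subsec:heat_equation.opt} have already been recorded, so Lemma~\ref{lem:bregman_representation_dual_error_measure} and Theorem~\ref{thm:dual_gap_identity} are applicable.

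\emph{Dual optimal strong convexity measure.} We treat the three Bregman divergences in \eqref{lem:bregman_representation_dual_error_measure.0} separately.
\begin{itemize}[noitemsep,topsep=2pt]
\item[(a)] By \eqref{rem:alternative_representation_bregman_divergence.8}, with $I_{G^*}=I_G=\tfrac12\|\cdot\|_Q^2$ and $z=\nabla u$, completing the square gives $\mathcal{D}_{I_{G^*}}^{\nabla u}(y,z)=\tfrac12\|y\|_Q^2-(y,\nabla u)_Q+\tfrac12\|\nabla u\|_Q^2=\tfrac12\|y-z\|_Q^2$.
\item[(b)] By \eqref{rem:alternative_representation_bregman_divergence.10}, the second divergence equals $\chi_{\{-f\}}(-\partial_t\lambda-L^*y)-\langle-\partial_t\lambda-L^*y,u\rangle+I_F(u)=\langle f,u\rangle-\langle f,u\rangle=0$.
\item[(c)] Since $\mu=u$, the third divergence is given by \eqref{rem:alternative_representation_bregman_divergence.4} with $v=\lambda$. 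Applying \eqref{subsec:heat_equation.7} with $(v,w,y)=(u,\lambda,z)$, which is admissible because of \eqref{subsec:heat_equation.opt.2}, yields $\tfrac12\|z-\nabla\lambda\|_Q^2=\tfrac12\|\nabla(\lambda-u)\|_Q^2$.
\end{itemize}
Adding the terminal term $\tfrac12\|(\lambda-\mu)(t_{\mathtt{fin}})\|_H^2=\tfrac12\|(\lambda-u)(t_{\mathtt{fin}})\|_H^2$ gives exactly the left-hand side of \eqref{lem:dual_gap_identity_heat_equation.0}.

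\emph{Dual gap estimator.} We insert $(y,\lambda)$ into \eqref{lem:representation_of_dual_gap_estimator.0}. The two bracketed terms are precisely the left-hand side of \eqref{subsec:heat_equation.7} with $(v,w,y)=(\lambda,\lambda,y)$, using $I_F(\lambda)=-\langle f,\lambda\rangle_{L^2(I;V)}$. This is admissible because $L^*y=f-\partial_t\lambda$. Hence $\eta_{-\mathcal{D}}^2(y,\lambda)=\tfrac12\|y-\nabla\lambda\|_Q^2+\tfrac12\|\lambda(0)-u_0\|_H^2$. Combining this with Theorem~\ref{thm:dual_gap_identity} proves the claim.

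The only delicate point is the bookkeeping of signs and of the $f$-terms. The $I_F$ contribution must cancel against the pairing with $f$, which relies on the constraint $-\partial_t\lambda-L^*y=-f$. We also have to interpret \eqref{lem:dual_gap_identity_heat_equation.-1} correctly as an identity in $V^*$ involving $L^*$. Apart from this, every step is a direct specialization of an earlier result.
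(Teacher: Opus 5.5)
Your proposal is correct and follows essentially the same route as the paper. Both proofs evaluate the three Bregman divergences of Lemma~\ref{lem:bregman_representation_dual_error_measure} through \eqref{rem:alternative_representation_bregman_divergence.8}, \eqref{rem:alternative_representation_bregman_divergence.10} and \eqref{rem:alternative_representation_bregman_divergence.4}, the last combined with \eqref{subsec:heat_equation.7} at $(u,\lambda,z)$; then compute the estimator via Lemma~\ref{lem:representation_of_dual_gap_estimator} and \eqref{subsec:heat_equation.7} at $(\lambda,\lambda,y)$; and conclude with Theorem~\ref{thm:dual_gap_identity}.
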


\begin{proof}
Let $(y,\lambda)\in L^2(I;Y^*)\times\mathcal{W}(I)$ with
\eqref{lem:dual_gap_identity_heat_equation.-1} be fixed, but arbitrary.

\emph{$\bullet$ Optimal strong convexity measure.}
Due to the alternative Bregman divergence representation
\eqref{rem:alternative_representation_bregman_divergence.8}, $I_{G^*}=I_{G}=\tfrac{1}{2}\|\cdot\|_{Q}^2$, a
binomial formula, and \eqref{subsec:heat_equation.opt.1}, we have that
\begin{align*}
    \mathcal{D}_{I_{G^*}}^{\nabla u}(y,z)
    &= \tfrac{1}{2}\|y-\nabla u\|_Q^2
   \\& =
    \tfrac{1}{2}\|y-z\|_Q^2\,,
\end{align*}
due to the alternative Bregman divergence representation
\eqref{rem:alternative_representation_bregman_divergence.10}, $I_F= -\langle f,\cdot\rangle_{L^2(I;V)}$,
$I_{F^*}=\chi_{\{-f\}}$,
\eqref{lem:dual_gap_identity_heat_equation.-1}, and
\eqref{subsec:heat_equation.4.1}, we have that
\begin{align*}
    \mathcal{D}_{I_{F^*}}^u
    (-\partial_t\lambda-L^*y,-\partial_tu-L^*z)
    &=\chi_{\{-f\}}(-\partial_t\lambda-L^*y)-\langle f-\partial_t\lambda-L^*y,u\rangle_{L^2(I;V)}
    \\&=0\,,
\end{align*}
and, due to the alternative Bregman divergence representation
\eqref{rem:alternative_representation_bregman_divergence.4} (which is
applicable since $\mu=u$ in $\mathcal{W}(I)$),
\eqref{subsec:heat_equation.7} (applied with
$(v,w,y)=(u,\lambda,z)$), and \eqref{subsec:heat_equation.opt}, we have
that
\begin{align*}
\begin{aligned}
    \mathcal{D}_{I_E}^{-\partial_tu}(\lambda,u)
    &=
    \tfrac{1}{2}\|z-\nabla\lambda\|_Q^2
    \\&=
    \tfrac{1}{2}\|\nabla(\lambda-u)\|_Q^2\,.
\end{aligned}
\end{align*}
Therefore, the Bregman-type representation of the dual error measure
\eqref{lem:bregman_representation_dual_error_measure.0} yields
\begin{align}\label{lem:dual_gap_identity_heat_equation.4}
\begin{aligned}
    \rho_{-\mathcal{D}}^2(y,\lambda)
    =
    \tfrac{1}{2}\|y-z\|_Q^2
    +
    \tfrac{1}{2}\|\nabla(\lambda-u)\|_Q^2
    +
    \tfrac{1}{2}\|(\lambda-u)(t_{\mathtt{fin}})\|_H^2\,.
\end{aligned}
\end{align}

\emph{$\bullet$ Dual gap estimator.}
Using the representation
\eqref{lem:representation_of_dual_gap_estimator.0} of the dual gap
estimator \eqref{def:dual_gap_estimator},
\eqref{subsec:heat_equation.7} (applied with
$(v,w,y)=(\lambda,\lambda,y)$), and a binomial formula, we find that
\begin{align}\label{lem:dual_gap_identity_heat_equation.5}
    \eta_{-\mathcal{D}}^2(y,\lambda)
    =
    \tfrac{1}{2}\|y-\nabla\lambda\|_Q^2
    +
    \tfrac{1}{2}\|\lambda(0)-u_0\|_H^2\,.
\end{align}

Finally, using the representations
\eqref{lem:dual_gap_identity_heat_equation.4} and
\eqref{lem:dual_gap_identity_heat_equation.5} and the general dual gap
identity \eqref{thm:dual_gap_identity.0}, we arrive at the claimed
representation \eqref{lem:dual_gap_identity_heat_equation.0} of the
dual gap identity for the unsteady~heat~equation~\eqref{subsec:heat_equation.4}.
\end{proof}\pagebreak

\subsection{The unsteady Stokes equations}\label{subsec:stokes_equations}\enlargethispage{1.5mm}

\hspace{5mm}In this subsection, we consider the \emph{unsteady Stokes equations}, which are named after G.~G.~Stokes, who systematically derived the governing equations for viscous fluid motion in his 1845 work on the internal friction of fluids (\textit{cf}.\ \cite{Stokes2009}), and which describe the motion of incompressible viscous Newtonian fluids in the regime of low Reynolds numbers.

Let $\ell\hspace{-0.1em}=\hspace{-0.1em}d$ and $p\hspace{-0.1em}=\hspace{-0.1em}2$, \textit{i.e.},
$V\hspace{-0.1em}=\hspace{-0.1em}W^{1,2}_D(\Omega;\mathbb{R}^d)$,
$Y\hspace{-0.1em}=\hspace{-0.1em}L^2(\Omega;\mathbb{R}^{d\times d})$, and
$H\hspace{-0.1em}=\hspace{-0.1em}L^2(\Omega;\mathbb{R}^d)$, where $V$~is~\mbox{equipped} with the gradient
norm $\|\cdot\|_V \coloneqq \|\nabla(\cdot)\|_\Omega$ in $V$.
For simplicity, we assume~that~${\Gamma_D=\partial\Omega}$~and~define
\begin{align*}
    V_{\sigma}
    \coloneqq
    \smash{\bigl\{
        v\in V
        \mid
        \operatorname{div}v=0\text{ a.e.\ in }\Omega
    \bigr\}}
    \quad\text{and}\quad
    H_{\sigma}
    \coloneqq
    \operatorname{cl}_H V_{\sigma}\,.
\end{align*}
Moreover, let the energy densities
$\phi\colon\smash{\mathbb{R}^{d\times d}}\to\mathbb{R}\cup\{+\infty\}$ and
$\psi\colon Q\times\smash{\mathbb{R}^d}\to\mathbb{R}$, for a.e.\
$(t,x)\in Q$, every $A\in\mathbb{R}^{d\times d}$ and
$a\in\mathbb{R}^d$, respectively, be defined by\vspace{-0.5mm}
\begin{subequations}\label{subsec:stokes_equations.1}
\begin{align}\label{subsec:stokes_equations.1.1}
    \phi(A)
    &\coloneqq
    \tfrac{1}{2}|A|^2
    +
    \chi_{\{0\}}(\operatorname{tr}A)\,,
    \\\label{subsec:stokes_equations.1.2}
    \psi(t,x,a)
    &\coloneqq
    0\,,\\[-6mm]\notag
\end{align}
\end{subequations}
so that $G\colon Y\to \mathbb{R}\cup\{+\infty\}$ is given via $G(y)\coloneqq\frac{1}{2}\|y\|_{\Omega}^2+\chi_{\{0\}}(\operatorname{tr}y)$ for all $y\in Y$,  where $\chi_{\{0\}}\colon L^2(\Omega;\mathbb{R}^1)\to \mathbb{R}\cup\{+\infty\}$, for every $v\in L^2(\Omega;\mathbb{R}^1)$, is given via\vspace{-0.5mm}
\begin{align}\label{subsec:stokes_equations.1.3}
    \chi_{\{0\}}(v)\coloneqq \begin{cases}
        0&\text{ if }v=0\text{ a.e.\ in }\Omega\,,\\
        +\infty&\text{ else}\,,
    \end{cases}\\[-6mm]\notag
\end{align}
and $F\colon \hspace{-0.1em}I\times V\hspace{-0.1em}\to\hspace{-0.1em} \mathbb{R}$, for some $f\hspace{-0.1em}\in\hspace{-0.1em} L^2(I;V^*)$, is given via $F(t,v)\hspace{-0.1em}\coloneqq \hspace{-0.1em}-\langle f(t),v\rangle_V$ for a.e.\ $t\hspace{-0.1em}\in\hspace{-0.1em} I$~and~all~$v\hspace{-0.1em}\in\hspace{-0.1em} V$. 
For the choice
\eqref{subsec:stokes_equations.1}, Assumptions~\ref{ass:energy_densities},
\ref{ass:convex_conjugation}, and
\ref{ass:sufficient_for_conjugation} are readily verified.

For a.e.\ $t\hspace{-0.1em}\in\hspace{-0.1em} I$, the steady primal energy functional
$E(t,\cdot)\colon \hspace{-0.1em}V\hspace{-0.1em}\to\hspace{-0.1em}\mathbb{R}\cup\{+\infty\}$, for every
$v\hspace{-0.1em}\in\hspace{-0.1em} V$,~is~given~via\vspace{-0.5mm}
\begin{align}\label{subsec:stokes_equations.2}
    E(t,v)
    =
    \tfrac{1}{2}\|\nabla v\|_\Omega^2
    +
    \chi_{\{0\}}(\operatorname{tr}\nabla v)
    -
    \langle f(t),v\rangle_V\,,\\[-6mm]\notag
\end{align}
and, \hspace{-0.1mm}due \hspace{-0.1mm}to \hspace{-0.1mm}\eqref{eq:E_prime_steady} \hspace{-0.1mm}and \hspace{-0.1mm}since \hspace{-0.1mm}the \hspace{-0.1mm}Fenchel \hspace{-0.1mm}conjugate \hspace{-0.1mm}$\phi^*\hspace{-0.1em}\colon \hspace{-0.175em}\mathbb{R}^{d\times d}\hspace{-0.175em}\to\hspace{-0.175em} \mathbb{R}$ \hspace{-0.1mm}of \hspace{-0.1mm}\eqref{subsec:stokes_equations.1.1}, \hspace{-0.1mm}for \hspace{-0.1mm}every \hspace{-0.1mm}$A^*\hspace{-0.175em}\in \hspace{-0.175em}\mathbb{R}^{d\times d}$,~\hspace{-0.1mm}is~\hspace{-0.1mm}given~\hspace{-0.1mm}via\vspace{-0.5mm}
\begin{align}\label{subsec:stokes_equations.3}
     \phi^*(A^*)
    =
    \tfrac{1}{2}|\operatorname{dev}A^*|^2\,,\\[-6mm]\notag
\end{align}
where $ \operatorname{dev}A^*
   \hspace{-0.15em} \coloneqq\hspace{-0.15em}
    A^*\hspace{-0.05em}-\hspace{-0.05em}\tfrac{1}{d}\operatorname{tr}(A^*)\mathrm{I}_d\hspace{-0.15em}\in\hspace{-0.15em}\mathbb{R}^{d\times d}$ denotes the deviatoric part of $A^*$, the Fenchel conjugate~functional
$E^*(t,\cdot)\colon V^*\to\mathbb{R}\cup\{+\infty\}$ of
\eqref{subsec:stokes_equations.2}, for every $v^*\in V^*$, is given via\vspace{-0.5mm}
\begin{align}\label{subsec:stokes_equations.4}
    \begin{aligned} 
    E^*(t,v^*)
    &=
    \inf_{\substack{
        y\in Y^*\\
        L^*y=f(t)+v^*\text{ in }V^*
    }}
    \bigl\{
        \tfrac{1}{2}\|\operatorname{dev}y\|_\Omega^2
    \bigr\}=\tfrac{1}{2}\|(f(t)+v^*)|_{V_\sigma}\|_{V_\sigma^*}^2\,.
    \end{aligned}\\[-6mm]\notag
\end{align}
Here, we used the standard representation of the $V_\sigma^*$-norm induced by the gradient inner product:
$\|v^*_\sigma\|_{V_\sigma^*}\hspace{-0.15em}=\hspace{-0.15em}\inf_{y\in Y^*:L^*y|_{V_\sigma}=v^*_\sigma}{\{\|y\|_{\Omega}\}}$ \hspace{-0.1mm}for \hspace{-0.1mm}all \hspace{-0.1mm}$v^*_\sigma\hspace{-0.15em}\in\hspace{-0.15em} V^*_\sigma$, \hspace{-0.1mm}which, \hspace{-0.1mm}in \hspace{-0.1mm}turn, \hspace{-0.1mm}is \hspace{-0.1mm}based \hspace{-0.1mm}on \hspace{-0.1mm}that~\hspace{-0.1mm}${\|\cdot\|_{V_\sigma}\hspace{-0.15em}=\hspace{-0.15em}\|\nabla(\cdot)\|_{\Omega}}$~\hspace{-0.1mm}on~\hspace{-0.1mm}$V_\sigma$.

If the vector Laplace operator $-\Delta\colon V\to V^*$ is
defined by $\langle (-\Delta)v,w\rangle_V
    \coloneqq
    (\nabla v,\nabla w)_\Omega$ for all $v,w\in V$ and the (distributional) gradient operator $\nabla\colon L_0^2(\Omega)\to V^*$, where $L_0^2(\Omega)\coloneqq \{q\in L^2(\Omega)\mid (q,1)_{\Omega}=0\}$, is defined by $\langle\nabla q,v\rangle_V
    \coloneqq
    -(q,\operatorname{div}v)_\Omega$ for all $q\in L_0^2(\Omega)$ and $v\in V$,
    then,
given~an~initial~datum~$u_0\in H_{\sigma}$, the abstract
subgradient-flow problem \eqref{eq:subgradient_flow} is given via the  
unsteady Stokes equations: find
$(u,p)\in\mathcal{W}(I)\times L^2(I;L_0^2(\Omega))$ such that\vspace{-0.5mm}
\begin{subequations}\label{subsec:stokes_equations.5}
\begin{alignat}{3}\label{subsec:stokes_equations.5.1}
    \partial_tu(t)+(-\Delta)u(t)+\nabla p(t)
    &=f(t)
    &&\quad\text{ in }V^*&&\quad\text{ for a.e.\ }t\in I\,,
    \\\label{subsec:stokes_equations.5.2}
    \operatorname{div}u(t)
    &=0
    &&\quad\text{ a.e.\ in }\Omega&&\quad\text{ for a.e.\ }t\in I\,,
    \\\label{subsec:stokes_equations.5.3}
    u(0)
    &=u_0
    &&\quad\text{ in }H\,.\\[-6mm]\notag
\end{alignat}
\end{subequations} 
By the standard existence theory for the unsteady Stokes equations (\textit{cf}.\ \cite[Ch.\ III, §1, Thm.\ 1.1]{Temam1977}),
\eqref{subsec:stokes_equations.5} admits a unique velocity field
$u\in\mathcal{W}(I)$ and a unique kinematic pressure
$p\in L^2(I;L_0^2(\Omega))$.  
According to the Br\'ezis--Ekeland--Nayroles principle
(\textit{cf}.\ Proposition~\ref{prop:brezis_ekeland_nayroles}), the
velocity field $u\in \mathcal{W}(I)$ is equivalently characterized as the primal solution,
\textit{i.e.}, as a minimizer of the unsteady primal energy functional
$\mathcal{E}\colon\mathcal{W}(I)\to\mathbb{R}\cup\{+\infty\}$, for
every $v\in\mathcal{W}(I)$ defined by\vspace{-0.5mm}
\begin{align}\label{subsec:stokes_equations.6}
\begin{aligned}
    \mathcal{E}(v)
    &\coloneqq
    \tfrac{1}{2}\|\nabla v\|_Q^2
    +
    \chi_{\{0\}}(\operatorname{tr}\nabla v)
    -
    \langle f,v\rangle_{L^2(I;V)}
    \\
    &\quad+ 
    \tfrac{1}{2}\|(f-\partial_t v)|_{L^2(I;V_\sigma)}\|_{L^2(I;V_\sigma^*)}^2 
    +
    \tfrac{1}{2}\|v(t_{\mathtt{fin}})\|_H^2
    +
    \chi_{\{u_0\}}(v(0))\,.
\end{aligned}\\[-6mm]\notag
\end{align}
According to Theorem~\ref{thm:duality}(\hyperlink{thm:duality.i}{i}),
the unsteady dual energy functional
$\mathcal{D}\colon L^2(I;Y^*)\times\mathcal{W}(I)
\to\mathbb{R}\cup\{-\infty\}$, for every
$(y,\lambda)\in L^2(I;Y^*)\times\mathcal{W}(I)$, is given via\vspace{-0.5mm}
\begin{align}\label{subsec:stokes_equations.7}
\begin{aligned}
    \mathcal{D}(y,\lambda)
    \coloneqq
    &-
    \tfrac{1}{2}\|\operatorname{dev}y\|_Q^2
    -
    \chi_{\{-f\}}(-\partial_t\lambda-L^*y)
    -
    \tfrac{1}{2}\|\nabla\lambda\|_Q^2
    \\
    &-
    \chi_{\{0\}}(\operatorname{tr}\nabla\lambda)
    +
    \langle f,\lambda\rangle_{L^2(I;V)}
    -
    \tfrac{1}{2}\|\lambda(t_{\mathtt{fin}})\|_H^2
    +
    (\lambda(0),u_0)_H\,.
\end{aligned}\\[-6mm]\notag
\end{align}
The continuity condition in
Theorem~\ref{thm:duality}(\hyperlink{thm:duality.ii}{ii}) is not
satisfied in the present setting, since the effective domain of the functional
$I_G\coloneqq\frac{1}{2}\|\cdot\|_{Q}^2+\chi_{\{0\}}(\operatorname{tr}(\cdot))\colon L^2(I;Y)\to\mathbb{R}\cup\{+\infty\}$ is the proper closed
subspace $ 
    \operatorname{dom}(I_G)\hspace{-0.1em}=\hspace{-0.1em}\{
        y\hspace{-0.1em}\in \hspace{-0.1em}L^2(I;Y)
        \mid
        \operatorname{tr}y\hspace{-0.1em}=\hspace{-0.1em}0\text{ a.e.\ in }Q
    \}$.
Nevertheless, a dual solution $(z,\mu)\hspace{-0.1em}\in \hspace{-0.1em}L^2(I;Y^*)\times \mathcal{W}(I)$~can\linebreak be constructed directly from the
velocity  field and the kinematic pressure.  More precisely,~if~we~define\vspace{-0.5mm}
\begin{align}\label{subsec:stokes_equations.8}
    (z,\mu)
    \coloneqq
    (\nabla u-p\mathrm{I}_d,u)
    \quad\text{ a.e.\ in }Q\,,\\[-6mm]\notag
\end{align} 
from $\operatorname{dev}\nabla u=\nabla u$  a.e.\ in $Q$ (since $\operatorname{tr}\nabla u=\operatorname{div}u=0$ a.e.\ in $Q$),  $\operatorname{dev}\mathrm{I}_d=0$, and
\eqref{subsec:stokes_equations.5}, it follows that\vspace{-0.5mm}
\begin{subequations}\label{subsec:stokes_equations.opt}
\begin{alignat}{2}\label{subsec:stokes_equations.opt.1}
    \operatorname{dev}z
    &=\nabla u
    &&\quad\text{a.e.\ in }Q\,,
    \\\label{subsec:stokes_equations.opt.2}
    \partial_tu+L^*z
    &=f
    &&\quad\text{in }L^2(I;V^*)\,.\\[-6mm]\notag
\end{alignat}
\end{subequations} 
Therefore, 
inserting \eqref{subsec:stokes_equations.8} into \eqref{subsec:stokes_equations.7}, using the optimality relations \eqref{subsec:stokes_equations.opt} and the integration-by-parts
formula in time \eqref{subsubsec:function_spaces_unsteady.3},~we~find~that\vspace{-0.5mm}
\begin{align*}
    \mathcal{D}(z,\mu)
=
\tfrac{1}{2}\|u_0\|_H^2
=
\mathcal{E}(u)\,,\\[-6mm]\notag
\end{align*} 
so that, by weak duality, \eqref{subsec:stokes_equations.8} is a maximizer of  \eqref{subsec:stokes_equations.7} and the
strong duality~relation~holds.

\hspace{-1mm}Next, \hspace{-0.1mm}we \hspace{-0.1mm}record \hspace{-0.1mm}the \hspace{-0.1mm}corresponding \hspace{-0.1mm}primal \hspace{-0.1mm}and \hspace{-0.1mm}dual \hspace{-0.1mm}gap \hspace{-0.1mm}identities
\hspace{-0.1mm}(\textit{cf}.\ \hspace{-0.1mm}Theorems~\ref{thm:primal_gap_identity} \hspace{-0.1mm}and
\hspace{-0.1mm}\ref{thm:dual_gap_identity}):~\hspace{-0.1mm}To~\hspace{-0.1mm}this~\hspace{-0.1mm}end, we note that, for every
$v,w\in\mathcal{W}(I)$ with
$w\in L^2(I;V_{\sigma})$ and every
$y\in L^2(I;Y^*)$ with
$L^*y=f-\partial_tv$ in $L^2(I;V^*)$, using $I_{G}=\frac{1}{2}\|\cdot\|_{Q}^2+\chi_{\{0\}}(\operatorname{tr}(\cdot))$, $I_{G^*}=I_{\smash{I_{\phi^*}^{\Omega}}}$ with 
\eqref{subsec:stokes_equations.3}, $I_{F}=-\langle f,\cdot\rangle_{L^2(I;V)}$, $I_{F^*}=\chi_{\{-f\}}$, and a binomial formula, there holds\vspace{-0.5mm}
\begin{align}\label{subsec:stokes_equations.9}
\begin{aligned}
    &I_{G^*}(y)
    -
    \langle y,\nabla w\rangle_{L^2(I;Y)}
    +
    I_G(\nabla w)
    \\
    &\quad+
    \chi_{\{-f\}}(-\partial_tv-L^*y)
    -
    \langle-\partial_tv-L^*y,w\rangle_{L^2(I;V)}
    -
    \langle f,w\rangle_{L^2(I;V)}
    \\
    &=
    \tfrac{1}{2}\|\operatorname{dev}y-\nabla w\|_Q^2\,.
\end{aligned}\\[-6mm]\notag
\end{align}

First, using repeatedly the elementary identity \eqref{subsec:stokes_equations.9}, we derive the corresponding primal~gap~\mbox{identity}.

\begin{lemma}[Primal gap identity for the unsteady Stokes equations]
\label{lem:primal_gap_identity_stokes_equations}
For every $v\in\mathcal{W}(I)$ with $v(0)=u_0$ in $H$ and
$v\in L^2(I;V_{\sigma})$, there holds\vspace{-0.5mm}
\begin{align}\label{lem:primal_gap_identity_stokes_equations.0}
\begin{aligned}
    &\tfrac{1}{2}\|\nabla(v-u)\|_Q^2
    +
    \tfrac{1}{2}\|\partial_t(v-u)|_{L^2(I;V_\sigma)}\|_{L^2(I;V_\sigma^*)}^2
    +
    \tfrac{1}{2}\|(v-u)(t_{\mathtt{fin}})\|_H^2
    \\&=
    \inf_{\substack{
        y\in L^2(I;Y^*)\\
        L^*y=f-\partial_tv\text{ in }L^2(I;V^*)
    }}
    \bigl\{
        \tfrac{1}{2}\|\operatorname{dev}y-\nabla v\|_Q^2
    \bigr\} 
    \\&=\tfrac{1}{2}\|(\partial_t v+(-\Delta)v-f)|_{L^2(I;V_\sigma)}\|_{L^2(I;V_\sigma^*)}^2\,.
\end{aligned}\\[-6mm]\notag
\end{align}
\end{lemma}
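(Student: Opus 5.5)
The plan mirrors the heat-equation case (Lemma \ref{lem:primal_gap_identity_heat_equation}). We combine the general primal gap identity \eqref{thm:primal_gap_identity.0} with the Bregman-type representation \eqref{lem:bregman_representation_primal_error_measure.0} of $\rho_{\mathcal{E}}^2(v)$ and the representation \eqref{lem:representation_of_primal_gap_estimator.0} of $\eta_{\mathcal{E}}^2(v)$. Each term is then evaluated using the elementary identity \eqref{subsec:stokes_equations.9}. Throughout, $v(0)=u_0$, $v\in L^2(I;V_\sigma)$ and $u\in L^2(I;V_\sigma)$. This gives $v\in\operatorname{dom}(\mathcal{E})$ and makes \eqref{subsec:stokes_equations.9} applicable with $w\in\{u,v\}$. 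The qualification required in \eqref{lem:representation_of_primal_gap_estimator.0} and \eqref{rem:alternative_representation_bregman_divergence.6} holds because $I_F=-\langle f,\cdot\rangle_{L^2(I;V)}$ is continuous everywhere. That is enough, even though $I_G\circ\nabla$ is not continuous.

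\emph{Optimal strong convexity measure.} Remark \ref{rem:alternative_representation_bregman_divergence} requires $\mu=u$ together with the inclusions \eqref{rem:alternative_representation_bregman_divergence.3}. Theorem \ref{thm:duality}(ii) does not supply these here, so I verify them directly for $(z,\mu)=(\nabla u-p\mathrm{I}_d,u)$ from \eqref{subsec:stokes_equations.8}.

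\begin{itemize}[noitemsep,topsep=2pt,leftmargin=!,labelwidth=\widthof{(ii)}]
\item[(i)] The inclusion $z\in\partial I_G(\nabla u)$ holds because $\partial\phi(A)=A+\mathbb{R}\mathrm{I}_d$ whenever $\operatorname{tr}A=0$.
\item[(ii)] The inclusion $-\partial_tu-L^*z\in\partial I_F(u)=\{-f\}$ is exactly \eqref{subsec:stokes_equations.opt.2}.
\end{itemize}

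Then \eqref{rem:alternative_representation_bregman_divergence.4} applies. Using \eqref{subsec:stokes_equations.9} with $(v,w,y)=(u,v,z)$ and $\operatorname{dev}z=\nabla u$ from \eqref{subsec:stokes_equations.opt.1}, we get $\mathcal{D}_{I_E}^{-\partial_tu}(v,u)=\tfrac12\|\nabla(v-u)\|_Q^2$.

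Next, \eqref{rem:alternative_representation_bregman_divergence.6} together with \eqref{subsec:stokes_equations.9} for $(v,w,y)=(v,u,y)$ gives
\[
\mathcal{D}_{I_{E^*}}^{u}(-\partial_tv,-\partial_tu)=\inf_{y\,:\,L^*y=f-\partial_tv}\tfrac12\|\operatorname{dev}y-\nabla u\|_Q^2 .
\]
For $y$ outside the constraint set, the indicator $\chi_{\{-f\}}$ makes the functional $+\infty$, so restricting the infimum is harmless. To evaluate this infimum, write $y=z+\widetilde y$ with $L^*\widetilde y=\partial_t(u-v)$. Since $\operatorname{dev}z=\nabla u$, the quantity becomes $\inf\tfrac12\|\operatorname{dev}\widetilde y\|_Q^2$ subject to $L^*\widetilde y=\partial_t(u-v)$. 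By \eqref{subsec:stokes_equations.4}, applied pointwise in time with $f(t)+v^*$ replaced by $\partial_t(u-v)(t)$, this equals $\tfrac12\|\partial_t(v-u)|_{L^2(I;V_\sigma)}\|_{L^2(I;V_\sigma^*)}^2$. Adding the terminal term from \eqref{lem:bregman_representation_primal_error_measure.0} yields the left-hand side of \eqref{lem:primal_gap_identity_stokes_equations.0}.

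\emph{Primal gap estimator.} Insert \eqref{subsec:stokes_equations.9} with $(v,w,y)=(v,v,y)$ into \eqref{lem:representation_of_primal_gap_estimator.0}. This gives the middle expression $\inf\tfrac12\|\operatorname{dev}y-\nabla v\|_Q^2$ directly. To pass to the dual norm, substitute $\widetilde y=y-\nabla v$. Then $\operatorname{dev}y-\nabla v=\operatorname{dev}\widetilde y$, because $\operatorname{tr}\nabla v=0$, and the constraint becomes $L^*\widetilde y=f-\partial_tv-(-\Delta)v$. Applying \eqref{subsec:stokes_equations.4} again gives the last expression.

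\emph{Main obstacle.} The delicate step is the pointwise-in-time use of the $V_\sigma^*$-norm representation \eqref{subsec:stokes_equations.4}, i.e.\ the identity $\inf\{\tfrac12\|\operatorname{dev}y\|^2 : L^*y=g\}=\tfrac12\|g|_{V_\sigma}\|_{V_\sigma^*}^2$. Two points need justification. First, the constrained set must be nonempty; for this I would use that $L^*$ is onto $V^*$, since $y=\nabla(-\Delta)^{-1}g$ works. Second, the deviatoric part must absorb the pressure freedom; here I would use the de Rham/Nečas representation of annihilators of $V_\sigma$ as $\nabla L_0^2(\Omega)$, together with $\operatorname{dev}(y+q\mathrm{I}_d)=\operatorname{dev}y$. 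Finally, a measurable selection of minimizers in time, obtained via the linear solution operator, lets the pointwise infimum be integrated over $I$ to give the $L^2(I;V_\sigma^*)$ statement.
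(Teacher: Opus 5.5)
Your proposal is correct and follows the same route as the paper. It combines the general primal gap identity with the Bregman representation of $\rho_{\mathcal{E}}^2$ and the representation of $\eta_{\mathcal{E}}^2$, and evaluates each term via \eqref{subsec:stokes_equations.9} and the $V_\sigma^*$-norm formula \eqref{subsec:stokes_equations.4}. Your direct check of the inclusions \eqref{rem:alternative_representation_bregman_divergence.3} for $(z,\mu)=(\nabla u-p\mathrm{I}_d,u)$, and the explicit shifts $y=z+\widetilde y$ and $\widetilde y=y-\nabla v$, make precise steps the paper leaves implicit.
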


\begin{proof}

Let $v\in\mathcal{W}(I)$ with $v(0)=u_0$ in $H$ and
$v\in L^2(I;V_{\sigma})$ be fixed, but arbitrary.

\emph{$\bullet$ Optimal strong convexity measure.}
Due to the alternative Bregman divergence representation
\eqref{rem:alternative_representation_bregman_divergence.4} (which is
applicable since $\mu =u$ in $\mathcal{W}(I)$), 
\eqref{subsec:stokes_equations.9} (applied with
$(v,w,y)=(u,v,z)$), and
the optimality conditions \eqref{subsec:stokes_equations.opt}, we have that\vspace{-0.5mm}
\begin{align*}
\begin{aligned}
    \mathcal{D}_{I_E}^{-\partial_tu}(v,u)
    &=
    \tfrac{1}{2}\|\operatorname{dev}z-\nabla v\|_Q^2
    \\
    &=
    \tfrac{1}{2}\|\nabla(v-u)\|_Q^2\,.
\end{aligned}\\[-6mm]\notag
\end{align*}
Moreover, due to the alternative Bregman divergence representation
\eqref{rem:alternative_representation_bregman_divergence.6} (which is applicable since $I_G\circ \nabla$ is proper and $I_F$ is continuous),
\eqref{subsec:stokes_equations.9} (applied with
$(v,w,y)=(v,u,y)$), and \eqref{subsec:stokes_equations.4},~we~have~that\vspace{-0.5mm}
\begin{align*}
\begin{aligned}
    \mathcal{D}_{I_{E^*}}^u
    (-\partial_tv,-\partial_tu)
    &=
    \inf_{\substack{
        y\in L^2(I;Y^*)\\
        L^*y=f-\partial_tv\text{ in }L^2(I;V^*)
    }}
    \bigl\{
        \tfrac{1}{2}\|\operatorname{dev}y-\nabla u\|_Q^2
    \bigr\}
    \\&=\tfrac{1}{2}\|\partial_t(v-u)|_{L^2(I;V_\sigma)}\|_{L^2(I;V_\sigma^*)}^2\,.
\end{aligned}\\[-6mm]\notag
\end{align*}
Therefore, the Bregman-type representation of the primal error measure
\eqref{lem:bregman_representation_primal_error_measure.0} yields
\begin{align}\label{lem:primal_gap_identity_stokes_equations.3}
\begin{aligned}
    \rho_{\mathcal{E}}^2(v)
    =
    \tfrac{1}{2}\|\nabla(v-u)\|_Q^2
    +
    \tfrac{1}{2}\|\partial_t(v-u)|_{L^2(I;V_\sigma)}\|_{L^2(I;V_\sigma^*)}^2
    +
    \tfrac{1}{2}\|(v-u)(t_{\mathtt{fin}})\|_H^2\,.
\end{aligned}
\end{align}

\emph{$\bullet$ Primal gap estimator.}
Using the representation
\eqref{lem:representation_of_primal_gap_estimator.0} of the primal gap 
estimator \eqref{def:primal_gap_estimator} (which is applicable since $I_G\circ \nabla$ is proper and $I_F$ is continuous), \eqref{subsec:stokes_equations.9} (applied with
$(v,w,y)=(v,v,y)$), and \eqref{subsec:stokes_equations.4}, we find that
\begin{align}\label{lem:primal_gap_identity_stokes_equations.4}
    \begin{aligned} 
    \eta_{\mathcal{E}}^2(v)
    &=
    \inf_{\substack{
        y\in L^2(I;Y^*)\\
        L^*y=f-\partial_tv\text{ in }L^2(I;V^*)
    }}
    \bigl\{
        \tfrac{1}{2}\|\operatorname{dev}y-\nabla v\|_Q^2
    \bigr\} 
    \\&=\tfrac{1}{2}\|(\partial_t v+(-\Delta)v-f)|_{L^2(I;V_\sigma)}\|_{L^2(I;V_\sigma^*)}^2\,.
    \end{aligned}
\end{align}

Finally, using the representations
\eqref{lem:primal_gap_identity_stokes_equations.3} and
\eqref{lem:primal_gap_identity_stokes_equations.4} and the general
primal gap identity \eqref{thm:primal_gap_identity.0}, we arrive at
the claimed representation \eqref{lem:primal_gap_identity_stokes_equations.0} of the primal gap identity for the unsteady Stokes~equations \eqref{subsec:stokes_equations.5}.
\end{proof}

Next, we derive the corresponding dual~gap~identity.

\begin{lemma}[Dual gap identity for the unsteady Stokes equations]
\label{lem:dual_gap_identity_stokes_equations}
For every $(y,\lambda)\in L^2(I;Y^*)\times\mathcal{W}(I)$ with
$\lambda\in L^2(I;V_{\sigma})$ and
\begin{align}\label{lem:dual_gap_identity_stokes_equations.-1}
    \operatorname{div}y
    =
    \partial_t\lambda-f
    \quad\text{in }L^2(I;V^*)\,,
\end{align}
there holds
\begin{align}\label{lem:dual_gap_identity_stokes_equations.0}
\begin{aligned}
    \tfrac{1}{2}\|\operatorname{dev}(y-z)\|_Q^2
    +
    \tfrac{1}{2}\|\nabla(\lambda-u)\|_Q^2
    +
    \tfrac{1}{2}\|(\lambda-u)(t_{\mathtt{fin}})\|_H^2
    =
    \tfrac{1}{2}\|\operatorname{dev}y-\nabla\lambda\|_Q^2
    +
    \tfrac{1}{2}\|\lambda(0)-u_0\|_H^2\,.
\end{aligned}
\end{align}
\end{lemma}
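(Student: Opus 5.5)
The plan mirrors the proof of Lemma~\ref{lem:dual_gap_identity_heat_equation}. Fix $(y,\lambda)\in L^2(I;Y^*)\times\mathcal{W}(I)$ with $\lambda\in L^2(I;V_\sigma)$ and \eqref{lem:dual_gap_identity_stokes_equations.-1}. We then evaluate both sides of the abstract dual gap identity \eqref{thm:dual_gap_identity.0}: the left side through the Bregman-type representation \eqref{lem:bregman_representation_dual_error_measure.0}, and the right side through Lemma~\ref{lem:representation_of_dual_gap_estimator}. The identity is applicable because the constructed pair $(z,\mu)=(\nabla u-p\mathrm{I}_d,u)$ from \eqref{subsec:stokes_equations.8} is a dual solution with strong duality. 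We also need the optimality inclusions \eqref{thm:duality.2}; these follow from strong duality by the Fenchel--Young equality argument of the proof of Theorem~\ref{thm:duality}(ii), which does not use the continuity hypothesis.

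\emph{Left-hand side.} There are three Bregman terms.
\begin{itemize}[noitemsep,topsep=2pt,leftmargin=!,labelwidth=\widthof{(ii)}]
\item[(i)] For $\mathcal{D}_{I_{G^*}}^{\nabla u}(y,z)$ we use \eqref{rem:alternative_representation_bregman_divergence.8}. This gives $\tfrac12\|\operatorname{dev}y\|_Q^2-(y,\nabla u)_Q+\tfrac12\|\nabla u\|_Q^2$, with $\chi_{\{0\}}(\operatorname{tr}\nabla u)=0$. Since $\operatorname{tr}\nabla u=0$, we have $(y,\nabla u)_Q=(\operatorname{dev}y,\nabla u)_Q$. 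By \eqref{subsec:stokes_equations.opt.1} this collapses to $\tfrac12\|\operatorname{dev}y-\operatorname{dev}z\|_Q^2=\tfrac12\|\operatorname{dev}(y-z)\|_Q^2$.
\item[(ii)] For the $I_{F^*}$ term we use \eqref{rem:alternative_representation_bregman_divergence.10}. The constraint \eqref{lem:dual_gap_identity_stokes_equations.-1}, read as $-\partial_t\lambda-L^*y=-f$, makes the indicator vanish and the pairing $\langle f-\partial_t\lambda-L^*y,u\rangle$ equal to zero, so this term is $0$.
\item[(iii)] For $\mathcal{D}_{I_E}^{-\partial_tu}(\lambda,u)$ we use \eqref{rem:alternative_representation_bregman_divergence.4}, which applies since $\mu=u$. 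Combined with \eqref{subsec:stokes_equations.9} for $(v,w,y)=(u,\lambda,z)$, which requires $w=\lambda\in L^2(I;V_\sigma)$ and $L^*z=f-\partial_tu$ by \eqref{subsec:stokes_equations.opt.2}, this gives $\tfrac12\|\operatorname{dev}z-\nabla\lambda\|_Q^2=\tfrac12\|\nabla(\lambda-u)\|_Q^2$.
\end{itemize}
The terminal term $\tfrac12\|(\lambda-\mu)(t_{\mathtt{fin}})\|_H^2$ becomes $\tfrac12\|(\lambda-u)(t_{\mathtt{fin}})\|_H^2$.

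\emph{Right-hand side.} Lemma~\ref{lem:representation_of_dual_gap_estimator} together with \eqref{subsec:stokes_equations.9} for $(v,w,y)=(\lambda,\lambda,y)$ yields $\eta_{-\mathcal{D}}^2(y,\lambda)=\tfrac12\|\operatorname{dev}y-\nabla\lambda\|_Q^2+\tfrac12\|\lambda(0)-u_0\|_H^2$. The hypotheses of \eqref{subsec:stokes_equations.9} hold because $\lambda$ is divergence-free and $L^*y=f-\partial_t\lambda$. Equating the two sides via \eqref{thm:dual_gap_identity.0} gives \eqref{lem:dual_gap_identity_stokes_equations.0}.

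\emph{Main obstacle.} The delicate point is justifying the use of Theorem~\ref{thm:dual_gap_identity} and Lemma~\ref{lem:bregman_representation_dual_error_measure} without the continuity condition of Theorem~\ref{thm:duality}(ii). I would check explicitly that strong duality for the constructed $(z,\mu)$ yields \eqref{prop:duality.14}; this is the standard Ekeland--Temam extremality relation, and the pointwise Fenchel--Young decomposition then gives \eqref{thm:duality.2}. A secondary point is verifying the elementary identity \eqref{subsec:stokes_equations.9}. It uses $\operatorname{tr}\nabla w=0$ to replace $y$ by $\operatorname{dev}y$ in the pairing, after which a binomial expansion finishes it. Since the paper records this identity, I may assume it.
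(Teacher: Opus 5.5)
Your proposal is correct and follows the paper's proof essentially step by step. You evaluate the three Bregman terms via \eqref{rem:alternative_representation_bregman_divergence.8}, \eqref{rem:alternative_representation_bregman_divergence.10}, and \eqref{rem:alternative_representation_bregman_divergence.4} combined with \eqref{subsec:stokes_equations.9} and \eqref{subsec:stokes_equations.opt}, you evaluate the estimator via Lemma~\ref{lem:representation_of_dual_gap_estimator} and \eqref{subsec:stokes_equations.9} with $(v,w,y)=(\lambda,\lambda,y)$, and you equate the two through Theorem~\ref{thm:dual_gap_identity}, exactly as the paper does. Your remark that the optimality inclusions \eqref{thm:duality.2} follow from the directly verified strong duality without the continuity hypothesis fills in a step the paper leaves implicit; alternatively, they can be read off directly from \eqref{subsec:stokes_equations.8} and \eqref{subsec:stokes_equations.5}.
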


\begin{proof}
Let $(y,\lambda)\in L^2(I;Y^*)\times\mathcal{W}(I)$ with
$\lambda\in L^2(I;V_{\sigma})$ and
\eqref{lem:dual_gap_identity_stokes_equations.-1} be fixed, but
arbitrary.

\emph{$\bullet$ Optimal strong convexity measure.}
Due to the alternative Bregman divergence representation
\eqref{rem:alternative_representation_bregman_divergence.8}, \eqref{subsec:stokes_equations.3}, a binomial formula, and \eqref{subsec:stokes_equations.opt.1}, we have that
\begin{align*}
    \mathcal{D}_{I_{G^*}}^{\nabla u}(y,z)
    &=
    \tfrac{1}{2}\|\operatorname{dev}y-\nabla u\|_Q^2
    \\&= \tfrac{1}{2}\|\operatorname{dev}(y-z)\|_Q^2\,,
\end{align*}
due to the alternative Bregman divergence representation
\eqref{rem:alternative_representation_bregman_divergence.10},  $I_F= -\langle f,\cdot\rangle_{L^2(I;V)}$, $I_{F^*}=\chi_{\{-f\}}$, \eqref{lem:dual_gap_identity_stokes_equations.-1}, and \eqref{subsec:stokes_equations.opt.2}, we have that
\begin{align*}
    \mathcal{D}_{I_{F^*}}^u
    (-\partial_t\lambda-L^*y,-\partial_tu-L^*z)
    &=\chi_{\{-f\}}(-\partial_t\lambda-L^*y)-\langle f-\partial_t\lambda-L^*y,u\rangle_{L^2(I;V)}
    \\&=
    0\,, 
\end{align*}
and, due to the alternative Bregman divergence representation
\eqref{rem:alternative_representation_bregman_divergence.4} (which is
applicable since $\mu=u$ in $\mathcal{W}(I)$),
\eqref{subsec:stokes_equations.9} (applied with
$(v,w,y)=(u,\lambda,z)$), and
\eqref{subsec:stokes_equations.opt.1}, we have that
\begin{align*}
\begin{aligned}
    \mathcal{D}_{I_E}^{-\partial_tu}(\lambda,u)
     &=
    \tfrac{1}{2}\|\operatorname{dev}z-\nabla \lambda \|_Q^2
    \\&=
    \tfrac{1}{2}\|\nabla(\lambda-u)\|_Q^2\,.
\end{aligned}
\end{align*}
Therefore, the Bregman-type representation of the dual error measure
\eqref{lem:bregman_representation_dual_error_measure.0} yields
\begin{align}\label{lem:dual_gap_identity_stokes_equations.4}
\begin{aligned}
    \rho_{-\mathcal{D}}^2(y,\lambda)
    =
    \tfrac{1}{2}\|\operatorname{dev}y-\nabla u\|_Q^2
    +
    \tfrac{1}{2}\|\nabla(\lambda-u)\|_Q^2 +
    \tfrac{1}{2}\|(\lambda-u)(t_{\mathtt{fin}})\|_H^2\,.
\end{aligned}
\end{align}

\emph{$\bullet$ Dual gap estimator.}
Using the representation
\eqref{lem:representation_of_dual_gap_estimator.0} of the dual gap
estimator \eqref{def:dual_gap_estimator}, \eqref{subsec:stokes_equations.9} (applied with
$(v,w,y)=(\lambda,\lambda,y)$), and a binomial formula, we find that
\begin{align}\label{lem:dual_gap_identity_stokes_equations.5}
    \eta_{-\mathcal{D}}^2(y,\lambda)
    =
    \tfrac{1}{2}\|\operatorname{dev}y-\nabla\lambda\|_Q^2
    +
    \tfrac{1}{2}\|\lambda(0)-u_0\|_H^2\,.
\end{align}

Finally, using the representations
\eqref{lem:dual_gap_identity_stokes_equations.4} and
\eqref{lem:dual_gap_identity_stokes_equations.5} and the general dual
gap identity \eqref{thm:dual_gap_identity.0}, we arrive at
the claimed representation \eqref{lem:dual_gap_identity_stokes_equations.0} of the dual gap identity for the unsteady Stokes equations~\eqref{subsec:stokes_equations.5}.
\end{proof}\pagebreak

\subsection{The unsteady Navier--Lam\'e equations}\label{subsec:navier_lame_equations}

\hspace{5mm}In this subsection, we consider the \emph{unsteady Navier--Lam\'e equations}, which were first introduced by C.-L.~Navier in 1821 (\textit{cf}.\ \cite{Navier1821}) and later cast in their modern form by G.~Lam\'e in 1833 (\textit{cf}.\ \cite{Lame1833}) and which, in the present parabolic setting, describe the dissipative evolution of the displacement field in a homogeneous isotropic linearly elastic body under external loads, with the Lam\'e parameters characterizing its resistance to shear and volumetric deformation.

Let $\ell\hspace{-0.1em}=\hspace{-0.1em}d$ and $p=2$, \textit{i.e.},
$V\hspace{-0.1em}=\hspace{-0.1em}W^{1,2}_D(\Omega;\mathbb{R}^d)$,
$Y\hspace{-0.1em}=\hspace{-0.1em}L^2(\Omega;\mathbb{R}^{d\times d})$, and
$H\hspace{-0.1em}=\hspace{-0.1em}L^2(\Omega;\mathbb{R}^d)$, where~$V$~is~\mbox{equipped} with the gradient
norm $\|\cdot\|_V\coloneqq\|\nabla(\cdot)\|_\Omega$ in $V$.
For simplicity, we assume that $\Gamma_D=\partial\Omega$~and~define
\begin{align*}
    Y^*_{\mathrm{sym}}
    \coloneqq
    \smash{\bigl\{
        y\in Y^*
        \mid
        y=y^\top\text{ a.e.\ in }\Omega
    \bigr\}\,.}
\end{align*}
For $\mu_{\mathrm{L}}>0$ and
$\lambda_{\mathrm{L}}>-\frac{2}{d}\mu_{\mathrm{L}}$, let the elasticity tensor
$\mathbb{C}\colon\smash{\mathbb{R}_{\mathrm{sym}}^{d\times d}}
\to\smash{\mathbb{R}_{\mathrm{sym}}^{d\times d}}$, for every
$A\in\smash{\mathbb{R}_{\mathrm{sym}}^{d\times d}}$,~be~defined~by
\begin{align*}
    \smash{\mathbb{C}A
    \coloneqq
    2\mu_{\mathrm{L}}A
    +
    \lambda_{\mathrm{L}}\operatorname{tr}(A)\mathrm{I}_d\,,}
\end{align*}
Moreover, let the energy
densities
$\phi\colon\mathbb{R}^{d\times d}\to\mathbb{R}$ and
$\psi\colon Q\times\mathbb{R}^d\to\mathbb{R}$, for a.e.\
$(t,x)\in Q$, every $A\in\mathbb{R}^{d\times d}$ and
$a\in\mathbb{R}^d$, respectively, denoting by $\operatorname{sym}A
    \coloneqq \tfrac{1}{2}(A+A^\top)$ the symmetric part of $A$, be defined by
\begin{subequations}\label{subsec:navier_lame_equations.1}
\begin{align}\label{subsec:navier_lame_equations.1.1}
    \phi(A)
    &\coloneqq
    \tfrac{1}{2}
   \mathbb{C}\operatorname{sym}A:
   \operatorname{sym}A
   =\smash{\mu_{\mathrm{L}}\vert\operatorname{sym}A\vert^2
    +
    \tfrac{\lambda_{\mathrm{L}}}{2}\vert\operatorname{tr}A\vert^2}
   \,,
    \\\label{subsec:navier_lame_equations.1.2}
    \psi(t,x,a)
    &\coloneqq
    0\,,
\end{align}
\end{subequations}
so that $F\colon \hspace{-0.1em}I\times V\hspace{-0.1em}\to\hspace{-0.1em} \mathbb{R}$, for some $f\hspace{-0.1em}\in\hspace{-0.1em} L^2(I;V^*)$, is given via $F(t,v)\hspace{-0.1em}\coloneqq \hspace{-0.1em}-\langle f(t),v\rangle_V$ for a.e.\ $t\hspace{-0.1em}\in\hspace{-0.1em} I$~and~all~$v\hspace{-0.1em}\in\hspace{-0.1em} V$. 
For the choice
\eqref{subsec:navier_lame_equations.1},
Assumptions~\ref{ass:energy_densities},
\ref{ass:convex_conjugation}, and
\ref{ass:sufficient_for_conjugation}~are~\mbox{readily}~\mbox{verified}.

For a.e.\ $t\hspace{-0.1em}\in\hspace{-0.1em}I$, the steady primal
energy functional
$E(t,\cdot)\colon V\to\mathbb{R}\cup\{+\infty\}$, for every
$v\in V$,~is~given~via
\begin{align}\label{subsec:navier_lame_equations.3}
    \smash{E(t,v)
    =
    \tfrac{1}{2}
    \|
        \mathbb{C}^{\frac{1}{2}}
        \operatorname{sym}\nabla v
    \|_\Omega^2
    -
    \langle f(t),v\rangle_V\,.}
\end{align}
Due to \eqref{eq:E_prime_steady} and since 
the Fenchel conjugate functional 
$\phi^*\colon\mathbb{R}^{d\times d}
\to\mathbb{R}\cup\{+\infty\}$ of
\eqref{subsec:navier_lame_equations.1.1}, for every
$A^*\in\mathbb{R}^{d\times d}$, denoting by $\operatorname{skew} A^*\coloneqq A^*-\operatorname{sym}A^*$ the skew-symmetric part of $A^*$, is given via
\begin{align}\label{subsec:navier_lame_equations.4}
\begin{aligned}
    \phi^*(A^*)
    &=
    \tfrac{1}{2}
    \mathbb{C}^{-1}\operatorname{sym}A^*:\operatorname{sym}A^*
    +
    \chi_{\{0\}}(\vert\operatorname{skew}A^*\vert)
    \\
    &=
    \smash{\tfrac{1}{4\mu_{\mathrm{L}}}}
    \vert\operatorname{dev}\operatorname{sym}A^*\vert^2
    +
    \smash{\tfrac{1}{2d(2\mu_{\mathrm{L}}+d\lambda_{\mathrm{L}})}}
    \vert\operatorname{tr}\operatorname{sym}A^*\vert^2
    +
    \chi_{\{0\}}(\vert\operatorname{skew}A^*\vert)\,,
\end{aligned}
\end{align}
where $\chi_{\{0\}}\colon L^2(\Omega;\mathbb{R}^1)\to \mathbb{R}\cup\{+\infty\}$ is defined by \eqref{subsec:stokes_equations.1.3},
the Fenchel conjugate functional
$E^*(t,\cdot)\colon V^*\to\mathbb{R}\cup\{+\infty\}$ of
\eqref{subsec:navier_lame_equations.3}, for every $v^*\in V^*$, is
given via\vspace{-0.5mm}
\begin{align}\label{subsec:navier_lame_equations.5}
\begin{aligned}
    E^*(t,v^*)
    &=
    \inf_{\substack{
        y\in Y^*_{\mathrm{sym}}\\
        L^*y=f(t)+v^*\text{ in }V^*
    }}
    \bigl\{
        \tfrac{1}{2}
        \|
            \mathbb{C}^{-\frac{1}{2}}y
        \|_\Omega^2
    \bigr\}
    =
    \tfrac{1}{2}
    \|f(t)+v^*\|_{V_{\mathbb{C}}^*}^2\,,
\end{aligned}\\[-6mm]\notag
\end{align}
where $V_{\mathbb{C}}$ denotes the space $V$ equipped with the equivalent  elastic
norm $ \|\cdot\|_{V_{\mathbb{C}}}
    \coloneqq
    \|
        \smash{\mathbb{C}^{\frac{1}{2}}}
        \operatorname{sym}\nabla (\cdot)
    \|_\Omega$~on~$V_{\mathbb{C}}$.
Here, we used the standard minimal-stress representation of the
$V_{\mathbb{C}}^*$-norm induced by the elastic inner product, while
the equivalence of the elastic norm and the gradient norm follows from
Korn's inequality.

If the  Navier--Lam\'e operator
$-\operatorname{div}\mathbb{C}\operatorname{sym}\nabla\colon V\to V^*$ is defined by $\langle-\operatorname{div}\mathbb{C}\operatorname{sym}\nabla v,w\rangle_V
    \coloneqq
    (
        \mathbb{C}\operatorname{sym}\nabla v,
        \operatorname{sym}\nabla w
    )_\Omega$ for all $v,w\in V$,  
then, given an arbitrary initial datum $u_0\in H$, the abstract sub\-gradient-flow problem \eqref{eq:subgradient_flow} is given via the 
unsteady Navier--Lam\'e equations:~find~${u\in\mathcal{W}(I)}$~such~that\vspace{-4.5mm}
\begin{subequations}\label{subsec:navier_lame_equations.6}
\begin{alignat}{2}\label{subsec:navier_lame_equations.6.1}
    \partial_tu(t)+(-\operatorname{div}\mathbb{C}\operatorname{sym}\nabla)u(t)
    &=
    f(t)
    &&\quad\text{in }V^*\quad\text{for a.e.\ }t\in I\,,
    \\\label{subsec:navier_lame_equations.6.2}
    u(0)
    &=
    u_0
    &&\quad\text{in }H\,.
\end{alignat}
\end{subequations}
By the standard existence theory for linear parabolic problems in an
evolution triple (\textit{cf}.\
\cite[Thm.~23.A]{Zeidler1990IIA}),
\eqref{subsec:navier_lame_equations.6} admits a unique solution
$u\in\mathcal{W}(I)$. According to the
Br\'ezis--Ekeland--Nayroles principle
(\textit{cf}.\ Proposition~\ref{prop:brezis_ekeland_nayroles}), this
solution is equivalently characterized as the primal solution,
\textit{i.e.}, as a minimizer of the unsteady primal energy functional
$\mathcal{E}\colon\mathcal{W}(I)\to\mathbb{R}\cup\{+\infty\}$, for
every $v\in\mathcal{W}(I)$ defined by\vspace{-0.5mm}
\begin{align}\label{subsec:navier_lame_equations.8}
\begin{aligned}
    \mathcal{E}(v)
    &\coloneqq
    \tfrac{1}{2}
    \|
        \mathbb{C}^{\frac{1}{2}}
        \operatorname{sym}\nabla v
    \|_Q^2
    -
    \langle f,v\rangle_{L^2(I;V)}
    \\
    &\quad+
    \tfrac{1}{2}
    \|f-\partial_tv\|_{L^2(I;V_{\mathbb{C}}^*)}^2
    +
    \tfrac{1}{2}\|v(t_{\mathtt{fin}})\|_H^2
    +
    \chi_{\{u_0\}}(v(0))\,.
\end{aligned}
\end{align}
According to Theorem~\ref{thm:duality}(\hyperlink{thm:duality.i}{i}),
it is sufficient to consider the restricted unsteady dual energy
functional
$\mathcal{D}\colon L^2(I;Y^*)\times\mathcal{W}(I)
\to\mathbb{R}\cup\{-\infty\}$, for every
$(y,\lambda)\in L^2(I;Y^*)\times\mathcal{W}(I)$,
given via
\begin{align}\label{subsec:navier_lame_equations.9}
\begin{aligned}
    \mathcal{D}(y,\lambda)
    \coloneqq
    &-
    \tfrac{1}{2}
    \|
        \mathbb{C}^{-\frac{1}{2}}\operatorname{sym}y
    \|_Q^2
    -\chi_{\{0\}}(\vert\operatorname{skew}y\vert)-
    \chi_{\{-f\}}(-\partial_t\lambda-L^*y)
    \\
    &-
    \tfrac{1}{2}
    \|
        \mathbb{C}^{\frac{1}{2}}
        \operatorname{sym}\nabla\lambda
    \|_Q^2
    +
    \langle f,\lambda\rangle_{L^2(I;V)}
    -
    \tfrac{1}{2}\|\lambda(t_{\mathtt{fin}})\|_H^2
    +
    (\lambda(0),u_0)_H\,.
\end{aligned}
\end{align}
Since the continuity condition in
Theorem~\ref{thm:duality}(\hyperlink{thm:duality.ii}{ii}) is satisfied
in the present quadratic setting, there exists a dual solution
$(z,\mu)\in L^2(I;Y^*_{\mathrm{sym}})\times\mathcal{W}(I)$ and the corresponding
strong duality relation~\eqref{thm:duality.1}~applies. 
In particular, the 
optimality inclusions \eqref{thm:duality.2} together with the strict
convexity of the steady primal energy functional \eqref{subsec:navier_lame_equations.3} imply that the dual
solution is unique and given via 
\begin{subequations}\label{subsec:navier_lame_equations.opt}
\begin{alignat}{2}\label{subsec:navier_lame_equations.opt.1}
    z
    &=
    \mathbb{C}\operatorname{sym}\nabla u
    &&\quad\text{a.e.\ in }Q\,,
    \\\label{subsec:navier_lame_equations.opt.2}
    \partial_tu+L^*z
    &=
    f
    &&\quad\text{in }L^2(I;V^*)\,,
    \\\label{subsec:navier_lame_equations.opt.3}
    \mu
    &=
    u
    &&\quad\text{in }\mathcal{W}(I)\,.
\end{alignat}
\end{subequations}

\hspace{-1mm}Next, \hspace{-0.1mm}we \hspace{-0.1mm}record \hspace{-0.1mm}the \hspace{-0.1mm}corresponding \hspace{-0.1mm}primal \hspace{-0.1mm}and \hspace{-0.1mm}dual \hspace{-0.1mm}gap \hspace{-0.1mm}identities
\hspace{-0.1mm}(\textit{cf}.\ \hspace{-0.1mm}Theorems~\ref{thm:primal_gap_identity} \hspace{-0.1mm}and
\hspace{-0.1mm}\ref{thm:dual_gap_identity}).~\hspace{-0.1mm}To~\hspace{-0.1mm}this~\hspace{-0.1mm}end, we note that, for every
$v,w\in \mathcal{W}(I)$ and  
$y\in  L^2(I;Y^*_{\mathrm{sym}})$ with
$L^*y= f-\partial_tv$ in $L^2(I;V^*)$, using $I_{G}=I_{I_\phi^\Omega}$ with \eqref{subsec:navier_lame_equations.1.1}, $I_{G^*}\hspace{-0.15em}=\hspace{-0.15em}I_{I_{\phi^*}^\Omega}$ with
\eqref{subsec:navier_lame_equations.4}, $I_{F}\hspace{-0.15em}=\hspace{-0.15em}-\langle f,\cdot\rangle_{L^2(I;V)}$, $I_{F^*}\hspace{-0.15em}=\hspace{-0.15em}\chi_{\{-f\}}$,~and~a~binomial~formula,~there~holds
\begin{align}\label{subsec:navier_lame_equations.10}
\begin{aligned}
    &I_{G^*}(y)
    -
    \langle y,\nabla w\rangle_{L^2(I;Y)}
    +
    I_G(\nabla w)
    \\
    &\quad+
    \chi_{\{-f\}}(-\partial_tv-L^*y)
    -
    \langle-\partial_tv-L^*y,w\rangle_{L^2(I;V)}
    -
    \langle f,w\rangle_{L^2(I;V)}
    \\
    &=
    \tfrac{1}{2}
    \|
        \mathbb{C}^{-\frac{1}{2}}
        (
            y-\mathbb{C}\operatorname{sym}\nabla w
        )
    \|_Q^2\,.
\end{aligned}
\end{align}

First, using repeatedly the elementary identity \eqref{subsec:navier_lame_equations.10}, we derive the corresponding primal~gap~\mbox{identity}.

\begin{lemma}[Primal gap identity for the unsteady Navier--Lam\'e equations]
\label{lem:primal_gap_identity_navier_lame_equations}
For every $v\in\mathcal{W}(I)$ with $v(0)=u_0$ in $H$, there holds
\begin{align}\label{lem:primal_gap_identity_navier_lame_equations.0}
\begin{aligned}
    &\tfrac{1}{2}
    \|
        \mathbb{C}^{\frac{1}{2}}
        \operatorname{sym}\nabla(v-u)
    \|_Q^2
    +
    \tfrac{1}{2}
    \|\partial_t(v-u)\|_{L^2(I;V_{\mathbb{C}}^*)}^2
    +
    \tfrac{1}{2}\|(v-u)(t_{\mathtt{fin}})\|_H^2
    \\
    &=
    \inf_{\substack{
        y\in L^2(I;Y^*_{\mathrm{sym}})\\
        L^*y=f-\partial_tv\text{ in }L^2(I;V^*)
    }}
    \bigl\{
        \tfrac{1}{2}
        \|
            \mathbb{C}^{-\frac{1}{2}}
            (
                y-\mathbb{C}\operatorname{sym}\nabla v
            )
        \|_Q^2
    \bigr\}
    \\
    &=
    \tfrac{1}{2}
    \|\partial_tv+(-\operatorname{div}\mathbb{C}\operatorname{sym}\nabla)v-f
    \|_{L^2(I;V_{\mathbb{C}}^*)}^2\,.
\end{aligned}
\end{align}
\end{lemma}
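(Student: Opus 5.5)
The proof follows the template of Lemmas \ref{lem:primal_gap_identity_heat_equation} and \ref{lem:primal_gap_identity_stokes_equations}. We fix $v\in\mathcal{W}(I)$ with $v(0)=u_0$ in $H$ and compute both sides of the general primal gap identity \eqref{thm:primal_gap_identity.0} separately, using the elementary identity \eqref{subsec:navier_lame_equations.10} as the central tool.

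\emph{Optimal strong convexity measure.} We start from the Bregman-type representation \eqref{lem:bregman_representation_primal_error_measure.0}.
\begin{itemize}[noitemsep,topsep=2pt,leftmargin=!,labelwidth=\widthof{$\bullet$}]
\item[$\bullet$] For the first Bregman divergence we use \eqref{rem:alternative_representation_bregman_divergence.4}, which is applicable since $\mu=u$ by \eqref{subsec:navier_lame_equations.opt.3}. Applying \eqref{subsec:navier_lame_equations.10} with $(v,w,y)=(u,v,z)$ is admissible, since $z\in L^2(I;Y^*_{\mathrm{sym}})$ and $L^*z=f-\partial_tu$ by \eqref{subsec:navier_lame_equations.opt.2}. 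Inserting $z=\mathbb{C}\operatorname{sym}\nabla u$ then gives
\[
\mathcal{D}_{I_E}^{-\partial_tu}(v,u)=\tfrac12\|\mathbb{C}^{-\frac12}(\mathbb{C}\operatorname{sym}\nabla u-\mathbb{C}\operatorname{sym}\nabla v)\|_Q^2=\tfrac12\|\mathbb{C}^{\frac12}\operatorname{sym}\nabla(v-u)\|_Q^2 .
\]
\item[$\bullet$] For the second Bregman divergence we use \eqref{rem:alternative_representation_bregman_divergence.6}, which is applicable since $I_G\circ\nabla$ and $I_F$ are continuous (quadratic and linear, respectively). In the infimum over $y\in L^2(I;Y^*)$, the indicator $\chi_{\{0\}}(|\operatorname{skew}y|)$ and $I_{F^*}=\chi_{\{-f\}}$ restrict the minimization to $y\in L^2(I;Y^*_{\mathrm{sym}})$ with $L^*y=f-\partial_tv$. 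Applying \eqref{subsec:navier_lame_equations.10} with $(v,w,y)=(v,u,y)$ yields $\inf\tfrac12\|\mathbb{C}^{-\frac12}(y-\mathbb{C}\operatorname{sym}\nabla u)\|_Q^2$.
\end{itemize}
To identify this infimum, we substitute $\tilde y\coloneqq y-\mathbb{C}\operatorname{sym}\nabla u$. The constraint then becomes $L^*\tilde y=f-\partial_tv-L^*z=\partial_t(u-v)$, using \eqref{subsec:navier_lame_equations.opt}. By the minimal-stress representation of the $V_{\mathbb{C}}^*$-norm, applied pointwise in time as in \eqref{subsec:navier_lame_equations.5}, the infimum equals $\tfrac12\|\partial_t(v-u)\|_{L^2(I;V_{\mathbb{C}}^*)}^2$. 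Adding the terminal term gives the left-hand side of \eqref{lem:primal_gap_identity_navier_lame_equations.0}.

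\emph{Primal gap estimator.} We use Lemma \ref{lem:representation_of_primal_gap_estimator} together with \eqref{subsec:navier_lame_equations.10} for $(v,w,y)=(v,v,y)$. Again the infimum is restricted to symmetric $y$ with $L^*y=f-\partial_tv$, which yields the middle expression. The shift $\tilde y=y-\mathbb{C}\operatorname{sym}\nabla v$ turns the constraint into $L^*\tilde y=f-\partial_tv-(-\operatorname{div}\mathbb{C}\operatorname{sym}\nabla)v$, and the same minimal-stress representation gives the final residual norm.

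The main obstacle is justifying the minimal-stress identity $\|v^*\|_{V_{\mathbb{C}}^*}=\inf\{\|\mathbb{C}^{-\frac12}y\|_\Omega : y\in Y^*_{\mathrm{sym}},\ L^*y=v^*\}$, and its Bochner version in $L^2(I;\cdot)$. I would prove it as follows. The lower bound comes from Cauchy--Schwarz: $\langle v^*,w\rangle=(y,\operatorname{sym}\nabla w)_\Omega\le\|\mathbb{C}^{-\frac12}y\|_\Omega\|w\|_{V_{\mathbb{C}}}$. Attainment is by the Riesz representative $y=\mathbb{C}\operatorname{sym}\nabla w_{v^*}$ in $V_{\mathbb{C}}$, which is well defined by Korn's inequality. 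Carried out pointwise in $t$, this choice is measurable because the Riesz map is linear and bounded, so the time-integrated infimum is attained at the corresponding pointwise minimizer.
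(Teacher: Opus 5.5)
Your proposal is correct and follows the paper's proof essentially step for step. You evaluate the Bregman representation \eqref{lem:bregman_representation_primal_error_measure.0} via \eqref{rem:alternative_representation_bregman_divergence.4}, \eqref{rem:alternative_representation_bregman_divergence.6} and \eqref{subsec:navier_lame_equations.10}, and the estimator via Lemma~\ref{lem:representation_of_primal_gap_estimator}. The only difference is that you write out the shifts $\tilde y=y-\mathbb{C}\operatorname{sym}\nabla u$ and $\tilde y=y-\mathbb{C}\operatorname{sym}\nabla v$ explicitly, and you prove the minimal-stress representation of the $V_{\mathbb{C}}^*$-norm together with its Bochner version, whereas the paper simply invokes that representation as standard.
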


\begin{proof}
Let $v\in\mathcal{W}(I)$ with $v(0)=u_0$ in $H$ be fixed, but
arbitrary.

\emph{$\bullet$ Optimal strong convexity measure.}
Due to the alternative Bregman divergence representation
\eqref{rem:alternative_representation_bregman_divergence.4} (which is
applicable since $\mu=u$ in $\mathcal{W}(I)$),
\eqref{subsec:navier_lame_equations.10} (applied with
$(v,w,y)=(u,v,z)$), and the optimality conditions
\eqref{subsec:navier_lame_equations.opt}, we have that
\begin{align*}
\begin{aligned}
    \mathcal{D}_{I_E}^{-\partial_tu}(v,u)
    &=
    \tfrac{1}{2}
    \|
        \mathbb{C}^{-\frac{1}{2}}
        (
            z-\mathbb{C}\operatorname{sym}\nabla v
        )
    \|_Q^2
    \\
    &=
    \tfrac{1}{2}
    \|
        \mathbb{C}^{\frac{1}{2}}
        \operatorname{sym}\nabla(v-u)
    \|_Q^2\,.
\end{aligned}
\end{align*}
Moreover, due to the alternative Bregman divergence representation
\eqref{rem:alternative_representation_bregman_divergence.6} (which is applicable since both $I_G\circ \nabla$ and $I_F$ are continuous),
\eqref{subsec:navier_lame_equations.10} (applied with
$(v,w,y)=(v,u,y)$), and \eqref{subsec:navier_lame_equations.5}, we have
that
\begin{align*}
\begin{aligned}
    \mathcal{D}_{I_{E^*}}^u
    (-\partial_tv,-\partial_tu)
    &=
    \inf_{\substack{
        y\in L^2(I;Y^*_{\mathrm{sym}})\\
        L^*y=f-\partial_tv\text{ in }L^2(I;V^*)
    }}
    \bigl\{
        \tfrac{1}{2}
        \|
            \mathbb{C}^{-\frac{1}{2}}
            (
                y-\mathbb{C}\operatorname{sym}\nabla u
            )
        \|_Q^2
    \bigr\}
    \\
    &=
    \tfrac{1}{2}
    \|\partial_t(v-u)\|_{L^2(I;V_{\mathbb{C}}^*)}^2\,.
\end{aligned}
\end{align*}
Therefore, the Bregman-type representation of the primal error measure
\eqref{lem:bregman_representation_primal_error_measure.0} yields
\begin{align}\label{lem:primal_gap_identity_navier_lame_equations.3}
\begin{aligned}
    \rho_{\mathcal{E}}^2(v)
    &=
    \tfrac{1}{2}
    \|
        \mathbb{C}^{\frac{1}{2}}
        \operatorname{sym}\nabla(v-u)
    \|_Q^2
    +
    \tfrac{1}{2}
    \|\partial_t(v-u)\|_{L^2(I;V_{\mathbb{C}}^*)}^2
    +
    \tfrac{1}{2}\|(v-u)(t_{\mathtt{fin}})\|_H^2\,.
\end{aligned}
\end{align}

\emph{$\bullet$ Primal gap estimator.}
Using the representation
\eqref{lem:representation_of_primal_gap_estimator.0} of the primal gap
estimator \eqref{def:primal_gap_estimator} (which is applicable since both $I_G\circ \nabla$ and $I_F$ are continuous),
\eqref{subsec:navier_lame_equations.10} (applied with
$(v,w,y)=(v,v,y)$),~and~\eqref{subsec:navier_lame_equations.5}, we find
that
\begin{align}\label{lem:primal_gap_identity_navier_lame_equations.4}
\begin{aligned}
    \eta_{\mathcal{E}}^2(v)
    &=
    \inf_{\substack{
        y\in L^2(I;Y^*_{\mathrm{sym}})\\
        L^*y=f-\partial_tv\text{ in }L^2(I;V^*)
    }}
    \bigl\{
        \tfrac{1}{2}
        \|
            \mathbb{C}^{-\frac{1}{2}}
            (
                y-\mathbb{C}\operatorname{sym}\nabla v
            )
        \|_Q^2
    \bigr\}
    \\
    &=
    \tfrac{1}{2}
    \|\partial_tv+(-\operatorname{div}\mathbb{C}\operatorname{sym}\nabla)v-f
    \|_{L^2(I;V_{\mathbb{C}}^*)}^2\,.
\end{aligned}
\end{align}

Finally, using the representations
\eqref{lem:primal_gap_identity_navier_lame_equations.3} and
\eqref{lem:primal_gap_identity_navier_lame_equations.4} and the general
primal gap identity \eqref{thm:primal_gap_identity.0}, we arrive at the
claimed representation
\eqref{lem:primal_gap_identity_navier_lame_equations.0} of the primal
gap identity for the unsteady Navier--Lam\'e equations
\eqref{subsec:navier_lame_equations.6}.
\end{proof}

Next, we derive the corresponding dual~gap~identity.

\begin{lemma}[Dual gap identity for the unsteady Navier--Lam\'e equations]
\label{lem:dual_gap_identity_navier_lame_equations}
For every
$(y,\lambda)\in L^2(I;Y^*_{\mathrm{sym}})\times\mathcal{W}(I)$ with
\begin{align}\label{lem:dual_gap_identity_navier_lame_equations.-1}
    \operatorname{div}y
    =
    \partial_t\lambda-f
    \quad\text{in }L^2(I;V^*)\,,
\end{align}
there holds
\begin{align}\label{lem:dual_gap_identity_navier_lame_equations.0}
\begin{aligned}
    &\tfrac{1}{2}
    \|
        \mathbb{C}^{-\frac{1}{2}}(y-z)
    \|_Q^2
    +
    \tfrac{1}{2}
    \|
        \mathbb{C}^{\frac{1}{2}}
        \operatorname{sym}\nabla(\lambda-u)
    \|_Q^2
    +
    \tfrac{1}{2}\|(\lambda-u)(t_{\mathtt{fin}})\|_H^2
    \\
    &=
    \tfrac{1}{2}
    \|
        \mathbb{C}^{-\frac{1}{2}}
        (
            y-\mathbb{C}\operatorname{sym}\nabla\lambda
        )
    \|_Q^2
    +
    \tfrac{1}{2}\|\lambda(0)-u_0\|_H^2\,.
\end{aligned}
\end{align}
\end{lemma}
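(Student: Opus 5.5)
The proof follows the template of Lemmas \ref{lem:dual_gap_identity_heat_equation} and \ref{lem:dual_gap_identity_stokes_equations}. We fix $(y,\lambda)\in L^2(I;Y^*_{\mathrm{sym}})\times\mathcal{W}(I)$ satisfying \eqref{lem:dual_gap_identity_navier_lame_equations.-1}. We compute the dual optimal strong convexity measure via the Bregman-type representation of Lemma \ref{lem:bregman_representation_dual_error_measure} and the dual gap estimator via Lemma \ref{lem:representation_of_dual_gap_estimator}. We then equate the two using the dual gap identity of Theorem \ref{thm:dual_gap_identity}, which applies because the dual solution $(z,\mu)$ given by \eqref{subsec:navier_lame_equations.opt} exists and strong duality holds.

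\emph{Optimal strong convexity measure.} We treat the three Bregman terms separately.

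For the first term, we use \eqref{rem:alternative_representation_bregman_divergence.8} together with \eqref{subsec:navier_lame_equations.4}. Since $y$ is symmetric, the indicator of $\operatorname{skew}y$ vanishes, and we obtain
\begin{align*}
\mathcal{D}_{I_{G^*}}^{\nabla u}(y,z)=\tfrac12\|\mathbb{C}^{-\frac12}y\|_Q^2-(y,\operatorname{sym}\nabla u)_Q+\tfrac12\|\mathbb{C}^{\frac12}\operatorname{sym}\nabla u\|_Q^2\,.
\end{align*}
Here we used $y:\nabla u=y:\operatorname{sym}\nabla u$. By the binomial formula in the $\mathbb{C}^{-1}$-weighted inner product, this equals $\tfrac12\|\mathbb{C}^{-\frac12}(y-\mathbb{C}\operatorname{sym}\nabla u)\|_Q^2$. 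By \eqref{subsec:navier_lame_equations.opt.1}, this is $\tfrac12\|\mathbb{C}^{-\frac12}(y-z)\|_Q^2$.

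For the second term, we use \eqref{rem:alternative_representation_bregman_divergence.10} with $I_{F^*}=\chi_{\{-f\}}$. By \eqref{lem:dual_gap_identity_navier_lame_equations.-1}, i.e.\ $-\partial_t\lambda-L^*y=-f$, the indicator is zero and the remaining pairing cancels against $I_F(u)=-\langle f,u\rangle$. Hence this term vanishes.

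For the third term, we use \eqref{rem:alternative_representation_bregman_divergence.4}, which is applicable since $\mu=u$. We then apply \eqref{subsec:navier_lame_equations.10} with $(v,w,y)=(u,\lambda,z)$; this is admissible since $z$ is symmetric and $L^*z=f-\partial_tu$ by \eqref{subsec:navier_lame_equations.opt.2}. This gives $\tfrac12\|\mathbb{C}^{-\frac12}(z-\mathbb{C}\operatorname{sym}\nabla\lambda)\|_Q^2=\tfrac12\|\mathbb{C}^{\frac12}\operatorname{sym}\nabla(\lambda-u)\|_Q^2$ by \eqref{subsec:navier_lame_equations.opt.1}.

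Adding the terminal term $\tfrac12\|(\lambda-\mu)(t_{\mathtt{fin}})\|_H^2$ with $\mu=u$ produces the left-hand side of \eqref{lem:dual_gap_identity_navier_lame_equations.0}.

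\emph{Dual gap estimator.} We insert into \eqref{lem:representation_of_dual_gap_estimator.0} and apply \eqref{subsec:navier_lame_equations.10} with $(v,w,y)=(\lambda,\lambda,y)$. This is admissible precisely because of the constraint \eqref{lem:dual_gap_identity_navier_lame_equations.-1}, i.e.\ $L^*y=f-\partial_t\lambda$, and the symmetry of $y$. The result is $\eta_{-\mathcal{D}}^2(y,\lambda)=\tfrac12\|\mathbb{C}^{-\frac12}(y-\mathbb{C}\operatorname{sym}\nabla\lambda)\|_Q^2+\tfrac12\|\lambda(0)-u_0\|_H^2$. Combining this with the previous step through \eqref{thm:dual_gap_identity.0} yields the claim.

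The only delicate point is the elementary identity \eqref{subsec:navier_lame_equations.10}, in particular the sign conventions linking $\operatorname{div}y$ and $-L^*y$. It must be checked that, for symmetric $y$,
\begin{align*}
\tfrac12\mathbb{C}^{-1}y:y-y:\operatorname{sym}\nabla w+\tfrac12\mathbb{C}\operatorname{sym}\nabla w:\operatorname{sym}\nabla w=\tfrac12|\mathbb{C}^{-\frac12}(y-\mathbb{C}\operatorname{sym}\nabla w)|^2
\end{align*}
pointwise. It must also be checked that the linear terms in $w$ cancel exactly under $L^*y=f-\partial_tv$. Both facts are routine once the constraint is used; everything else is bookkeeping identical to the Stokes case.
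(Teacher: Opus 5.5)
Your proposal is correct and follows the paper's proof essentially step for step. The three Bregman terms are evaluated via \eqref{rem:alternative_representation_bregman_divergence.8}, \eqref{rem:alternative_representation_bregman_divergence.10}, and \eqref{rem:alternative_representation_bregman_divergence.4} together with \eqref{subsec:navier_lame_equations.10} for $(v,w,y)=(u,\lambda,z)$; the estimator is obtained via \eqref{lem:representation_of_dual_gap_estimator.0} and \eqref{subsec:navier_lame_equations.10} for $(v,w,y)=(\lambda,\lambda,y)$; and the two are equated through Theorem~\ref{thm:dual_gap_identity}, with your explicit checks of the $\mathbb{C}^{-1}$-weighted binomial identity and the cancellation of the linear terms being exactly what the paper compresses into ``a binomial formula''.
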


\begin{proof}
Let
$(y,\lambda)\in L^2(I;Y^*_{\mathrm{sym}})\times\mathcal{W}(I)$ with
\eqref{lem:dual_gap_identity_navier_lame_equations.-1} be fixed, but
arbitrary.

\emph{$\bullet$ Optimal strong convexity measure.}
Due to the alternative Bregman divergence representation
\eqref{rem:alternative_representation_bregman_divergence.8},
\eqref{subsec:navier_lame_equations.4}, a binomial formula, and \eqref{subsec:navier_lame_equations.opt.1}, we have
that
\begin{align*}
\begin{aligned}
    \mathcal{D}_{I_{G^*}}^{\nabla u}(y,z)
    &=\tfrac{1}{2}\|
        \mathbb{C}^{-\frac{1}{2}}(y-\mathbb{C}\operatorname{sym}\nabla u)
    \|_Q^2
    \\&=\tfrac{1}{2}
    \|
        \mathbb{C}^{-\frac{1}{2}}(y-z)
    \|_Q^2\,,
\end{aligned}
\end{align*}
due to the alternative Bregman divergence representation
\eqref{rem:alternative_representation_bregman_divergence.10}, $I_F= -\langle f,\cdot\rangle_{L^2(I;V)}$,
$I_{F^*}=\chi_{\{-f\}}$,
\eqref{lem:dual_gap_identity_navier_lame_equations.-1}, and
\eqref{subsec:navier_lame_equations.opt.2}, we have that
\begin{align*}
    \mathcal{D}_{I_{F^*}}^u
    (-\partial_t\lambda-L^*y,-\partial_tu-L^*z)
    &=\chi_{\{-f\}}(-\partial_t\lambda-L^*y)-\langle f-\partial_t\lambda-L^*y,u\rangle_{L^2(I;V)}
    \\&=
    0\,,
\end{align*}
and, due to the alternative Bregman divergence representation
\eqref{rem:alternative_representation_bregman_divergence.4} (which is
applicable since $\mu=u$ in $\mathcal{W}(I)$),
\eqref{subsec:navier_lame_equations.10} (applied with
$(v,w,y)=(u,\lambda,z)$), and
\eqref{subsec:navier_lame_equations.opt.1}, we have that
\begin{align*}
\begin{aligned}
    \mathcal{D}_{I_E}^{-\partial_tu}(\lambda,u)
    &=
    \tfrac{1}{2}
    \|
        \mathbb{C}^{-\frac{1}{2}}
        (
            z-\mathbb{C}\operatorname{sym}\nabla\lambda
        )
    \|_Q^2
    \\
    &=
    \tfrac{1}{2}
    \|
        \mathbb{C}^{\frac{1}{2}}
        \operatorname{sym}\nabla(\lambda-u)
    \|_Q^2\,.
\end{aligned}
\end{align*}
Therefore, the Bregman-type representation of the dual error measure
\eqref{lem:bregman_representation_dual_error_measure.0} yields
\begin{align}\label{lem:dual_gap_identity_navier_lame_equations.4}
\begin{aligned}
    \rho_{-\mathcal{D}}^2(y,\lambda)
    =
    \tfrac{1}{2}
    \|
        \mathbb{C}^{-\frac{1}{2}}(y-z)
    \|_Q^2
    +
    \tfrac{1}{2}
    \|
        \mathbb{C}^{\frac{1}{2}}
        \operatorname{sym}\nabla(\lambda-u)
    \|_Q^2
    +
    \tfrac{1}{2}\|(\lambda-u)(t_{\mathtt{fin}})\|_H^2\,.
\end{aligned}
\end{align}

\emph{$\bullet$ Dual gap estimator.}
Using the representation
\eqref{lem:representation_of_dual_gap_estimator.0} of the dual gap
estimator \eqref{def:dual_gap_estimator},
\eqref{subsec:navier_lame_equations.10} (applied with
$(v,w,y)=(\lambda,\lambda,y)$), and a binomial formula, we find that
\begin{align}\label{lem:dual_gap_identity_navier_lame_equations.5}
    \eta_{-\mathcal{D}}^2(y,\lambda)
    =
    \tfrac{1}{2}
    \|
        \mathbb{C}^{-\frac{1}{2}}
        (
            y-\mathbb{C}\operatorname{sym}\nabla\lambda
        )
    \|_Q^2
    +
    \tfrac{1}{2}\|\lambda(0)-u_0\|_H^2\,.
\end{align}

Finally, using the representations
\eqref{lem:dual_gap_identity_navier_lame_equations.4} and
\eqref{lem:dual_gap_identity_navier_lame_equations.5} and the general
dual gap identity \eqref{thm:dual_gap_identity.0}, we arrive at the
claimed representation
\eqref{lem:dual_gap_identity_navier_lame_equations.0} of the dual gap
identity for the unsteady Navier--Lam\'e equations
\eqref{subsec:navier_lame_equations.6}.
\end{proof}\pagebreak

\subsection{The unsteady Bingham flow through a pipe}\label{subsec:bingham_flow_pipe}\enlargethispage{1mm}

\hspace{5mm}In this subsection, we consider the unsteady flow of a Bingham fluid through a straight pipe~(\textit{cf}.~\cite{DuvautLions1972}), a classical model for viscoplastic fluids that remain unyielded below a critical shear stress and exhibit a viscous response once this threshold is exceeded. Assuming a fully developed unidirectional flow, the velocity has only an axial component, and the vector-valued incompressible flow problem reduces to a scalar evolution problem for the axial velocity on the cross-section $\Omega$ of the pipe.

Let $\ell=1$ and $p=2$, \textit{i.e.},
$V=W^{1,2}_D(\Omega;\mathbb{R}^1)$, $Y=L^2(\Omega;\mathbb{R}^d)$, and
$H=L^2(\Omega;\mathbb{R}^1)$, where $V$ is equipped with
the gradient norm $\|\cdot\|_V\coloneqq\|\nabla(\cdot)\|_\Omega$ in $V$.
Moreover, let $\nu,g>0$ and let the~energy~densities
$\phi\colon\mathbb{R}^d\to\mathbb{R}$ and
$\psi\colon Q\times\mathbb{R}\to\mathbb{R}$, for a.e.\
$(t,x)\in Q$, every $a\in\mathbb{R}^d$ and
$b\in\mathbb{R}$, respectively, be defined by 
\begin{subequations}\label{subsec:bingham_flow_pipe.1}
\begin{align}\label{subsec:bingham_flow_pipe.1.1}
    \phi(a)&\coloneqq \tfrac{\nu}{2}\vert a\vert^2+g\vert a\vert\,,\\\label{subsec:bingham_flow_pipe.1.2}
    \psi(t,x,b)&\coloneqq 0\,, 
\end{align}
\end{subequations}
so that $F\colon \hspace{-0.1em}I\times V\hspace{-0.1em}\to\hspace{-0.1em} \mathbb{R}$, for some $f\hspace{-0.1em}\in\hspace{-0.1em} L^2(I;V^*)$, is given via $F(t,v)\hspace{-0.1em}\coloneqq \hspace{-0.1em}-\langle f(t),v\rangle_V$ for a.e.\ $t\hspace{-0.1em}\in\hspace{-0.1em} I$~and~all~$v\hspace{-0.1em}\in\hspace{-0.1em} V$. 
For the choice \eqref{subsec:bingham_flow_pipe.1}, Assumptions~\ref{ass:energy_densities},
\ref{ass:convex_conjugation}, and
\ref{ass:sufficient_for_conjugation}~are~\mbox{readily}~\mbox{verified}.

For a.e.\ $t\hspace*{-0.1em}\in\hspace*{-0.1em} I$, the steady primal energy functional
$E(t,\cdot)\colon \hspace*{-0.1em}V\hspace*{-0.1em}\to\hspace*{-0.1em}\mathbb{R}\cup\{+\infty\}$,
for every $v\hspace*{-0.1em}\in\hspace*{-0.1em} V$,~is~given~via
\begin{align}\label{subsec:bingham_flow_pipe.2}
    E(t,v)
    =\tfrac{\nu}{2}\|\nabla v\|_\Omega^2+g\|\nabla v\|_{1,\Omega} 
    -
    \langle f(t),v\rangle_V\,.
\end{align}
Due to \eqref{eq:E_prime_steady} and since the Fenchel conjugate $\phi^*\colon \mathbb{R}^d\to \mathbb{R}$ of \eqref{subsec:bingham_flow_pipe.1.1}, for every $a^*\in \mathbb{R}^d$, is given via 
\begin{align}\label{subsec:bingham_flow_pipe.3}
    \phi^*(a^*)
   =
    \tfrac{1}{2\nu}(\vert a^*\vert-g)_+^2=\tfrac{1}{2\nu}\vert a^*-\Pi_g(a^*)\vert^2\,, 
\end{align}
where $\Pi_g\colon \mathbb{R}^d\to K_g^d(0)$ denotes the orthogonal projection onto $ K_g^d(0)$, 
the Fenchel conjugate functional 
$E^*(t,\cdot)\colon V^*\to\mathbb{R}\cup\{+\infty\}$ of \eqref{subsec:bingham_flow_pipe.2},
for every $v^*\in V^*$, is given via 
\begin{align}\label{subsec:bingham_flow_pipe.4}
\begin{aligned}
    E^*(t,v^*)
    =
    \inf_{\substack{y\in Y^*\\ L^*y=f(t)+v^*\text{ in }V^*}}
    \bigl\{
            \tfrac{1}{2\nu}\|(\vert y\vert-g)_+\|_{\Omega}^2
    \bigr\}
    =
    \inf_{\substack{y\in Y^*\\ L^*y=f(t)+v^*\text{ in }V^*}}
    \bigl\{
            \tfrac{1}{2\nu}\|y-\Pi_g(y)\|_{\Omega}^2
    \bigr\}\,.
    \end{aligned} 
\end{align}
 
Using the Laplace operator $-\Delta\colon V\to V^*$ introduced above, given an arbitrary initial datum $u_0\in H$,  the subgradient-flow problem \eqref{eq:subgradient_flow}   is given via the 
unsteady Bingham flow through~a~pipe:~find~$u\in\mathcal{W}(I)$ such that 
\begin{subequations}\label{subsec:bingham_flow_pipe.7}
\begin{alignat}{2}\label{subsec:bingham_flow_pipe.7.1}
    \partial_t u(t)+\nu(-\Delta) u+L^*(g\partial \|\vert \cdot\vert\|_{1,\Omega}(\nabla u))&\ni f(t)
    &&\quad\text{in }V^*\quad\text{for a.e.\ }t\in I\,,\\\label{subsec:bingham_flow_pipe.7.2}
    u(0)&=u_0
    &&\quad\text{in }H\,. 
\end{alignat}
\end{subequations}
If, in addition, $u_0\in V$, by the standard existence theory for Hilbert-space subgradient flows
induced by proper, convex, lower semi-continuous functionals (\textit{cf}.\ \cite[Cor.\ 4.1]{Barbu1984}), the 
unsteady Bingham flow \eqref{subsec:bingham_flow_pipe.7} admits a unique
solution $u\in\mathcal{W}(I)$. In what follows, without
imposing these additional regularity assumptions on $u_0\in H$ throughout,  we assume that there exists~a~solution~${u\in\mathcal{W}(I)}$.
According to the
Br\'ezis--Ekeland--Nayroles principle
(\textit{cf}.\ Proposition~\ref{prop:brezis_ekeland_nayroles}), this
solution is equivalently characterized as the primal solution, \textit{i.e.},
as a minimizer of the unsteady primal energy functional
$\mathcal{E}\colon\mathcal{W}(I)\to\mathbb{R}\cup\{+\infty\}$, for every
$v\in\mathcal{W}(I)$ defined by 
\begin{align}\label{subsec:bingham_flow_pipe.8}
    \begin{aligned} 
    \mathcal{E}(v)
    &\coloneqq\tfrac{\nu}{2}\|\nabla v\|_{Q}^2+g\|\nabla v\|_{1,Q}
    -
    \langle f,v\rangle_{L^2(I;V)}
    \\
    &\quad+
    \inf_{\substack{y\in L^2(I;Y^*)\\ L^*y=f-\partial_t v\text{ in }L^2(I;V^*)}}
    \bigl\{
        \tfrac{1}{2\nu}\|y-\Pi_g(y)\|_{Q}^2 
    \bigr\} +
    \tfrac{1}{2}\|v(t_{\mathtt{fin}})\|_H^2
    +
    \chi_{\{u_0\}}(v(0))\,.
    \end{aligned}
\end{align}
According to Theorem~\ref{thm:duality}(\hyperlink{thm:duality.i}{i}), the
unsteady dual energy functional
$\mathcal{D}\colon L^2(I;Y^*)\times\mathcal{W}(I)\to
\mathbb{R}\cup\{-\infty\}$, for every
$(y,\lambda)\in L^2(I;Y^*)\times\mathcal{W}(I)$, is given via
\begin{align}\label{subsec:bingham_flow_pipe.9}
    \begin{aligned} 
    \mathcal{D}(y,\lambda)
    \coloneqq
    &- 
        \tfrac{1}{2\nu}\|y-\Pi_g(y)\|_{Q}^2 
    -
    \chi_{\{-f\}}(-\partial_t\lambda-L^*y)
  \\
    &\quad - 
            \tfrac{\nu}{2}\|\nabla\lambda\|_{Q}^2
            -g\|\nabla\lambda\|_{1,Q}
    +
    \langle f,\lambda\rangle_{L^2(I;V)}
    -
    \tfrac{1}{2}\|\lambda(t_{\mathtt{fin}})\|_H^2
    +
    (\lambda(0),u_0)_H\,.
    \end{aligned}
\end{align} Since the continuity condition in
Theorem~\ref{thm:duality}(\hyperlink{thm:duality.ii}{ii}) is satisfied
in the present setting, there exists a dual solution
$(z,\mu)\in L^2(I;Y^*)\times\mathcal{W}(I)$ and the corresponding
strong duality relation~\eqref{thm:duality.1}~applies.\pagebreak 

\noindent In particular, the 
optimality inclusions \eqref{thm:duality.2}  together with the strict
convexity of the steady primal energy functional \eqref{subsec:bingham_flow_pipe.2} imply that the dual
solution is given via
\begin{subequations}\label{subsec:bingham_flow_pipe.opt}
\begin{alignat}{2}\label{subsec:bingham_flow_pipe.opt.1}
    z&\in\partial\phi(\nabla u)
    &&\quad\text{ a.e.\ in }Q\,,
    \\
    \label{subsec:bingham_flow_pipe.opt.2}
    \partial_tu+L^*z&=f
    &&\quad\text{ in }L^2(I;V^*)\,,\\
    \mu&=u&&\quad\text{ in }\mathcal{W}(I)\,.
\end{alignat}
\end{subequations}
Moreover, the optimality inclusion \eqref{thm:duality.2.1} is equivalent to
\begin{subequations}\label{subsec:bingham_flow_pipe.opt_explicit}
\begin{alignat}{2}\label{subsec:bingham_flow_pipe.opt_explicit.1}
    z-\Pi_g(z)&=\nu\nabla u
    &&\quad\text{a.e.\ in }Q\,, 
    \\
    \label{subsec:bingham_flow_pipe.opt_explicit.2}
    \Pi_g(z)\cdot\nabla u&=g\vert\nabla u\vert
    &&\quad\text{a.e.\ in }Q\,.
\end{alignat}
\end{subequations}

\hspace{-1mm}Next, \hspace{-0.1mm}we \hspace{-0.1mm}record \hspace{-0.1mm}the \hspace{-0.1mm}corresponding \hspace{-0.1mm}primal \hspace{-0.1mm}and \hspace{-0.1mm}dual \hspace{-0.1mm}gap \hspace{-0.1mm}identities
\hspace{-0.1mm}(\textit{cf}.\ \hspace{-0.1mm}Theorems~\ref{thm:primal_gap_identity} \hspace{-0.1mm}and
\hspace{-0.1mm}\ref{thm:dual_gap_identity}).~\hspace{-0.1mm}To~\hspace{-0.1mm}this~\hspace{-0.1mm}end, we note that for every $v,w\in \mathcal{W}(I)$ and $ y\in L^2(I;Y^*)$ with $L^*y=f-\partial_t v$~in~$L^2(I;V^*)$, due to $I_{G}\hspace{-0.15em}=\hspace{-0.15em}I_{\smash{I_{\phi}^{\Omega}}}$ with  \eqref{subsec:bingham_flow_pipe.1.1}, $I_{G^*}\hspace{-0.15em}=\hspace{-0.15em}I_{\smash{I_{\phi^*}^{\Omega}}}$ with \eqref{subsec:bingham_flow_pipe.3}, $I_F\hspace{-0.15em}=\hspace{-0.15em}-\langle f,\cdot\rangle_{L^2(I;V)}$, $I_{F^*}\hspace{-0.15em}=\hspace{-0.15em}\chi_{\{-f\}}$, and a binomial~formula,~there~holds
    \begin{align}\label{lem:primal_gap_identity_bingham_flow_pipe.0.5}
    \begin{aligned} 
        &I_{G^*}(y)-\langle y,\nabla w\rangle_{L^2(I;Y)}+I_{G}(\nabla w)
        \\&\quad+\chi_{\{-f\}}(-\partial_t v-L^*y)-\langle -\partial_t v-L^*y,w\rangle_{L^2(I;V)}-\langle f,w\rangle_{L^2(I;V)}
   \\
    &=
            \tfrac{1}{2\nu}\|y-\Pi_g(y)\|_{Q}^2-(y,\nabla w)_{Q}+\tfrac{\nu}{2}\|\nabla w\|^2_{Q}+g\|\nabla w\|_{1,Q}
    \\
    &=
            \tfrac{1}{2\nu}\| y-\Pi_g(y)-\nu \nabla w\|_{Q}^2+\bigl(g\|\nabla w\|_{1,Q}-(\Pi_g(y),\nabla w)_{Q}\bigr)\,.
    \end{aligned}
    \end{align}

First, using repeatedly the elementary identity \eqref{lem:primal_gap_identity_bingham_flow_pipe.0.5}, we derive the corresponding primal~gap~\mbox{identity}.
 
\begin{lemma}[Primal gap identity for the unsteady Bingham flow through a pipe]
\label{lem:primal_gap_identity_bingham_flow_pipe}
For every $v\in\mathcal{W}(I)$ with $v(0)=u_0$ in $H$, there holds
\begin{align}\label{lem:primal_gap_identity_bingham_flow_pipe.0}
\begin{aligned} 
    &\tfrac{\nu}{2}\|\nabla(v-u)\|_Q^2
    +\bigl(g\|\nabla v\|_{1,Q}-(\Pi_g(z),\nabla v)_{Q}\bigr)+
    \tfrac{1}{2}\|(v-u)(t_{\mathtt{fin}})\|_H^2 \\
    &\quad+
    \inf_{\substack{
        y\in L^2(I;Y^*)\\
        L^*y=f-\partial_t v\text{ in }L^2(I;V^*)
    }}
    \bigl\{
            \tfrac{1}{2\nu}\| y-\Pi_g(y)-\nu \nabla u\|_{Q}^2+\bigl(g\|\nabla u\|_{1,Q}-(\Pi_g(y),\nabla u)_{Q}\bigr)
    \bigr\} \\
    &=
   \inf_{\substack{
        y\in L^2(I;Y^*)\\
        L^*y=f-\partial_t v\text{ in }L^2(I;V^*)
    }}
    \bigl\{
            \tfrac{1}{2\nu}\| y-\Pi_g(y)-\nu \nabla v\|_{Q}^2+\bigl(g\|\nabla v\|_{1,Q}-(\Pi_g(y),\nabla v)_{Q}\bigr)
    \bigr\}\,.
    \end{aligned}
\end{align}
\end{lemma}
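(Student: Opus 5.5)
This follows the template of Lemmas \ref{lem:primal_gap_identity_heat_equation}--\ref{lem:primal_gap_identity_navier_lame_equations}. We compute the two sides of the general primal gap identity \eqref{thm:primal_gap_identity.0} separately, each time reducing to the elementary identity \eqref{lem:primal_gap_identity_bingham_flow_pipe.0.5}, and then equate them. Fix $v\in\mathcal{W}(I)$ with $v(0)=u_0$ in $H$. Note that $I_G\circ\nabla$ and $I_F$ are continuous, since $\phi$ has quadratic growth and $F$ is linear. Hence the representations \eqref{rem:alternative_representation_bregman_divergence.6} and \eqref{lem:representation_of_primal_gap_estimator.0} are available. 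Moreover, $\mu=u$ by \eqref{subsec:bingham_flow_pipe.opt}, so \eqref{rem:alternative_representation_bregman_divergence.4} is available as well.

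\emph{Optimal strong convexity measure.} We use the Bregman-type representation \eqref{lem:bregman_representation_primal_error_measure.0}.

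For the first term $\mathcal{D}_{I_E}^{-\partial_tu}(v,u)$, we apply \eqref{rem:alternative_representation_bregman_divergence.4} together with \eqref{lem:primal_gap_identity_bingham_flow_pipe.0.5} with $(v,w,y)=(u,v,z)$; this is admissible since $L^*z=f-\partial_tu$ by \eqref{subsec:bingham_flow_pipe.opt.2}. This gives
\begin{align*}
\tfrac{1}{2\nu}\|z-\Pi_g(z)-\nu\nabla v\|_Q^2+\bigl(g\|\nabla v\|_{1,Q}-(\Pi_g(z),\nabla v)_Q\bigr)\,,
\end{align*}
and \eqref{subsec:bingham_flow_pipe.opt_explicit.1} turns the first summand into $\tfrac{\nu}{2}\|\nabla(v-u)\|_Q^2$.

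For the second term $\mathcal{D}_{I_{E^*}}^{u}(-\partial_tv,-\partial_tu)$, we use \eqref{rem:alternative_representation_bregman_divergence.6}. The conjugate $I_{F^*}=\chi_{\{-f\}}$ forces $L^*y=f-\partial_tv$, so the infimum runs only over this affine set. We then apply \eqref{lem:primal_gap_identity_bingham_flow_pipe.0.5} with $(v,w,y)=(v,u,y)$ to the integrand, which yields exactly the infimum on the left of \eqref{lem:primal_gap_identity_bingham_flow_pipe.0}.

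Finally, the terminal term $\tfrac12\|(v-u)(t_{\mathtt{fin}})\|_H^2$ is carried over unchanged, which completes the left-hand side.

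\emph{Primal gap estimator.} We use Lemma~\ref{lem:representation_of_primal_gap_estimator}. The term $I_{F^*}(-L^*y-\partial_tv)=\chi_{\{-f\}}(-L^*y-\partial_tv)$ again restricts the infimum to $L^*y=f-\partial_tv$. The remaining pairing terms are $-\langle -L^*y-\partial_tv,v\rangle_{L^2(I;V)}-\langle f,v\rangle_{L^2(I;V)}$, which is exactly the combination appearing in \eqref{lem:primal_gap_identity_bingham_flow_pipe.0.5} with $(v,w,y)=(v,v,y)$. That identity then gives the right-hand side of \eqref{lem:primal_gap_identity_bingham_flow_pipe.0}.

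Equating the two sides via \eqref{thm:primal_gap_identity.0} concludes the proof.

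The only step that needs genuine care is the elementary identity \eqref{lem:primal_gap_identity_bingham_flow_pipe.0.5} itself, since everything else reduces to it. I would check it by expanding $\tfrac{1}{2\nu}|y-\Pi_g y-\nu\nabla w|^2$ and using $\phi^*(y)=\tfrac{1}{2\nu}|y-\Pi_g(y)|^2$ from \eqref{subsec:bingham_flow_pipe.3}. I would also confirm that the extra term $g\|\nabla w\|_{1,Q}-(\Pi_g(y),\nabla w)_Q$ is nonnegative, which holds because $|\Pi_g(y)|\le g$ pointwise.

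A second point to watch is the case where the infimum over $y$ is not attained or the affine set is empty. Emptiness cannot occur, since $L^*$ is surjective onto $V^*$ under the gradient norm. Non-attainment causes no problem, because all manipulations are pointwise in $y$ before the infimum is taken.
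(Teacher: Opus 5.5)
Your proposal is correct and follows essentially the same route as the paper. You evaluate $\rho_{\mathcal{E}}^2(v)$ via the Bregman-type representation \eqref{lem:bregman_representation_primal_error_measure.0}, treating its two divergences through \eqref{rem:alternative_representation_bregman_divergence.4} and \eqref{rem:alternative_representation_bregman_divergence.6} combined with \eqref{lem:primal_gap_identity_bingham_flow_pipe.0.5} for $(v,w,y)=(u,v,z)$ and $(v,u,y)$. You evaluate $\eta_{\mathcal{E}}^2(v)$ via \eqref{lem:representation_of_primal_gap_estimator.0} with $(v,w,y)=(v,v,y)$, and then equate the two through \eqref{thm:primal_gap_identity.0}; your extra remarks (non-emptiness of the constraint set, non-negativity of $g\|\nabla w\|_{1,Q}-(\Pi_g(y),\nabla w)_Q$) are correct but not needed for the identity.
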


\begin{proof} 
    Let $v\in\mathcal{W}(I)$ with $v(0)=u_0$ in $H$ be fixed, but arbitrary.

\emph{$\bullet$ Optimal strong convexity measure.} Due to the alternative Bregman divergence representation
\eqref{rem:alternative_representation_bregman_divergence.4} (which is applicable since $\mu = u$), \eqref{lem:primal_gap_identity_bingham_flow_pipe.0.5} (applied with $(v,w,y)=(u,v,z)$), and the optimality equation \eqref{subsec:bingham_flow_pipe.opt} (or \eqref{subsec:bingham_flow_pipe.opt_explicit}, respectively), we have that
\begin{align*}
\begin{aligned} 
    \mathcal{D}_{I_E}^{-\partial_t u}(v,u) 
    &= \tfrac{1}{2\nu}\| z-\Pi_g(z)-\nu \nabla v\|_{Q}^2+\bigl(g\|\nabla v\|_{1,Q}-(\Pi_g(z),\nabla v)_{Q}\bigr)
    \\&=\tfrac{\nu}{2}\|\nabla (v-u)\|_{Q}^2+\bigl(g\|\nabla v\|_{1,Q}-(\Pi_g(z),\nabla v)_{Q}\bigr)\,.
\end{aligned}  
\end{align*}
Moreover, due to the alternative Bregman divergence representation
\eqref{rem:alternative_representation_bregman_divergence.6} (which is applicable since both $I_G\circ\nabla$ and $I_F$ are continuous) and \eqref{lem:primal_gap_identity_bingham_flow_pipe.0.5} (applied with $(v,w,y)=(v,u,y)$), we have that
\begin{align*}
    \mathcal{D}_{I_{E^*}}^{u}
    (-\partial_t v,-\partial_t u) 
    &=
    \inf_{\substack{
        y\in L^2(I;Y^*)\\
        L^*y=f-\partial_t v\text{ in }L^2(I;V^*)
    }}
    \bigl\{
            \tfrac{1}{2\nu}\| y-\Pi_g(y)-\nu \nabla u\|_{Q}^2\\[-5mm]&\qquad\qquad\qquad\qquad\qquad\quad+\smash{\bigl(g\|\nabla u\|_{1,Q}-(\Pi_g(y),\nabla u)_{Q}\bigr)}
    \bigr\}\,.
\end{align*}
Therefore, the Bregman-type representation of the primal error measure
\eqref{lem:bregman_representation_primal_error_measure.0} yields
\begin{align}\label{lem:primal_gap_identity_bingham_flow_pipe.2}
\begin{aligned} 
    \rho_{\mathcal{E}}^2(v)
    &=
    \tfrac{\nu}{2}\|\nabla(v-u)\|_Q^2
    +\bigl(g\|\nabla v\|_{1,Q}-(\Pi_g(z),\nabla v)_{Q}\bigr)+
    \tfrac{1}{2}\|(v-u)(t_{\mathtt{fin}})\|_H^2 \\
    &\quad+
    \inf_{\substack{
        y\in L^2(I;Y^*)\\
        L^*y=f-\partial_t v\text{ in }L^2(I;V^*)
    }}
    \bigl\{
            \tfrac{1}{2\nu}\| y-\Pi_g(y)-\nu \nabla u\|_{Q}^2+\bigl(g\|\nabla u\|_{1,Q}-(\Pi_g(y),\nabla u)_{Q}\bigr)
    \bigr\}\,.
    \end{aligned}
\end{align}

\emph{$\bullet$ Primal gap estimator.}
Using the representation
\eqref{lem:representation_of_primal_gap_estimator.0} of the primal gap
estimator \eqref{def:primal_gap_estimator} (which is applicable since both $I_G\circ\nabla$ and $I_F$ are continuous) and \eqref{lem:primal_gap_identity_bingham_flow_pipe.0.5} (applied with $(v,w,y)=(v,v,y)$),  we find that
\begin{align}\label{lem:primal_gap_identity_bingham_flow_pipe.3}
    \eta_{\mathcal{E}}^2(v)
    =
    \inf_{\substack{y\in L^2(I;Y^*)\\ L^*y=f-\partial_t v\text{ in }L^2(I;V^*)}}
    \bigl\{
            \tfrac{1}{2\nu}\| y-\Pi_g(y)-\nu \nabla v\|_{Q}^2+\bigl(g\|\nabla v\|_{1,Q}-(\Pi_g(y),\nabla v)_{Q}\bigr)
    \bigr\}\,.
\end{align}

Finally, using the representations
\eqref{lem:primal_gap_identity_bingham_flow_pipe.2} and
\eqref{lem:primal_gap_identity_bingham_flow_pipe.3} and the general primal
gap identity \eqref{thm:primal_gap_identity.0},~we~arrive at
the claimed representation \eqref{lem:primal_gap_identity_bingham_flow_pipe.0} of the primal gap identity for the unsteady Bingham flow through a pipe.
\end{proof}

Next, we derive the corresponding dual~gap~identity.\vspace{-0.5mm}

\begin{lemma}[Dual gap identity for the unsteady Bingham flow through a pipe]
\label{lem:dual_gap_identity_bingham_flow_pipe}
For every $(y,\lambda)\in L^2(I;Y^*)\times\mathcal{W}(I)$ with
\begin{align}\label{lem:dual_gap_identity_bingham_flow_pipe.-1}
    \operatorname{div}y=\partial_t\lambda-f
    \quad\text{in }L^2(I;V^*)\,,
\end{align}
there holds
\begin{align}\label{lem:dual_gap_identity_bingham_flow_pipe.0}
\begin{aligned}
    &\tfrac{1}{2\nu}\|(y-z)-(\Pi_g(y)-\Pi_g(z))\|^2_{Q}+\bigl(g\|\nabla u\|_{1,Q}-(\Pi_g(y),\nabla u)_{Q}\bigr)
    \\
    &\quad+
    \tfrac{\nu}{2}\|\nabla(\lambda-u)\|_Q^2
    +
    \bigl(g\|\nabla\lambda\|_{1,Q}-(\Pi_g(z),\nabla\lambda)_Q\bigr)
    +
    \tfrac{1}{2}\|(\lambda-u)(t_{\mathtt{fin}})\|_H^2
    \\
    &= 
        \tfrac{1}{2\nu}
        \|
        y-\Pi_g(y)-\nu \nabla\lambda
        \|_{Q}^2
        +
        \bigl(g\|\nabla\lambda\|_{1,Q}
        -
       (\Pi_g(y),\nabla\lambda)_{Q}\bigr)
    +
    \tfrac{1}{2}\|\lambda(0)-u_0\|_H^2\,.
\end{aligned}
\end{align}
\end{lemma}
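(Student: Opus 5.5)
We follow the same two-part scheme as in the heat, Stokes, and Navier--Lam\'e cases. First we compute the dual optimal strong convexity measure through the Bregman-type representation \eqref{lem:bregman_representation_dual_error_measure.0}. Then we compute the dual gap estimator through Lemma~\ref{lem:representation_of_dual_gap_estimator}. Finally we equate the two by the dual gap identity \eqref{thm:dual_gap_identity.0}. Throughout, we fix $(y,\lambda)\in L^2(I;Y^*)\times\mathcal{W}(I)$ satisfying \eqref{lem:dual_gap_identity_bingham_flow_pipe.-1}, and we use $\mu=u$ from \eqref{subsec:bingham_flow_pipe.opt}.

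\emph{Dual optimal strong convexity measure.} The representation \eqref{lem:bregman_representation_dual_error_measure.0} consists of three Bregman divergences and a terminal term; we treat them in turn.

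For the first divergence, we use \eqref{rem:alternative_representation_bregman_divergence.8}, which gives
\[
\mathcal{D}_{I_{G^*}}^{\nabla u}(y,z)=I_{G^*}(y)-(y,\nabla u)_Q+I_G(\nabla u).
\]
This is exactly the left-hand side of \eqref{lem:primal_gap_identity_bingham_flow_pipe.0.5} with $w=u$, apart from the $F$-terms, which are handled as in the identity itself. It therefore equals
\[
\tfrac{1}{2\nu}\|y-\Pi_g(y)-\nu\nabla u\|_Q^2+\bigl(g\|\nabla u\|_{1,Q}-(\Pi_g(y),\nabla u)_Q\bigr).
\]
Substituting $\nu\nabla u=z-\Pi_g(z)$ from \eqref{subsec:bingham_flow_pipe.opt_explicit.1} turns the quadratic term into $\tfrac{1}{2\nu}\|(y-z)-(\Pi_g(y)-\Pi_g(z))\|_Q^2$.

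For the second divergence, we use \eqref{rem:alternative_representation_bregman_divergence.10}. The indicator $\chi_{\{-f\}}$ vanishes by \eqref{lem:dual_gap_identity_bingham_flow_pipe.-1}, since $-L^*y=\operatorname{div}y$ in the distributional sense. The linear terms cancel, so this divergence is $0$, exactly as in the heat case.

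For the third divergence, we use \eqref{rem:alternative_representation_bregman_divergence.4}, which is applicable since $\mu=u$. Applying \eqref{lem:primal_gap_identity_bingham_flow_pipe.0.5} with $(v,w,y)=(u,\lambda,z)$ and then \eqref{subsec:bingham_flow_pipe.opt_explicit.1} gives
\[
\tfrac{\nu}{2}\|\nabla(\lambda-u)\|_Q^2+\bigl(g\|\nabla\lambda\|_{1,Q}-(\Pi_g(z),\nabla\lambda)_Q\bigr),
\]
which is the same computation as in the proof of Lemma~\ref{lem:primal_gap_identity_bingham_flow_pipe}.

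The terminal term is $\tfrac12\|(\lambda-u)(t_{\mathtt{fin}})\|_H^2$, because $\mu=u$. Summing the four contributions produces the left-hand side of \eqref{lem:dual_gap_identity_bingham_flow_pipe.0}.

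\emph{Dual gap estimator.} We insert \eqref{lem:primal_gap_identity_bingham_flow_pipe.0.5} with $(v,w,y)=(\lambda,\lambda,y)$ into \eqref{lem:representation_of_dual_gap_estimator.0}. This is admissible because the constraint $L^*y=f-\partial_t\lambda$ is precisely \eqref{lem:dual_gap_identity_bingham_flow_pipe.-1}. The result is the right-hand side of \eqref{lem:dual_gap_identity_bingham_flow_pipe.0}, including the term $\tfrac12\|\lambda(0)-u_0\|_H^2$.

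\emph{Main obstacle.} The only delicate step is the first divergence. We must check that the constant $\tfrac{1}{2\nu}$ in the projected quadratic term is correct. We must also make sure the sign of the term $(\Pi_g(y),\nabla u)_Q$ combines with $g\|\nabla u\|_{1,Q}$ into a non-negative quantity; this holds because $|\Pi_g(y)|\le g$ pointwise. To do this, we will verify the binomial expansion in \eqref{lem:primal_gap_identity_bingham_flow_pipe.0.5} pointwise using \eqref{subsec:bingham_flow_pipe.3}. Beyond that, the argument only requires noting that $\operatorname{div}y$ is taken in the distributional sense.
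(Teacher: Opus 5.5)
Your proposal is correct and follows the paper's proof essentially step by step. You treat the three Bregman divergences via \eqref{rem:alternative_representation_bregman_divergence.8}, \eqref{rem:alternative_representation_bregman_divergence.10}, and \eqref{rem:alternative_representation_bregman_divergence.4}, combined with \eqref{lem:primal_gap_identity_bingham_flow_pipe.0.5} and \eqref{subsec:bingham_flow_pipe.opt_explicit.1}; you obtain the estimator from \eqref{lem:representation_of_dual_gap_estimator.0}; and you conclude with Theorem~\ref{thm:dual_gap_identity}. The pointwise non-negativity check via $|\Pi_g(y)|\le g$ in your last paragraph is harmless but not needed, since the identity is purely algebraic.
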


\begin{proof}
Let $(y,\lambda)\in L^2(I;Y^*)\times\mathcal{W}(I)$ with
\eqref{lem:dual_gap_identity_bingham_flow_pipe.-1} be fixed, but arbitrary.

\emph{$\bullet$ Optimal strong convexity measure.}
Due to the alternative Bregman divergence representation
\eqref{rem:alternative_representation_bregman_divergence.8},
\eqref{subsec:bingham_flow_pipe.opt_explicit.1}, and a binomial formula, we have
that
\begin{align*} 
    \mathcal{D}_{I_{G^*}}^{\nabla u}(y,z)
    &=\tfrac{1}{2\nu}\|y-\Pi_g(y)-\nu \nabla u\|^2_{Q}+\bigl(g\|\nabla u\|_{1,Q}-(\Pi_g(y),\nabla u)_{Q}\bigr)
    \\&=
 \tfrac{1}{2\nu}\|(y-z)-(\Pi_g(y)-\Pi_g(z))\|^2_{Q}+\bigl(g\|\nabla u\|_{1,Q}-(\Pi_g(y),\nabla u)_{Q}\bigr)\,,
\end{align*}
due to the alternative Bregman divergence representation
\eqref{rem:alternative_representation_bregman_divergence.10}, $I_F= -\langle f,\cdot\rangle_{L^2(I;V)}$,
$I_{F^*}=\chi_{\{-f\}}$,
\eqref{lem:dual_gap_identity_bingham_flow_pipe.-1}, and
\eqref{subsec:bingham_flow_pipe.opt.2}, we have that
\begin{align*}
    \mathcal{D}_{I_{F^*}}^u
    (-\partial_t\lambda-L^*y,-\partial_tu-L^*z)
    &=\smash{\chi_{\{-f\}}}(-\partial_t\lambda-L^*y)-\langle f-\partial_t\lambda-L^*y,u\rangle_{L^2(I;V)}
    \\&=
    0\,.
\end{align*}
and, due to the alternative Bregman divergence representation
\eqref{rem:alternative_representation_bregman_divergence.4} (which is
applicable since $\mu=u$ in $\mathcal{W}(I)$),
\eqref{lem:primal_gap_identity_bingham_flow_pipe.0.5} (applied with
$(v,w,y)=(u,\lambda,z)$), and
\eqref{subsec:bingham_flow_pipe.opt_explicit.1}, we have that
\begin{align*}
\begin{aligned}
    \mathcal{D}_{I_E}^{-\partial_tu}(\lambda,u)
    &=
    \tfrac{1}{2\nu }\|z-\Pi_g(z)-\nu \nabla \lambda\|^2_{Q}+\bigl(g\|\nabla \lambda\|_{1,Q}-(\Pi_g(z),\nabla \lambda)_{Q}\bigr)
    \\
    &=
    \tfrac{\nu}{2}\|\nabla (\lambda-u)\|^2_{Q}+\bigl(g\|\nabla \lambda\|_{1,Q}-(\Pi_g(z),\nabla \lambda)_{Q}\bigr)\,.
\end{aligned}
\end{align*}
Therefore, the Bregman-type representation of the dual error measure
\eqref{lem:bregman_representation_dual_error_measure.0} yields
\begin{align}\label{lem:dual_gap_identity_bingham_flow_pipe.1}
\begin{aligned}
    \rho_{-\mathcal{D}}^2(y,\lambda)
    &=   \tfrac{1}{2\nu}\|(y-z)-(\Pi_g(y)-\Pi_g(z))\|^2_{Q}+\bigl(g\|\nabla u\|_{1,Q}-(\Pi_g(y),\nabla u)_{Q}\bigr)
    \\
    &\quad+
    \tfrac{\nu}{2}\|\nabla(\lambda-u)\|_Q^2
    +
    \bigl(g\|\nabla\lambda\|_{1,Q}-(\Pi_g(z),\nabla\lambda)_Q\bigr)
    +
    \tfrac{1}{2}\|(\lambda-u)(t_{\mathtt{fin}})\|_H^2\,.
\end{aligned}
\end{align}

\emph{$\bullet$ Dual gap estimator.}
Using the representation \eqref{lem:representation_of_dual_gap_estimator.0} of the dual gap estimator
\eqref{def:dual_gap_estimator} and \eqref{lem:primal_gap_identity_bingham_flow_pipe.0.5} (which is applicable due to \eqref{lem:dual_gap_identity_bingham_flow_pipe.-1}),  we find that
\begin{align}\label{lem:dual_gap_identity_bingham_flow_pipe.2}
    \eta_{-\mathcal{D}}^2(y,\lambda)
    =
      \tfrac{1}{2\nu}
        \|
        y-\Pi_g(y)-\nu \nabla\lambda
        \|_{Q}^2
        +
        \bigl(g\|\nabla\lambda\|_{1,Q}
        -
       (\Pi_g(y),\nabla\lambda)_{Q}\bigr)
    +
    \tfrac{1}{2}\|\lambda(0)-u_0\|_H^2\,.
\end{align}

Finally, using the representations
\eqref{lem:dual_gap_identity_bingham_flow_pipe.1} and
\eqref{lem:dual_gap_identity_bingham_flow_pipe.2} and the general dual gap
identity \eqref{thm:dual_gap_identity.0}, we arrive at the claimed representation
\eqref{lem:dual_gap_identity_bingham_flow_pipe.0} of the dual gap identity for the unsteady Bingham flow through a pipe.
\end{proof}\newpage

\subsection{The unsteady obstacle problem}\label{subsec:obstacle_problem}

\hspace{5mm}In this subsection, we consider the unsteady \emph{obstacle problem} (\textit{cf}.\ \cite{DuvautLions1972,KinderlehrerStampacchia1980}), the parabolic counterpart of the classical obstacle problem for an elastic membrane constrained by a rigid obstacle, which describes the evolution of a diffusion-type state subject to a unilateral pointwise constraint.

Let $\ell=1$ and $p=2$, \textit{i.e.},
$V=W^{1,2}_D(\Omega;\mathbb{R}^1)$, $Y=L^2(\Omega;\mathbb{R}^d)$, and
$H=L^2(\Omega;\mathbb{R}^1)$, where $V$ is equipped with~the~gradient norm $\|\cdot\|_V\coloneqq\|\nabla(\cdot)\|_\Omega$ in $V$. 
Moreover, let the energy densities
$\phi\colon\mathbb{R}^d\to\mathbb{R}$~and $\psi\colon Q\times \mathbb{R}\to \mathbb{R}\cup\{+\infty\}$, for a.e.\ $(t,x)\in Q$, every
$a\in\mathbb{R}^d$, and $b\in \mathbb{R}$, respectively, be defined by 
\begin{subequations}\label{subsec:obstacle_problem.2}
\begin{align}\label{subsec:obstacle_problem.2.1}
\phi(a)
&\coloneqq
\tfrac{1}{2}|a|^2\,,\\\label{subsec:obstacle_problem.2.2}
\psi(t,x,b)
&\coloneqq
\chi_{[0,+\infty)}(b-\zeta(t,x))\,, 
\end{align}
\end{subequations}
for some $\zeta\in L^2(I;V)\cap C^0(\overline{I};H)$, so that $F\colon I\times V\to \mathbb{R}\cup\{+\infty\}$, for some $f\in  L^2(I;V^*)$,~is~given~via $F(t,v)
\coloneqq
\chi_{K(t)}(v)
-
\langle f(t),v\rangle_V$ for a.e.\ $t\in I$ and all $v\in V$. Here, we interpret $\chi_K\coloneqq I_{\psi}^{\Omega}\colon I\times V\to \mathbb{R}\cup\{+\infty\}$ as spatial integral reduction (\textit{cf}.\ Lemma \ref{lem:integral_functionals_as_normal_integrands}), which, for a.e.\ $t\in I$ and every $v\in V$,~is~given~via\vspace{-0.5mm}
\begin{align*}
\chi_{K(t)}(v)
\coloneqq
\begin{cases}
    0&\text{ if }v\in K(t)\,,\\
    +\infty&\text{ else}\,,
\end{cases} \\[-6mm]\notag
\end{align*}
where\vspace{-0.5mm}
\begin{align*}
K(t)
\coloneqq \bigl\{v\in V\mid v\ge \zeta(t) \text{ a.e.\ in }\Omega\bigr\}\,. \\[-6mm]\notag
\end{align*} 
For the choice \eqref{subsec:obstacle_problem.2}, Assumption~\ref{ass:energy_densities}~is~readily~\mbox{verified}, while Assumption~\ref{ass:sufficient_for_conjugation} is violated. Therefore,
the present application illustrates that Assumption~\ref{ass:sufficient_for_conjugation}
is sufficient, but not necessary, for the identification of the dual
problem in Theorem~\ref{thm:duality}. In Theorem~\ref{thm:duality}, this assumption is used to identify
$\mathcal{F}^*(-L^*(y,\lambda))$, which in the present
application can instead be identified directly, as shown~in~Lemma~\ref{lem:convex_conjugation_obstacle}~below.

For a.e.\ $t\in I$, the steady primal energy functional
$E(t,\cdot)\colon V\to \mathbb{R}\cup\{+\infty\}$, for every~${v\in V}$,~is~given~via 
\begin{align}\label{subsec:obstacle_problem.5}
    \smash{E(t,v)
    =
    \tfrac{1}{2}\|\nabla v\|_{\Omega}^{2}
    +
    \chi_{K(t)}(v)
    -
    \langle f(t),v\rangle_V\,,}
\end{align}
and, due to \eqref{eq:E_prime_steady}, its (Fenchel) conjugate $E^*(t,\cdot)\colon V^*\to \mathbb{R}\cup\{+\infty\}$, for every
$v^*\in V^*$, is given via 
\begin{subequations}\label{subsec:obstacle_problem.6} 
\begin{align}\label{subsec:obstacle_problem.6.1} 
    E^*(t,v^*)
   & =
    \inf_{y\in Y^*}
    \bigl\{
        \tfrac{1}{2}\|y\|_{\Omega}^{2}
        +
        \chi_K^*(f(t)+v^*-L^*y)
    \bigr\}
    \\&= \inf_{\eta\in V^*}
    \bigl\{
        \tfrac{1}{2}\|f(t)+v^*-\eta\|_{V^*}^{2}
        +
        \chi_K^*(\eta)
    \bigr\}\,,\label{subsec:obstacle_problem.6.2} 
\end{align} 
\end{subequations}
where we used  $\smash{F^*(t,v^*)=\chi_{K(t)}^*(f(t)+v^*)}$ for a.e.\ $t\in I$ and all $v^*\in V^*$ for \eqref{subsec:obstacle_problem.6.1} and, subsequently, the substitution $\eta\coloneqq f(t)+v^*-L^*y$ in $V^*$ for a.e.\ $t\in I$, all $v^*\in V^*$, and all $y\in Y^*$ for \eqref{subsec:obstacle_problem.6.2}. 

Using the Laplace operator $-\Delta\hspace{-0.1em}\coloneqq \hspace{-0.1em}L^*\circ\nabla\hspace{-0.1em}\colon \hspace{-0.1em}V\hspace{-0.1em}\to\hspace{-0.1em} V^*$ introduced above and, for a.e.\ $t\hspace{-0.1em}\in\hspace{-0.1em} I$ and~every~$v\hspace{-0.1em}\in\hspace{-0.1em} V$,
denoting the normal cone to $K(t)$ by
\begin{align*}
    \smash{N_{K(t)}(v)
    \coloneqq
    \partial\chi_{K(t)}(v)
    =
    \bigl\{
       v^*\in V^*
        \mid
        \langle v^*,\varphi-v\rangle_V\leq 0
        \text{ for all }\varphi\in K(t)
    \bigr\}\,,}
\end{align*}
the abstract subgradient flow \eqref{eq:subgradient_flow}, given an arbitrary initial
datum $u_0\in K(0)$, is given by the unsteady obstacle problem: find $u\in \mathcal{W}(I)$ such that
\begin{subequations}\label{subsec:obstacle_problem.8}
\begin{alignat}{2}
    \partial_t u(t)+(-\Delta)u(t)+N_{K(t)}(u(t))
    &\ni f(t)
    &&\quad\text{in }V^*\quad\text{for a.e.\ }t\in I\,,
    \label{subsec:obstacle_problem.8.1}
    \\
    u(0)
    &=u_0
    &&\quad\text{in }H\,.
    \label{subsec:obstacle_problem.8.2}
\end{alignat}
\end{subequations}
If, in addition, $\partial_t\zeta,\Delta\zeta,f\in L^2(Q;\mathbb{R}^1)$, according to \cite[Lem.\ 3.1]{BoegeleinDuzaarMingione2011}, 
the  unsteady obstacle~\mbox{problem}~\eqref{subsec:obstacle_problem.8} admits a unique solution $u\in \mathcal{W}(I)$. Motivated by \cite[Lem.\ 3.1]{BoegeleinDuzaarMingione2011}, but without
imposing these additional regularity assumptions throughout, in what follows, we assume that there exists a solution $u\in\mathcal{W}(I)$.
 According to the
Br\'ezis--Ekeland--Nayroles principle
(\textit{cf}.\ Proposition~\ref{prop:brezis_ekeland_nayroles}), this solution
is equivalently characterized as the primal solution, \textit{i.e.}, as a
minimizer of the unsteady primal energy functional
$\mathcal{E}\colon\mathcal{W}(I)\to\mathbb{R}\cup\{+\infty\}$, for every
$v\in\mathcal{W}(I)$ defined by\vspace{-0.5mm}
\begin{align}\label{subsec:obstacle_problem.9}
    \mathcal{E}(v)
    =\tfrac{1}{2}\|\nabla v\|_{Q}^2+I_{\chi_K}(v)
    -\langle f,v\rangle_{L^2(I;V)}+I_{E^*}(-\partial_t v) 
    +
    \tfrac{1}{2}\|v(t_{\mathtt{fin}})\|_H^2
    +
    \chi_{\{u_0\}}(v(0))\,.
\end{align}
Here, 
due to \eqref{eq:E_prime_steady},  
for every $v^*\in L^2(I;V^*)$,~we~have~that
\begin{align*}
    I_{E^*}(v^*)
   & =
    \inf_{y\in L^2(I;Y^*)}
    \bigl\{
        \tfrac{1}{2}\|y\|_{Q}^{2}
        +I_{\chi_K}^*(f+v^*-L^*y)
    \bigr\}
    \\&= \inf_{\eta\in L^2(I;V^*)}
    \bigl\{
        \tfrac{1}{2}\|f+v^*-\eta\|_{L^2(I;V^*)}^{2}
        +
        I_{\chi_K}^*(\eta)
    \bigr\}\,.
\end{align*} 

The unsteady obstacle problem \eqref{subsec:obstacle_problem.8} (and \eqref{subsec:obstacle_problem.9}, respectively) is not covered by~\mbox{Assumption}~\ref{ass:sufficient_for_conjugation}, since the  functional
$I_{F}\coloneqq I_{\chi_K}-\langle f,\cdot\rangle_{L^2(I;V)}\colon L^2(I;V)\to \mathbb{R}\cup\{+\infty\}$ is neither~\mbox{continuous}~nor~bounded. In the proof of
Theorem~\ref{thm:duality}, this assumption is used to identify the conjugate
term $\mathcal{F}^*(-\mathcal{L}^*(y,\lambda))$, where the functional $\mathcal{F}\colon \mathcal{W}(I)\to \mathbb{R}\cup\{+\infty\}$, for every $v\in \mathcal{W}(I)$, is defined by
\begin{align}\label{subsec:obstacle_problem.11} 
    \mathcal{F}(v)\coloneqq I_{\chi_K}(v)-\langle f,v\rangle_{L^2(I;V)}+\tfrac{1}{2}\|v(t_{\mathtt{fin}})\|_H^2+\chi_{\{u_0\}}(v(0))\,.
\end{align}
For the unsteady obstacle problem \eqref{subsec:obstacle_problem.8}, however,  
this term can be identified similarly~if~$\lambda\in\mathcal{W}(I)$.

\begin{lemma}[Convex conjugation formula]
	\label{lem:convex_conjugation_obstacle}
	Assume, in addition, that $	\zeta\in L^2(I;V)\cap H^1(I;H)$ and $\zeta(t_{\mathtt{fin}})\in V$. 
	Then, for every
	$(y,\lambda)\in L^2(I;Y^*)\times\mathcal{W}(I)$, there holds
	\begin{align}\label{lem:convex_conjugation_obstacle.0}
		\begin{aligned}
			\mathcal{F}^*(-\mathcal{L}^*(y,\lambda))
			&=
			I_{\chi_K^*}(f-\partial_t\lambda-L^*y)
			\\&\quad+
			\tfrac{1}{2}\|\lambda(t_{\mathtt{fin}})\|_H^2
			-
			\tfrac{1}{2}
			\operatorname{dist}_H^2
			(
			\lambda(t_{\mathtt{fin}}),
			\operatorname{cl}_H K(t_{\mathtt{fin}}))
			-
			(\lambda(0),u_0)_H\,,
		\end{aligned}
	\end{align}
    where 
    \begin{align*}
        \operatorname{dist}_H^2
			(
			\lambda(t_{\mathtt{fin}}),
			\operatorname{cl}_H K(t_{\mathtt{fin}}))\coloneqq\min_{h\in \operatorname{cl}_H K(t_{\mathtt{fin}})}{\bigl\{\|\lambda(t_{\mathtt{fin}})-h\|_H^2\bigr\}}\,.
    \end{align*}
	Moreover, if $\lambda(t)\in  K(t)$ for a.e.\ $t\in  I$, then $	\operatorname{dist}_H(\lambda(t_{\mathtt{fin}}),\operatorname{cl}_H K(t_{\mathtt{fin}}))=0$. 
\end{lemma}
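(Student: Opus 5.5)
We follow the proof of Theorem~\ref{thm:duality}(\hyperlink{thm:duality.i}{i}), case $\lambda\in\mathcal{W}(I)$, and replace the continuity of $I_F$ (Assumption~\ref{ass:sufficient_for_conjugation}) by an explicit density argument adapted to the constraint set. The starting point is the analogue of \eqref{prop:duality.12}. Fix $(y,\lambda)\in L^2(I;Y^*)\times\mathcal{W}(I)$ and write $\mathcal{F}^*(-\mathcal{L}^*(y,\lambda))$ as a supremum over $v\in\mathcal{W}(I)$ with $v(0)=u_0$. Integrating by parts in time via \eqref{subsubsec:function_spaces_unsteady.3} gives
\begin{align*}
\mathcal{F}^*(-\mathcal{L}^*(y,\lambda))=\sup_{\substack{v\in\mathcal{W}(I),\,v(0)=u_0\\ v(t)\in K(t)\text{ a.e.}}}\bigl\{\langle f-\partial_t\lambda-L^*y,v\rangle_{L^2(I;V)}+(v(t_{\mathtt{fin}}),\lambda(t_{\mathtt{fin}}))_H-\tfrac12\|v(t_{\mathtt{fin}})\|_H^2\bigr\}-(\lambda(0),u_0)_H\,.
\end{align*}
The upper bound ``$\le$'' is then immediate. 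We drop the coupling between $v$ and $v(t_{\mathtt{fin}})$ and take separate suprema over $v\in L^2(I;V)$ with $v(t)\in K(t)$ a.e., which gives $I_{\chi_K}^*(f-\partial_t\lambda-L^*y)=I_{\chi_K^*}(f-\partial_t\lambda-L^*y)$ by Lemma~\ref{lem:conjugate_of_integral_functional}, and over $h\in\operatorname{cl}_HK(t_{\mathtt{fin}})$. The second supremum is computed by completing the square:
\begin{align*}
\sup_{h\in\operatorname{cl}_HK(t_{\mathtt{fin}})}\bigl\{(h,\lambda(t_{\mathtt{fin}}))_H-\tfrac12\|h\|_H^2\bigr\}=\tfrac12\|\lambda(t_{\mathtt{fin}})\|_H^2-\tfrac12\operatorname{dist}_H^2(\lambda(t_{\mathtt{fin}}),\operatorname{cl}_HK(t_{\mathtt{fin}}))\,.
\end{align*}
Here the minimum defining the distance is attained because $\operatorname{cl}_HK(t_{\mathtt{fin}})$ is closed and convex in the Hilbert space $H$.

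The main obstacle is the reverse inequality ``$\ge$''. We must approximate any pair $(w,h)$, with $w\in L^2(I;V)$, $w(t)\in K(t)$ a.e., and $h\in K(t_{\mathtt{fin}})$, by admissible $v_n\in\mathcal{W}(I)$ with $v_n(0)=u_0$ and $v_n(t)\in K(t)$. The approximation must give $v_n\to w$ in $L^2(I;V)$ and $v_n(t_{\mathtt{fin}})\to h$ in $H$, since the functional is affine-continuous in $v$ on $L^2(I;V)$ and continuous in the terminal value. It suffices to treat $h\in K(t_{\mathtt{fin}})$, because $K(t_{\mathtt{fin}})$ is $H$-dense in $\operatorname{cl}_HK(t_{\mathtt{fin}})$. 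The plan is to shift by the obstacle: write $v=\zeta+\tilde v$ with $\tilde v\ge 0$. The regularity $\zeta\in L^2(I;V)\cap H^1(I;H)$ ensures $\zeta\in\mathcal{W}(I)$, since $H^1(I;H)\subseteq H^1(I;V^*)$. Then $\tilde w\coloneqq w-\zeta\ge0$, $\tilde h\coloneqq h-\zeta(t_{\mathtt{fin}})\in V$ with $\tilde h\ge0$ (using $\zeta(t_{\mathtt{fin}})\in V$), and $\tilde u_0\coloneqq u_0-\zeta(0)\ge0$.

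We then copy the construction of Lemma~\ref{lem:dense_range}, keeping nonnegativity at each step. First, $\tilde w$ is approximated by nonnegative $\varphi_n\in C_c^1(I;V)$; this is done by mollifying in time a truncation of $\tilde w$, which preserves the cone $\{\ge 0\}$ since convolution with a nonnegative kernel preserves nonnegativity. Second, we add the cut-offs $\psi_n\tilde h$ near $t_{\mathtt{fin}}$ and $(1-\psi_n(t_{\mathtt{fin}}-\cdot))\tilde u_0$ near $0$ with shrinking support. For the term near $0$, $\tilde u_0$ need only lie in $H$ and must be lifted to a nonnegative element of $\mathcal{W}(I)$ with small $L^2(I;V)$-norm. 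For this we use the heat semigroup applied to $\tilde u_0$ on a short interval, which preserves positivity by the maximum principle and has $L^2(I;V)$-norm of order $\|\tilde u_0\|_H$ times a vanishing factor. All these summands are nonnegative, so $v_n\coloneqq\zeta+{}$(sum) is admissible. Passing to the limit yields ``$\ge$'' and hence \eqref{lem:convex_conjugation_obstacle.0}.

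Finally, if $\lambda(t)\in K(t)$ for a.e.\ $t$, then $\lambda-\zeta\in\mathcal{W}(I)\hookrightarrow C^0(\overline I;H)$ is nonnegative a.e.\ in time. Since the set $\{\ge0\}$ is closed in $H$, continuity gives $\lambda(t_{\mathtt{fin}})-\zeta(t_{\mathtt{fin}})\ge0$. Thus $\lambda(t_{\mathtt{fin}})\in\{h\in H\mid h\ge\zeta(t_{\mathtt{fin}})\}$, and this set equals $\operatorname{cl}_HK(t_{\mathtt{fin}})$ by truncation density, using $\zeta(t_{\mathtt{fin}})\in V$. Hence the distance vanishes.
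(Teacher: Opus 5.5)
Your proof is correct. Its skeleton is the paper's: integration by parts in time, splitting the supremum into a time-integral part and a terminal part, and proving the lower bound by density. The genuine difference is how you obtain the constrained density.

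\textbf{The paper's route.} It shifts by $\zeta-\widehat{u}_0$, where $\widehat{u}_0$ is a lift of $u_0$. It takes the \emph{unconstrained} approximants of Lemma~\ref{lem:dense_range} and projects them onto the constraint via $\max\{v_n,\zeta-\widehat{u}_0\}$. The initial condition then survives for free, because $(\zeta-\widehat{u}_0)(0)=\zeta(0)-u_0\le 0=v_n(0)$. The cost is twofold: the Sobolev chain rule in time is needed to keep the truncated functions in $\mathcal{W}(I)$, which is where $\zeta\in H^1(I;H)$ enters; and the nonlinear map $v\mapsto\max\{v,g\}$ must be continuous on $L^2(I;V)$ and on $H$.

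\textbf{Your route.} You shift by $\zeta$ alone and assemble nonnegative approximants from linear, positivity-preserving operations. So you need neither continuity of a truncation map on $L^2(I;V)$ nor a chain rule in time. The price is that the initial mismatch $u_0-\zeta(0)$ lies only in $H$. This forces an extra ingredient: a nonnegative lift with small $L^2(I;V)$-norm. You supply it through the heat semigroup and the weak maximum principle, which is valid for the mixed boundary conditions in $V$ since $v\mapsto v^+$ maps $V$ into itself. A by-product is that your route uses only $\zeta\in\mathcal{W}(I)$ and $\zeta(t_{\mathtt{fin}})\in V$, so the hypothesis $\zeta\in H^1(I;H)$ becomes dispensable.

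\textbf{Points to state precisely when writing it out.} Let $S(t)$ denote the heat semigroup and $\tilde u_0\coloneqq u_0-\zeta(0)$.
\begin{itemize}
\item The smallness $\|S(\cdot)\tilde u_0\|_{L^2(0,\delta;V)}\to 0$ as $\delta\to 0$ holds for fixed $\tilde u_0$, by absolute continuity of $\int_0^\infty\|\nabla S(t)\tilde u_0\|_\Omega^2\,\mathrm{d}t\le\frac12\|\tilde u_0\|_H^2$. It does not hold with a uniform vanishing factor times $\|\tilde u_0\|_H$, as your wording suggests.
\item The semigroup term must be multiplied by a Lipschitz cut-off $\chi_n$ with $\chi_n(0)=1$ and support in $[0,\delta_n]$, so that $v_n(t_{\mathtt{fin}})=h$ exactly. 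The extra derivative term $\chi_n'S(\cdot)\tilde u_0\in L^\infty(I;H)\subseteq L^2(I;V^*)$ is harmless.
\item Your ``$\le$'' step tacitly uses $v(t_{\mathtt{fin}})\in\operatorname{cl}_HK(t_{\mathtt{fin}})$ for every admissible $v$. This is exactly your closing argument applied to $v$ in place of $\lambda$, and should be said at that point.
\end{itemize}
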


Once the conjugation formula of
Lemma~\ref{lem:convex_conjugation_obstacle} has been established,
Assumption~\ref{ass:sufficient_for_conjugation}~is~no~longer~needed
in the present application. Indeed, formula \eqref{lem:convex_conjugation_obstacle.0} yields the dual\
functional below in the form of~\eqref{eq:dual}, while the corresponding
optimality inclusions and strong duality relation are verified directly below.
As~a~result, the arguments underlying Lemma~\ref{lem:bregman_representation_dual_error_measure}, Theorem~\ref{thm:dual_gap_identity}, and
Lemma~\ref{lem:representation_of_dual_gap_estimator} carry over~to~the~present application, since their proofs only
use formula \eqref{lem:convex_conjugation_obstacle.0}, the corresponding optimality inclusions,
and strong duality. The primal gap identity of Theorem~\ref{thm:primal_gap_identity} does not
require Assumption~\ref{ass:sufficient_for_conjugation}~in~the~first~place.

\begin{proof}[Proof (of Lemma \ref{lem:convex_conjugation_obstacle}).]
	Let $y\in L^2(I;Y^*)$ and $\lambda\in \mathcal{W}(I)$  
	be fixed, but arbitrary. Then, denoting by $\widehat{u}_0\in L^2(I;V)\cap H^1(I;H)$ a trace lift of the initial datum $u_0\hspace{-0.1em}\in\hspace{-0.1em} K(0)$, \textit{i.e.}, there holds $\widehat{u}_0(0)\hspace{-0.1em}=\hspace{-0.1em}u_0$ in $H$, 
	using the integration-by-parts formula in time \eqref{subsubsec:function_spaces_unsteady.3}, we find that
	\begin{align}\label{lem:convex_conjugation_obstacle.1} 
		\begin{aligned}
			\mathcal{F}^*(-\mathcal{L}^*(y,\lambda)) &=\sup_{v\in\mathcal{W}(I)}
			\bigl\{
				-\langle y,\nabla v\rangle_{L^2(I;Y)}
				+
				\langle \partial_t v,\lambda\rangle_{L^2(I;V)}
				\\[-2.5mm]&\qquad\qquad\qquad-
				I_F(v)
				-
				\tfrac{1}{2}\|v(t_{\mathtt{fin}})\|_H^2-\chi_{\{u_0\}}(v(0))
				\bigr\}
			\\&=\sup_{\substack{v\in\mathcal{W}_0(I)\\ v(t)\in (K-\widehat{u}_0)(t)\text{ for a.e.\ }t\in I}}
			\bigl\{
			\langle f-L^*y-\partial_t \lambda,v+\widehat{u}_0\rangle_{L^2(I;V)}\\[-5mm]&\qquad\qquad\qquad\qquad\qquad\qquad-
			\tfrac{1}{2}\|(v+\widehat{u}_0)(t_{\mathtt{fin}})\|_H^2 
			\\&\qquad\qquad\qquad\qquad\qquad\qquad+(\lambda(t_{\mathtt{fin}}), (v+\widehat{u}_0)(t_{\mathtt{fin}}))_H-(\lambda(0),u_0)_H
			\bigr\}\,.
        \end{aligned}
	\end{align}
	Next, we verify a constrained version of the density result from
	Lemma~\ref{lem:dense_range}. Namely, we claim that
	\begin{align}\label{lem:convex_conjugation_obstacle.2}
		\begin{aligned}
			&\operatorname{cl}_{L^2(I;V)\times H}
			\big\{
			(v,v(t_{\mathtt{fin}}))
			\mid
			v\in\mathcal{W}_0(I),\
			v(t)\in(K-\widehat{u}_0)(t)
			\text{ for a.e.\ }t\in I
			\big\}
			\\
			&\qquad=
			\big\{
			v\in L^2(I;V)
			\mid
			v(t)\in(K-\widehat{u}_0)(t)
			\text{ for a.e.\ }t\in I
			\big\}
			\times
			\operatorname{cl}_H
			(K-\widehat{u}_0)(t_{\mathtt{fin}})\,.
		\end{aligned}		
	\end{align}

	$\bullet$ \emph{The inclusion ``$\subseteq$'' in \eqref{lem:convex_conjugation_obstacle.2}.} 
	The inclusion ``$\subseteq$'' in
	\eqref{lem:convex_conjugation_obstacle.2} is evident. In fact, if
	$v\in\mathcal{W}_0(I)$~satisfies $v(t)\in (K-\widehat{u}_0)(t)$ (\textit{i.e.},
	$v(t)\ge(\zeta-\widehat{u}_0)(t)$ a.e.\ in $\Omega$) for a.e.\ $t\in I$, then
	\begin{align*}
		v-(\zeta-\widehat{u}_0)
		\in\mathcal{W}(I)
		\hookrightarrow C^0(\overline I;H)\,.
	\end{align*}
	The closedness of the positive cone in $H$, therefore, implies that $v(t_{\mathtt{fin}})
	\in\operatorname{cl}_H(K-\widehat{u}_0)(t_{\mathtt{fin}})$.\pagebreak
	
	$\bullet$ \emph{The inclusion ``$\supseteq$'' in \eqref{lem:convex_conjugation_obstacle.2}.} Let $v\in L^2(I;V)$ with $v(t)\in (K-\widehat{u}_0)(t)$ for a.e.\ $t\in I$ and let $h\in \operatorname{cl}_H(K-\widehat{u}_0)(t_{\mathtt{fin}})$ be fixed, but arbitrary. 
	By Lemma~\ref{lem:dense_range}, there exists a sequence
	$\{v_n\}_{n\in\mathbb{N}}\subseteq\mathcal{W}_0(I)$ such that
	\begin{subequations} \label{lem:convex_conjugation_obstacle.4}
	\begin{alignat}{3}\label{lem:convex_conjugation_obstacle.4.1}
		v_n&\to v
		&&\quad\text{ in }L^2(I;V)&&\quad(n\to \infty)\,,
		\\\label{lem:convex_conjugation_obstacle.4.2}
		v_n(t_{\mathtt{fin}})&\to h
		&&\quad\text{ in }H&&\quad(n\to \infty)\,.
	\end{alignat}
	\end{subequations}
    In view of the construction in the proof of
	Lemma~\ref{lem:dense_range}, we may assume $v_n\in W^{1,\infty}(I;V)$ for all $n\in \mathbb{N}$. Then, if  we 
	define the sequence $\{\widetilde{v}_n\}_{n\in \mathbb{N}}
	\coloneqq
	\{\max\{v_n,\zeta-\widehat{u}_0\}\}_{n\in \mathbb{N}}
	=
	\{\zeta-\widehat{u}_0+
	(v_n-(\zeta-\widehat{u}_0))_+\}_{n\in \mathbb{N}}$,\linebreak by the assumption  $\zeta\in L^2(I;V)\cap H^1(I;H)$ and the Sobolev  chain rule for the positive-part~mapping, we have that $\widetilde{v}_n
	\in
	L^2(I;V)\cap H^1(I;H)
	\subseteq\mathcal{W}(I)$ for all $n\in \mathbb{N}$.
	Moreover, since $u_0\ge\zeta(0)$ a.e.\ in $\Omega$, we have that $\widetilde{v}_n(0)
	=
	\max\{0,\zeta(0)-u_0\}
	=
	0$ for all $n\in \mathbb{N}$, 
	so that $\widetilde{v}_n\in\mathcal{W}_0(I)$, and, by construction,~$\widetilde{v}_n(t)
	\in(K-\widehat{u}_0)(t)$ for a.e.\ $t\in I$. Since the positive-part mappings $(v\mapsto \max\{v,\zeta-\widehat{u}_0\})\colon L^2(I;V)\to L^2(I;V)$ and $(h\mapsto \max\{h,(\zeta-\widehat{u}_0)(t_{\mathtt{fin}})\})\colon H\to H$ are continuous, from \eqref{lem:convex_conjugation_obstacle.4.1} together with
	$v(t)\ge(\zeta-\widehat{u}_0)(t)$ a.e.\ in $\Omega$ for a.e.\ $t\in I$ (as $v(t)\in (K-\widehat{u}_0)(t)$ for a.e.\ $t\in I$) and \eqref{lem:convex_conjugation_obstacle.4.2} together with
	$h\ge(\zeta-\widehat{u}_0)(t_{\mathtt{fin}})$ a.e.\ in $\Omega$ (as $h\in \operatorname{cl}_H(K-\widehat{u}_0)(t_{\mathtt{fin}})$), we obtain
	\begin{alignat*}{3}
		\widetilde{v}_n=\max\{v_n,\zeta-\widehat{u}_0\}
	&	\to
		\max\{v,\zeta-\widehat{u}_0\}
		=
		v
	&&	\quad\text{in }L^2(I;V)&&\quad(n\to \infty)\,,\\
		\widetilde{v}_n(t_{\mathtt{fin}})
		=
		\max\{
		v_n(t_{\mathtt{fin}}),
		(\zeta-\widehat{u}_0)(t_{\mathtt{fin}})
		\}
	&	\to
		\max\{
		h,
		(\zeta-\widehat{u}_0)(t_{\mathtt{fin}})
		\}
		=
		h
		&&\quad\text{in }H&&\quad(n\to \infty)\,,
	\end{alignat*} 
	which proves \eqref{lem:convex_conjugation_obstacle.2}.
	
	Due to \eqref{lem:convex_conjugation_obstacle.2},
	from \eqref{lem:convex_conjugation_obstacle.1}, we infer that
	\begin{align*}
			\mathcal{F}^*(-\mathcal{L}^*(y,\lambda))&=\sup_{\substack{v\in L^2(I;V)\\ v(t)\in (K-\widehat{u}_0)(t)\text{ for a.e.\ }t\in I}}
			\bigl\{
			\langle f-L^*y-\partial_t \lambda,v+\widehat{u}_0\rangle_{L^2(I;V)}\bigr\}
			\\&\quad+\sup_{h\in \operatorname{cl}_H (K-\widehat{u}_0)(t_{\mathtt{fin}})}
			\bigl\{(\lambda(t_{\mathtt{fin}}), h+\widehat{u}_0(t_{\mathtt{fin}}))_H-
			\tfrac{1}{2}\|h+\widehat{u}_0(t_{\mathtt{fin}})\|_H^2-(\lambda(0),u_0)_H
			\bigr\}
			\\&=\sup_{v\in L^2(I;V)}
			\bigl\{
			\langle f-L^*y-\partial_t \lambda,v\rangle_{L^2(I;V)} 
			-
			I_{\chi_K}(v)\bigr\}
			\\&\quad+\sup_{h\in \operatorname{cl} K(t_{\mathtt{fin}})}
			\bigl\{(\lambda(t_{\mathtt{fin}}), h)_H-
			\tfrac{1}{2}\|h\|_H^2
			\bigr\}
			\\&=
			(I_{\chi_K})^*(f-L^*y-\partial_t \lambda)\\&\quad+
			\tfrac{1}{2}\|\lambda(t_{\mathtt{fin}})\|_H^2-\tfrac{1}{2}\operatorname{dist}_H^2(\lambda(t_{\mathtt{fin}}),\operatorname{cl}_HK(t_{\mathtt{fin}}))-(\lambda(0),u_0)_H\,,
        \hspace{-5mm}\\[-6mm]\notag
	\end{align*} 
	which is the claimed convex conjugation formula \eqref{lem:convex_conjugation_obstacle.0}.
	
	If $\lambda(t)\in K(t)$ for a.e.\ $t\in I$,  due to $\lambda\in \mathcal{W}(I)\subseteq C^0(\overline{I};H)$, we have that ${\lambda(t_{\mathtt{fin}})\in \operatorname{cl}_HK(t_{\mathtt{fin}})}$; hence, we conclude that ${\operatorname{dist}_H^2(\lambda(t_{\mathtt{fin}}),\operatorname{cl}_HK(t_{\mathtt{fin}}))=0}$.
\end{proof}

Consequently, although Assumption~\ref{ass:sufficient_for_conjugation} is not
satisfied for  the  functional
$I_{F}\colon L^2(I;V)\to \mathbb{R}\cup\{+\infty\}$, Lemma~\ref{lem:convex_conjugation_obstacle}
provides the required conjugation formula at least for all $(y,\lambda)\in L^2(I;Y^*)\times\mathcal{W}(I)$.~Hence, the corresponding (possibly restricted) unsteady dual energy functional
${\mathcal{D}\colon \hspace{-0.15em}L^2(I;Y^*)\hspace{-0.25em}\times\hspace{-0.25em}\mathcal{W}(I)\hspace{-0.15em}\to\hspace{-0.15em}
\mathbb{R}\hspace{-0.15em}\cup\hspace{-0.15em}\{-\infty\}}$, for every
$(y,\lambda)\in L^2(I;Y^*)\times\mathcal{W}(I)$, is given via\vspace{-0.5mm}
\begin{align}\label{subsec:obstacle_problem.12} 
    \begin{aligned} 
    \mathcal{D}(y,\lambda)
    &\coloneqq
    -\tfrac{1}{2}\|y\|_Q^2
    -
    I_{\chi_K}^*(f-\partial_t\lambda-L^*y)
    -
    \tfrac{1}{2}\|\nabla\lambda\|_Q^2
    -
    I_{\chi_K}(\lambda)
    +
    \langle f,\lambda\rangle_{L^2(I;V)}
    \\&\quad-
    \tfrac{1}{2}\|\lambda(t_{\mathtt{fin}})\|_H^2
    +
    (\lambda(0),u_0)_H\,,
     \end{aligned}\\[-6mm]\notag
\end{align} 
where, 
for every
$v^*\in L^2(I;V^*)$, we have that 
\begin{align*}
   I_{\chi_K^*}(v^*)
    =
    \begin{cases}
        \langle v^*,\zeta\rangle_{L^2(I;V)}
        &
        \text{ if }\langle v^*(t),v\rangle_{V}\leq 0\text{ for all }v\in V\text{ with }v\ge 0\text{ a.e.\ in }\Omega\text{ for a.e.\ }t\in I \,,
        \\
        +\infty
        &
        \text{ else}\,.
    \end{cases}
\end{align*}
Note that the term $-I_{\chi_K}(\lambda)$, which does not result from $\mathcal{F}^*(-\mathcal{L}^*(y,\lambda))$, enforces $\lambda(t)\hspace{-0.1em}\in\hspace{-0.1em} K(t)$~for~a.e.~${t\hspace{-0.1em}\in\hspace{-0.1em} I}$, so \hspace{-0.1mm}that, \hspace{-0.1mm}by \hspace{-0.1mm}Lemma~\hspace{-0.1mm}\ref{lem:convex_conjugation_obstacle}, \hspace{-0.1mm}for \hspace{-0.1mm}the \hspace{-0.1mm}terminal \hspace{-0.1mm}distance \hspace{-0.1mm}term \hspace{-0.1mm}in \hspace{-0.1mm}\eqref{lem:convex_conjugation_obstacle.0}, \hspace{-0.1mm}there~\hspace{-0.1mm}holds~\hspace{-0.1mm}${\operatorname{dist}_H^2(\lambda(t_{\mathtt{fin}}),\operatorname{cl}_H\!K(t_{\mathtt{fin}}))\hspace{-0.1em}=\hspace{-0.1em}0}$.\pagebreak

\noindent Moreover, if $u\in\mathcal{W}(I)$ solves the unsteady obstacle problem
\eqref{subsec:obstacle_problem.8}, then the corresponding dual solution
is given by $(z,\mu)=(\nabla u,u)$. More precisely, there exists a
Lagrange multiplier $\Lambda\in L^2(I;V^*)$ such that
\begin{subequations}\label{subsec:obstacle_problem.opt}
\begin{alignat}{2}
    z&=\nabla u
    &&\quad\text{a.e.\ in }Q\,,
    \label{subsec:obstacle_problem.opt.1}
    \\
    \Lambda
    \coloneqq f-\partial_tu-L^*z
    &\in N_{K(t)}(u(t))
    &&\quad\text{for a.e.\ }t\in I\,,
    \label{subsec:obstacle_problem.opt.2}
    \\
    \mu&=u
    &&\quad\text{in }\mathcal{W}(I)\,.
    \label{subsec:obstacle_problem.opt.3}
\end{alignat}
\end{subequations}Indeed, the normal-cone relation
\eqref{subsec:obstacle_problem.opt.2} is the equality condition in the
Fenchel--Young inequality for $I_{\chi_K}$
(\textit{cf}.\ \cite[Prop.\ 51.2]{Zeidler1985III}). Hence, inserting
\eqref{subsec:obstacle_problem.opt} into
\eqref{subsec:obstacle_problem.12} and using the integration-by-parts
formula in time \eqref{subsubsec:function_spaces_unsteady.3}, we obtain
\begin{align*}
    \mathcal{D}(z,\mu)
    &=
    \tfrac{1}{2}\|u_0\|_H^2
    =
    \mathcal{E}(u)\,,
\end{align*}
so that, by weak duality, $(z,\mu)=(\nabla u,u)$  is a maximizer of  \eqref{subsec:obstacle_problem.12}  and a
strong duality~relation~applies.

\hspace{-1mm}Next, \hspace{-0.1mm}we \hspace{-0.1mm}record \hspace{-0.1mm}the \hspace{-0.1mm}corresponding \hspace{-0.1mm}primal \hspace{-0.1mm}and \hspace{-0.1mm}dual \hspace{-0.1mm}gap \hspace{-0.1mm}identities
\hspace{-0.1mm}(\textit{cf}.\ \hspace{-0.1mm}Theorems~\ref{thm:primal_gap_identity} \hspace{-0.1mm}and
\hspace{-0.1mm}\ref{thm:dual_gap_identity}).~\hspace{-0.1mm}To~\hspace{-0.1mm}this~\hspace{-0.1mm}end, we note that for every $v,w\in\mathcal{W}(I)$ with
$w(t)\in K(t)$ for a.e.\ $t\in I$ and every
$y\in L^2(I;Y^*)$, due to $I_G=I_{G^*}=\tfrac{1}{2}\|\cdot\|_Q^2$, $I_F=I_{\chi_K}-\langle f,\cdot\rangle_{L^2(I;V)}$, $I_{F^*}
    =
    I_{\chi_K^*}(f+\cdot)$, and a binomial formula, 
there holds
\begin{align}\label{subsec:obstacle_problem.fenchel_gap}
\begin{aligned}
    &I_{G^*}(y)
    -
    \langle y,\nabla w\rangle_{L^2(I;Y)}
    +
    I_G(\nabla w)
    \\
    &\quad+
    I_{F^*}(-L^*y-\partial_t v)
    -
    \langle-L^*y-\partial_t v,w\rangle_{L^2(I;V)}
    +
    I_F(w)
    \\
    &=
    \tfrac{1}{2}\|y-\nabla w\|_Q^2
    +
    I_{\chi_K^*}(f-\partial_t v-L^*y)
    -
    \langle
        f-\partial_t v-L^*y,
        w
    \rangle_{L^2(I;V)}\,.
\end{aligned}
\end{align}

First, using repeatedly the elementary identity \eqref{subsec:obstacle_problem.fenchel_gap}, we derive the corresponding primal~gap~\mbox{identity}.

\begin{lemma}[Primal gap identity for the unsteady obstacle problem]\label{lem:primal_gap_identity_obstacle_problem}
For every 
$v\in \mathcal{W}(I)$ with $v(0)=u_0$ in $H$ and $v(t)\in K(t)$ for a.e.\ $t\in I$, there holds
\begin{align}\label{lem:primal_gap_identity_obstacle_problem.0}
\begin{aligned} 
   &\tfrac{1}{2}\|\nabla (v-u)\|_Q^2
    +
    \langle
        -\Lambda,
        v-u
    \rangle_{L^2(I;V)}+
    \tfrac{1}{2}\|(v-u)(t_{\mathtt{fin}})\|_H^2 \\
    &\quad+
    \inf_{ y\in L^2(I;Y^*)}
    \bigl\{
            \tfrac{1}{2}\|y-\nabla u\|_Q^2
    +
    I_{\chi_K^*}(f-\partial_t v-L^*y)
    -
    \langle
        f-\partial_t v-L^*y,
        u
    \rangle_{L^2(I;V)}
    \bigr\}
    \\&=\inf_{y\in L^2(I;Y^*)}
\bigl\{
    \tfrac{1}{2}\|y-\nabla v\|_Q^2
    +
    I_{\chi_K}^*(f-\partial_t v-L^*y) -
    \langle f-\partial_t v-L^*y,v\rangle_{L^2(I;V)}
\bigr\}
   \\&=\inf_{\eta\in L^2(I;V^*)}{\bigl\{
        \tfrac{1}{2}\|\partial_t v+(-\Delta)v+\eta-f\|_{L^2(I;V^*)}^2
        +I_{\chi_K}^*(\eta)+
        \langle- \eta,v\rangle_{L^2(I;V)}\bigr\}}\,.
        \end{aligned}
\end{align}
\end{lemma}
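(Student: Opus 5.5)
We follow the same template as in Lemmas~\ref{lem:primal_gap_identity_heat_equation}--\ref{lem:primal_gap_identity_bingham_flow_pipe}. We compute the left-hand side via the Bregman representation of Lemma~\ref{lem:bregman_representation_primal_error_measure} and the right-hand side via Lemma~\ref{lem:representation_of_primal_gap_estimator}, and then equate the two by the primal gap identity of Theorem~\ref{thm:primal_gap_identity}. That theorem needs only Assumption~\ref{ass:energy_densities} and the BEN principle. The representations \eqref{rem:alternative_representation_bregman_divergence.2}, \eqref{rem:alternative_representation_bregman_divergence.6} and \eqref{lem:representation_of_primal_gap_estimator.0} require the infimal convolution formula \eqref{eq:E_prime_steady}. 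Its qualification holds here because $I_G\circ\nabla=\tfrac12\|\nabla\cdot\|_Q^2$ is continuous everywhere on $L^2(I;V)$ and is finite at any $u^\sharp$ with $u^\sharp(t)\in K(t)$ (for instance $u^\sharp=\zeta$).

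\emph{Optimal strong convexity measure.} Fix $v$ admissible. We treat the first Bregman term $\mathcal{D}_{I_E}^{-\partial_tu}(v,u)$ through \eqref{rem:alternative_representation_bregman_divergence.4}, which applies since $\mu=u$ and since \eqref{subsec:obstacle_problem.opt} gives $z=\nabla u\in\partial I_G(\nabla u)$ and $-\partial_tu-L^*z\in\partial I_F(u)$; the latter is the normal-cone relation \eqref{subsec:obstacle_problem.opt.2}, rewritten as $f-\partial_tu-L^*z=\Lambda\in\partial I_{\chi_K}(u)$. We then compute directly with the difference form $\mathcal{D}_G^z(\nabla v,\nabla u)+\mathcal{D}_F^{-\partial_tu-L^*z}(v,u)$, rather than the conjugate form. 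The first part equals $\tfrac12\|\nabla(v-u)\|_Q^2$ by a binomial formula. The second part equals $I_{\chi_K}(v)-I_{\chi_K}(u)-\langle\Lambda,v-u\rangle_{L^2(I;V)}=\langle-\Lambda,v-u\rangle_{L^2(I;V)}$, using $v(t),u(t)\in K(t)$.

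The second Bregman term $\mathcal{D}_{I_{E^*}}^u(-\partial_tv,-\partial_tu)$ follows from \eqref{rem:alternative_representation_bregman_divergence.6} together with the elementary identity \eqref{subsec:obstacle_problem.fenchel_gap}, applied with $(v,w,y)=(v,u,y)$ (admissible since $u(t)\in K(t)$). This yields exactly the infimum over $y\in L^2(I;Y^*)$ appearing on the left-hand side. Adding the terminal term $\tfrac12\|(v-u)(t_{\mathtt{fin}})\|_H^2$ gives the left-hand side of \eqref{lem:primal_gap_identity_obstacle_problem.0}.

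\emph{Primal gap estimator.} Lemma~\ref{lem:representation_of_primal_gap_estimator} combined with \eqref{subsec:obstacle_problem.fenchel_gap}, applied with $(v,w,y)=(v,v,y)$ (admissible since $v(t)\in K(t)$), gives the middle expression directly. To pass to the last expression, we substitute $\eta\coloneqq f-\partial_tv-L^*y\in L^2(I;V^*)$. For fixed $\eta$ the infimum over $y$ subject to $L^*y=f-\partial_tv-\eta$ is computed by $\inf_y\tfrac12\|y-\nabla v\|_Q^2=\tfrac12\|f-\partial_tv-\eta-L^*\nabla v\|_{L^2(I;V^*)}^2$. This uses the minimal-norm representation of the $V^*$-norm recalled after \eqref{subsec:heat_equation.3}, applied pointwise in time after shifting $y\mapsto y-\nabla v$, and $L^*\nabla v=(-\Delta)v$. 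Finally $\langle f-\partial_tv-L^*y,v\rangle=\langle\eta,v\rangle$, which yields the stated third line with the sign $\langle-\eta,v\rangle$.

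The main obstacle is the justification of the alternative Bregman representations in the absence of Assumption~\ref{ass:sufficient_for_conjugation}: one must ensure that $\partial I_{E^*}=(\partial I_E)^{-1}$ and \eqref{eq:E_prime_steady} hold. This holds because $I_G\circ\nabla$ is continuous, and it should be stated explicitly. A further subtlety is that Remark~\ref{rem:alternative_representation_bregman_divergence} records \eqref{rem:alternative_representation_bregman_divergence.4} under $\mu=u$, which Lemma~\ref{lem:bregman_representation_dual_error_measure} normally supplies through Theorem~\ref{thm:duality}. Here $\mu=u$ is instead provided by the direct verification \eqref{subsec:obstacle_problem.opt}, so we rely only on the subdifferential inclusions rather than on the theorem. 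The remaining steps are binomial formulas.
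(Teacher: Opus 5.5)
Your proposal is correct and follows the paper's proof. You evaluate $\rho_{\mathcal E}^2(v)$ through \eqref{rem:alternative_representation_bregman_divergence.4}, \eqref{rem:alternative_representation_bregman_divergence.6} and \eqref{subsec:obstacle_problem.fenchel_gap}, evaluate $\eta_{\mathcal E}^2(v)$ through Lemma~\ref{lem:representation_of_primal_gap_estimator} with \eqref{subsec:obstacle_problem.fenchel_gap} at $(v,v,y)$, and close with Theorem~\ref{thm:primal_gap_identity}, exactly as the paper does. The only differences are minor: for $\mathcal{D}_{I_E}^{-\partial_t u}(v,u)$ you use the difference form $\mathcal{D}_G^z+\mathcal{D}_F^{-\partial_tu-L^*z}$, which needs only algebra and $u(t),v(t)\in K(t)$, whereas the paper uses the conjugate form with \eqref{subsec:obstacle_problem.fenchel_gap} at $(u,v,z)$ and the Fenchel equality $I_{\chi_K^*}(\Lambda)=\langle\Lambda,u\rangle_{L^2(I;V)}$; and you spell out the minimal-norm argument behind the final $\eta$-reformulation, which the paper leaves implicit.
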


 \begin{proof} 
    Let $v\in \mathcal{W}(I)$ with $v(0)=u_0$ in $H$ and $v(t)\in K(t)$ for a.e.\ $t\in I$ be fixed, but arbitrary.

\emph{$\bullet$ Optimal strong convexity measure.} Due to the alternative Bregman divergence representation
\eqref{rem:alternative_representation_bregman_divergence.4} (which is applicable since $\mu = u$), \eqref{subsec:obstacle_problem.fenchel_gap} (applied with $(v,w,y)=(u,v,z)$), and the optimality inclusions \eqref{subsec:obstacle_problem.opt.1}--\eqref{subsec:obstacle_problem.opt.2}, we have that
\begin{align*}
\begin{aligned} 
    \mathcal{D}_{I_E}^{-\partial_t u}(v,u) 
    &= \tfrac{1}{2}\|z-\nabla v\|_Q^2
    +
    I_{\chi_K^*}(f-\partial_t u-L^*z)
    -
    \langle
        f-\partial_t u-L^*z,
        v
    \rangle_{L^2(I;V)} 
    \\&=\tfrac{1}{2}\|\nabla (v-u)\|_Q^2
    +
    \langle
        -\Lambda,
        v-u
    \rangle_{L^2(I;V)} \,.
\end{aligned}  
\end{align*}
Moreover, due to the alternative Bregman divergence representation
\eqref{rem:alternative_representation_bregman_divergence.6} (which is applicable since  $I_G\circ\nabla$ is continuous and $I_F$ is proper) and \eqref{subsec:obstacle_problem.fenchel_gap} (applied with $(v,w,y)=(v,u,y)$), we have that
\begin{align*}
    \mathcal{D}_{I_{E^*}}^{u}
    (-\partial_t v,-\partial_t u) 
    &=
    \inf_{ y\in L^2(I;Y^*)}
    \bigl\{
            \tfrac{1}{2}\|y-\nabla u\|_Q^2
    +
    I_{\chi_K^*}(f-\partial_t v-L^*y)
    -
    \langle
        f-\partial_t v-L^*y,
        u
    \rangle_{L^2(I;V)}
    \bigr\}\,.
\end{align*}
Therefore, the Bregman-type representation of the primal error measure
\eqref{lem:bregman_representation_primal_error_measure.0} yields
\begin{align}\label{lem:primal_gap_identity_obstacle_problem.2}
\begin{aligned} 
    \rho_{\mathcal{E}}^2(v)
    &=
    \tfrac{1}{2}\|\nabla (v-u)\|_Q^2
    +
    \langle
        -\Lambda,
        v-u
    \rangle_{L^2(I;V)}+
    \tfrac{1}{2}\|(v-u)(t_{\mathtt{fin}})\|_H^2 \\
    &\quad+
    \inf_{ y\in L^2(I;Y^*)}
    \bigl\{
            \tfrac{1}{2}\|y-\nabla u\|_Q^2
    +
    I_{\chi_K^*}(f-\partial_t v-L^*y)
    -
    \langle
        f-\partial_t v-L^*y,
        u
    \rangle_{L^2(I;V)}
    \bigr\}\,.
    \end{aligned}
\end{align}\pagebreak

\emph{$\bullet$ Primal gap estimator.}
Using the representation
\eqref{lem:representation_of_primal_gap_estimator.0} of the primal gap
estimator \eqref{def:primal_gap_estimator} (which is applicable since $I_G\circ\nabla$ is continuous and $I_F$ is proper) and \eqref{subsec:obstacle_problem.fenchel_gap} (applied with $(v,w,y)=(v,v,y)$),  we find that
\begin{align}\label{lem:primal_gap_identity_obstacle_problem.3}
    \begin{aligned} 
    \eta_{\mathcal{E}}^2(v)
    &=
    \inf_{y\in L^2(I;Y^*)}
\bigl\{
    \tfrac{1}{2}\|y-\nabla v\|_Q^2
    +
    I_{\chi_K}^*(f-\partial_t v-L^*y) -
    \langle f-\partial_t v-L^*y,v\rangle_{L^2(I;V)}
\bigr\}
   \\&=\inf_{\eta\in L^2(I;V^*)}{\bigl\{
        \tfrac{1}{2}\|\partial_t v+(-\Delta)v+\eta-f\|_{L^2(I;V^*)}^2
        +I_{\chi_K}^*(\eta)+
        \langle- \eta,v\rangle_{L^2(I;V)}\bigr\}}\,.
    \end{aligned}
\end{align}

Finally, using the representations
\eqref{lem:primal_gap_identity_obstacle_problem.2} and
\eqref{lem:primal_gap_identity_obstacle_problem.3} and the general primal
gap identity \eqref{thm:primal_gap_identity.0}, we arrive at
the claimed representation \eqref{lem:primal_gap_identity_obstacle_problem.0} of the primal gap identity for the unsteady~obstacle~problem.
\end{proof}

Next, we derive the corresponding dual~gap~identity.\enlargethispage{5mm}\vspace{-0.5mm}

\begin{lemma}[Dual gap identity for the unsteady obstacle problem]\label{lem:dual_gap_identity_obstacle_problem}
For every $(y,\lambda,\eta)\in L^2(I;Y^*)\times \mathcal{W}(I)\times L^2(I;V^*)$
with $\lambda(t)\in K(t)$ for a.e.\ $t\in I$,
$I_{\chi_K^*}(\eta)<+\infty$, and
\begin{align}\label{lem:dual_gap_identity_obstacle_problem.-1}
    \operatorname{div}y=\partial_t \lambda+\eta-f\quad\text{ in }L^2(I;V^*)\,,
\end{align}
there holds
\begin{align}\label{lem:dual_gap_identity_obstacle_problem.0}
\begin{aligned}
    &\tfrac{1}{2}\|y-z\|_{Q}^2
    +
    \tfrac{1}{2}\|\nabla(\lambda-u)\|_{Q}^2
    +
    \tfrac{1}{2}\|(\lambda-u)(t_{\mathtt{fin}})\|_{H}^2
    +
    \langle -\eta,u-\zeta\rangle_{L^2(I;V)}
    +
    \langle -\Lambda,\lambda-\zeta\rangle_{L^2(I;V)}
    \\
    &\quad =
    \tfrac{1}{2}\|y-\nabla \lambda\|_{Q}^2
    +
    \langle -\eta,\lambda-\zeta\rangle_{L^2(I;V)}
    +
    \tfrac{1}{2}\|\lambda(0)-u_0\|_{H}^2\,.
\end{aligned}
\end{align}
\end{lemma}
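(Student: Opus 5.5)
The plan is to follow the template of the dual gap identities for the heat equation (Lemma~\ref{lem:dual_gap_identity_heat_equation}). We evaluate the dual optimal strong convexity measure $\rho_{-\mathcal{D}}^2(y,\lambda)$ through its Bregman-type representation (Lemma~\ref{lem:bregman_representation_dual_error_measure}). We evaluate the dual gap estimator $\eta_{-\mathcal{D}}^2(y,\lambda)$ through Lemma~\ref{lem:representation_of_dual_gap_estimator}. We then equate the two via Theorem~\ref{thm:dual_gap_identity}.

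These three results remain valid here. As explained after Lemma~\ref{lem:convex_conjugation_obstacle}, their proofs use only the conjugation formula \eqref{lem:convex_conjugation_obstacle.0}, the optimality relations \eqref{subsec:obstacle_problem.opt} and strong duality $\mathcal{D}(z,\mu)=\tfrac12\|u_0\|_H^2$. The distance term in \eqref{lem:convex_conjugation_obstacle.0} vanishes because $\lambda(t)\in K(t)$.

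The key preliminary computation concerns $I_{\chi_K^*}$. By \eqref{lem:dual_gap_identity_obstacle_problem.-1} we have $f-\partial_t\lambda-L^*y=\eta$. Since $I_{\chi_K^*}(\eta)<+\infty$, the explicit formula for $I_{\chi_K^*}$ gives $I_{\chi_K^*}(\eta)=\langle\eta,\zeta\rangle_{L^2(I;V)}$. Likewise, \eqref{subsec:obstacle_problem.opt.2} gives $\Lambda\in N_{K(t)}(u(t))$, and the Fenchel--Young equality for $\chi_K$ yields $I_{\chi_K^*}(\Lambda)=\langle\Lambda,u\rangle_{L^2(I;V)}$. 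Moreover $\Lambda$ lies in the negative cone, so $I_{\chi_K^*}(\Lambda)=\langle\Lambda,\zeta\rangle_{L^2(I;V)}$, which means $\langle\Lambda,u-\zeta\rangle=0$.

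\emph{Estimator.} Insert $(v,w,y)=(\lambda,\lambda,y)$ into \eqref{subsec:obstacle_problem.fenchel_gap}. This is admissible since $\lambda(t)\in K(t)$, so $I_F(\lambda)=-\langle f,\lambda\rangle$. The result is
\[
\eta_{-\mathcal{D}}^2(y,\lambda)=\tfrac12\|y-\nabla\lambda\|_Q^2+\langle\eta,\zeta\rangle-\langle\eta,\lambda\rangle+\tfrac12\|\lambda(0)-u_0\|_H^2 .
\]
This is exactly the right-hand side of \eqref{lem:dual_gap_identity_obstacle_problem.0}.

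\emph{Error measure.} The four terms of \eqref{lem:bregman_representation_dual_error_measure.0} are handled as follows.
\begin{itemize}
\item[(a)] By \eqref{rem:alternative_representation_bregman_divergence.8}, $I_{G^*}=\tfrac12\|\cdot\|_Q^2$ and $z=\nabla u$, we get $\mathcal{D}_{I_{G^*}}^{\nabla u}(y,z)=\tfrac12\|y-z\|_Q^2$.
\item[(b)] By \eqref{rem:alternative_representation_bregman_divergence.10}, $I_{F^*}=I_{\chi_K^*}(f+\cdot)$ and $I_F(u)=-\langle f,u\rangle$, the $F^*$-divergence equals $I_{\chi_K^*}(\eta)-\langle\eta,u\rangle=\langle-\eta,u-\zeta\rangle$.
\item[(c)] By \eqref{rem:alternative_representation_bregman_divergence.4}, which applies since $\mu=u$, together with \eqref{subsec:obstacle_problem.fenchel_gap} at $(v,w,y)=(u,\lambda,z)$, we get $\tfrac12\|\nabla(\lambda-u)\|_Q^2+I_{\chi_K^*}(\Lambda)-\langle\Lambda,\lambda\rangle$. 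This equals $\tfrac12\|\nabla(\lambda-u)\|_Q^2+\langle-\Lambda,\lambda-\zeta\rangle$.
\item[(d)] The terminal term gives $\tfrac12\|(\lambda-u)(t_{\mathtt{fin}})\|_H^2$, using $\mu=u$.
\end{itemize}
Summing (a)--(d) yields the left-hand side of \eqref{lem:dual_gap_identity_obstacle_problem.0}.

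The main obstacle is rigor in the identification $I_{\chi_K^*}(\Lambda)=\langle\Lambda,\zeta\rangle=\langle\Lambda,u\rangle$. This requires checking that $\Lambda\in L^2(I;V^*)$ is nonpositive on nonnegative $V$-functions. For that I would test the normal-cone inequality with $\varphi=u(t)+w$, where $w\ge0$; this choice keeps $\varphi\in K(t)$ and gives the sign. Complementarity then follows from testing with $\varphi=\zeta(t)$ if $\zeta(t)\in V$. Otherwise I would instead read it off from Fenchel--Young equality combined with the explicit conjugate formula, which already encodes the value $\langle\cdot,\zeta\rangle$. A second point to verify is that Lemma~\ref{lem:representation_of_dual_gap_estimator}'s integration-by-parts computation goes through unchanged when $I_F$ contains $I_{\chi_K}$. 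It does, because $I_F$ enters only through its value at $\lambda$, which is finite.
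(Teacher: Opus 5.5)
Your proposal is correct and follows essentially the same route as the paper. The error measure is assembled from the same four contributions: \eqref{rem:alternative_representation_bregman_divergence.8}, \eqref{rem:alternative_representation_bregman_divergence.10}, \eqref{rem:alternative_representation_bregman_divergence.4} combined with \eqref{subsec:obstacle_problem.fenchel_gap} at $(u,\lambda,z)$, and the terminal term. The estimator comes from \eqref{subsec:obstacle_problem.fenchel_gap} at $(\lambda,\lambda,y)$, and Theorem~\ref{thm:dual_gap_identity} closes the argument. Your explicit complementarity step $\langle\Lambda,u-\zeta\rangle_{L^2(I;V)}=0$ supplies what the paper leaves implicit when it passes from $\langle-\Lambda,\lambda-u\rangle_{L^2(I;V)}$ in its computation of $\mathcal{D}_{I_E}^{-\partial_t u}(\lambda,u)$ to $\langle-\Lambda,\lambda-\zeta\rangle_{L^2(I;V)}$ in \eqref{lem:dual_gap_identity_obstacle_problem.1}.
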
 

\begin{proof}
Let $(y,\lambda,\eta)\in L^2(I;Y^*)\times\mathcal{W}(I)\times L^2(I;V^*)$
with $\lambda(t)\in K(t)$ for a.e.\ $t\in I$,
$I_{\chi_K^*}(\eta)<+\infty$, and \eqref{lem:dual_gap_identity_obstacle_problem.-1} be fixed, but arbitrary.

\emph{$\bullet$ Optimal strong convexity measure.}
Due to the alternative Bregman divergence representation
\eqref{rem:alternative_representation_bregman_divergence.8},
\eqref{subsec:obstacle_problem.opt.1}, and a binomial formula, we have
that
\begin{align*} 
    \mathcal{D}_{I_{G^*}}^{\nabla u}(y,z)
    &=
\tfrac{1}{2}\|y-\nabla u\|_Q^2
\\&=\tfrac{1}{2}\|y-z\|_Q^2\,,
\end{align*}
due to the alternative Bregman divergence representation
\eqref{rem:alternative_representation_bregman_divergence.10}, $I_{F}\hspace{-0.15em}=\hspace{-0.15em}I_{\chi_{K}}-\langle f,\cdot\rangle_{L^2(I;V)}$,~${I_{F^*}\hspace{-0.15em}=\hspace{-0.15em}I_{\chi_{K}^*}(f\hspace{-0.1em}+\hspace{-0.1em}\cdot)}$,
\eqref{lem:dual_gap_identity_obstacle_problem.-1}, and
\eqref{subsec:obstacle_problem.opt.2}, we have that
\begin{align*}
    \mathcal{D}_{I_{F^*}}^u
    (-\partial_t\lambda-L^*y,-\partial_tu-L^*z)
    &=I_{\chi_K^*}(f-\partial_t\lambda-L^*y)-\langle f-\partial_t\lambda-L^*y, u\rangle_{L^2(I;V)}
    \\&=\langle -\eta, u-\zeta\rangle_{L^2(I;V)}
    \,,
\end{align*}
and, due to the alternative Bregman divergence representation
\eqref{rem:alternative_representation_bregman_divergence.4} (which is
applicable since $\mu=u$ in $\mathcal{W}(I)$),
\eqref{subsec:obstacle_problem.fenchel_gap} (applied with
$(v,w,y)=(u,\lambda,z)$), and
\eqref{subsec:obstacle_problem.opt.1}--\eqref{subsec:obstacle_problem.opt.2}, we have that
\begin{align*}
\begin{aligned}
    \mathcal{D}_{I_E}^{-\partial_tu}(\lambda,u)
    &= \tfrac{1}{2}\|z-\nabla \lambda\|_Q^2
    +
    I_{\chi_K^*}(f-\partial_t u-L^*z)
    -
    \langle
        f-\partial_t u-L^*z,
        \lambda
    \rangle_{L^2(I;V)}
    \\
    &=
    \tfrac{1}{2}\|\nabla (\lambda-u)\|_Q^2 
    +
    \langle
        -\Lambda,
        \lambda-u
    \rangle_{L^2(I;V)}\,.
\end{aligned}
\end{align*}
Therefore, the Bregman-type representation of the dual error measure
\eqref{lem:bregman_representation_dual_error_measure.0} yields
\begin{align}\label{lem:dual_gap_identity_obstacle_problem.1}
\begin{aligned}
    \rho_{-\mathcal{D}}^2(y,\lambda)
    &=   \tfrac{1}{2}\|y-z\|_{Q}^2
    +
    \tfrac{1}{2}\|\nabla(\lambda-u)\|_{Q}^2
    +
    \tfrac{1}{2}\|(\lambda-u)(t_{\mathtt{fin}})\|_{H}^2
    \\&\quad+
    \langle -\eta,u-\zeta\rangle_{L^2(I;V)}
    +
    \langle -\Lambda,\lambda-\zeta\rangle_{L^2(I;V)}\,.
\end{aligned}
\end{align}

\emph{$\bullet$ Dual gap estimator.}
Using the representation \eqref{lem:representation_of_dual_gap_estimator.0} of the dual gap estimator
\eqref{def:dual_gap_estimator} and \eqref{subsec:obstacle_problem.fenchel_gap} (which is applicable due to \eqref{lem:dual_gap_identity_obstacle_problem.-1}),  we find that
\begin{align}\label{lem:dual_gap_identity_obstacle_problem.2}
\begin{aligned} 
    \eta_{-\mathcal{D}}^2(y,\lambda)
    &=
    \tfrac{1}{2}\|y-\nabla \lambda\|_Q^2
    \\&\quad+
    I_{\chi_K^*}(f-\partial_t \lambda-L^*y)
    -
    \langle
        f-\partial_t \lambda-L^*y,
        \lambda
    \rangle_{L^2(I;V)}
    +
    \tfrac{1}{2}\|\lambda(0)-u_0\|_H^2
    \\&=\tfrac{1}{2}\|y-\nabla \lambda\|_Q^2
    +
    \langle
        -\eta ,
        \lambda-\zeta
    \rangle_{L^2(I;V)}
    +
    \tfrac{1}{2}\|\lambda(0)-u_0\|_H^2\,.
\end{aligned}
\end{align}

Finally, using the representations
\eqref{lem:dual_gap_identity_obstacle_problem.1} and
\eqref{lem:dual_gap_identity_obstacle_problem.2} and the general dual gap
identity \eqref{thm:dual_gap_identity.0}, we arrive at the claimed representation 
\eqref{lem:dual_gap_identity_obstacle_problem.0} of the dual gap identity for the unsteady~obstacle~problem.
\end{proof}\newpage

 \subsection{The unsteady elasto-plastic torsion problem}
\label{subsec:unsteady_elasto_plastic_torsion}\enlargethispage{3.5mm}

\hspace{5mm}In this subsection, we consider the unsteady \emph{elasto-plastic torsion problem} (\textit{cf}.\ \cite{DuvautLions1972}, see also \cite{AntilBartelsKaltenbachKhandelwal2025,ChoulyHild2026,ChoulyGustafssonHild2026}), a classical model problem in structural mechanics originating in Prandtl's theory of plastic torsion (\textit{cf}.\ \cite{Prandtl1903}), which, in the present time-dependent setting, describes the evolution of the torsional response of a prismatic bar made of an elastic-perfectly plastic material under time-dependent loading and is formulated as a variational problem with a pointwise gradient constraint encoding the yield condition.

The present problem naturally leads to the endpoint case $p=\infty$.
Although Sections~\ref{sec:preliminaries}--\ref{sec:duality_based_a_posteriori_error_control} are formulated for $p\in(1,\infty)$, several
of the underlying convex-analytic arguments extend directly to this
setting. We exploit these extensions below and follow the structure
of the preceding applications, indicating explicitly where an
additional argument is required.

Let $\ell=1$ and $p=\infty$, \textit{i.e.}, $V=W^{1,\infty}_D(\Omega;\mathbb{R}^1)$, $Y=L^\infty(\Omega;\mathbb{R}^d)$, and $H=L^2(\Omega;\mathbb{R}^1)$, where $V$ is equipped with the gradient norm $\|\cdot\|_V\coloneqq \|\nabla (\cdot)\|_{\infty,\Omega}$ in $V$. In particular,  the dual and predual~space~of~$Y$\linebreak are characterized by $Y^*\cong\mathrm{ba}(\Omega;\mathbb{R}^d)$ and $Y_*\cong L^1(\Omega;\mathbb{R}^d)$ (\textit{i.e.}, $(Y_*)^* = Y$), respectively. Here, $\mathrm{ba}(\Omega;\mathbb{R}^d)$\linebreak denotes the space of bounded
and finitely additive vector measures on $\mathcal{L}^d(\Omega)$~that~vanish~on~\mbox{Lebesgue-null} sets (\textit{cf}.~\mbox{\cite[Def.~4.1]{Toland20}}). Since $\nabla V \subseteq Y=Y_*^*$ is weakly-$*$ closed, a predual~space~of~$V$~is~characterized by  $V_*\hspace{-0.1em}\cong\hspace{-0.1em}  Y_*/{}^\perp(\nabla V)$, where ${{}^\perp(\nabla V)\hspace{-0.1em}\coloneqq\hspace{-0.1em} \{y\hspace{-0.1em}\in\hspace{-0.1em}  Y_*\mid (y,\nabla v)_{\Omega}\hspace{-0.1em}=\hspace{-0.1em}0\text{ for all } v\hspace{-0.1em}\in\hspace{-0.1em} V\}}$~(\textit{cf}.~\cite[Thms.~4.7(b)~and~4.9(b)]{Rudin1991}). 

Moreover, let the energy densities
$\phi\colon Q\times\mathbb{R}^d\to\mathbb{R}\cup\{+\infty\}$ and
$\psi\colon Q\times\mathbb{R}\to\mathbb{R}$, for a.e.\ $(t,x)\in Q$
and every $a\in\mathbb{R}^d$ and $b\in\mathbb{R}$, respectively, be
defined by
\begin{subequations}\label{subsec:unsteady_elasto_plastic_torsion.1}
    \begin{align}\label{subsec:unsteady_elasto_plastic_torsion.1.1}
    \phi(t,x,a)
    &\coloneqq
    \tfrac{1}{2}|a|^2+\chi_{[0,+\infty)}(\zeta(t,x)-\vert a\vert)\,, 
    \\\label{subsec:unsteady_elasto_plastic_torsion.1.2}
    \psi(t,x,b)
    &\coloneqq
    0\,,
    \end{align}
\end{subequations}
for some
$\zeta\in L^\infty(Q)\cap C_{\mathrm{w}}^0(I;H)$ with
$\zeta\geq\zeta_0$ a.e. in $Q$, where $\zeta_0\in(0,+\infty)$, so that   $G\colon  I\times Y\to \mathbb{R}\cup\{+\infty\}$ is given via $G(t,y)\hspace{-0.15em}\coloneqq \hspace{-0.15em} \tfrac{1}{2}\|y\|_{\Omega}^2+\chi_{K(t)}(y)$ for a.e.\ $t\hspace{-0.15em}\in\hspace{-0.15em} I$ and all $y\hspace{-0.15em}\in\hspace{-0.15em} Y$ and  ${F\colon \hspace{-0.15em}I\hspace{-0.15em}\times \hspace{-0.15em}V\hspace{-0.175em}\to\hspace{-0.15em} \mathbb{R}}$,~for~some~${f\hspace{-0.175em}\in\hspace{-0.175em} L^1(I;V_*)}$, is given via $F(t,v)\coloneqq -\langle f(t),v\rangle_V$ for a.e.\ $t\in I$~and~all~$v\in V$.
Here, we interpret $\chi_K\coloneqq I_{\phi-\frac{1}{2}\vert \cdot\vert^2}^{\Omega}\colon I\times Y\to \mathbb{R}\cup\{+\infty\}$ as spatial integral reduction (\textit{cf}.\ Lemma \ref{lem:integral_functionals_as_normal_integrands}), which, for a.e.\ $t\in I$ and every $y\in Y$,~is~given~via\vspace{-0.5mm}
\begin{align*}
\chi_{K(t)}(y)
\coloneqq
\begin{cases}
    0&\text{ if }y\in K(t)\,,\\
    +\infty&\text{ else}\,,
\end{cases}\\[-6mm]\notag
\end{align*}
where\vspace{-0.5mm}
\begin{align*}
    K(t)\coloneqq \smash{\bigl\{y\in Y\mid \vert y\vert \leq \zeta(t)\text{ a.e.\ in }\Omega\bigr\}}\,.
\end{align*}

For a.e.\ $t\hspace{-0.1em}\in\hspace{-0.1em} I$, the steady primal
energy functional $E(t,\cdot)\colon \hspace{-0.1em}V\hspace{-0.1em}\to\hspace{-0.1em}\mathbb{R}\cup\{+\infty\}$, for every $v\hspace{-0.1em}\in\hspace{-0.1em} V$,~is~given~via 
\begin{align}\label{subsec:unsteady_elasto_plastic_torsion.4}
    \smash{E(t,v)
    =
    \tfrac{1}{2}\|\nabla v\|_\Omega^2
    +
    \chi_{K(t)}(\nabla v)
    -
    \langle f(t),v\rangle_{V}\,.}
\end{align} 

Since, for a.e.\ $(t,x)\in Q$, the Fenchel conjugate functional (with respect to the last argument) $\phi^*\colon Q\times \mathbb{R}^d\to \mathbb{R}$ of \eqref{subsec:unsteady_elasto_plastic_torsion.1.1}, for a.e.\ $(t,x)\in Q$ and  every $a\in \mathbb{R}^d$, is given via 
\begin{align}\label{subsec:unsteady_elasto_plastic_torsion.4.5}
    \smash{\phi^*(t,x,a)
=
\tfrac{1}{2}|a|^2-\tfrac{1}{2}(|a|-\zeta(t,x))_+^2\,,}
\end{align} 
by the convex conjugation formula for integral functionals defined on $Y$ (\textit{cf}.\ \cite[Thm.\ 1]{Rockafellar1971II}), for a.e.\ $t\in I$,\linebreak the Fenchel conjugate functional $G^*(t,\cdot)\colon Y^*\to \mathbb{R}$, for every $y=y^{\mathrm{a}}\otimes\mathrm{d}x+y^{\mathrm{s}}\in Y^*$, where
$y^{\mathrm{a}}\in Y_*$ (the $\sigma$-additive part) and $y^{\mathrm{s}}\in Y^*$ (the purely finitely additive part) with $y^{\mathrm{s}}\perp y^{\mathrm{a}}\otimes \mathrm{d}x$ form the unique Yosida--Hewitt decomposition of $y \in \mathrm{ba}(\Omega;\mathbb{R}^d)$~(\textit{cf}.~\mbox{\cite[Thm.~4.13]{Toland20}}), is given via  
\begin{align}\label{subsec:unsteady_elasto_plastic_torsion.5}
    \smash{G^*(t,y)
    =I_{\phi^*}^{\Omega}(t,y^{\mathrm{a}}) 
    +|\zeta(t)y^{\mathrm{s}}|(\Omega)\,.}
\end{align} 
As a consequence, since, due to $\zeta\geq\zeta_0>0$ a.e.\ in $Q$, for a.e.\ $t\in I$,
$G(t,\cdot)$ is continuous at $0=L0$ in $Y$, the Fenchel conjugate
$E^*(t,\cdot)\colon V^*\to\mathbb{R}\cup\{+\infty\}$ of \eqref{subsec:unsteady_elasto_plastic_torsion.4}, 
for every $v^*\in V^*$, is given via the infimal convolution (\textit{cf}.~\cite[Thm.~9.4.1]{AttouchButtazzoMichaille2014})\vspace{-0.5mm}
\begin{align}\label{subsec:unsteady_elasto_plastic_torsion.6}
    E^*(t,v^*)
    =
    \inf_{\substack{y=y^{\mathrm{a}}\otimes \mathrm{d}x+y^{\mathrm{s}}\in\mathrm{ba}(\Omega;\mathbb{R}^d)\\
    L^*y=f(t)+v^*\text{ in }V^*}}
    \bigl\{I_{\phi^*}^{\Omega}(t,y^{\mathrm{a}})+
    |\zeta(t)y^{\mathrm{s}}|(\Omega)
    \bigr\}\,,\\[-6mm]\notag
\end{align}
where $L^*\colon Y^*\to V^*$ denotes the adjoint operator of the gradient operator $L\coloneqq \nabla \colon V\to Y$. 

If the Laplace operator $-\Delta\coloneqq L^*\circ (\nabla (\cdot)\otimes \mathrm{d}x)\colon V\to V^*$, for every  $v,w\in V$, is defined by\enlargethispage{1mm}
\begin{align*}
   \smash{\langle(-\Delta)v,w\rangle_V
    \coloneqq
    (\nabla v,\nabla w)_\Omega}\,,
\end{align*} and, for a.e.\ $t\in I$~and~every~$y\in Y$,
denoting the normal cone to $K(t)$ by
\begin{align*}
    \smash{N_{K(t)}(y)
    \coloneqq
    \partial\chi_{K(t)}(y)
    =
    \{
        y^*\in Y^*
        \mid
        \langle y^*,\varphi-y\rangle_Y\leq 0
        \text{ for all }\varphi\in K(t)
    \}\,,}
\end{align*}
the abstract subgradient-flow problem \eqref{eq:subgradient_flow}, given an arbitrary initial
datum $u_0\in V$ with $\nabla u_0\in K(0)$, is given by the unsteady elasto-plastic torsion problem: find $u\in  \mathcal{W}_\infty(I)$, 
where (\textit{cf}.\ \cite{Pedregal1997})
\begin{align*}
    \mathcal{W}_\infty(I)
    &\coloneqq
    \bigl\{v\in L^\infty_{\mathrm{w}^*}(I;V)\mid \partial_t v\in L^1(I;V_*)\bigr\}\hookrightarrow C^0(\overline{I};H)\,,\\
    L^\infty_{\mathrm{w}^*}(I;V)&\coloneqq\bigl\{v\colon I\hspace{-0.1em}\to\hspace{-0.1em} V\mid 
         \langle \nabla v(\cdot),w\rangle_{Y_*}\colon\hspace{-0.1em} I\hspace{-0.1em}\to\hspace{-0.1em} \mathbb{R}\text{ is }\mathcal{L}^1(I)\text{-measurable for all }w\hspace{-0.1em}\in\hspace{-0.1em} Y_*\,,\;\|v(\cdot)\|_V\hspace{-0.1em}\in\hspace{-0.1em} L^\infty(I)\bigr\}\,,
\end{align*}
where $L^\infty_{\mathrm{w}^*}(I;V)$ forms a Banach space equipped with the norm $\|\cdot\|_{L^\infty_{\mathrm{w}^*}(I;V)}\coloneqq \text{ess\,sup}_{t\in I}{\{\|(\cdot)(t)\|_V\}}$~and, by the separability of $V_*\cong Y_*/{}^\perp(\nabla V)$ 
satisfies $(L^1(I;V_*))^*\cong L^\infty_{\mathrm{w}^*}(I;V)$ (\textit{cf}.\ \cite[Thm.\ 6.14]{Pedregal1997}),~such~that\vspace{-4.5mm}
\begin{subequations}\label{subsec:elasto_plastic_torsion_problem.0}
\begin{alignat}{2}
    \partial_t u(t)+(-\Delta)u(t)+L^*N_{K(t)}(\nabla u(t))
    &\ni f(t)
    &&\quad\text{ in }V^*
    \quad\text{ for a.e.\ }t\in I\,,
    \label{subsec:elasto_plastic_torsion.0.1}
    \\
    u(0)&=u_0
   && \quad\text{ in }H\,.
    \label{subsec:elasto_plastic_torsion.0.2}
\end{alignat}
\end{subequations}
If, in addition, $ \zeta\in W^{1,\infty}(I;L^\infty(\Omega))$ and $f\in L^2(Q)$,
according to \cite[Thm.~2.10]{MirandaRodriguesSantos2020}, the unsteady~elasto-plastic torsion problem \eqref{subsec:elasto_plastic_torsion_problem.0} admits a unique solution $u\in \mathcal{W}_\infty(I)$. More precisely, \cite[Thm.~2.10]{MirandaRodriguesSantos2020} yields a unique solution $u\in L^2(I;W_D^{1,2}(\Omega;\mathbb{R}^1))\cap H^1(I;H)$ satisfying $|\nabla u|\leq\zeta$ a.e.\ in $Q$,  which implies that $u\in\mathcal{W}_\infty(I)$, but, in general, not that  $u\in L^\infty(I;V)$ due to a possible lack of Bochner measurability.\linebreak
Motivated \hspace{-0.15mm}by \hspace{-0.15mm}\cite[\hspace{-0.2mm}Thm.~\hspace{-0.2mm}2.10]{MirandaRodriguesSantos2020}, \hspace{-0.15mm}without \hspace{-0.15mm}imposing \hspace{-0.15mm}additional \hspace{-0.15mm}regularity \hspace{-0.15mm}assumptions~\hspace{-0.15mm}on~\hspace{-0.15mm}${\zeta\hspace{-0.25em}\in \hspace{-0.2em}L^\infty(Q)\hspace{-0.15em}\cap\hspace{-0.15em} C_{\mathrm{w}}^0(I;H)}$ and $f\in L^1(I;V_*)$, we assume that there exists a solution $u\in\mathcal{W}_\infty(I)$.

The argument of the Br\'ezis--Ekeland--Nayroles principle
(\textit{cf}.\ Proposition~\ref{prop:brezis_ekeland_nayroles}) extends to the
present setting. Therefore, the solution $u\in\mathcal{W}_\infty(I)$
is equivalently characterized as the primal~\mbox{solution},~\textit{i.e.},~a~mi\-nimizer of the unsteady primal energy functional
$\mathcal{E}\colon\hspace{-0.1em}\mathcal{W}_\infty(I)\hspace{-0.1em}\to\hspace{-0.1em}\mathbb{R}\cup\{+\infty\}$,~for~every~$v\hspace{-0.1em}\in\hspace{-0.1em}\mathcal{W}_\infty(I)$~defined~by\vspace{-0.5mm}
\begin{align}\label{subsec:elasto_plastic_torsion_problem.primal}
\begin{aligned}
    \mathcal{E}(v)
    &\coloneqq
    \tfrac{1}{2}\|\nabla v\|_Q^2
    +
    I_{\chi_K}(\nabla v)
    -
    \langle f,v\rangle
    \\
    &\quad+
    \inf_{\substack{
        y=y^{\mathrm{a}}\,\mathrm{d}t\mathrm{d}x+y^{\mathrm{s}}\in\operatorname{ba}(Q;\mathbb{R}^d)\\
        L^*y=f-\partial_t v\text{ in }(L^\infty_{\mathrm{w}^*}(I;V))^*}}
    \bigl\{
        I_{\phi^*}(y^{\mathrm{a}})
        +
        |\zeta y^{\mathrm{s}}|(Q)
    \bigr\}
    +
    \tfrac{1}{2}\|v(t_{\mathtt{fin}})\|_H^2
    +
    \chi_{\{u_0\}}(v(0))\,,
\end{aligned}\\[-6mm]\notag
\end{align}
satisfying\vspace{-0.5mm}
\begin{align}\label{subsec:elasto_plastic_torsion_problem.primal.2}
    \smash{\mathcal{E}(u)=\tfrac{1}{2}\|u_0\|_H^2\,.}
\end{align}
Here, $L^*\colon (L^\infty(Q;\mathbb{R}^d))^*\to (L^\infty_{\mathrm{w}^*}(I;V))^*$, where $(L^\infty(Q;\mathbb{R}^d))^*\cong \operatorname{ba}(Q;\mathbb{R}^d)$ (\textit{cf}.\ \cite[Thm.\ 3.1]{Toland20}), denotes the adjoint operator of the gradient operator $\nabla \colon L^\infty_{\mathrm{w}^*}(I;V)\to L^\infty_{\mathrm{w}^*}(I;Y)$, where $L^\infty_{\mathrm{w}^*}(I;Y)\cong L^\infty(Q;\mathbb{R}^d)$ (\textit{cf}.\ \cite[Thm.\ 6.14]{Pedregal1997}).\enlargethispage{2.5mm}

Proceeding as in the proof of Theorem~\ref{thm:duality}(\hyperlink{thm:duality.i}{i}), we derive the restricted unsteady dual energy~functional  $\mathcal{D}
    \colon 
    \operatorname{ba}(Q;\mathbb{R}^d)\times\mathcal{W}_\infty(I)
    \to \mathbb{R}\cup\{-\infty\}$,   for every 
$y=y^{\mathrm{a}}\,\mathrm{d}t\mathrm{d}x+y^{\mathrm{s}}\in
\operatorname{ba}(Q;\mathbb{R}^d)$, where $y^{\mathrm{a}}\in L^1(I;Y_*)\cong L^1(Q;\mathbb{R}^d)$ \hspace{-0.15mm}(the \hspace{-0.15mm}$\sigma$-additive \hspace{-0.15mm}part) \hspace{-0.15mm}and \hspace{-0.15mm}$y^{\mathrm{s}}\hspace{-0.175em}\in \hspace{-0.175em}\mathrm{ba}(Q;\mathbb{R}^d)$ \hspace{-0.15mm}(the \hspace{-0.15mm}purely \hspace{-0.15mm}finitely \hspace{-0.15mm}additive \hspace{-0.15mm}part)~\hspace{-0.15mm}with~\hspace{-0.15mm}${y^{\mathrm{s}}\hspace{-0.2em}\perp\hspace{-0.2em} y^{\mathrm{a}}\,\mathrm{d}t\mathrm{d}x}$~\hspace{-0.15mm}form the \hspace{-0.1mm}unique \hspace{-0.1mm}Yosida--Hewitt \hspace{-0.1mm}decomposition \hspace{-0.1mm}of \hspace{-0.1mm}$y\hspace{-0.15em}\in\hspace{-0.15em}\operatorname{ba}(Q;\mathbb{R}^d) $ \hspace{-0.1mm}(\textit{cf}.~\hspace{-0.1mm}\mbox{\cite[\hspace{-0.1mm}Thm.~\hspace{-0.1mm}4.13]{Toland20}}),
\hspace{-0.1mm}and
\hspace{-0.1mm}$\lambda\hspace{-0.15em}\in\hspace{-0.15em}\mathcal{W}_\infty(I)$~\hspace{-0.1mm}\mbox{defined}~\hspace{-0.1mm}by
\begin{align}\label{subsec:elasto_plastic_torsion_problem.dual}
\begin{aligned}
    \mathcal{D}(y,\lambda)
    &\coloneqq 
    -I_{\phi^*}(y^{\mathrm{a}})
    -|\zeta y^{\mathrm{s}}|(Q)
    -\chi_{\{-f\}}
        (-L^*y-\partial_t\lambda)
    \\
    &\quad
    -\tfrac{1}{2}\|\nabla\lambda\|_Q^2
    -I_{\chi_K}(\nabla\lambda)
    +\langle f,\lambda\rangle_{\smash{L^\infty_{\mathrm{w}^*}(I;V)}}
    -\tfrac{1}{2}\|\lambda(t_{\mathtt{fin}})\|_H^2
    +(\lambda(0),u_0)_H\,.
\end{aligned}
\end{align}
Note that the non-restricted unsteady dual energy functional is defined on $\operatorname{ba}(Q;\mathbb{R}^d)\times L^\infty_{\mathrm{w}^*}(I;V)$.

Since $\zeta\geq\zeta_0>0$ a.e.\ in $Q$, the functional
$I_G\colon L^\infty(Q;\mathbb{R}^d)\to\mathbb{R}\cup\{+\infty\}$
is continuous at $0$. Therefore, since $-\partial_tu\in\partial I_E(u)$,  
the standard subdifferential calculus (\textit{cf}.\ \cite[Props.\ 5.6-7, p.\ 26--27] {EkelandTemam1999}) yields the existence of
$z=z^{\mathrm{a}}\,\mathrm{d}t\mathrm{d}x+z^{\mathrm{s}}\in
\operatorname{ba}(Q;\mathbb{R}^d)$, where $z^{\mathrm{a}}\in L^1(I;Y_*)$ and $z^{\mathrm{s}}\in \mathrm{ba}(Q;\mathbb{R}^d)$ with $z^{\mathrm{s}}\perp z^{\mathrm{a}}\,\mathrm{d}t\mathrm{d}x$ form a unique Yosida--Hewitt decomposition of $z\in\operatorname{ba}(Q;\mathbb{R}^d) $ (\textit{cf}.~\mbox{\cite[Thm.~4.13]{Toland20}}), 
such that
\begin{subequations}\label{subsec:elasto_plastic_torsion_problem.opt}
\begin{align}\label{subsec:elasto_plastic_torsion_problem.opt.1}
    z&\in\partial I_G(\nabla u)\,,\\
    L^*z+\partial_tu&=f\quad\text{ in }(L^\infty_{\mathrm{w}^*}(I;V))^*\,.
        \label{subsec:elasto_plastic_torsion_problem.opt.2}
\end{align}
\end{subequations}
Since $I_G^*(z)=I_{\smash{I_{\phi^*}^{\Omega}}}(z^{\mathrm{a}})+\vert \zeta z^{\mathrm{s}}\vert(Q)$ (\textit{cf}.\ \cite[Thm.\ 1]{Rockafellar1971II}),
the optimality inclusion \eqref{subsec:elasto_plastic_torsion_problem.opt.1} is equivalent to
\begin{align}
    \nabla u&=\mathrm{D}_a\phi^*(t,x,z^{\mathrm{a}})=\Pi_\zeta(z^{\mathrm{a}})
        \quad\text{ a.e.\ in }Q\,,
        \label{subsec:elasto_plastic_torsion_problem.opt.3}\\
    |\zeta z^{\mathrm{s}}|(Q)
        &=\langle z^{\mathrm{s}},\nabla u\rangle_{L^\infty(Q;\mathbb{R}^d)}\,.
        \label{subsec:elasto_plastic_torsion_problem.opt.4}
\end{align}
Hence, setting $\mu\coloneqq u\in \mathcal{W}_\infty(I)$, the pair $(z,\mu)\in \mathrm{ba}(Q;\mathbb{R}^d)\times \mathcal{W}_\infty(I)$ is admissible for~the~dual~problem. The Fenchel equality corresponding to
\eqref{subsec:elasto_plastic_torsion_problem.opt.2} and the
integration-by-parts formula in time yield
\begin{align}\label{subsec:elasto_plastic_torsion_problem.opt.5}
    \mathcal{D}(z,\mu)
    =
    \tfrac{1}{2}\|u_0\|_H^2
    =
    \mathcal{E}(u)\,.
\end{align} 
Thus, by weak duality, $(z,\mu)$ is a maximizer of \eqref{subsec:elasto_plastic_torsion_problem.dual}
 and a strong duality relation applies.

\hspace{-0.5mm}Next, \hspace{-0.1mm}we \hspace{-0.1mm}record \hspace{-0.1mm}the \hspace{-0.1mm}corresponding \hspace{-0.1mm}primal \hspace{-0.1mm}and \hspace{-0.1mm}dual \hspace{-0.1mm}gap \hspace{-0.1mm}identities
\hspace{-0.1mm}(\textit{cf}.\ \hspace{-0.1mm}Theorems~\hspace{-0.1mm}\ref{thm:primal_gap_identity} \hspace{-0.1mm}and
\hspace{-0.1mm}\ref{thm:dual_gap_identity}).~\hspace{-0.1mm}To~\hspace{-0.1mm}this~\hspace{-0.1mm}end, we note that, for every $v,w\in\mathcal{W}_\infty(I)$
with $\nabla w(t)\in K(t)$ for a.e.\ $t\in I$ and  $y=y^{\mathrm{a}}\,\mathrm{d}t\mathrm{d}x+y^{\mathrm{s}}\in \mathrm{ba}(Q;\mathbb{R}^d)$ with $L^*y=f-\partial_tv$ in $(L^\infty_{\mathrm{w}^*}(I;V))^*$,
due to $I_G=I_{I_{\phi}^{\Omega}}$ with \eqref{subsec:unsteady_elasto_plastic_torsion.1.1}, $(I_G)^*(y)
    =
    I_{\phi^*}(y^{\mathrm{a}})+|\zeta y^{\mathrm{s}}|(Q)$ with \eqref{subsec:unsteady_elasto_plastic_torsion.4.5}, $I_F=-\langle f,\cdot\rangle_{\smash{L^\infty_{\mathrm{w}^*}(I;V)}}$, and $(I_F)^*=\chi_{\{-f\}}$, there holds
\begin{align}
\label{subsec:elasto_plastic_torsion.fenchel_gap}
\begin{aligned}
    &(I_G)^*(y)
    -
    \langle y,\nabla w\rangle_{L^\infty(Q;\mathbb{R}^d)}
    +
    I_G(\nabla w)
    \\
    &\quad+
    (I_F)^*(-L^*y-\partial_tv)
    -
    \langle-L^*y-\partial_tv,w\rangle_{\smash{L^\infty_{\mathrm{w}^*}(I;V)}}
    +
    I_F(w)
    \\
    &=
    I_{\phi^*}(y^{\mathrm{a}})
-
(y^{\mathrm{a}},\nabla w)_Q
+
\tfrac{1}{2}\|\nabla w\|_Q^2+
\smash{\bigl(|\zeta y^{\mathrm{s}}|(Q)
-
\langle y^{\mathrm{s}},\nabla w\rangle_{L^\infty(Q;\mathbb{R}^d)}\bigr)}\,.
\end{aligned}
\end{align}

First, \hspace{-0.1mm}using \hspace{-0.1mm}repeatedly \hspace{-0.1mm}the \hspace{-0.1mm}elementary \hspace{-0.1mm}identity
\hspace{-0.1mm}\eqref{subsec:elasto_plastic_torsion.fenchel_gap}, \hspace{-0.1mm}we \hspace{-0.1mm}derive \hspace{-0.1mm}the
\hspace{-0.1mm}corresponding~\hspace{-0.1mm}primal~\hspace{-0.1mm}gap~\hspace{-0.1mm}\mbox{identity}.

\begin{lemma}[Primal gap identity for the unsteady elasto-plastic
torsion problem]
\label{lem:primal_gap_identity_elasto_plastic_torsion}
For every $v\in\mathcal{W}_\infty(I)$ with $v(0)=u_0$ in $H$ and
$\nabla v(t)\in K(t)$ for a.e.\ $t\in I$, there holds
\begin{align}
    &\tfrac{1}{2}\|\nabla(v-u)\|_Q^2
    +
    \smash{\bigl(
        \tfrac{|z^{\mathrm{a}}|}{\zeta}-1,
        \zeta^2-\nabla u\cdot\nabla v
    \bigr)}_{\smash{\{|\nabla u|=\zeta\}}} +
    |\zeta z^{\mathrm{s}}|(Q)
    -
    \langle z^{\mathrm{s}},\nabla v\rangle_{L^\infty(Q;\mathbb{R}^d)}
    +
    \tfrac{1}{2}\|(v-u)(t_{\mathtt{fin}})\|_H^2\notag
    \\
    &\quad+
    \inf_{\substack{
        y=y^{\mathrm{a}}\,\mathrm{d}t\,\mathrm{d}x+y^{\mathrm{s}}\in \mathrm{ba}(Q;\mathbb{R}^d)\\
        L^*y=f-\partial_tv\ \text{in }(L^\infty_{\mathrm{w}^*}(I;V))^*
    }}
    \Bigl\{
        I_{\phi^*}(y^{\mathrm{a}})
        -
        ( y^{\mathrm{a}},\nabla u)_{Q}
        +
        \tfrac{1}{2}\|\nabla u\|_Q^2 
        +
        |\zeta y^{\mathrm{s}}|(Q)
        -
        \langle y^{\mathrm{s}},\nabla u\rangle_{L^\infty(Q;\mathbb{R}^d)}
    \Bigr\}\notag
    \\
    &=
    \inf_{\substack{
        y=y^{\mathrm{a}}\,\mathrm{d}t\,\mathrm{d}x+y^{\mathrm{s}}\in \mathrm{ba}(Q;\mathbb{R}^d)\\
        L^*y=f-\partial_tv\ \text{in }(L^\infty_{\mathrm{w}^*}(I;V))^*
    }}
    \Bigl\{
        I_{\phi^*}(y^{\mathrm{a}})
        -
        (y^{\mathrm{a}},\nabla v)_Q
        +
        \tfrac{1}{2}\|\nabla v\|_Q^2 
        +
        |\zeta y^{\mathrm{s}}|(Q)
        -
        \langle y^{\mathrm{s}},\nabla v\rangle_{L^\infty(Q;\mathbb{R}^d)}
    \Bigr\}\,. \label{lem:primal_gap_identity_elasto_plastic_torsion.0}
\end{align}
\end{lemma}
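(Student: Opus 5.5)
We follow the template of the Bingham and obstacle applications. The argument has three parts: compute the optimal strong convexity measure, compute the primal gap estimator, and equate them via the primal gap identity (Theorem~\ref{thm:primal_gap_identity}). Fix $v\in\mathcal{W}_\infty(I)$ with $v(0)=u_0$ and $\nabla v(t)\in K(t)$ for a.e.\ $t\in I$. Then $\mathcal{E}(v)<+\infty$, because the infimum in \eqref{subsec:elasto_plastic_torsion_problem.primal} is finite. Since \eqref{subsec:elasto_plastic_torsion_problem.primal.2} holds, we have $\rho_{\mathcal{E}}^2(v)=\eta_{\mathcal{E}}^2(v)$.

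\emph{Primal gap estimator.} The computation of Lemma~\ref{lem:representation_of_primal_gap_estimator} transfers to the present $p=\infty$ setting without change. It uses the integration by parts $\langle\partial_t v,v\rangle=\tfrac12\|v(t_{\mathtt{fin}})\|_H^2-\tfrac12\|u_0\|_H^2$ on $\mathcal{W}_\infty(I)$ and the infimal-convolution formula for $I_{E^*}$, which is valid because $I_G$ is continuous at $0=\nabla 0$ (recall $\zeta\ge\zeta_0>0$). Inserting \eqref{subsec:elasto_plastic_torsion.fenchel_gap} with $(v,w,y)=(v,v,y)$ and using $I_G(\nabla v)=\tfrac12\|\nabla v\|_Q^2$ yields the right-hand side of \eqref{lem:primal_gap_identity_elasto_plastic_torsion.0}.

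\emph{Optimal strong convexity measure.} We use Lemma~\ref{lem:bregman_representation_primal_error_measure}, whose proof is purely algebraic and only needs $-\partial_tu\in\partial I_E(u)$ together with $u\in\partial I_{E^*}(-\partial_tu)$. The terminal term is $\tfrac12\|(v-u)(t_{\mathtt{fin}})\|_H^2$.

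For the second divergence $\mathcal{D}^u_{I_{E^*}}(-\partial_tv,-\partial_tu)$, we argue as in \eqref{rem:alternative_representation_bregman_divergence.6}. Combining the infimal-convolution formula with \eqref{subsec:elasto_plastic_torsion.fenchel_gap} for $(v,w,y)=(v,u,y)$ gives exactly the infimum appearing on the left-hand side of \eqref{lem:primal_gap_identity_elasto_plastic_torsion.0}.

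For the first divergence, we argue as in \eqref{rem:alternative_representation_bregman_divergence.4} with $\mu=u$, that is, we apply \eqref{subsec:elasto_plastic_torsion.fenchel_gap} with $(v,w,y)=(u,v,z)$. This is admissible by \eqref{subsec:elasto_plastic_torsion_problem.opt.2}. Since $\nabla u\in\partial I_{G^*}(z)$, we obtain
\begin{align*}
\mathcal{D}^{-\partial_tu}_{I_E}(v,u)=\bigl(I_{\phi^*}(z^{\mathrm a})-(z^{\mathrm a},\nabla v)_Q+\tfrac12\|\nabla v\|_Q^2\bigr)+\bigl(|\zeta z^{\mathrm s}|(Q)-\langle z^{\mathrm s},\nabla v\rangle\bigr)\,.
\end{align*}
The singular bracket already appears in the claim, so only the absolutely continuous bracket has to be simplified.

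\emph{Main step.} The absolutely continuous bracket is to be evaluated pointwise using \eqref{subsec:unsteady_elasto_plastic_torsion.4.5} and \eqref{subsec:elasto_plastic_torsion_problem.opt.3}. On $\{|z^{\mathrm a}|\le\zeta\}$ we have $z^{\mathrm a}=\nabla u$ and $\phi^*(z^{\mathrm a})=\tfrac12|\nabla u|^2$, so the integrand equals $\tfrac12|\nabla(v-u)|^2$.

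On $\{|z^{\mathrm a}|>\zeta\}$ we have $\nabla u=\zeta z^{\mathrm a}/|z^{\mathrm a}|$, hence $|\nabla u|=\zeta$, and $z^{\mathrm a}=\tfrac{|z^{\mathrm a}|}{\zeta}\nabla u$. There
\begin{align*}
\phi^*(z^{\mathrm a})=\tfrac12|z^{\mathrm a}|^2-\tfrac12(|z^{\mathrm a}|-\zeta)^2=\zeta|z^{\mathrm a}|-\tfrac12\zeta^2\,.
\end{align*}
Subtracting $\tfrac12|\nabla(v-u)|^2$ from the integrand leaves, after expanding and using $|\nabla u|^2=\zeta^2$,
\begin{align*}
\zeta|z^{\mathrm a}|-\zeta^2-\bigl(\tfrac{|z^{\mathrm a}|}{\zeta}-1\bigr)\nabla u\cdot\nabla v=\bigl(\tfrac{|z^{\mathrm a}|}{\zeta}-1\bigr)(\zeta^2-\nabla u\cdot\nabla v)\,.
\end{align*}
This expression vanishes wherever $|z^{\mathrm a}|=\zeta$. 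Consequently the integral over $\{|z^{\mathrm a}|>\zeta\}$ can be replaced by one over $\{|\nabla u|=\zeta\}$: on the part of $\{|\nabla u|=\zeta\}$ where $|z^{\mathrm a}|\le\zeta$, we have $z^{\mathrm a}=\nabla u$, so $|z^{\mathrm a}|=\zeta$ and the factor is zero there as well. Adding the three contributions gives the left-hand side of \eqref{lem:primal_gap_identity_elasto_plastic_torsion.0}.

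The main obstacle is not this algebra but the justification of the $p=\infty$ transfers: the integration by parts on $\mathcal{W}_\infty(I)$, the conjugation $(I_G)^*=I_{\phi^*}+|\zeta\,\cdot^{\mathrm s}|(Q)$ on $\mathrm{ba}(Q;\mathbb{R}^d)$ via \cite[Thm.~1]{Rockafellar1971II}, and the infimal convolution via continuity of $I_G$ at $0$. I would state these once, citing the results already invoked before the lemma, rather than reprove them.
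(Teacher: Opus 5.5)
Your proposal is correct and follows the paper's proof step by step. The paper also splits $\rho_{\mathcal{E}}^2(v)$ into Bregman divergences and evaluates them through \eqref{subsec:elasto_plastic_torsion.fenchel_gap} with $(v,w,y)=(u,v,z)$, $(v,u,y)$ and $(v,v,y)$, then concludes with $\mathcal{E}(u)=\tfrac12\|u_0\|_H^2$. Your pointwise case analysis on $\{|z^{\mathrm a}|\le\zeta\}$ and $\{|z^{\mathrm a}|>\zeta\}$, including replacing the latter set by $\{|\nabla u|=\zeta\}$, is the computation the paper compresses into ``a binomial formula'', and it checks out.
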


\begin{proof}
Let $v\in\mathcal{W}_\infty(I)$ with $v(0)=u_0$ in $H$ and
$\nabla v(t)\in K(t)$ for a.e.\ $t\in I$ be fixed, but arbitrary.

\emph{$\bullet$ Optimal strong convexity measure.}
Due to the endpoint analogue of the alternative~Bregman
divergence \hspace{-0.15mm}representation
\hspace{-0.15mm}\eqref{rem:alternative_representation_bregman_divergence.4}
\hspace{-0.15mm}(which \hspace{-0.15mm}is \hspace{-0.15mm}applicable \hspace{-0.15mm}since \hspace{-0.15mm}$\mu\hspace{-0.175em}=\hspace{-0.175em}u$),
\hspace{-0.15mm}\eqref{subsec:elasto_plastic_torsion.fenchel_gap}
\hspace{-0.15mm}(applied~\hspace{-0.15mm}with~\hspace{-0.15mm}${(v,w,y)\hspace{-0.175em}=\hspace{-0.175em}(u,v,z)}$), \eqref{subsec:elasto_plastic_torsion_problem.opt.4}, that $I_{\phi^*}(z^{\mathrm{a}})
    -
    (z^{\mathrm{a}},\nabla u)_Q
    +
    \tfrac{1}{2}\|\nabla u\|_Q^2
    =0$ (which is equivalent to \eqref{subsec:elasto_plastic_torsion_problem.opt.3}), and a binomial formula, we have that
\begin{align*}
\begin{aligned}
    \mathcal{D}_{I_E}^{-\partial_tu}(v,u)
    &=
    I_{\phi^*}(z^{\mathrm{a}})
    -
    (z^{\mathrm{a}},\nabla v)_Q
    +
    \tfrac{1}{2}\|\nabla v\|_Q^2
    +
    |\zeta z^{\mathrm{s}}|(Q)
    -
    \langle z^{\mathrm{s}},\nabla v\rangle_{L^\infty(Q;\mathbb{R}^d)}
    \\
    &=
    \tfrac{1}{2}\|\nabla(v-u)\|_Q^2
    +
    \smash{\bigl(
        \tfrac{|z^{\mathrm{a}}|}{\zeta}-1,
        \zeta^2-\nabla u\cdot\nabla v
    \bigr)}_{\{|\nabla u|=\zeta\}}+
    |\zeta z^{\mathrm{s}}|(Q)
    -
    \langle z^{\mathrm{s}},\nabla v\rangle_{L^\infty(Q;\mathbb{R}^d)} \,.
\end{aligned}
\end{align*}
Moreover, due to the endpoint analogue of the alternative Bregman
divergence representation
\eqref{rem:alternative_representation_bregman_divergence.6}
and
\eqref{subsec:elasto_plastic_torsion.fenchel_gap}
(applied with $(v,w,y)=(v,u,y)$), we have that
\begin{align*}
\begin{aligned}
    \mathcal{D}_{(I_E)^*}^{u}
    (-\partial_t v,-\partial_t u)
    &=
    \inf_{\substack{
        y=y^{\mathrm{a}}\,\mathrm{d}t\mathrm{d}x+y^{\mathrm{s}}\in\operatorname{ba}(Q;\mathbb{R}^d)
        \\
        L^*y=f-\partial_t v
        \ \text{in }(L^\infty_{\mathrm{w}^*}(I;V))^*
    }}
    \bigl\{
        I_{\phi^*}(y^{\mathrm{a}})
        -
        (y^{\mathrm{a}},\nabla u)_Q
        +
        \tfrac{1}{2}\|\nabla u\|_Q^2
        \\[-5mm]
        &\qquad\qquad\qquad\qquad\qquad\qquad
        +
        |\zeta y^{\mathrm{s}}|(Q)
        -
        \langle y^{\mathrm{s}},\nabla u\rangle_{L^\infty(Q;\mathbb{R}^d)}
    \bigr\}\,.
\end{aligned}
\end{align*}
Hence, the endpoint analogue of the Bregman-type
representation of the primal~error~measure~\eqref{lem:bregman_representation_primal_error_measure.0}~yields
\begin{align}
\label{lem:primal_gap_identity_elasto_plastic_torsion_problem.1}
\begin{aligned}
    \rho_{\mathcal{E}}^2(v)
    &=
    \tfrac{1}{2}\|\nabla(v-u)\|_Q^2
    +
    \smash{\bigl(
        \tfrac{|z^{\mathrm{a}}|}{\zeta}-1,
        \zeta^2-\nabla u\cdot\nabla v
    \bigr)}_{\{|\nabla u|=\zeta\}}
    \\
    &\quad+
    |\zeta z^{\mathrm{s}}|(Q)
    -
    \langle z^{\mathrm{s}},\nabla v\rangle_{L^\infty(Q;\mathbb{R}^d)}
    +
    \tfrac{1}{2}\|(v-u)(t_{\mathtt{fin}})\|_H^2
    \\
    &\quad+
    \inf_{\substack{
        y=y^{\mathrm{a}}\,\mathrm{d}t\mathrm{d}x+y^{\mathrm{s}}\in\operatorname{ba}(Q;\mathbb{R}^d)
        \\
        L^*y=f-\partial_t v
        \ \text{in }(L^\infty_{\mathrm{w}^*}(I;V))^*
    }}
    \bigl\{
        I_{\phi^*}(y^{\mathrm{a}})
        -
        (y^{\mathrm{a}},\nabla u)_Q
        +
        \tfrac{1}{2}\|\nabla u\|_Q^2
        \\[-5mm]
        &\qquad\qquad\qquad\qquad\qquad\qquad
        +
        |\zeta y^{\mathrm{s}}|(Q)
        -
        \langle y^{\mathrm{s}},\nabla u\rangle_{L^\infty(Q;\mathbb{R}^d)}
    \bigr\}\,.
\end{aligned}
\end{align}

\emph{$\bullet$ Primal gap estimator.}
Using the endpoint analogue of the representation
\eqref{lem:representation_of_primal_gap_estimator.0}
of the primal gap estimator
\eqref{def:primal_gap_estimator}
and
\eqref{subsec:elasto_plastic_torsion.fenchel_gap}
(applied with $(v,w,y)=(v,v,y)$), we find that
\begin{align}
\label{lem:primal_gap_identity_elasto_plastic_torsion_problem.2}
\begin{aligned}
    \eta_{\mathcal{E}}^2(v)
    &=
    \inf_{\substack{
        y=y^{\mathrm{a}}\,\mathrm{d}t\mathrm{d}x+y^{\mathrm{s}}\in\operatorname{ba}(Q;\mathbb{R}^d)
        \\
        L^*y=f-\partial_t v
        \ \text{in }(L^\infty_{\mathrm{w}^*}(I;V))^*
    }}
    \bigl\{
        I_{\phi^*}(y^{\mathrm{a}})
        -
        (y^{\mathrm{a}},\nabla v)_Q
        +
        \tfrac{1}{2}\|\nabla v\|_Q^2
        \\[-5mm]
        &\qquad\qquad\qquad\qquad\qquad\qquad
        +
        |\zeta y^{\mathrm{s}}|(Q)
        -
        \langle y^{\mathrm{s}},\nabla v\rangle_{L^\infty(Q;\mathbb{R}^d)}
    \bigr\}\,.
\end{aligned}
\end{align}

Finally, using the representations
\eqref{lem:primal_gap_identity_elasto_plastic_torsion_problem.1}
and
\eqref{lem:primal_gap_identity_elasto_plastic_torsion_problem.2}
and the endpoint analogue of the general primal gap identity
\eqref{thm:primal_gap_identity.0}, the proof of which carries over
verbatim since
$\mathcal{E}(u)=\tfrac{1}{2}\|u_0\|_H^2$ (\textit{cf}.\ \eqref{subsec:elasto_plastic_torsion_problem.primal.2}), we arrive at the
claimed primal gap identity for the unsteady elasto-plastic
torsion problem.
\end{proof}

Next, we derive the corresponding dual gap identity.\enlargethispage{2.5mm}

\begin{lemma}[Dual gap identity for the unsteady elasto-plastic
torsion problem]
\label{lem:dual_gap_identity_elasto_plastic_torsion}
For every
$y=y^{\mathrm{a}}\,\mathrm{d}t\mathrm{d}x+y^{\mathrm{s}}\in \mathrm{ba}(Q;\mathbb{R}^d)$ and
$\lambda\in\mathcal{W}_\infty(I)$ with 
$\nabla\lambda(t)\in K(t)$ for a.e.\ $t\in I$ and
\begin{align}
\label{lem:dual_gap_identity_elasto_plastic_torsion.-1}
    L^*y+\partial_t\lambda
    =
    f
    \quad\text{in }(L^\infty_{\mathrm{w}^*}(I;V))^*\,,
\end{align}
there holds
\begin{align}
\label{lem:dual_gap_identity_elasto_plastic_torsion.0}
\begin{aligned}
    &\tfrac{1}{2}(\mathbb{H}_{\phi^*}^{y^{\mathrm{a}},z^{\mathrm{a}}}(y^{\mathrm{a}}-z^{\mathrm{a}}),y^{\mathrm{a}}-z^{\mathrm{a}})_{Q} 
    +
    |\zeta y^{\mathrm{s}}|(Q)
    -
    \langle y^{\mathrm{s}},\nabla u\rangle_{L^\infty(Q;\mathbb{R}^d)}
    \\
    &\quad+
    \tfrac{1}{2}\|\nabla(\lambda-u)\|_Q^2
    +
    \smash{\bigl(
        \tfrac{|z^{\mathrm{a}}|}{\zeta}-1,
        \zeta^2-\nabla u\cdot\nabla\lambda
    \bigr)}_{\{|\nabla u|=\zeta\}}
    \\
    &\quad+
    |\zeta z^{\mathrm{s}}|(Q)
    -
    \langle z^{\mathrm{s}},\nabla\lambda\rangle_{L^\infty(Q;\mathbb{R}^d)}
    +
    \tfrac{1}{2}\|(\lambda-u)(t_{\mathtt{fin}})\|_H^2
    \\
    &=
    I_{\phi^*}(y^{\mathrm{a}})
    -
    (y^{\mathrm{a}},\nabla\lambda)_Q
    +
    \tfrac{1}{2}\|\nabla\lambda\|_Q^2
    +
    |\zeta y^{\mathrm{s}}|(Q)
    -
    \langle y^{\mathrm{s}},\nabla\lambda\rangle_{L^\infty(Q;\mathbb{R}^d)}+
    \tfrac{1}{2}\|\lambda(0)-u_0\|_H^2\,.
\end{aligned}
\end{align}
Here, the mapping $\mathbb{H}_{\phi^*}^{y^{\mathrm{a}},z^{\mathrm{a}}}\coloneqq
    2\int_0^1
    (1-s)
    H_{\zeta}
    (sy^{\mathrm{a}}+(1-s)z^{\mathrm{a}})
    \,\mathrm{d}s\colon Q\to \mathbb{R}^{d\times d}$, where $H_{\zeta}\colon Q\times \mathbb{R}^d\to \mathbb{R}^{d\times d}$, for a.e.\ $(t,x)\in Q$ and every $q\in \mathbb{R}^d$, is defined by 
    \begin{align*}
    H_{\zeta}(t,x,q)
    \coloneqq
    \begin{cases}
        I_d
        & |q|<\zeta(t,x)\,,\\
        O_d& |q|=\zeta(t,x)\,,\\
        \tfrac{\zeta(t,x)}{|q|}(
            I_d-\tfrac{q\otimes q}{|q|^2})\,,
        & |q|>\zeta(t,x)\,,
    \end{cases}
\end{align*}
where $I_d=(\delta_{ij})_{i,j=1,\ldots,d},O_d=(0)_{i,j=1,\ldots,d}\in \mathbb{R}^{d\times d}$ denote the identity and the zero matrix,~respectively.
\end{lemma}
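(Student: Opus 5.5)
The plan follows the template of the preceding dual gap identities: compute both sides of the endpoint analogue of the dual gap identity \eqref{thm:dual_gap_identity.0} separately and equate them. Strong duality \eqref{subsec:elasto_plastic_torsion_problem.opt.5} gives $\mathcal{D}(z,\mu)=\tfrac12\|u_0\|_H^2$, so $\rho_{-\mathcal{D}}^2(y,\lambda)=\eta_{-\mathcal{D}}^2(y,\lambda)$ holds exactly as in the proof of Theorem~\ref{thm:dual_gap_identity}.

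\emph{Dual gap estimator.} Follow the proof of Lemma~\ref{lem:representation_of_dual_gap_estimator} in the endpoint setting. Temporal integration by parts in $\mathcal{W}_\infty(I)\hookrightarrow C^0(\overline I;H)$ turns $-\langle\partial_t\lambda,\lambda\rangle+\tfrac12\|\lambda(t_{\mathtt{fin}})\|_H^2-(\lambda(0),u_0)_H+\tfrac12\|u_0\|_H^2$ into $\tfrac12\|\lambda(0)-u_0\|_H^2$. Then apply \eqref{subsec:elasto_plastic_torsion.fenchel_gap} with $(v,w,y)=(\lambda,\lambda,y)$; this is admissible by \eqref{lem:dual_gap_identity_elasto_plastic_torsion.-1} and $\nabla\lambda(t)\in K(t)$, which also makes $I_{\chi_K}(\nabla\lambda)=0$. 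The result is precisely the right-hand side of \eqref{lem:dual_gap_identity_elasto_plastic_torsion.0}.

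\emph{Dual optimal strong convexity measure.} Use the endpoint analogue of Lemma~\ref{lem:bregman_representation_dual_error_measure}. Its proof only needs the optimality relations \eqref{subsec:elasto_plastic_torsion_problem.opt}, $\mu=u$, $\nabla u\in\partial I_G^*(z)$, and integration by parts. It splits $\rho_{-\mathcal{D}}^2$ into four pieces:
\begin{itemize}
\item[(a)] $\mathcal{D}_{(I_G)^*}^{\nabla u}(y,z)$;
\item[(b)] the $F^*$-Bregman term, which vanishes as in the linear examples because $(I_F)^*=\chi_{\{-f\}}$ and both $(y,\lambda)$ and $(z,u)$ satisfy the constraint $L^*(\cdot)+\partial_t(\cdot)=f$;
\item[(c)] $\mathcal{D}_{I_E}^{-\partial_t u}(\lambda,u)$, which equals the expression already computed in the proof of Lemma~\ref{lem:primal_gap_identity_elasto_plastic_torsion} with $v$ replaced by $\lambda$, namely $\tfrac12\|\nabla(\lambda-u)\|_Q^2+(\tfrac{|z^{\mathrm a}|}{\zeta}-1,\zeta^2-\nabla u\cdot\nabla\lambda)_{\{|\nabla u|=\zeta\}}+|\zeta z^{\mathrm s}|(Q)-\langle z^{\mathrm s},\nabla\lambda\rangle$;
\item[(d)] the terminal term $\tfrac12\|(\lambda-u)(t_{\mathtt{fin}})\|_H^2$.
\end{itemize}

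\emph{Main step: term (a).} By \eqref{rem:alternative_representation_bregman_divergence.8} in its endpoint form, (a) equals
\[
(I_G)^*(y)-\langle y,\nabla u\rangle+I_G(\nabla u)
=\bigl[I_{\phi^*}(y^{\mathrm a})-(y^{\mathrm a},\nabla u)_Q+\tfrac12\|\nabla u\|_Q^2\bigr]+\bigl[|\zeta y^{\mathrm s}|(Q)-\langle y^{\mathrm s},\nabla u\rangle\bigr].
\]
The singular bracket appears verbatim on the left-hand side of \eqref{lem:dual_gap_identity_elasto_plastic_torsion.0}. For the absolutely continuous bracket, use the Fenchel equality $I_{\phi^*}(z^{\mathrm a})=(z^{\mathrm a},\nabla u)_Q-\tfrac12\|\nabla u\|_Q^2$ together with $\nabla u=\mathrm{D}_a\phi^*(\cdot,\cdot,z^{\mathrm a})=\Pi_\zeta(z^{\mathrm a})$ from \eqref{subsec:elasto_plastic_torsion_problem.opt.3}. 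These rewrite it as the pointwise Bregman divergence $\phi^*(y^{\mathrm a})-\phi^*(z^{\mathrm a})-\mathrm{D}\phi^*(z^{\mathrm a})\cdot(y^{\mathrm a}-z^{\mathrm a})$, integrated over $Q$.

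Apply Taylor's formula of Remark~\ref{rem:optimal_strong_convexity_measure}(ii) pointwise along the segment $s\mapsto sy^{\mathrm a}+(1-s)z^{\mathrm a}$ to get $\tfrac12(\mathbb{H}^{y^{\mathrm a},z^{\mathrm a}}_{\phi^*}(y^{\mathrm a}-z^{\mathrm a}),y^{\mathrm a}-z^{\mathrm a})$. I expect this step to be the main obstacle. The map $\mathrm{D}\phi^*=\Pi_\zeta$ is only Lipschitz, not $C^1$: its derivative is $H_\zeta$ away from the sphere $|q|=\zeta$, where it jumps. I would justify the formula as follows:
\begin{itemize}
\item Write $\phi^*(y)-\phi^*(z)-\mathrm{D}\phi^*(z)\cdot(y-z)=\int_0^1(\Pi_\zeta(z+s(y-z))-\Pi_\zeta(z))\cdot(y-z)\,\mathrm{d}s$.
\item Use that $s\mapsto\Pi_\zeta(z+s(y-z))$ is Lipschitz, hence absolutely continuous, with a.e.\ derivative $H_\zeta(\cdot)(y-z)$. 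The segment meets the sphere in at most two parameters unless it lies on the sphere, in which case the value $O_d$ is consistent with the tangential derivative along a radial segment. In either case the choice on $\{|q|=\zeta\}$ is immaterial.
\item Integrate by parts in $s$ to obtain the weight $(1-s)$ and the factor $2$ in $\mathbb{H}$.
\end{itemize}
Summing (a)--(d) then gives the left-hand side of \eqref{lem:dual_gap_identity_elasto_plastic_torsion.0}, which concludes the argument.
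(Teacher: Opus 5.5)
Your proposal is correct and follows the paper's proof essentially step by step. It uses the same splitting of $\rho_{-\mathcal{D}}^2$ into the $(I_G)^*$-, $(I_F)^*$-, $I_E$- and terminal contributions, the same application of \eqref{subsec:elasto_plastic_torsion.fenchel_gap} for the estimator, and the same pointwise Taylor argument justified by the Lipschitz continuity of $\Pi_\zeta$; you additionally spell out the integration-by-parts-in-$s$ step the paper leaves implicit. One small correction: the case of a segment lying on the sphere is vacuous. For $y^{\mathrm a}\neq z^{\mathrm a}$, the map $s\mapsto|z^{\mathrm a}+s(y^{\mathrm a}-z^{\mathrm a})|^2$ is a strictly convex quadratic, so it equals $\zeta^2$ at most twice, and the degenerate case $y^{\mathrm a}=z^{\mathrm a}$ contributes nothing. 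Your remark about $O_d$ and radial segments is therefore unnecessary.
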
  

\begin{proof}
Let $y=y^{\mathrm{a}}\,\mathrm{d}t\mathrm{d}x+y^{\mathrm{s}}\in\operatorname{ba}(Q;\mathbb{R}^d)$
and $\lambda\in\mathcal{W}_\infty(I)$ with
$\nabla\lambda(t)\in K(t)$ for a.e.\ $t\in I$ and \eqref{lem:dual_gap_identity_elasto_plastic_torsion.-1} 
be fixed, but arbitrary.

\emph{$\bullet$ Optimal strong convexity measure.}
Since, due to the endpoint analogue of the alternative Bregman
divergence representation
\eqref{rem:alternative_representation_bregman_divergence.8}, \eqref{subsec:elasto_plastic_torsion_problem.opt.3}, 
and the integral form of Taylor's formula applied pointwise for a.e.\  $(t,x)\in Q$
(which is applicable since
$\mathrm{D}_a\phi^*(t,x,\cdot)
=\Pi_{\zeta(t,x)}$
is Lipschitz continuous and, hence, absolutely continuous along line
segments, and its derivative along such segments is given~via~$H_\zeta(t,x,\cdot)$; the assigned value on $\vert q\vert =\zeta(t,x)$ is immaterial), we have that
\begin{align*}
\begin{aligned}
    \mathcal{D}_{(I_G)^*}^{\nabla u}(y,z)
    &=
    I_{\phi^*}(y^{\mathrm{a}})
    -
    I_{\phi^*}(z^{\mathrm{a}})
    -
    (\nabla u,y^{\mathrm{a}}-z^{\mathrm{a}})_Q+
    |\zeta y^{\mathrm{s}}|(Q)
    -
    \langle y^{\mathrm{s}},\nabla u\rangle_{L^\infty(Q;\mathbb{R}^d)} 
    \\&=
    I_{\phi^*}(y^{\mathrm{a}})
    -
    I_{\phi^*}(z^{\mathrm{a}})
    -
    (\mathrm{D}_a\phi^*(\cdot,\cdot,z^{\mathrm{a}}),y^{\mathrm{a}}-z^{\mathrm{a}})_Q+
    |\zeta y^{\mathrm{s}}|(Q)
    -
    \langle y^{\mathrm{s}},\nabla u\rangle_{L^\infty(Q;\mathbb{R}^d)} 
    \\&=\tfrac{1}{2} (\mathbb{H}_{\phi^*}^{y^{\mathrm{a}},z^{\mathrm{a}}}(y^{\mathrm{a}}-z^{\mathrm{a}}),
        y^{\mathrm{a}}-z^{\mathrm{a}})_Q +
    |\zeta y^{\mathrm{s}}|(Q)
    -
    \langle y^{\mathrm{s}},\nabla u\rangle_{L^\infty(Q;\mathbb{R}^d)}\,,
\end{aligned}
\end{align*}
due to the endpoint analogue of the alternative
Bregman divergence representation
\eqref{rem:alternative_representation_bregman_divergence.10}, $I_F= -\langle f,\cdot\rangle_{\smash{L^\infty_{\mathrm{w}^*}(I;V)}}$,
$(I_F)^*=\chi_{\{-f\}}$, \eqref{lem:dual_gap_identity_elasto_plastic_torsion.-1},
and \eqref{subsec:elasto_plastic_torsion_problem.opt.2}, we have that
\begin{align*}
    \mathcal{D}_{(I_F)^*}^{u}
    (-\partial_t\lambda-L^*y,-\partial_tu-L^*z)
    &=\chi_{\{-f\}}(-\partial_t\lambda-L^*y)-\langle f-\partial_t\lambda-L^*y,u\rangle_{\smash{L^\infty_{\mathrm{w}^*}(I;V)}}
    \\&=0\,,
\end{align*}
and, due to the endpoint analogue of the alternative
Bregman divergence representation
\eqref{rem:alternative_representation_bregman_divergence.4},
\eqref{subsec:elasto_plastic_torsion.fenchel_gap}
(applied with $(v,w,y)=(u,\lambda,z)$), \eqref{subsec:elasto_plastic_torsion_problem.opt.4}, that $I_{\phi^*}(z^{\mathrm{a}})
    -
    (z^{\mathrm{a}},\nabla u)_Q
    +
    \tfrac{1}{2}\|\nabla u\|_Q^2
    =0$ (which is equivalent to \eqref{subsec:elasto_plastic_torsion_problem.opt.3}), and a binomial formula, we have that
\begin{align*}
\begin{aligned}
    \mathcal{D}_{I_E}^{-\partial_tu}(\lambda,u)
    &=
    I_{\phi^*}(z^{\mathrm{a}})
    -
    (z^{\mathrm{a}},\nabla\lambda)_Q
    +
    \tfrac{1}{2}\|\nabla\lambda\|_Q^2
    +
    |\zeta z^{\mathrm{s}}|(Q)
    -
    \langle z^{\mathrm{s}},\nabla\lambda\rangle_{L^\infty(Q;\mathbb{R}^d)}
    \\
    &=
    \tfrac{1}{2}\|\nabla(\lambda-u)\|_Q^2
    +
    \smash{\bigl(
        \tfrac{|z^{\mathrm{a}}|}{\zeta}-1,
        \zeta^2-\nabla u\cdot\nabla\lambda
    \bigr)}_{\{|\nabla u|=\zeta\}}
    +
    |\zeta z^{\mathrm{s}}|(Q)
    -
    \langle z^{\mathrm{s}},\nabla\lambda\rangle_{L^\infty(Q;\mathbb{R}^d)}\,,
\end{aligned}
\end{align*}
the endpoint analogue of the Bregman-type
representation of the dual error measure
\eqref{lem:bregman_representation_dual_error_measure.0}
yields
\begin{align}
\label{lem:dual_gap_identity_elasto_plastic_torsion_problem.1}
\begin{aligned}
    \rho_{-\mathcal{D}}^2(y,\lambda)
    &=
    \tfrac{1}{2}
    (
        \mathbb{H}_{\phi^*}^{y^{\mathrm{a}},z^{\mathrm{a}}}(y^{\mathrm{a}}-z^{\mathrm{a}}),
        y^{\mathrm{a}}-z^{\mathrm{a}}
    )_Q
    +
    |\zeta y^{\mathrm{s}}|(Q)
    -
    \langle y^{\mathrm{s}},\nabla u\rangle_{L^\infty(Q;\mathbb{R}^d)}
    \\
    &\quad+
    \tfrac{1}{2}\|\nabla(\lambda-u)\|_Q^2
    +
    \smash{\bigl(
        \tfrac{|z^{\mathrm{a}}|}{\zeta}-1,
        \zeta^2-\nabla u\cdot\nabla\lambda
    \bigr)}_{\{|\nabla u|=\zeta\}}
    \\
    &\quad+
    |\zeta z^{\mathrm{s}}|(Q)
    -
    \langle z^{\mathrm{s}},\nabla\lambda\rangle_{L^\infty(Q;\mathbb{R}^d)}
    +
    \tfrac{1}{2}\|(\lambda-u)(t_{\mathtt{fin}})\|_H^2\,.
\end{aligned}
\end{align}

\emph{$\bullet$ Dual gap estimator.}
Using the endpoint analogue of the representation
\eqref{lem:representation_of_dual_gap_estimator.0}
of the dual gap estimator
\eqref{def:dual_gap_estimator}
and
\eqref{subsec:elasto_plastic_torsion.fenchel_gap},
which is applicable due to the balance condition, we find that
\begin{align}
\label{lem:dual_gap_identity_elasto_plastic_torsion_problem.2}
\begin{aligned}
    \eta_{-\mathcal{D}}^2(y,\lambda)
    &=
    I_{\phi^*}(y^{\mathrm{a}})
    -
    (y^{\mathrm{a}},\nabla\lambda)_Q
    +
    \tfrac{1}{2}\|\nabla\lambda\|_Q^2
    \\
    &\quad+
    |\zeta y^{\mathrm{s}}|(Q)
    -
    \langle y^{\mathrm{s}},\nabla\lambda\rangle_{L^\infty(Q;\mathbb{R}^d)}
    +
    \tfrac{1}{2}\|\lambda(0)-u_0\|_H^2\,.
\end{aligned}
\end{align}

Finally, using the representations
\eqref{lem:dual_gap_identity_elasto_plastic_torsion_problem.1}
and
\eqref{lem:dual_gap_identity_elasto_plastic_torsion_problem.2}
and the endpoint analogue of the general dual gap identity
\eqref{thm:dual_gap_identity.0}, the proof of which carries over
verbatim since
$\mathcal{D}(z,u)=\tfrac{1}{2}\|u_0\|_H^2$ (\textit{cf}.\ \eqref{subsec:elasto_plastic_torsion_problem.opt.5}), we arrive at the
claimed dual gap identity for the unsteady elasto-plastic
torsion problem.\enlargethispage{5mm}
\end{proof}

{\setlength{\bibsep}{0pt plus 0.0ex}\small
		
		\bibliographystyle{aomplain}
		\bibliography{references}
		
	}

\end{document}